\DeclareMathAlphabet{\mathpzc}{OT1}{pzc}{m}{it}
\newtheoremstyle{note}{11pt}{11pt}{}{}{\bfseries}{.}{.5em}{}
\newtheorem{theo}[equation]{Theorem}
\newtheorem{coro}[equation]{Corollary}
\newtheorem{prop}[equation]{Proposition}
\newtheorem{conj}[equation]{Conjecture}
\numberwithin{equation}{section}
\newtheorem{lemma}[equation]{Lemma}
\newcommand{\Qb}{\overline{\mathbb{Q}}}
\newcommand{\Q}{\mathbb{Q}}
\newcommand{\F}{\mathbb{F}}
\newcommand{\Aa}{\mathbb{A}}
\newcommand{\fa}{{\mathbb{A}}_F}
\newcommand{\faf}{F_f}
\newcommand{\faft}{F_{f}^{\times}}
\newcommand{\fai}{F_{\infty}}
\newcommand{\fait}{F_{\infty}^{\times}}
\newcommand{\Z}{\mathbb{Z}}
\newcommand{\R}{\mathbb{R}}
\newcommand{\C}{\mathbb{C}}
\newcommand{\calE}{\mathcal{E}}
\newcommand{\N}{\mathcal{N}}
\newcommand{\I}{\mathcal{I}}
\newcommand{\Ll}{\mathcal{L}}
\newcommand{\Cc}{\mathcal{C}}
\newcommand{\oo}{\mathcal{O}}
\newcommand{\h}{\mathcal{H}}
\newcommand{\p}{\mathpzc{p}}
\newcommand{\cycl}{\mathcal{N}}
\newcommand{\G}{\Gamma}
\newcommand{\LL}{\Lambda}
\newcommand{\eps}{\varepsilon}
\newcommand{\lgr}{\left\{}
\newcommand{\rgr}{\right\}}
\newcommand{\rra}{\right\rangle}
\newcommand{\lla}{\left\langle}
\newcommand{\gl}{\mbox{GL}_2}
\newcommand{\Sl}{\mbox{SL}_2}
\newcommand{\clpinf}{\mathrm{Cl}_{\Nfrak}( p^{\infty})}
\newcommand{\clpinfc}{\mathrm{Cl}_{\cfrak}( p^{\infty})}
\newcommand{\clpinfm}{\mathrm{Cl}_{\Mfrak}( p^{\infty})}
\newcommand{\bolda}{\mathbf{a}}
\newcommand{\boldG}{\mathbf{G}}
\newcommand{\boldf}{\mathbf{f}}
\newcommand{\boldM}{\mathbf{M}}
\newcommand{\boldm}{\mathbf{m}}
\newcommand{\boldH}{\mathbf{H}}
\newcommand{\boldh}{\mathbf{h}}
\newcommand{\boldS}{\mathbf{S}}
\newcommand{\boldW}{\mathbf{W}}
\newcommand{\boldI}{\mathbf{I}}
\newcommand{\boldK}{\mathbf{K}}
\newcommand{\boldB}{\mathbf{B}}
\newcommand{\boldg}{\mathbf{g}}
\newcommand{\boldF}{\mathbf{F}}
\newcommand{\bexp}{\mathbf{e}}
\newcommand{\boldN}{\mathbf{N}}
\newcommand{\beps}{\boldsymbol\varepsilon}
\newcommand{\rif}{\mathfrak{r}}
\newcommand{\afrak}{\mathfrak{a}}
\newcommand{\lfrak}{\mathfrak{l}}
\newcommand{\cfrak}{\mathfrak{c}}
\newcommand{\pfrak}{\mathfrak{p}}
\newcommand{\mfrak}{\mathfrak{m}}
\newcommand{\Mfrak}{\mathfrak{M}}
\newcommand{\qfrak}{\mathfrak{q}}
\newcommand{\efrak}{\mathfrak{e}}
\newcommand{\Cfrak}{\mathfrak{C}}
\newcommand{\Lfrak}{\mathfrak{L}}
\newcommand{\ffrak}{\mathfrak{f}}
\newcommand{\nfrak}{\mathfrak{n}}
\newcommand{\dfrak}{\mathfrak{d}}
\newcommand{\Nfrak}{\mathfrak{N}}
\begin{document}
\title{Derivative at $s=1$ of the $p$-adic $L$-function of the symmetric square of a Hilbert modular form}
\author{Giovanni Rosso 
\footnote{PhD Fellowship of Fund for Scientific Research - Flanders, partially supported by a JUMO grant from KU Leuven (Jumo/12/032) and a ANR grant  (ANR-10-BLANC 0114 ArShiFo), 
}}
\maketitle

Let $p \geq 3$ be a prime and $F$ a totally real number field. Let $\boldf$ be a Hilbert  cuspidal eigenform of parallel weight $2$, trivial Nebentypus and ordinary at $p$. It is possible to construct a $p$-adic $L$ function which interpolates the complex $L$-function associated with the symmetric square representation of $\boldf$.  This $p$-adic $L$-function vanishes at $s=1$ even if the complex $L$-function does not. 
Assuming $p$ inert and $\boldf$ Steinberg at $p$, we give a formula for the $p$-adic derivative at $s=1$ of this $p$-adic $L$-function, generalizing unpublished work of Greenberg and Tilouine. Under some hypotheses on the conductor of $\boldf$ we  prove a particular case of a conjecture of Greenberg on trivial zeros. 

\tableofcontents

\section{Introduction}

The aim of this paper is to prove a conjecture of Greenberg on trivial zeros of $p$-adic $L$-functions \cite{TTT}, generalizing the classical conjecture of Mazur-Tate-Teitelbaum \cite{MTT} to ordinary motives.\\
Let $V=\left\{V_l \right\}$ be a compatible system of continous, finite-dimensional $l$-adic representations of $G_{\Q}=\mathrm{Gal}(\Qb/\Q)$. Let us denote by $V^*$ the dual motive whose $l$-adic realizations are $V_l^*=\mathrm{Hom}(V_l,\Q_l(1))$.\\
We can associate in a standard way a complex $L$-function $L(s,V)$, converging for $Re(s) \gg 0$; we suppose that $L(s,V)$ extends to an holomorphic function on the whole complex plane and that it satisfies the functional equation
\begin{align*}
L(s,V)\G(s,V) = \beps(s)L(1-s,V^*)\G(s,V^*)
\end{align*}
where $\G(V,s)$ is a product of complex and real Gamma functions and $\beps(s)$ a function of the form $\zeta N^{s}$, for $\zeta$ a root of unity and $N$ a positive integer. We suppose moreover that both $\G(0,V)$ and $\G(1,V^*)$ are defined; in this case $s=1$ is critical \`a la Deligne \cite{Del} and we suppose in addition the existence of a Deligne period $\Omega$ such that 
\begin{align*}
\frac{L(V,0)}{\Omega} \in \Qb.
\end{align*}
Let us fix a prime number $p$;  we will fix once and for all an isomorphism $\mathbb{C} \cong \mathbb{C}_p$. This in particular defines an embedding of $\overline{\Q}$ in $\mathbb{C}$ and $\mathbb{C}_p$. We say that $V$ is ordinary at $p$ if $V_p$ as $G_{\Q_p}=\mathrm{Gal}(\Qb_p/\Q_p)$-representation admits a decreasing  and exhaustive filtration $\mathrm{Fil}^i V_p$ such that $I_p$ acts on $\mathrm{Gr}^i V_p$ as $\chi_{\mathrm{cycl}}^i$, where $\chi_{\mathrm{cycl}}$ denotes the $p$-adic cyclotomic character.\\
Let us denote by $K$ the coefficient field of $V_p$ and by $\oo$ its valuation ring. Under these hypotheses, Coates and Perrin-Riou \cite{CPR} have formulated the following conjecture;
\begin{conj}
There exist a formal series $G(T,V)$ in $\oo[[T]]$ such that for all finite order, non trivial characters $\eps$ of $1+p\Z_p $ we have 
\begin{align*}
G(\eps(1+p)-1,T)= C_{\eps}\frac{L(V\otimes \eps,0)}{\Omega};
\end{align*}
here $C_{\eps}$ is an explicit determined algebraic number. Moreover 
\begin{align*}
G(0)= \calE(V)\frac{L(V,0)}{\Omega};
\end{align*}
where $\calE(V)$ is  a finite product of Euler type factors at $p$.
\end{conj}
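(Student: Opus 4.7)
Since the conjecture of Coates--Perrin-Riou is open in the generality stated, I will sketch the strategy that succeeds in the setting of this paper, namely when $V = \mathrm{Sym}^2(\rho_{\boldf})$ for a Hilbert cuspidal eigenform $\boldf$ of parallel weight $2$ that is ordinary at $p$. The plan is to produce $G(T,V)$ via a Rankin--Selberg integral representation, interpolated $p$-adically using Hida's theory of nearly ordinary families together with a Katz-style Eisenstein measure.

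I would first recall the classical integral representation of Shimura, extended to the Hilbert setting by Im, in which the twisted symmetric square $L$-value appears as
\begin{align*}
L(s,\mathrm{Sym}^2(\rho_{\boldf}),\eps) \sim \lla \boldf, \, \theta \cdot E(-,s,\eps)\rra
\end{align*}
for a suitable half-integral weight theta series and an Eisenstein series $E(-,s,\eps)$ depending on the twist $\eps$. Twisting $V$ by a finite order character $\eps$ of $1+p\Z_p$ corresponds to altering the Eisenstein series used in the integral. A Katz-type construction over the Hilbert modular Igusa tower yields a $p$-adic measure $\mu_{\mathrm{Eis}}$ on $1+p\Z_p$ whose moments realise $p$-stabilised versions of $E(-,s,\eps)$.

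The $p$-adic $L$-function is then obtained by pairing $\mu_{\mathrm{Eis}}$ against $\boldf$ through Hida's ordinary projector $e^{\mathrm{ord}}$ on a suitable space of (possibly half-integral weight) $p$-adic modular forms, producing a measure $\mu_V$ whose Mellin transform is $G(T,V) \in \oo[[T]]$. Matching the $p$-adic and archimedean integrals for each non-trivial $\eps$ extracts the constants $C_{\eps}$ as an explicit product of local zeta factors, archimedean Gamma factors and conductor contributions at $p$. At the trivial character, the discrepancy between the $p$-stabilised and the unstabilised Eisenstein series is precisely what produces the Euler-type factor $\calE(V)$; in the ordinary case it takes the form of a product of terms $1-\alpha_i p^{-n_i}$ indexed by the jumps of the ordinary filtration $\mathrm{Fil}^i V_p$.

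The main obstacle is twofold. First, one must choose the Deligne period $\Omega$ so that every $C_{\eps} L(V\otimes\eps, 0)/\Omega$ is not only algebraic but $p$-integral, which is what guarantees that the interpolating series actually lies in $\oo[[T]]$ rather than merely in $K[[T]]$; this normally forces one to build $\mu_V$ cohomologically (via modular symbols or overconvergent cohomology) rather than by a purely analytic pairing, in order to inherit integrality from the integral lattice in cohomology. Second, the precise identification of $\calE(V)$ as the expected product of Euler factors requires a careful local analysis at $p$ of the $p$-stabilisation procedure; in the Steinberg case treated in this paper, exactly one of those factors vanishes at $s=1$, which is the trivial zero that then forces one to pass to the derivative formula established in the body of the paper.
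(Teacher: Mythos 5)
You have correctly recognised that the statement is a conjecture (Coates--Perrin-Riou) which the paper does not prove in general, and your sketch of the special case $V=\mathrm{Sym}^2(\rho_{\boldf})$ follows the same broad route as Sections 6--7 of the paper: Shimura--Im integral representation against a theta series times an Eisenstein series, an Eisenstein measure whose moments give the relevant Eisenstein series, and Hida's ordinary projector together with the $p$-adic Petersson functional $l_{\lambda}$ to extract the $L$-value, with $E_1$ and $E_2$ (the paper's names for your $\calE(V)$) arising from $p$-stabilisation of the theta series and non-primitivity at $p$.

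Two points where your sketch diverges from what the paper actually does are worth flagging. First, the Eisenstein measure is not built Katz-style over the Igusa tower: the paper deliberately works with $q$-expansions, feeding the Deligne--Ribet $p$-adic $L$-functions of Hecke characters (Theorem \ref{Ribet}) into the Fourier coefficients of the Eisenstein series of Proposition \ref{Eisen_other2}; the geometric route is only mentioned as an alternative. Second, and more substantively, your proposed remedy for integrality --- building the distribution cohomologically so that $G(T,V)\in\oo[[T]]$ on the nose --- is not what happens here, and the paper does not in fact establish integrality in that strong form. The construction divides by the congruence number $H$ and by Iwasawa functions such as $\Delta$ and $\calE_2$, so the output a priori lies only in a total ring of fractions; what the paper proves (Appendices \ref{App} and \ref{AppB}) is holomorphy away from CM cases, and this is done not by exhibiting an integral lattice but by constructing a second $p$-adic $L$-function interpolating the complementary half of the critical values and playing the two off against each other via the complex functional equation of $\LL(s,\hat{\pi}(\boldf),\chi)$, checking that the candidate pole loci of the two sides are disjoint. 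Even then the result lands in $\oo[[X]]\left[\frac{1}{p}\right]$ rather than $\oo[[X]]$, so the $\oo[[T]]$-integrality asserted in the conjecture remains open even in this special case; your sketch should not present it as settled by the pairing construction.
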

A description of $\calE(V)$ when $V_p$ is semistable can be found in \cite[\S 0.1]{BenEZ}. \\
It is clear from the above interpolation formula that if one of the factors of $\calE(V)$ vanishes then the $p$-adic $L$-function does not give us any infomation about the special value $\frac{L(V,0)}{\Omega}$. In this case, we say that we have an ``exceptional'' or ``trivial'' zero. In \cite{TTT}, Greenberg proposes a conjecture which generalizes the well known conjecture of Mazur-Tate-Teitelbaum.
\begin{conj}\label{MainCo}
Let us denote by $g$ the number of factors of $\calE(V)$ which vanish. We have then 
\begin{align*}
G(T,V)=\Ll(V) \calE^*(V)\frac{L(V,0)}{\Omega} \frac{T^g}{g!} + (T^{g+1})
\end{align*}
where  $\calE^*(V)$ denotes the non-zero factor of $\calE(V)$ and $\Ll(V)$ is a non zero error term.
\end{conj}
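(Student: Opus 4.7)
The plan is to follow the Greenberg--Stevens strategy, developed originally for the Mazur--Tate--Teitelbaum conjecture for elliptic curves with multiplicative reduction, adapted here (via the Greenberg--Tilouine framework) to the symmetric square of a Hilbert modular form over a totally real field. The core idea is to deform $\boldf$ in a $p$-adic Hida family $\boldF$ of parallel weight and to build a two-variable $p$-adic $L$-function $L_p(\kappa,s)$, where $\kappa$ is the weight variable parameterizing the family and $s$ the cyclotomic variable; its specialization at $\kappa=2$ recovers the $p$-adic $L$-function of $\mathrm{Sym}^2 \boldf$ whose vanishing at $s=1$ is the object of study.

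The second step is to construct an ``improved'' one-variable $p$-adic $L$-function $L_p^*(\kappa)$, obtained from a different Rankin--Selberg type integral (pairing $\boldF$ with a suitable $p$-adic Eisenstein measure) along an arithmetic line through $(\kappa,s)=(2,1)$, engineered so that its interpolation formula is free of the vanishing Euler factor at $p$ responsible for the trivial zero. One then establishes a factorization of the form
\begin{align*}
L_p(\kappa, s(\kappa)) = \calE_p(\kappa) \cdot L_p^*(\kappa),
\end{align*}
where $\calE_p(\kappa)$ is a $p$-adic analytic function on weight space vanishing to order exactly one at $\kappa=2$, reflecting the single trivial Euler factor that disappears in the inert Steinberg case.

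The Greenberg--Stevens trick then converts the cyclotomic derivative into a weight derivative: differentiating at $(\kappa,s)=(2,1)$ and exploiting a two-variable symmetry (coming from the $p$-adic functional equation relating $L_p(\kappa,s)$ and $L_p(\kappa,\kappa-s)$, or an analogous identity produced by the construction), one expresses the derivative of the cyclotomic $p$-adic $L$-function at $s=1$ as $(d\calE_p/d\kappa)(2)\cdot L_p^*(2)$. The value $L_p^*(2)$ equals the expected $\calE^*(V)L(V,0)/\Omega$ by the improved interpolation formula, while the logarithmic derivative of $\calE_p$ at $\kappa=2$ is identified with Greenberg's arithmetic $\Ll$-invariant $\Ll(V)$ by a computation on the Galois representation carried by $\boldF$, using the ordinary filtration at $p$ to read off the invariant.

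The principal obstacle will be the construction of the improved $p$-adic $L$-function and the verification of the factorization in the inert Steinberg setting: over a totally real field in which $p$ is inert of residue degree $[F:\Q]$, the local Rankin--Selberg calculation at $p$ is substantially more delicate than in the split case treated by Greenberg--Tilouine, and matching archimedean and $p$-adic normalizations between the two-variable construction and the improved one-variable one requires careful local analysis. A closely related difficulty, secondary but essential, is showing that the analytic $\Ll$-invariant produced by the Hida-family derivative coincides with the Galois-cohomological $\Ll$-invariant attached to $\mathrm{Sym}^2 \boldf$, which demands a precise description of the tangent direction to the Hida family cut out by the ordinary filtration on $V_p$.
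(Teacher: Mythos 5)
Your proposal follows essentially the same route as the paper: a two-variable $p$-adic $L$-function $L_p(s,k)$ that vanishes identically along the line $s=k-2$ (here produced by the vanishing Euler factor in the interpolation formula rather than by a functional equation), an improved one-variable $p$-adic $L$-function built by replacing the convolution of measures with a product of a fixed theta series and an Eisenstein measure, the factorization $L_p(0,k)=\prod_{\pfrak\mid p}\bigl(1-\lambda(T(\varpi_\pfrak))^{-2}\bigr)L_p^{+}(k)$, and the identification of the weight-derivative of that Euler factor with Greenberg's $\Ll$-invariant via the Hida--Mok computation on the nearly ordinary family. The only caveat is that this argument establishes the conjecture just in the special case treated by the paper (parallel weight $2$, one prime above $p$, Steinberg at $p$, squarefree conductor divisible by the primes above $2$), not the general statement.
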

The factor $\Ll(V)$, usually called the $\Ll$-invariant, has conjecturally an interpretation in terms of the Galois cohomology of $V_p$.
At present, not many cases of these conjecture are known; up to the 90's, the conjecture was known essentialy when $V$ is a Dirichlet character \cite{FG} or an elliptic curve over $\Q$ with split multiplicative reduction at $p$ \cite{SSS} or the symmetric square of an elliptic curve with multiplicative reduction at $p$ (Greenberg and Tilouine, unpublished).\\
Recently, many people \cite{DDP,Mok,Spi1,Spi2} have obtained positive results on this conjecture in the case when $V$ is an induction from a totally real field of the Galois representation associated with a Hecke character or a modular elliptic curve; these cases can be seen both as an attempt to test the conjecture for higher rank $V$ and as a (highly non-trivial) generalization of the known cases. The natural next step is the case of the induction of the symmetric square representation associated with an elliptic curve over a totally real field;  it is the main subject of this paper.\\
Let $F$ be a totally real number field and $\rif$ its ring of integers, let $I$ be the set of its real embeddings. Let $\boldf$ be a Hilbert modular form of weight $k={(k_{\sigma})}_{\sigma \in I}$, $k_{\sigma} \geq 2$ for all $\sigma$. Let $K \subset \C_p$ be a $p$-adic field containing the Hecke eigenvalues of $\boldf$ and let $\oo$ be its valuation ring. It is well known \cite{BR,Tayl} that there exists a $2$-dimensional $p$-adic Galois representation 
\begin{align*}
\rho_{\boldf}: & \mathrm{Gal}(\overline{F}/F) \rightarrow \gl(\oo)
\end{align*}
associated with $\boldf$. Let $\mathrm{Sym}^2(\rho_{\boldf})$ be the symmetric square of $\rho_{\boldf}$, it is a $3$-dimensional Galois representation. For almost all primes $\qfrak$ of $\rif$, the Euler factor of $L(s,\mathrm{Sym}^2(\rho_{\boldf}))$ is
\begin{align*}
 (1 - {\alpha(\qfrak)}^{2}{\N(\qfrak)}^{-s})(1 - { \alpha \beta(\qfrak)}{\N(\qfrak)}^{-s})(1 - {\beta(\qfrak)}^2{\N(\qfrak)}^{-s})
\end{align*}  
where $\alpha(\qfrak)$ and $\beta(\qfrak)$ are the two roots of the Hecke polynomial of $\boldf$ at $\qfrak$.\\
We define similarly the twisted $L$-function $L(s,\mathrm{Sym}^2(\rho_{\boldf}),\chi)$ for $\chi$ a gr\"ossencharacter of $\Aa_F^{\times}$.
We shall write $L(s,\mathrm{Sym}^2(\boldf),\chi)=L(s,\mathrm{Sym}^2(\rho_{\boldf}),\chi)$. Let us denote by $k^0 = \mathrm{max}(k_{\sigma})$ and  $k_0 = \mathrm{min}(k_{\sigma})$; in \cite{Im}, Im proves Deligne's conjecture for this $L$-function, i.e. he shows for any integer $n$ in the critical strip $\left[k^0 - k_0 + 1, k^0 -1 \right]$  such that $\chi_{v}(-1)={(-1)}^{n+1}$ for all $v \mid \infty$ the existence of non-zero complex number $\Omega(\boldf,n)$ such that the ratio 
\begin{align*}
\frac{L(n,\mathrm{Sym}^2(\boldf),\chi)}{\Omega(\boldf,n)}
\end{align*}
is an algebraic number. This algebraic number will be called the special value of $L(s,\mathrm{Sym}^2(\boldf),\chi)$ at $n$.\\
We can make a ``good choice'' of the periods $\Omega(\boldf,n)$ such that the special values can be $p$-adically interpolated. Actually, there exists a  power series $G(T,\mathrm{Sym}^2(\boldf))$ in $\oo[[T]]$ constructed in \cite{Wu} and in  Section \ref{padicL} of the present work which satisfies the following interpolation formula
\begin{align*}
G(\eps(1+p)(1+p)^n-1,\mathrm{Sym}^2(\boldf)) = & C_{\eps,n} E(n,\eps,\boldf) \frac{L(n,\mathrm{Sym}^2(\boldf),\eps)}{\Omega(\boldf,n)}.
\end{align*}
for $n$ critical. Here $\eps$ is a finite order character of $1+p\Z_p$, $C_{\eps,n}$ is a non-zero explicit number, and $E(n,\eps,\boldf)$ is a product of  some Euler-type factors at primes above $p$ as predicted by the above conjecture. \\
Let us pose $L_p(s,\mathrm{Sym}^2(\boldf)):=  G(u^s -1,\mathrm{Sym}^2(\boldf))$. The trivial zeros occur when the Nebentypus of $\boldf$ is trivial at $p$ and $s=k^0-1$. In this case the number of vanishing factors is equal to the number of primes above $p$ in $F$. The problem of trivial zeros for the symmetric square has already been studied in great generality by Hida \cite{ASG,HIwa,HTate1}. Hida calculated the $\Ll$-invariant, defined according to the cohomological definition of Greenberg \cite{TTT}, in  \cite{HIwa}.  It is a hard problem to show that it is non-zero, but we know that it is ``frequently'' non-zero when $\boldf$ varies in a $p$-adic family which is Steinberg for all primes above $p$ except possibly one. \\
The main result of the paper is the following 
\begin{theo}\label{MainTh}
Let $p\geq 3$ be a prime such that there is only one prime ideal $\pfrak$ of $F$ above $p$ and let $\boldf$ be a Hilbert  cuspidal eigenform  of parallel weight $2$ and conductor $\Nfrak \pfrak$  such that the Nebentypus of $\boldf$ is trivial.  Suppose that $\Nfrak$ is squarefree and divisible by all the primes of $F$ above $2$; suppose moreover that $\pi(\boldf)_{\pfrak}$ is a Steinberg representation. Then the formula for the derivative in Conjecture \ref{MainCo} is true. \end{theo}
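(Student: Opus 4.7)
The plan follows the Greenberg--Stevens deformation strategy, adapted to symmetric square $p$-adic $L$-functions of Hilbert modular forms (and extending the unpublished strategy of Greenberg--Tilouine for the symmetric square of an elliptic curve over $\Q$). I would proceed in four steps.

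First, I would deform $\boldf$ into a parallel-weight Hida family $\boldF$ of ordinary $\pfrak$-new Hilbert eigenforms through $\boldf$, parametrized by a neighborhood $U$ of the weight $\kappa_0$ corresponding to parallel weight $2$. Building on the one-variable construction of Section \ref{padicL} (following \cite{Wu}), I would produce a two-variable $p$-adic $L$-function $L_p(\kappa,s,\mathrm{Sym}^2(\boldF))$ on $U\times\Z_p$ whose specialization at $\kappa=\kappa_0$ recovers $L_p(s,\mathrm{Sym}^2(\boldf))$. Concretely, this amounts to lifting the underlying Rankin--Selberg integral by replacing the Hilbert modular form, the half-integral weight theta series, and the Siegel--Eisenstein series entering the integral representation of $L(s,\mathrm{Sym}^2(\boldf),\chi)$ by their Hida-family counterparts.

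Second, near the trivial-zero locus I would prove an improvement identity of the form
\begin{align*}
L_p(\kappa,\kappa-1,\mathrm{Sym}^2(\boldF)) = \bigl(1-U_{\pfrak}(\kappa)^{-2}\N(\pfrak)^{\kappa-2}\bigr)\, L_p^{*}(\kappa,\mathrm{Sym}^2(\boldF)),
\end{align*}
where $U_{\pfrak}(\kappa)$ is the $U_{\pfrak}$-eigenvalue in the family and $L_p^{*}$ is an improved $p$-adic $L$-function built from the same integral but with the Eisenstein section at $\pfrak$ replaced by one that annihilates the problematic local zeta factor. An explicit local Rankin--Selberg computation at $\pfrak$ (Steinberg component against the modified section), together with an unchanged computation at all other places, would show that $L_p^{*}(\kappa_0) = C\cdot\calE^{*}(V)\,L(1,\mathrm{Sym}^2(\boldf))/\Omega(\boldf,1)$ for the expected explicit constant $C$.

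Third, I would identify the $\Ll$-invariant: by Hida's computation \cite{HIwa}, under our hypotheses (one prime $\pfrak$ above $p$, $\boldf$ Steinberg at $\pfrak$, parallel weight) Greenberg's Galois-cohomological $\Ll$-invariant of $\mathrm{Sym}^2(\boldf)$ is a constant multiple of $d\log U_{\pfrak}(\kappa)/d\kappa$ at $\kappa_0$, which matches exactly the $\kappa$-derivative at $\kappa_0$ of the Euler factor appearing in Step 2. Fourth, I conclude by the Greenberg--Stevens trick: differentiating the improvement identity at $\kappa_0$ converts the vanishing Euler factor into $\Ll(\mathrm{Sym}^2(\boldf))$ times the special value, while decomposing the diagonal derivative as the sum of the pure $\kappa$- and $s$-derivatives and using the functional equation of the two-variable symmetric square $L$-function to pin down the pure $\kappa$-derivative isolates $\frac{d}{ds}L_p(s,\mathrm{Sym}^2(\boldf))|_{s=1}$ and yields precisely the formula of Conjecture \ref{MainCo}.

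The main obstacle is Step 2: constructing $L_p^{*}$ and establishing the improvement identity. This requires an explicit Rankin--Selberg computation at $\pfrak$ with a Steinberg local component paired with a carefully chosen Eisenstein section, followed by a verification that the modified global integral interpolates in the Hida family and has the predicted value at $\kappa_0$. The hypotheses that $\Nfrak$ is squarefree and divisible by all primes above $2$ are used to keep the local automorphic data uniquely determined (so that newform theory at level $\Nfrak\pfrak$ applies), to make existing results on Hida families of Hilbert modular forms directly usable, and to guarantee that the auxiliary theta-lift entering the integral representation behaves as in the classical case.
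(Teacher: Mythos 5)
Your overall strategy is the one the paper follows --- a two-variable (weight, cyclotomic) $p$-adic $L$-function, an improved one-variable $p$-adic $L$-function absorbing the vanishing Euler factor, the Greenberg--Stevens exchange of derivatives, and the Hida--Mok identification of $\Ll(\mathrm{Sym}^2(\boldf))$ with a logarithmic derivative of $\lambda(T(\varpi_{\pfrak}))$ in the weight direction --- but the two key identities are set up on the wrong lines and do not close. Your improvement identity is asserted along the diagonal $s=\kappa-1$, which is precisely the trivial-zero locus: for every classical weight $\kappa>2$ the member $\boldf_{\kappa}$ of the Hida family is $p$-old, so the factor $E_2$ of Theorem \ref{T1} contains $(1-\N(\pfrak)^{m-s})$, which vanishes identically on $s=m=\kappa-2$; hence the left-hand side $L_p(\kappa,\kappa-1)$ is identically zero and an identity $0=(1-U_{\pfrak}(\kappa)^{-2}\N(\pfrak)^{\kappa-2})L_p^{*}(\kappa)$ would force $L_p^{*}\equiv 0$. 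The factorization must be taken along the transverse line where the cyclotomic variable is frozen at the trivial-zero point of $\boldf$ itself; this is Corollary \ref{Coroll}, $\Ll_p(Q_0,P)=\prod_{\pfrak\mid p}(1-\lambda(T(\varpi_{\pfrak}))^{-2})\,\Ll_p^{+}(P)$, with no power of $\N(\pfrak)^{\kappa-2}$. Relatedly, the functional equation cannot ``pin down the pure $\kappa$-derivative'': unlike the standard $L$-function, the trivial zero of the symmetric square at $s=\kappa-1$ is not the center of the functional equation (which reflects $s\mapsto 2\kappa-1-s$), so no vanishing line or derivative relation comes from it, and in the paper the functional equation is used only in the appendix to prove holomorphy. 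The correct bookkeeping is: (i) the identical vanishing on $s=\kappa-1$, read off the interpolation formula as above, gives $\partial_s L_p=-\partial_\kappa L_p$ there; (ii) differentiating the transverse factorization in $\kappa$ at $\kappa_0$ gives $\partial_\kappa L_p$ as $2\lambda'(T(\varpi_{\pfrak}))(2)\lambda(T(\varpi_{\pfrak}))(2)^{-3}$ times the nonvanishing special value, which Mok's formula converts into $-f_{\pfrak}\Ll(\mathrm{Sym}^2(\boldf))$ times that value.

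Two further points where your account diverges from what is actually needed. The Euler factor $E_1$ responsible for the trivial zero originates in the theta datum, not the Eisenstein datum: it is the price of convolving against a $p$-stabilized theta measure whose conductor need not be divisible by $p$ (formula (\ref{thetashift})), and the improved $\Ll_p^{+}$ is obtained by replacing the convolution of two measures by the product of a \emph{fixed} theta series of level prime to $p$ with an Eisenstein measure; so the local modification you single out as the main obstacle sits in the wrong factor of the integrand. Finally, the hypotheses on $\Nfrak$ are not there for newform theory or for the theta lift: divisibility by all primes above $2$ forces the auxiliary Euler factor $\calE_2$ at $2$, which occurs as a denominator in the construction of $\Ll_p$, to be identically $1$ and hence invertible at the point of interest, while squarefreeness together with trivial Nebentypus makes $\pi(\boldf)_{\qfrak}$ Steinberg at every bad prime, so that the imprimitive $L$-function actually interpolated coincides with the primitive one and no Euler factor outside $p$ can contribute an extra zero at $s=1$.
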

In the case where $\boldf$ corresponds to an elliptic curve, such a theorem has been claimed  in \cite{RosCR}. We were able to substitute the hypothesis {\it $p$ inert} by {\it there is only one prime ideal of $F$ above $p$} thanks to a clever remark of \'Eric Urban to whom we are very grateful.\\
In this particular case of multiplicative reduction, the $\Ll$-invariant depends only on the restriction of $V_p$ to $G_{\Q_p}$ and was already calculated in \cite{SSS}. By  the ``{\it Th\'eor\`eme de Saint-\'Etienne}'' \cite{Saint}, we know that $\Ll(\mathrm{Sym}^2(\boldf))$ is non-zero when $\boldf$ is associated with an elliptic curve. \\
In reality, we could weaken slightly the hypothesis on the conductor; see Section \ref{proofMT} for one example.\\

We point out that this theorem is a generalization of the theorem (unpublished) of Greenberg and Tilouine when $F=\Q$. In fact, our proof is the natural generalization of theirs to the totally real case.\\
Recently, again in the case $F=\Q$, Dasgupta has shown Conjecture \ref{MainCo} in the case the symmetric square of modular forms of any weights. He uses the strategy outlined in \cite{Citro} which suggests to factor the three variable $p$-adic $L$-function $L_p(k,l,s)$ associated with the Rankin product of a Hida family with itself \cite{H1bis} when $k=l$. To do this, he exploits recent results of Bertolini, Darmon and Rotger expressing the values at $s=l=k=2$ of $L_p(\eps_1 k,\eps_1 l,\eps_2 s)$ in term of complex and $p$-adic regulators of Beilinson-Flach elements (and circular units). Clearly at this moment such a method is almost impossible to generalize to an arbitrary  totally real field.\\
 
The proof of Theorem \ref{MainTh} follows the same line as in the work of Greenberg and Stevens for the standard $L$-function. \\
We view the $p$-adic $L$-function $L_p(s,\mathrm{Sym}^2(\boldf))$ as the specialization of a two-variable $p$-adic $L$-function $L_p(s,k)$. Such a function vanishes identically on the line $s=k-1$ so that we can relate the derivative with respect to $s$ to the one with respect to $k$. In order to evaluate the derivative with respect to $k$, we  find a factorization of $L_p(1,k)$ into two $p$-analytic functions, one which gives rise to the trivial zero (it corresponds to the product of some Euler factors at primes above $p$) and one which interpolates the complex $L$-value (the {\it improved } $p$-adic $L$-function in the terminology of Greenberg-Stevens). The construction of the improved $L$-functions is done as in \cite{HT} by substituting the convolution of two measures used in the construction of $L_p(s,m)$ by a product of a function and a measure.\\
The main limit of this approach is due to the fact that we are using only two variables and so, such a method can not go beyond the first derivative without a new idea. In fact, one can define  a $p$-adic $L$-function associated with the nearly ordinary family deforming $\boldf$ which has $d+2 + \delta$ variables; however, we can only find a factorization of this multi variable $p$-adic $L$-function only after restricting to the ordinary part. This is because the Euler factors which we remove from our $p$-adic $L$-functions are not $p$-adic analytic functions of the {\it nearly ordinary} variables. Moreover, this method works only when $\boldf$ is Steinberg at $p$ and such representation at $p$ in the nearly ordinary setting can appear only when the weight is parallel and equal to $2t$.\\
Still, we want to point out that in the case when $\boldf$ is Steinberg at all primes above $p$ we can show a formula similar to one in Conjecture \ref{MainCo} but for the derivative with respect to the weight. It seems quite an hard problem to relate this derivative to the one with respect to $s$.\\
As this method relies on Shimura's integral formula which relates the symmetric square $L$-function to a suitable Petersson product, it can be generalized to all the cases of finite slope families,  assuming similar hypothesis and the existence of the many variable $p$-adic $L$-function. In particular, thanks to the recent work of Urban \cite{UrbNholo} on families of nearly-overconvergent modular forms, this method can now be successfully applied in the case $F=\Q$. This is the main subject of \cite{RosOC}.\\
We point out that understanding the order of this zero and an exact formula for the derivative of the $p$-adic $L$-function has recently become more important after the work of Urban on the main conjecture for the symmetric square representation \cite{Urb} of an elliptic modular form. We shall explain how its proof can be completed with Theorem \ref{MainTh}.\\
We shall also extend the result of Theorem \ref{MainTh} to the base change of a Hilbert modular form as in Theorem \ref{MainTh}, providing examples of the conjecture for trivial zeros of higher order.\\

This paper is the result of the author's PhD thesis and would have never seen the light of the day without the guidance of Prof. Jacques Tilouine and his great helpfulness. The author would like to thank him very much. \\
The author would like to thank also Arno Kret for his great patience in answering all the author's questions and Prof. Ralph Greenberg, Prof. Haruzo Hida, Chung Pang Mok, Prof. Johannes Nicaise, Prof. \'Eric Urban and John Welliaveetil for useful discussions and suggestions. Part of this work has been written during a stay at the Hausdorff Institute during the {\it Arithmetic and Geometry} program, the author would like to thank the organizers and the institute for the support and the optimal working condition.\\

The structure of the paper is the following: in  Section \ref{Hilbertforms} we recall the theory of complex and $p$-adic modular Hilbert modular forms of integral weight and in Section \ref{HalfHilbert} we do the same for the half-integral weight, presenting our definition of $p$-adic modular form of half-integral weight. In Section \ref{LFun} we recall the definition of the $L$-function for the symmetric square of $\boldf$ and relate it to the Rankin product of $\boldf$ with a theta series.
In Section \ref{operators} we present some operators which will be used in the construction of $p$-adic $L$-function, and in Section \ref{p-measure} we recall briefly the theory of $p$-adic measures.
In Section \ref{padicL} we use the method of Hida and Wu to construct a many variable $p$-adic $L$-function for the symmetric square (relaxing also some of their hypotheses) and the {\it improved} one, while in Section \ref{proofMT} we prove Theorem \ref{MainTh}. In Section \ref{proofBC} we deal with the case of  base change. In Section \ref{AppMC} we explain how our result allows one to complete the proof of the main conjecture given by Urban.
Finally, in Appendix \ref{App} and \ref{AppB} we prove a theorem on the holomorphicity of the $p$-adic $L$-functions constructed in Section \ref{padicL}.


\section{Classical Hilbert modular forms}\label{Hilbertforms}
In this section we review the theory of complex and $p$-adic Hilbert modular forms.
\subsection{Complex Hilbert modular forms}\label{ComplexHilbert}
As our interest lies more in Hecke algebras than in modular forms {\it per se}, we look at adelic Hilbert modular forms. Let $F$ be a totally real field of degree $d$ and $\rif$ its ring of integers. Let $I$ be the set of its real embeddings. An element $w \in \Z[I]$ is called a weight. For two weights $k=\sum_{\sigma \in I} k_{\sigma} \sigma$ and $w=\sum_{\sigma \in I} w_{\sigma}$, we say that $k \geq w$ if $k_{\sigma} \geq w_{\sigma}$ for all $\sigma$ in $I$. In particular $k \geq 0$ if $k_{\sigma} \geq 0$ for all $\sigma$ in $I$. Define $t=\sum_{\sigma \in I} \sigma$.\\
Let $m \geq 0$ be an integer and $v$ in $\Z[I]$. Consider  a Hilbert  cuspidal eigenform $\boldf$ of weight $k = (m+2)t - 2v$ and define $k_0$ as the minimum of $k_{\sigma}$ for $\sigma$ in $I$. Let $\dfrak$ be the different of $F$ and $D_F=\N_{F/\Q}(\dfrak)$ the discriminant of $F$. By abuse of notation, we shall denote by $\dfrak$ also a fixed id\`ele which is $1$ at the primes which do not divide the different,  and is a generator of the local different at the primes dividing it. \\ 
We denote by $\fa$ the ad\`eles of $F$, and we factor $\fa=F_f  \times F_{\infty}$ into its finite component and infinite  component. For $y$ in $\fa$, we denote by $y_f$ resp. $y_{\infty}$ the projection of $y$ to $F_f$ resp. $F_{\infty}$.\\
Let $\gl$ be the algebraic group, defined over $F$, of $2 \times 2$ invertible matrices. For $y$ in $\gl(\fa)$, we denote by $y_f$, resp. $y_{\infty}$, the projection of $y$ to $\gl(F_f)$ resp. $\gl(F_{\infty})$.\\
Let $S$ be a compact open subgroup of $\gl(\faf)$ and let $\gl(F_{\infty})^+$ be the connected component of $\gl(\fai)$ containing the identity. Let $\h$ be the upper half plane then $\gl(F_{\infty})^+$ acts on $\h^I$ by linear fractional transformation. Let  $C_{\infty +}$ be the stabilizer of the point $z_0=(i,\ldots,i)$ in $\h^I$ under this action.\\
Fix two elements $k$ and $w$ of $\Z[I]$ such that $k \geq 0$. Let $t=\sum_{\sigma \in I} \sigma$. We say that an element of $\Z[I]$ is parallel if it is of the form $m t$ with $m$ an integer.\\
We use the multi-index notation; for $x$ in $F$ and $k=\sum_{\sigma \in I} k_{\sigma} \sigma$, $z^k$ will denote $\prod_{\sigma \in I} \sigma(z)^{k_{\sigma}}$. \\
We define $\boldM_{k,w}(S,\C)$ as the space of complex-valued functions on $\gl(\fa)$,  holomorphic on $\gl(\fai)$ such that
$$ \boldf (\alpha x u) = \boldf(x){j_{k,w}(u_{\infty},z_0)}^{-1} \mbox{ for } \alpha \in \gl(F) \mbox{ and } u \in S C_{\infty +}, $$
where $j_{k,w}\left( \left( \begin{array}{cc} a & b \\ c & d \end{array}\right) ,z \right) = {(ad-bc)}^{-w}{(cz+d)}^k$, under the multi-index convention.

We say that $\boldf$ is cuspidal if 
\begin{align*}
  \int_{F \setminus \fa} f(ga)\textup{d} a = 0 & \mbox{ for all } g \in \gl(\fa);
\end{align*}
we shall denote by  $\boldS_{k,w}(S,\C)$ the subset of cuspidal modular forms. \\
Fix a pair $(n,v)$ such that $k = n + 2t$ and $w = t - v$, and suppose that $n + 2v=mt$ (otherwise there are no non-zero Hilbert modular forms).
The choice of $v \in \Z[I]$ guarantees that the Fourier coefficients of $\boldf$, defined below, belong to a number field.\\
For an element $k \in \Q[I]$, we will denote by $[k]$ the integer such that $k=[k]t$ if $k$ is parallel or $0$ otherwise.\\
We define some congruence subgroups of $\gl(\fa)$. Let $\hat{\rif}$ be the profinite completion of  $\rif$, then let 
\begin{eqnarray*}
\tilde{U}_0(\Nfrak) = & \left\{ \left( \begin{array}{cc} a & b \\ c & d \end{array}\right) \in \gl(\hat{\rif}) | c \in \Nfrak \hat{\rif} \right\},\\
\tilde{U}_0(\Nfrak,\Mfrak) = & \left\{ \left( \begin{array}{cc} a & b \\ c & d \end{array}\right) \in U_0(\Nfrak) | b\in \Mfrak \hat{\rif} \right\}, \\
\tilde{V}_1(\Nfrak) = & \left\{ \left( \begin{array}{cc} a & b \\ c & d \end{array}\right) \in U_0(\Nfrak) | d \equiv 1 \bmod \Nfrak \hat{\rif} \right\},\\
\tilde{U}(\Nfrak) = & \left\{ \left( \begin{array}{cc} a & b \\ c & d \end{array}\right) \in V_1(\Nfrak) | a \equiv 1 \bmod \Nfrak \hat{\rif} \right\},\\
\tilde{U}(\Nfrak,\Mfrak) = & \left\{ \left( \begin{array}{cc} a & b \\ c & d \end{array}\right) \in U(\Nfrak) | b\in \Mfrak \hat{\rif} \mbox{ and } a \equiv d \equiv 1 \bmod \Nfrak\Mfrak \hat{\rif} \right\}
\end{eqnarray*}
for two integral ideals $\Nfrak,\Mfrak$ of $\rif$. The subgroup $\tilde{U}(\Nfrak,\Mfrak)$ is isomorphic to $\tilde{U}(\Nfrak\Mfrak)$ and we will use it in the following to simplify some calculations in the construction of the $p$-adic $L$-function.\\

We say that $\tilde{S}$ is congruence subgroup if $\tilde{S}$ contains $\tilde{U}(\Nfrak)$ for a certain ideal $\Nfrak$. For such a $\tilde{S}$,  we pose $\mathrm{det}\tilde{S}= \lgr \mathrm{det}(s) | s \in  \tilde{S} \rgr \subset \faft$, and decompose the id\`eles into 

\begin{align}\label{classdecomp}
 \fa^{\times} = & \bigcup_{i=1}^{h(S)} F^{\times}a_i \mathrm{det}\tilde{S}_F F_{\infty +}^{\times}. 
\end{align}
Here $h(S) $ is a positive integer and  $\lgr a_i \rgr$ is a set of id\`eles such that the $\lgr \afrak_i = a_i\rif \rgr$ form a set of representatives. We can and we will choose each $a_i$ such that $a_{i,p}=1$ and $a_{i,\infty}=1$. When $\tilde{S}=\tilde{U}(\Nfrak)$, we will use the notation $h(\Nfrak)$ for $h(S)$, as in this case $h(S)$ is the strict class number of $F$ of level $\Nfrak$. 
Define now 

$$ E = \left\{ \eps \in \rif^{\times} | \eps \gg 0 \right\}, \: \: 
\rif^{\times}(\Nfrak) = \left\{ \eps \in \rif^{\times} | \eps \equiv 1 \bmod \Nfrak \right\}, $$
$$E(\Nfrak) = E \cap \rif^{\times}(\Nfrak) \mbox{ and }  $$
$$\tilde{\G}[\Nfrak, \afrak] = \left\{  
\left( \begin{array}{cc} 
a & b \\ 
c & d 
\end{array}\right) \in  
\left( \begin{array}{cc} 
\rif     & \afrak \\
\Nfrak  & \rif 
 \end{array} \right) 
| ad-bc \in E,  \right\}.$$
 
In this way we can associate to each adelic form $\boldf$ for $\tilde{U}(\Nfrak)$ exactly $h(\Nfrak)$ complex Hilbert modular forms,
$$ \boldf_i(z) = j_{k,w}(u_{\infty},z)\boldf(t_i u_{\infty})=y_{\infty}^{-w}\boldf\left(t_i \left( \begin{array}{cc} y_{\infty} & x_{\infty} \\ 0 & 1 \end{array}\right) \right)   $$
where $u_{\infty} $ is such that $u_{\infty}(z_0)=z$. Each $\boldf_i$ is an element of $\boldM_{k,w}(\tilde{\G}^{\equiv}[\Nfrak\afrak_i,\afrak_i^{-1}])$, where 
\begin{align*}
 \tilde{\G}^{\equiv}[\Nfrak\afrak_i,\afrak_i^{-1}] = \left\{  
\left( \begin{array}{cc} 
a & b \\ 
c & d 
\end{array}\right) \in  \tilde{\G}[\Nfrak, \afrak]
| a \equiv d \bmod \Nfrak,  \right\}.
\end{align*}
This allows us to define the following isomorphism
$$
\begin{array}{cccc}
\I^{-1}_{\Nfrak} : & \boldM_{k,v}(U(\Nfrak),\C) & \rightarrow & \bigoplus_{i} \boldM_{k,v}(\tilde{\G}^{\equiv}[\Nfrak\afrak_i,\afrak_i^{-1}],\C) \\
& \boldf & \mapsto & (\boldf_1, \ldots, \boldf_{h(\Nfrak)})
\end{array}
$$
We point out that each $\boldf_i(z)$ is a Hilbert modular form on a different connected component of the Shimura variety associated with $\tilde{U}(\Nfrak)$.\\
Each $\boldf_i(z)$ has a Fourier expansion
$$ \boldf_i(z)= a(0,\boldf_i) + \sum_{0 \ll \xi \in \afrak\dfrak^{-1}} a(\xi,\boldf_i)\bexp_F(\xi z). $$
As in the classical case, $\boldf$ is cuspidal if and only if  $a(0,\boldf_i| \gamma)=0$ for $i=1, \ldots, h(\Nfrak)$ and all the matrices $\alpha \in \gl(F_{\infty})^+$.\\ 
Let $K_0$ be a finite extension of the Galois closure of $F$, and suppose that in its ring of integers all the integral ideals of $F$ are principal. We choose a compatible system of generators $\lgr y^{\sigma}\rgr$ for all id\`eles $y$ as in \cite[\S 3 ]{Hr1}. Let $K$ be the $p$-adic completion of $K_0$ with respect to the embedding of $\overline{\Q}$ in $\mathbb{C}_p$ fixed in the introduction. When we will be working with coefficients in $K$, we will suppose $\lgr y \rgr =1$ if $y$ is prime to $p$.\\
We define now functions on $F_{\mathbb{A}+}^{\times}$; let $y$ be an integral  id\`ele,  by the  decomposition given in (\ref{classdecomp}) write $y=\xi a_i \dfrak u$, for $u$ in $U_F(N)F_{\infty +}^{\times}$, and define 
\begin{align*}
\bolda(y,\boldf)& = a(\xi,\boldf_i)\lgr y^{-v}\rgr \xi^v |a_i|_{\mathbb{A}}^{-1},\\ 
\bolda_p(y,\boldf)& = a(\xi,\boldf_i) y_p^{-v} \xi^v {\mathcal{N}_p(a_i)}^{-1}
\end{align*}
and $0$ if $y$ is not integral. Here $\mathcal{N}_p$ is the cyclotomic character such that $\mathcal{N}_p(y)=y_p^{-t}{|y_f|}^{-1}_{\mathbb{A}}$. If the infinity-part and the $p$-part of $a_i$ are 1 as we chose before, then 
$$\bolda_p(y,\boldf) = \bolda(y,\boldf) \lgr y^{v}\rgr y_p^{-v}. $$
Multiplication by $\xi^v$ is necessary to ensure that these functions are independent of the choice of the decomposition of $y$. We define also the constant term
$$\bolda_0(y,\boldf)=a(0,\boldf_i)|a_i|_{\mathbb{A}}^{1-[v]}, \bolda_{0,p}(y,\boldf)=\bolda_0(y,\boldf){\cycl_p(y\dfrak^{-1})}^{[v]}, $$
and we can now state a proposition on the Fourier expansion of $\boldf$.
\begin{prop}\cite[Theorem 1.1]{Hr3}
Let  $\boldf$ be an Hilbert modular form of level $ \tilde{V}_1(\Nfrak)$, then $\boldf$ has a Fourier expansion of the form
\begin{align}\label{FourierHida}
\boldf\left( \left( \begin{array}{cc} y  & x  \\ 0 & 1 \end{array}\right) \right) = & |y|_{\mathbb{A}} \lgr \bolda_0(y\dfrak,\boldf)|y|_{\mathbb{A}}^{-[v]} +\right. \notag\\
 & \left. + \sum_{ 0 \ll \xi \in F^{\times}} \bolda(\xi \dfrak y,\boldf){\lgr \xi \dfrak y \rgr}^{v}{( \xi y_{\infty})}^{-v}\bexp_{\infty}(i\xi y_{\infty})\bexp_F(\xi x) \rgr .
\end{align}
and by formal substitution we obtain a $p$-adic $q$-expansion 
\begin{align*}
 \boldf = & {\cycl_p(y)}^{-1}\lgr \bolda_{0,p}(y\dfrak,\boldf)|y|_{\mathbb{A}}^{-[v]} + \sum_{0 \ll \xi \in F^{\times}} \bolda_p(\xi \dfrak y,\boldf)q^{\xi} \rgr.
\end{align*}
Here $\bexp_F$ is the standard additive character of $\fa / F$ such that $\bexp_{\infty}(x_\infty)=\exp(2 \pi i x_{\infty})$.
\end{prop}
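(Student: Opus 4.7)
The plan is to reduce the adelic Fourier expansion to the classical Fourier expansions of the components $\boldf_i$ on each connected component of the Shimura variety, and then translate back using the normalized coefficients $\bolda(y,\boldf)$ and $\bolda_0(y,\boldf)$. The key inputs are the class decomposition (\ref{classdecomp}), the isomorphism $\I^{-1}_{\Nfrak}$, the modular transformation property, and strong approximation for $\gl$.

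First I would take $y \in \faft$ with archimedean part totally positive, and use (\ref{classdecomp}) to write $y = \xi a_i \dfrak u$ with $\xi \in F^{\times}$ and $u \in U_F(\Nfrak) F_{\infty +}^{\times}$ (where $U_F(\Nfrak) = \det \tilde V_1(\Nfrak)$). Using strong approximation for the upper-triangular Borel, I would decompose
\[
\begin{pmatrix} y & x \\ 0 & 1 \end{pmatrix} = \alpha \cdot t_i \cdot u_f \cdot u_{\infty},
\]
with $\alpha \in \gl(F)$, $u_f \in \tilde{V}_1(\Nfrak)$, and $u_{\infty} \in \gl(F_{\infty})^{+}$ sending $z_0$ to a point $z \in \h^I$ depending on $(x_\infty, y_\infty)$. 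Applying $\boldf(\alpha g u_f u_\infty) = \boldf(g) j_{k,w}(u_\infty, z_0)^{-1}$ and the very definition $\boldf_i(z) = j_{k,w}(u_\infty, z_0) \boldf(t_i u_\infty)$, the left-hand side collapses to a value of $\boldf_i$ at the archimedean point $z = x_\infty + i y_\infty$ times an automorphy factor.

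Next I would substitute the classical Fourier expansion
\[
\boldf_i(z) = a(0,\boldf_i) + \sum_{0 \ll \xi \in \afrak_i \dfrak^{-1}} a(\xi,\boldf_i)\bexp_F(\xi z),
\]
which is available because $\boldf_i \in \boldM_{k,w}(\tilde{\G}^{\equiv}[\Nfrak \afrak_i, \afrak_i^{-1}])$ and is cuspidal (or not) in the usual sense. Unpacking $\bexp_F(\xi z) = \bexp_F(\xi x)\bexp_\infty(i \xi y_\infty)$, collecting the automorphy factor $y_\infty^{-w} = y_\infty^{v-t}$, and inserting the normalizations $\bolda(\xi \dfrak y,\boldf) = a(\xi,\boldf_i)\lgr (\xi \dfrak y)^{-v}\rgr (\xi)^v |a_i|_{\mathbb{A}}^{-1}$ and $\bolda_0(y\dfrak,\boldf) = a(0,\boldf_i)|a_i|_{\mathbb{A}}^{1-[v]}$ yields the stated formula, once one notes that $|y|_{\mathbb{A}} = |a_i|_{\mathbb{A}} y_\infty^{t}$. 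The $p$-adic $q$-expansion then follows purely formally from the relations $\bolda_p(y,\boldf) = \bolda(y,\boldf)\lgr y^v\rgr y_p^{-v}$ and $\bolda_{0,p}(y,\boldf) = \bolda_0(y,\boldf)\cycl_p(y\dfrak^{-1})^{[v]}$, together with the convention that $\bexp_\infty(i\xi y_\infty)$ is replaced by the formal symbol $q^\xi$ and $y_\infty^{-v}$ by $\lgr y\rgr^{-v} y_p^{v} \cycl_p(y)^{-t}$ on the $p$-adic side.

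The main obstacle is bookkeeping rather than conceptual: one must verify that the twist by $\xi^v$ in the definition of $\bolda$ precisely cancels the ambiguity in the choice of decomposition $y = \xi a_i \dfrak u$ (so that $\bolda(y,\boldf)$ depends only on $y$, not on the choices of $\xi$ and $u$), and that the multi-index exponents $v$, $w = t-v$, and $k = n+2t$ combine correctly with the idelic absolute values $|\cdot|_{\mathbb{A}}$, the generators $\lgr \cdot \rgr$, and the automorphy factor $j_{k,w}$. The necessary consistency follows from $U_F(\Nfrak) \cap F^\times = E(\Nfrak)$, since for $\eps \in E(\Nfrak)$ one has $\eps^v \lgr \eps^{-v}\rgr = 1$, which is precisely what makes $\bolda(y,\boldf)$ well-defined and matches Hida's formula in \cite{Hr3}.
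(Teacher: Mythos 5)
This proposition is stated in the paper purely as a citation of \cite[Theorem 1.1]{Hr3} and is not proved there, so the only comparison available is with Hida's own argument, which your sketch reconstructs faithfully: reduce to the classical forms $\boldf_i$ via the class decomposition (\ref{classdecomp}) and strong approximation for the Borel, substitute their classical Fourier expansions, and absorb the automorphy factor and the factors $|a_i|_{\mathbb{A}}$, $\xi^v$, $\lgr\cdot\rgr^{v}$ into the normalized coefficients $\bolda$, $\bolda_0$, $\bolda_p$. Your proposal is correct and is essentially the same approach; the one point worth tightening is the well-definedness check at the end, where the invariance under replacing $\xi$ by $\eps\xi$ (for $\eps\in E(\Nfrak)$) comes from the transformation $a(\eps\xi,\boldf_i)=\eps^{-v}a(\xi,\boldf_i)$ combined with the factor $\xi^{v}$, rather than from $\eps^{v}\lgr\eps^{-v}\rgr=1$ with $y$ held fixed.
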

We define 
\begin{align*} \boldf^u (x) = {|D_F|}^{-(m/2)-1}\overline{\boldf^{\rho}}(x)j(x_{\infty},z_0)^{k\rho} |det(x)|^{m/2}_{\mathbb{A}}; \end{align*}
$\boldf^u$ is the unitarization of (the complex conjugate of) $\boldf$. If $\boldf$ is of weight $(k,w)$, $\boldf^u$ is of weight $(k,k/2)$.\\
For arithmetic applications, we define the normalized Fourier coefficient as 
$$ C(\afrak,\boldf) = {\xi}^{-k/2}\bolda(\xi,\boldf^u), $$ where  we have $\afrak=\xi a_i^{-1}$. 
From \cite[(1.3 a)(4.3 b)]{Hr3} we have the relation
\begin{equation}\label{CoeffAr} 
C(\afrak,\boldf) =  a(\xi,\boldf_i) {\N(\afrak)}^{-m/2 -1} \xi^{v}|a_i|_{\Aa}^{-1} . 
\end{equation} 
From now on, following Shimura, we conjugate all the previous subgroups by $\left( \begin{array}{cc}\dfrak & 0 \\ 0 & 1 \end{array}\right)$.
The conjugated groups will be denoted by the same symbol without the tilde. The advantage of this choice is that the Fourier expansion of a Hilbert modular form $\boldf$ for ${V}_1(\Nfrak)$ is now indexed by totally positive elements in $\rif $ instead of totally positive elements belonging to the fractional ideal $\dfrak^{-1} $.

\subsection{The $p$-adic theory}\label{ptheory}
In \cite{Hr1,Hr2,HPEL} Hida develops the theory of nearly ordinary modular forms.  It is constructed via the duality  between modular forms and  their associated Hecke algebra.
Let $v \geq 0$ and $ k>0$ be a couple of weights and $m \geq 0$ an integer such that  $mt= k -2t -2v$; let $w=t-v$. Let  $S$ be a compact-open subgroup of $\gl(\hat{\rif})$, $U_0(\Nfrak) \supset S \supset U(\Nfrak)$. We suppose $\Nfrak$ prime to $p$.\\
Let $K$ be a $p$-adic field containing the Galois closure of $F$ and $\oo$ its ring of integers. For all integral domain $A$ which are $\oo$-algebra  we define :
\begin{align*}
\boldM_{k,w}(S,A) = & \lgr \boldf \in \boldM_{k,w}(S,\C) \mid \bolda_0(y,\boldf), \bolda(y,\boldf) \in A \rgr, \\
\boldS_{k,w}(S,A) = & \lgr \boldf \in \boldS_{k,w}(S,\C) \mid  \bolda(y,\boldf) \in A \rgr, \\
\boldm_{k,w}(S,A) = & \lgr \boldf \in \boldM_{k,w}(S,\C) \mid \bolda(y,\boldf) \in A \rgr .
\end{align*}

For all $ y \in \faft$, we define the Hecke operator $T(y)$ as in \cite[\S 3]{Hr1} and we pose $T_0(y)=\lgr y^{-v} \rgr T(y)$. $T(y)$ is an operator on $\boldM_{k,w}(S,A)$ and $\boldm_{k,w}(S,A)$ and, if $\lgr y^{-v} \rgr$ belongs to $A$ for all $y$, then $T_0(y)$ acts on both spaces. Furthermore, we  define $T(a,b)$ as in \cite[\S 2]{Hr3}. \\
We define the Hecke algebra $\boldH_{k,w}(S,A)$ (resp. $\boldh_{k,w}(S,A)$) as the sub-algebra of $\mbox{End}_A(\boldM_{k,w}(S,A))$ (resp. $\mbox{End}_A(\boldS_{k,w}(S,A))$) generated by $T(y)$ and $T(a,b)$.\\
For a positive integer $\alpha$, we pose $ S(p^{\alpha})= S \cap U(p^{\alpha})$ and define:

\begin{align*}
\boldM_{k,w}(S(p^{\infty}),A) = & \varinjlim_{\alpha } \boldM_{k,w}(S(p^{\alpha}),A),\\
\boldS_{k,w}(S(p^{\infty}),A) = & \varinjlim_{\alpha } \boldS_{k,w}(S(p^{\alpha}),A),\\
\boldm_{k,w}(S(p^{\infty}),A) = & \varinjlim_{\alpha } \boldm_{k,w}(S(p^{\alpha}),A),\\
\boldh_{k,w}(S(p^{\infty}),A) = & \varprojlim_{\alpha} \boldh_{k,w}(S(p^{\alpha}),A),\\
\boldH_{k,w}(S(p^{\infty}),A) = & \varprojlim_{\alpha} \boldH_{k,w}(S(p^{\alpha}),A).
\end{align*}

Hida shows in \cite[Theorem 2.3]{Hr2} that when $k\geq 2t$ we have an isomorphism $\boldh_{k,w}(S(p^{\infty}),A)\cong \boldh_{2t,t}(S(p^{\infty}),A)$ which sends $T_0(y)$ into $T_0(y)$. Let $\boldh_{k,w}^{\mathrm{n.ord}}(S(p^{\infty}),A)$ be the direct summand of $\boldh_{k,w}(S(p^{\infty}),A)$ where $T_0(p)$ acts as a unit. We will drop the weight from the notation in what follows. \\
 We let $e=\lim_n T_0(p)^{n!}$ be the idempotent of  $\boldh(S(p^{\infty}),A)$ which defines $ \boldh^{\mathrm{n.ord}}(S(p^{\infty}),A) $.\\ 
Let $\boldS^{\mathrm{n.ord}}(S(p^{\infty}),A)$ be defined as  $e \overline{\boldS}_{k,w}(S(p^{\infty}),A)$, where the line denotes the completion  with respect to the $p$-adic topology on the function $\bolda_p(y,\boldf)$. This completion is independent of $(k,w)$ as shown in \cite[Theorem 3.1]{Hr3}. \\
Let now $S_0(p^{\alpha})=S\cap U_0(p^{\alpha})$. We define 
\begin{align*}
 \boldG^{\alpha} = & S(p^{\alpha})\rif^{\times} / S_0(p^{\alpha})\rif^{\times}, \\
 \boldG =  & \varprojlim_{\alpha} \boldG^{\alpha}.
\end{align*}
Let $S_F$  (resp. ${S(p^{\alpha})}_F$) be $S \cap \faft$ (resp. $S(p^{\alpha}) \cap \faft$) and 
\begin{align*}
\mathrm{Cl}_S(p^{\infty})= & \varprojlim_{\alpha} S_F\rif^{\times} / {S(p^{\alpha})}_F\rif^{\times}.
\end{align*}
From \cite[Lemma 2.1]{Hr2} we have an isomorphism $\boldG \cong \rif_p^{\times} \times  \mathrm{Cl}_S(p^{\infty})$ via the map 
\begin{align}\label{GinGL2}
 S \ni s =  \left( \begin{array}{cc} 
a & b \\ 
c & d 
\end{array}\right) \mapsto (a_p^{-1}d_p,a) .
\end{align}
If $S=U(\Nfrak)$, then $\mathrm{Cl}_S(p^{\infty})=\mathrm{Cl}_{\Nfrak}( p^{\infty})$; in general, $\mathrm{Cl}_S(p^{\infty})$ is always a subgroup of $\mathrm{Cl}_\Nfrak (p^{\infty})$ of finite index. \\ 
We have an action of $\boldG^{\alpha}$  on $\boldM_{k,w}(S(p^{\alpha}),A)$; the couple $(a,z)$ acts via $T(z,z)T(a^{-1},1)$. In the following, we will denote $T(z,z)$ by $\lla z \rra$. 
For two characters $\psi$, $\psi'$ of finite order modulo $\Nfrak p^{\alpha}$, we define  $\boldS_{k,w}^{\mathrm{n.ord}}(S(p^{\alpha}),\psi,\psi';\oo)$ as the forms on which $(a,z)$ acts via $\psi(z) \N(z)^m\psi'(a)a^v $ and $\boldh_{k,w}^{\mathrm{n.ord}}(S(p^{\alpha}),\psi,\psi';\oo)$ in the obvious way.\\
We let $\boldW$ be the torsion-free part of $\boldG$. Let $\LL$ be the Iwasawa algebras of $\boldW$; we have that  $\boldh^{\mathrm{n.ord}}(S(p^{\infty}),A)$ is a module over $\LL$ and $\oo[[\boldG]]$. We say that a $\oo$-linear morphism $P: \oo[[\boldG]] \rightarrow \oo$ is arithmetic of type $(m,v,\psi,\psi')$ if $P(z,a)= \psi(z) \N(z)^m\psi'(a)a^v$ for $m \geq 0$ and $\psi$, $\psi'$ finite order characters of $\boldG^{\alpha}$.\\
We have the following theorem, which subsumes several results of Hida \cite[Theorem 2.3, 2.4]{Hr2}, \cite[Corollary 3.3]{Hr3}, \cite[\S 4]{HPEL} 
\begin{theo}\label{controlThm}
Let $S$ as above, then $\boldh^{\mathrm{n.ord}}(S(p^{\infty}),A)$ is torsion-free over $\LL$, and  for all points $P$ of type $(m,v,\psi,\psi')$ we have 
\begin{align*}
 \boldh^{\mathrm{n.ord}}(S(p^{\infty}),\oo)_P / P  \boldh^{\mathrm{n.ord}}(S(p^{\infty}),\oo)_P \cong &   \boldh_{k,w}^{\mathrm{n.ord}}(S(p^{\alpha}),\psi,\psi';\oo).
\end{align*}
Moreover, there is a duality as $\oo$-module between $\overline{\boldS}_{k,w}(S(p^{\infty}),\oo)$ (resp. $\overline{\boldm}_{k,w}(S(p^{\infty}),\oo)$) and $\boldh(S(p^{\infty}),\oo)$ (resp. $\boldH(S(p^{\infty}),\oo)$).
When $S=U(\Nfrak) $, then for all points $P$ of type $(m,v,\psi,\psi')$ we have also 
\begin{align*}
 \boldS^{\mathrm{n.ord}}(S(p^{\infty}),\oo)[P] \cong &   \boldS_{k,w}^{\mathrm{n.ord}}(S(p^{\alpha}),\psi,\psi';\oo).
\end{align*}
If moreover $p \nmid 6D_F$, then $\boldh^{\mathrm{n.ord}}(U(\Nfrak)(p^{\infty}),\oo)$ is free over $\oo[[\boldW]]$.
\end{theo}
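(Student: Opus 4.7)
The statement assembles several control theorems of Hida from \cite{Hr2,Hr3,HPEL}, so the plan is to identify the correct reference for each clause and explain how they fit together. The organizing principle is the canonical weight-independent isomorphism $\boldh_{k,w}^{\mathrm{n.ord}}(S(p^{\infty}),\oo)\cong \boldh_{2t,t}^{\mathrm{n.ord}}(S(p^{\infty}),\oo)$ sending $T_0(y)\mapsto T_0(y)$ established in \cite[Thm.~2.3]{Hr2}; this intertwines the $\LL$-action, so every structural assertion at arbitrary $(k,w)$ reduces to the weight-$(2t,t)$ case in which Hida's results are formulated. Once that reduction is made, the different clauses of the theorem correspond to different facets of the big Hecke algebra, and I would treat them in the order: duality, control, torsion-freeness, freeness.

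For the duality statement, I would first recall that at each finite level the pairing $(\boldf,T)\mapsto \bolda_p(1, T\boldf)$ between $\overline{\boldS}_{k,w}(S(p^{\alpha}),\oo)$ and $\boldh_{k,w}(S(p^{\alpha}),\oo)$ is perfect as an $\oo$-module pairing, by the standard argument using Hecke eigenvalues equal to Fourier coefficients. Taking the inductive limit on the form side and the projective limit on the Hecke side, then applying the idempotent $e$, yields the claimed nearly ordinary duality for $\overline{\boldS}$ against $\boldh$; the case of $\overline{\boldm}$ against $\boldH$ is the same argument without the cuspidal projection.

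For the control theorem on the Hecke side, a point $P$ of type $(m,v,\psi,\psi')$ picks out the weight $(k,w) = ((m+2)t-2v, t-v)$ together with the $(\psi,\psi')$-isotypic component for the diamond operators of level $S(p^{\alpha})$. The surjectivity of $\boldh^{\mathrm{n.ord}}(S(p^{\infty}),\oo)_P/P \to \boldh_{k,w}^{\mathrm{n.ord}}(S(p^{\alpha}),\psi,\psi';\oo)$ is clear by construction, and injectivity (i.e.\ the vanishing of the kernel) is the content of \cite[\S 4]{HPEL} combined with \cite[Thm.~2.3]{Hr2}. Dualizing this via the perfect pairing of the previous paragraph, and using that $U(\Nfrak)$ is small enough to allow the cleanest form of the duality at infinite level, produces the $[P]$-torsion identification for $\boldS^{\mathrm{n.ord}}$. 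Torsion-freeness of $\boldh^{\mathrm{n.ord}}(S(p^{\infty}),\oo)$ over $\LL$ then follows from the $\LL$-cofreeness (equivalently: the divisibility) of the space of $\LL$-adic nearly ordinary cusp forms appearing on the other side of the pairing, which is \cite[Thm.~2.4]{Hr2}.

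The final assertion, freeness of $\boldh^{\mathrm{n.ord}}(U(\Nfrak)(p^{\infty}),\oo)$ over $\oo[[\boldW]]$ under $p\nmid 6D_F$, is \cite[Cor.~3.3]{Hr3}: the hypothesis $p\nmid D_F$ ensures that $p$ is unramified in $F$, so that $\rif_p^{\times}$ has a clean decomposition as torsion times torsion-free, while $p\nmid 6$ removes the small torsion from the global units and from the finite part of $\boldG$, giving a splitting $\boldG\cong \boldW\times (\text{finite})$ along which freeness descends from $\oo[[\boldG]]$ to $\oo[[\boldW]]$. I expect the main obstacle to be not conceptual but bookkeeping: keeping track of the nested subgroups $S$, $S(p^{\alpha})$, $S_0(p^{\alpha})$ and of the action of $E(\Nfrak)$ on $\mathrm{Cl}_S(p^{\infty})$ so that all specialization and limit operations are compatible, and so that the identification \eqref{GinGL2} of $\boldG$ with $\rif_p^{\times}\times\mathrm{Cl}_S(p^{\infty})$ is used consistently on both the form side and the Hecke side.
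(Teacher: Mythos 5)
Your proposal is correct and matches the paper, which gives no independent argument for this theorem: it simply records that the statement subsumes \cite[Theorems 2.3, 2.4]{Hr2}, \cite[Corollary 3.3]{Hr3} and \cite[\S 4]{HPEL}, exactly the results you invoke for the respective clauses. Your added glue (reduction to weight $(2t,t)$ via the $T_0(y)$-preserving isomorphism, the pairing $(\boldf,T)\mapsto \bolda_p(1,T\boldf)$, and the role of $p\nmid 6D_F$) is consistent with how these theorems are used later in the paper.
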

In the following, we will concentrate on the case $S=V_1(\Nfrak)$, in what follows we will use the notation $\boldh^{\mathrm{n.ord}}(\Nfrak,\oo)$ for $\boldh^{\mathrm{n.ord}}(V_1(\Nfrak)(p^{\infty}),\oo)$. We will denote by  $\boldh^{\mathrm{ord}}(\Nfrak,\oo)$ the ordinary part of the Hecke algebra. Via the above mentioned duality, it corresponds to the Hilbert modular forms of parallel weight; it is a module over $\oo[[\mathrm{Cl}_\Nfrak (p^{\infty})]]$. We have  $\boldh^{\mathrm{ord}}(\Nfrak,\oo)=\boldh^{\mathrm{n.ord}}(\Nfrak,\oo)/P^{\mathrm{ord}},$ where $P^{\mathrm{ord}}$ is the ideal of $\oo[[\boldG]]$ corresponding to the projection on the second component $\boldG \rightarrow \mathrm{Cl}_\Nfrak (p^{\infty})$.\\

Let $\boldK$ be a field extension of $\mathrm{Frac}(\LL)$ and $\boldI$ the integral closure of $\LL$ in $\boldK$. By duality, to each morphism $\lambda:\boldh^{\mathrm{n.ord}}(\Nfrak,\oo) \rightarrow \boldI$ corresponds an $\boldI$-adic modular form $\boldF$ which is characterized by its $\boldI$-adic Fourier coefficient $\bolda_p(y,\boldF)=\lambda(T_0(y))$. To such a $\lambda$ corresponds an irreducible component of $\mathrm{Spec}(\boldh^{\mathrm{n.ord}}(\Nfrak,\oo))$, determined by the prime ideal $\mathrm{Ker}(\lambda)$. Let $X(\boldI)$ be the set of arithmetic points of $\mathrm{Spec}(\boldI)$, i.e. the $\Qb_p$-points that restrained to $\LL$ correspond to an arithmetic point of type $(m,v,\psi,\psi')$. By the previous theorem, to each arithmetic point $P$ corresponds a Hilbert modular eigenform $\boldf_P=P(\boldF)$ such that $\bolda_p(y,\boldf_P)=P(\lambda(T_0(y)))$, $\boldf_P|T(a^{-1},1)= a^v \psi'(a)\boldf_P$ and $\boldf_P|\lla z \rra = \N^m(z)\psi(z)\boldf_P$. In particular, the center of $\gl(\hat{\rif})$ acts via  $\N^{m}(-)\psi(-)$.

Following \cite[Theorem I]{Hr4}, we can associate to each family of nearly ordinary Hilbert modular forms with coefficients in a complete noetherian local integral domain of characteristic different from $2$ (i.e. to each $\lambda: \boldh^{n.ord}(\Nfrak,\oo) \rightarrow A$), a $2$-dimensional semisimple and continuous Galois representation $\rho_{\lambda}$, unramified outside $\Nfrak p$ such that for all $\qfrak \not| \Nfrak p$, we have 
\begin{align*}
 det(1 - \rho_{\lambda}(Fr_{\qfrak})) = 1 - \lambda(T(\qfrak))X + \lambda(\lla \qfrak \rra)\N(\qfrak)X^2.
\end{align*} 
Moreover this representation is nearly-ordinary at all $\pfrak | p $. For the exact definition of nearly ordinary, see \cite[Theorem I (iv)]{Hr4}. Essentially, it means that its restriction to $D_{\pfrak}$, the decomposition group at $\pfrak$, is upper triangular and unramified after a twist by a finite order character of $I_{\pfrak}$.\\

As in Section \ref{proofMT} we will discuss the relation between the analytic and the arithmetic $\Ll$-invariant, we give now briefly the definition of the cyclotomic Hecke algebra $\boldh^{\mathrm{cycl}}(\Nfrak,\oo)$; it has been introducted in \cite[3.2.9]{HIwa}. Roughly speaking, $\boldh^{\mathrm{cycl}}(\Nfrak,\oo)$ corresponds to the subspace of  $\boldh^{\mathrm{n.ord}}(\Nfrak,\oo)$ defined by the equation $v_{\sigma_1}=v_{\sigma_2}$, for $\sigma_1$ and $\sigma_2$ which induce the same $p$-adic place $\pfrak$ of $F$.\\
Consequently, we say that an arithmetic point $P$ is locally cyclotomic if $P$, as a character of ${T(\Z_p)} \equiv \lgr \left( \begin{array}{cc} a  & 0  \\ 0 & d \end{array}\right) \in \gl(\rif_p) \rgr $, factors through the local norms, i.e.,  $$P|_{{T(\Z_p)}} = \prod_{\pfrak | p } ({\N(a_{p})}^{v_{\pfrak}},{\N(d_{p})}^{m_{\pfrak}}),$$
up to a finite order character. \\ 
From the deformation-theoretic point of view, requiring that a Hilbert modular form $\boldf$ belongs to $\boldh^{\mathrm{cycl}}(\Nfrak,\oo)$ is equivalent to demand that, for all $\pfrak \mid p$ the local Galois representation at $\pfrak$, $\rho_{\boldf,\pfrak}$ is of type $\left( \begin{array}{cc}
\eps_{\pfrak} & \ast \\
0 & \delta_{\pfrak}
\end{array}\right)$, with the two characters $\eps_{\pfrak}$ and  $\delta_{p}$ of fixed type outside $\mathrm{Gal}(F_{\pfrak}(\mu_{p^{\infty}})/ F_{\pfrak})$ (\cite[\mbox{(Q4')}]{HIwa}).


\section{Hilbert modular forms of half-integral weights}\label{HalfHilbert}
In this section we first recall the theory of Hilbert modular forms of half integral weight, with particular interest in theta series and Eisenstein series. We will use these series to give, in the next section, an integral expression for the $L$-function of the symmetric square representation. In the second part of the section, we develop a $p$-adic theory for half integral weight Hilbert modular forms which will be used in Section \ref{p-measure}.
\subsection{Complex forms of half-integral weights}\label{HIntweight}
We will follow in the exposition \cite{ShH3}. Half-integral weight modular forms are defined using the metaplectic groups as defined by Weil. One can define $M_{\fa}$ as a certain non trivial extension of $SL_2(\fa)$ by the complex torus $\mathbb{S}^1$ with a faithful unitary representation (called the Weil representation) on the space $L^2(\fa)$.\\
Similarly, for any place $v$, $M_v$ is defined as a certain extension of $SL_2(F_v)$ by the complex torus $\mathbb{S}^1$ together with a faithful unitary representation on $L^2(F_v)$. We denote by the same symbol $pr$ the projection of $M_{\Aa}$ onto $SL_2(\fa)$ and of $M_{v}$ onto $SL_2(F_v)$. In the following, we will sometimes use $G$ to denote the algebraic group $\Sl$.\\

Denote by $P_{\fa}$ the subgroup of upper triangular matrices in $SL_2(\fa)$  and by $\Omega_{\fa}$ the subset of $SL_2(\fa)$ consisting of matrices which have a bottom left entry invertible. \\ 
The extension $M_{\fa}$ is not split, but we have three liftings $r$, $r_P$ and $r_{\Omega}$ of $SL_2(F)$, $P_{\fa}$ and $\Omega_{\fa}$ such that $pr \circ  r_?$ is the identity, for $?= \emptyset, P, \Omega$. Their explicit description is given in \cite[(1.9),(1.10)]{ShH3} and they are compatible in the sense that $r=r_P$ on $G \cap P_{\fa}$ and $r=r_{\Omega}$ on $G \cap \Omega_{\fa}$. \\
For $x$ in $\gl(\faf)\times\gl(F_{\infty})^+$ and $z$ in $\h^I$ we define $j(x,z)=det(x_{\infty})^{-1/2}(cz+d)$, where $x_{\infty} = \left( \begin{array}{cc} a & b \\ 
c & d 
\end{array}\right)$ is the projection of $x$ to $\gl(F_{\infty})^+$. For $\tau$ in $M_{\fa}$ we pose $j(\tau,z)=j(pr(\tau),z)$. \\

We define $C'$ as the matrices $\gamma$ in $G(\fa)$ such that $\gamma_v$ belongs to the group $C'_v$ defined as follows
$$
C'_v = 
\left\{
\begin{array}{cc}  
 \left\{ \gamma \in SL_2(F_v) | \gamma i = i  \right\}  & \mbox { if } v|\infty  \\
  \left\{ \gamma \in SL_2(F_v) | \gamma \in
\left( 
\begin{array}{cc}
 * & 2\dfrak_v \\
 2\dfrak_v^{-1} & *
\end{array}
\right)                          \right\}        & \mbox { if } v \mbox{ finite}

\end{array}
\right. .
$$

Let $\eta_0$ be defined by

$$
(\eta_0)_v = 
\left\{
\begin{array}{cc}  
  1       & \mbox { if } v|\infty  \\

\left( 
\begin{array}{cc}
 0 &-\dfrak_v^{-1} \\
 \dfrak_v & 0
\end{array}
\right)
        & \mbox { if } v \mbox{ finite} 

\end{array}
\right. .
$$

$C''$ is define as the union of $C'$ and $C'\eta_0$.
By \cite[Proposition 2.3]{ShH3}, we can define for $\tau$ in $pr^{-1}(P_{\fa}C'')$ an holomorphic function $h(\tau,z)$ such that
\begin{align*}
h(\tau,z)^2 = & \zeta_{\tau}j(\tau,z), 
\end{align*}
where $\zeta_{\tau}$ is a certain fourth root of $1$. \\
Let $\G$ be a congruence subgroup contained in $C''G(F_{\infty})$, we define a Hilbert modular form of half-integral weight $k + \frac{1}{2}t$, $k$ in  $\Z[I]$ and $k \geq 0$, as a holomorphic function on $\h^{I}$ such that 
$$ f|_{k + \frac{1}{2}t}\gamma(z) = f(\gamma z) {j(\gamma, z)}^{-k}{h(\gamma,z)}^{-1} = f(z) $$ for all $\gamma$ in $\G$.  
Contrary to the case of $\gl$, such complex modular forms are the same as the functions $\boldf :M_{\Aa} \rightarrow \mathbb{C}$ such that 
$$ \boldf(\alpha x w) = \boldf(x) {j(w, z_0 )}^{-k}{h(w,z_0)}^{-1} \mbox{ for all } \alpha \in \Sl(F), w \in pr^{-1}(B),$$ where $B$ is a compact open subgroups of $C''$.\\
In fact, we can pass from one formulation to the other in the following way: to such a $\boldf$ we associate a Hilbert modular form $f$ on $G \cap B\G_{\infty}$ by $f(z)=\boldf(u){j(u, z_0)}^{-k}{h(u,z_0)}^{-1}$ for any $u$ such that $u(z_0)=z$.
Conversely, to such $f$ we can associate $\boldf$ such that
$$ \boldf(\alpha x) = f|_{(k + \frac{1}{2}t)} x (z_0) \mbox{ for all } \alpha \in \Sl(F), x \in pr^{-1}(BG(\fai)).$$

Let $\Nfrak$ and $\Mfrak$ be two fractional ideals such that $\Nfrak\Mfrak$ is integral and divisible by $4$.  
Congruence subgroups of interest are $D(\Nfrak,\Mfrak)$ which are defined as the intersection of $G(\Aa)$ and $U_0(\Nfrak,\Mfrak)\times SO(2)^{\bolda}$. The intersection of $D(\Nfrak,\Mfrak)$ with $G$ shall be denote by $\G^1[\Nfrak,\Mfrak]$. In particular, assuming $4 \mid \Nfrak$, $h(-,z)$ is a factor of automorphy and the map $\chi_{-1}:\tau \mapsto \zeta_{\tau}^2$, for the root of unit $\zeta_{\tau}$ defined above, is a quadratic character of $D(\Nfrak,4)$ depending only on the image of $d_{\gamma}$ modulo $4$. \\
Let $k'=k + \frac{1}{2}t$, with $k \in \Z[I]$ and $k \geq 0$ and choose a Hecke character $\psi$ of $\fa^{\times}$  (i.e. $\psi$ in trivial on $F^{\times}$) of conductor dividing $\Mfrak$ and such that $\psi_{\infty}(-1)={(-1)}^{\sum_{\sigma \in I} k_{\sigma}}$. Let $\psi_{\Mfrak}= \prod_{v|\Mfrak} \psi_{v}$. We define $\boldM_{k}(\G^1[\Nfrak,\Mfrak],\psi,\C)$ (resp.  $\boldS_{k}(\G^1[\Nfrak,\Mfrak],\psi,\C)$ ) as the set of all holomorphic functions $f$ (resp. cuspidal) such that
$$ f|_{k} \gamma = \psi_{\Mfrak\Nfrak}(a_{\gamma})f \mbox{ for all } \gamma \in \G^1[\Nfrak,\Mfrak].  $$
This definition implies that the corresponding adelic form $\boldf$ satisfies $\boldf(x \gamma) = \psi(d_{\gamma})\boldf(x)$ for $\gamma \in D(\Nfrak,\Mfrak)$.\\
To have a non-zero Hilbert modular form on $D(\Nfrak,\rif)$, we need to suppose that $4\mid \Nfrak$. So we will define $\boldM_{k}(\Nfrak,\psi,\C)=\boldM_{k}(\G^1[2^{-1} \Nfrak, 2],\psi,\C) $  and similarly for $\boldS_{k}(\Nfrak,\psi,\C)$\\
We can define a Fourier expansion also for half-integral weight. 
\begin{prop}\label{FExp}
Let $k \in \Z[I]$ and let $\boldf$ be a Hilbert modular form in $\boldM_{k'}(\Nfrak,\psi,\C)$, for $ k' = \frac{t}{2} +k $, or in $\boldM_{k,-k/2}(U_0(2^{-1}\Nfrak,2),\psi,\C)$, then we have (ignore $r_P$ if the weight is integral) 
\begin{align*}
\boldf \left( r_P \left( \begin{array}{cc} y  & x  \\ 0 & y^{-1} \end{array}\right)  \right) = & \psi_{\infty}^{-1}(y)y_{\infty}^{k}|y|_{\Aa}^{[k'-k]} \sum_{\xi \in F} \lambda(\xi,y\rif;\boldf,\psi)\bexp_{\infty}(i\xi y^2_{\infty}/2)\bexp_F(yx\xi /2).
\end{align*}
The following properties hold
\begin{itemize}
	\item $\lambda(\xi,\mathfrak{m};\boldf,\psi) \neq 0$ only if $\xi \in \mathfrak{m}^{-2}$ and $\xi=0$   or $\xi \gg 0$,
	\item $\lambda(\xi b^2,\mathfrak{m};\boldf,\psi)=b^{k}\psi_{\infty}(b)\lambda(\xi,b\mathfrak{m};\boldf,\psi)$ for all $b \in F^{\times}.$
\end{itemize}
In particular 
\begin{align*}
 f(z)=& \sum_{\xi \in F} \lambda(\xi,\rif;\boldf,\psi)\bexp_{\infty}(\xi z/2)
\end{align*}

We can compare with the previous Fourier expansion when $\boldf$ has integral weight and we have 
\begin{align}\label{ShiHida}
\lambda(\xi,y\rif;\boldf,\psi) = & \psi_{\infty}^{-1}(y)|y|_{\Aa}^{2+m}\bolda(y^2 \xi,\boldf) {\left\{ y^2 \xi\right\}}^v {( y^2_{\infty} \xi)}^{-v}.
\end{align}
\end{prop}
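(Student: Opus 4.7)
The plan is to adapt the derivation of Shimura \cite{ShH3} to the adelic setting, following the same pattern Hida uses for integral weight in Section \ref{ComplexHilbert}. The Fourier expansion comes from three inputs: left invariance of $\boldf$ under $\Sl(F)$ (in particular under the $F$-rational unipotent radical of the Borel), right invariance under the compact open subgroup of $D(\Nfrak,\Mfrak)$, and holomorphicity of the associated function $f$ on $\h^I$. All the subtlety sits in the cocycle governing the partial lift $r_P$ of $P_{\fa}$ to $M_{\fa}$ and the factor of automorphy $h(\tau,z)$.

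First I would exploit that $r = r_P$ on $G \cap P_{\fa}$, so for any $x' \in F$,
\begin{align*}
\boldf\left(r_P\left(\begin{array}{cc} 1 & x' \\ 0 & 1 \end{array}\right)\cdot r_P\left(\begin{array}{cc} y & x \\ 0 & y^{-1}\end{array}\right)\right) = \boldf\left(r_P\left(\begin{array}{cc} y & x+y^2 x' \\ 0 & y^{-1}\end{array}\right)\right),
\end{align*}
using additivity of $r_P$ on the unipotent radical. Hence $x \mapsto \boldf(r_P(\cdots))$ descends to $y^2 F\backslash \fa$ and admits a Fourier expansion indexed by $\xi \in F$ against the standard character $\bexp_F$, with the normalization chosen so that the character is $\bexp_F(yx\xi/2)$. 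Holomorphicity on $\h^I$ forces the archimedean mode to be exactly $\bexp_\infty(i\xi y_\infty^2/2)$. The prefactor $\psi_\infty^{-1}(y)\,y_\infty^{k}\,|y|_\Aa^{[k'-k]}$ is read off from the identification of the adelic and classical pictures: the archimedean automorphy factor ${j(u,z_0)}^{-k}{h(u,z_0)}^{-1}$ evaluated on the diagonal torus produces exactly $y_\infty^{k}$ and, in the half-integral case, the extra norm factor $|y|_\Aa^{[k'-k]} = |y|_\Aa^{1/2}$; the character $\psi_\infty^{-1}(y)$ arises from the requirement that $\boldf$ transform by $\psi$ on $D(\Nfrak,\Mfrak)$ together with the left invariance under $F^\times$ embedded diagonally.

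Next I would extract the two bulleted properties. The support condition $\xi \in \mathfrak{m}^{-2}$ with $\mathfrak{m} = y\rif$ follows from right invariance under the finite upper unipotents of $D(\Nfrak,\Mfrak)$, i.e.\ elements $\left(\begin{smallmatrix} 1 & b \\ 0 & 1\end{smallmatrix}\right)$ with $b$ in the relevant integral ideal: the induced translation $x \mapsto x + y^2 b$ on the archimedean parameter together with the conductor $\dfrak^{-1}$ of $\bexp_F$ forces the character $\bexp_F(yx\xi/2)$ to be trivial on $y^2 b$, hence $\xi \in (y^2 \Mfrak\dfrak)^{-1}$; a local computation with $\Mfrak = 2\rif$ gives the claim, while $\xi = 0$ or $\xi \gg 0$ is the usual holomorphicity at cusps. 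The scaling formula $\lambda(\xi b^2, \mathfrak{m};\boldf,\psi) = b^k \psi_\infty(b)\, \lambda(\xi, b\mathfrak{m};\boldf,\psi)$ comes from the action of the diagonal element $\left(\begin{smallmatrix} b & 0 \\ 0 & b^{-1}\end{smallmatrix}\right)$ for $b \in F^\times$: applying left invariance under $\Sl(F)$ and comparing the two resulting Fourier expansions, the automorphy factor $h$ on the torus contributes precisely the scalar $b^k \psi_\infty(b)$ after replacing $y$ by $b y$ and $\xi$ by $\xi b^{-2}$.

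Finally, relation (\ref{ShiHida}) is obtained simply by specializing the half-integral weight expansion just derived to the integral weight case — where $r_P$ is the trivial lift and $h$ disappears — and comparing term by term with the expansion (\ref{FourierHida}) for $\bolda(\cdot,\boldf)$; the discrepancies between $\lgr \cdot \rgr$, $\psi_\infty$, and the $\xi^v / y^v$ normalization account exactly for the displayed factor $\psi_\infty^{-1}(y) |y|_\Aa^{2+m} \lgr y^2 \xi \rgr^v (y_\infty^2 \xi)^{-v}$. The main obstacle is the careful bookkeeping of the metaplectic cocycle $\zeta_\tau$ and the factor $h(\tau, z)$ — in particular their dependence on the lower-right entry $d_\gamma$ modulo $4$ at places above $2$ — which is why $\psi_\Mfrak$ appears in the transformation law and why the ``$2$'' in $\Gamma^1[2^{-1}\Nfrak, 2]$ must be handled with care.
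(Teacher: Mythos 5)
For the expansion itself and the two bulleted properties, you are taking a genuinely different (and much longer) route than the paper, which simply quotes \cite[Proposition 2.1]{Im} for all of that. Your outline — left invariance under the $F$-rational unipotents via $r=r_P$, right invariance under the finite unipotents of $D(\Nfrak,\Mfrak)$ for the support condition, holomorphy for $\xi\gg0$ or $\xi=0$, and the diagonal $F^{\times}$-action for the scaling law — has the right shape, but your bookkeeping of the translations is off: left multiplication by $r_P\left(\begin{smallmatrix}1&x'\\0&1\end{smallmatrix}\right)$ sends $x$ to $x+x'y^{-1}$ (not $x+y^{2}x'$), while right multiplication by $\left(\begin{smallmatrix}1&b\\0&1\end{smallmatrix}\right)$ sends $x$ to $x+yb$, so that the phase picked up by $\bexp_F(yx\xi/2)$ is $\bexp_F(y^{2}b\xi/2)$; it is this last identity that forces $\xi\in\mathfrak{m}^{-2}$ for $\mathfrak{m}=y\rif$. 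These slips are repairable, but they are exactly the kind of detail the argument lives on.

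The genuine gap is in formula (\ref{ShiHida}), which is the only part the paper actually proves. You propose to ``compare term by term'' with (\ref{FourierHida}), but the two expansions evaluate $\boldf$ at different group elements: (\ref{FourierHida}) expands $\boldf\left(\begin{smallmatrix}y&x\\0&1\end{smallmatrix}\right)$, whereas the proposition expands $\boldf\left(r_P\left(\begin{smallmatrix}y&x\\0&y^{-1}\end{smallmatrix}\right)\right)$, so no term-by-term comparison is available until the two arguments are related. The paper's proof consists precisely of supplying that bridge: one writes $\boldf=\boldf'|\left(\begin{smallmatrix}2^{-1}&0\\0&1\end{smallmatrix}\right)_f$ with $\boldf'$ of level $V_1(\Nfrak)$ (this is where the halves in $\bexp_F(yx\xi/2)$ and $q^{\xi/2}$ come from), and then uses the decomposition $\left(\begin{smallmatrix}y&x\\0&y^{-1}\end{smallmatrix}\right)=\left(\begin{smallmatrix}y^{2}&xy\\0&1\end{smallmatrix}\right)\left(\begin{smallmatrix}y^{-1}&0\\0&y^{-1}\end{smallmatrix}\right)$ together with the central character $\psi|\phantom{e}|_{\Aa}^{-m}$ to strip off the scalar matrix. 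It is the resulting substitution $y\mapsto y^{2}$, $x\mapsto xy$ in (\ref{FourierHida}) that produces the $\bolda(y^{2}\xi,\boldf)$, the $|y|_{\Aa}^{2+m}$ and the $(y_{\infty}^{2}\xi)^{-v}$ of (\ref{ShiHida}); without this step your claim that ``the discrepancies account exactly for the displayed factor'' is an assertion rather than a computation.
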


\begin{proof}
The Fourier expansion and the first properties are contained in \cite[Proposition 2.1]{Im}. For the last formula, we point out that $\boldf$ can be written as $\boldf'| {\left( \begin{array}{cc} 2^{-1}  & 0  \\ 0 & 1 \end{array}\right)}_f$ with $\boldf'$ a Hilbert modular form for ${V}_1(\Nfrak)$.  Then we note that
$$
\left( \begin{array}{cc} y  & x  \\ 0 & y^{-1} \end{array}\right) = \left( \begin{array}{cc} y^2  & xy  \\ 0 & 1 \end{array}\right)\left( \begin{array}{cc} y^{-1}  & 0  \\ 0 & y^{-1} \end{array}\right)
$$
and, if the central character of $\boldf$ is $\psi|\phantom{e}|^{-m}_{\Aa}$, we obtain the formula by comparison with \ref{FourierHida}.
\end{proof}

The interest of this adelic Fourier expansion is that it can give rise to the Fourier expansions at all cusps.\\
Let $\Nfrak$ be an integral ideal such that $4|\Nfrak$ and $\nfrak$ a finite id\`ele representing $\Nfrak$: $\nfrak \rif = \Nfrak$ and $\nfrak$ is $1$ at the place outside $\Nfrak$. We shall define now two operators $[{\nfrak}^2]$ and $\tau({\nfrak}^2)$ which will be useful to simplify the calculations of Eisenstein series and $p$-adic $L$ functions we shall perform later. The latter operator will be used to define a Hecke equivariant pairing on integral weight modular forms. Let $k' = \frac{t}{2} +k $. 
We set 
\begin{align*}
\boldf|[{\nfrak}^2](x)=  {\N(\nfrak)}^{-\frac{1}{2}}\boldf\left(x r_P {\left( \begin{array}{cc} \nfrak^{-1}  & 0  \\ 0 & {\nfrak} \end{array} \right)}_{f}  \right) . 
\end{align*}
 This operator sends a form of level group $D(\Cfrak,\Mfrak)$ into one of level group $D(\Cfrak\Nfrak^2,\Nfrak^{-2}\Mfrak)$ and its the analogue of \cite[\S 2 h3]{H6}.
It is easy to see how the Fourier expansion changes;
\begin{align}\label{coeff [p]}
\lambda(\xi,y\rif;\boldf|[\nfrak^2],\psi) = & \lambda(\xi,\nfrak^{-1} y\rif;\boldf,\psi).
\end{align}
In particular, we have from \cite[Proposition 2.1 (i)]{Im} that a necessary condition for $\lambda(\xi,\rif;\boldf|[\nfrak^2],\psi) \neq 0$ is that  $\xi \in \nfrak^2$ and $\xi \gg 0$. We write $f$ resp. $f'$  for the complex version of $\boldf$ resp. $\boldf|[\nfrak^2]$; we easily see that the Fourier expansion at infinity is
$$f'(z)=   \sum_{ 0 \ll \xi \in F^{\times}, \xi \in \nfrak^2} \lambda(\xi,\nfrak^{-1} \rif;\boldf,\psi)\bexp_{\infty}(\xi z/2). $$
If moreover $\Nfrak = (b)$, for a totally positive element $b$, then $f'(z)={b}^{k}f({b}^2 z)$.\\
{\bfseries  Important remark}: in the next section we shall often use a similar operator $\left[ \frac{\nfrak^2}{4}\right]$. This operator is defined exactly as above with the difference that $4$ does not represent the ideal $4 \rif$, but 4 is {\it  diagonally embedded} in $F_f$.\\

We define the half-integral weight Atkin-Lehner involution
\begin{align*}
 \boldf |\tau({\nfrak}^2)= &\boldf |\eta_2| r_P {\left( \begin{array}{cc} \nfrak^{-1}  & 0  \\ 0 & {\nfrak} \end{array} \right)}_f \\
 = & \boldf | r_{\Omega} {\left( \begin{array}{cc}  0 & -2\dfrak^{-1}{\nfrak}^{-1} \\ 2^{-1} \dfrak\nfrak  & 0 \end{array} \right)}_{f}.
\end{align*}
The matrix $\eta_2$ is $\eta_0$ conjugated with the matrix ${\left( \begin{array}{cc}  2 & 0 \\ 0   &  1\end{array} \right)}_{f}$ (embedded diagonally).\\
This second operator sends $\boldM_{k'}(\Nfrak^2,\psi,\C)$ to $\boldM_{k'}(\Nfrak^2,\psi^{-1},\C)$, since the corresponding matrix normalizes $D(2^{-1}\Nfrak^2,2)$. Note that $\tau({\nfrak}^2)$ is almost an involution: $$\boldf|\tau(\nfrak^2)\tau(\nfrak^2)= \boldf| \left( \begin{array}{cc} -1  & 0  \\ 0 & {-1} \end{array}\right) = (-1)^{k}\boldf .$$ 

\subsection{Theta Series}
We give now a first example of half-integral weight modular forms which will be used for the integral formulation of the symmetric square $L$-function. We follow \cite[\S 4]{ShH3}.  Recall that we have a unitary action of $M_{\fa}$ on the space $L^2(\fa)$. For a Schwartz function on the finite ad\`eles $\eta$ and $n \in \Z[I]$, we define a function for  $\tau \in M_{\fa}$
$$ \theta_n(\tau,\eta) = \sum_{\xi \in F} \tau \eta_n (\xi,i) $$
where we define for $\xi$ in $\fa$ and $w$ in $\h^I$ $$\eta_n(\xi,w)= \eta(\xi_{f})\prod_{\sigma \in I} \phi_{n_{\sigma}}(\xi_{\sigma},w_{\sigma}),$$
$\phi_{n_{\sigma}}(\xi_{\sigma},w_{\sigma})= y_{n_{\sigma}}^{-{n_{\sigma}}/2}H_{n_{\sigma}}(\sqrt{4 \pi y_{n_{\sigma}} } \xi_{\sigma})\bexp(\xi_{\sigma}^2 w_{\sigma}/2)$, where $H_n$ is the $n$-th  Hermite polynomial.
It is the adelic counterpart of the complex form
$$ \theta_n(z,\eta) = {(4\pi y)}^{-n/2}\sum_{\xi \in F} \eta (\xi,i)H_n(\sqrt{4\pi y} \xi)\bexp_{\infty}(\xi^2 z /2) $$

Choose a Hecke character $\chi$ of $F$ of conductor $\Nfrak$ such that $\chi_{\infty}(-1)={(-1)}^{\sum_{I} n_{\sigma}}$, we denote also by $\chi$ the corresponding  character on the group of fractional ideals.
Recall that we defined $t= \sum_{\sigma \in I}\sigma$; for $n=0,t$ we have
\begin{align*}
 \theta_0(\chi) & = \sum_{\xi \in F} \chi_{\infty}(\xi)\chi(\xi \rif) \bexp_{\infty}(\xi^2 z/2) \\
 \theta_t(\chi) & = \sum_{\xi \in F} \chi_{\infty}(\xi)\chi(\xi \rif) \xi^t \bexp_{\infty}(\xi^2 z/2). 
\end{align*}
From now on $\theta_n(\chi):=\theta_{nt}(\chi)$; $\theta_n(\chi)$ is holomorphic. Moreover, from \cite[Lemma 4.3]{ShH3}, we have that for all $\gamma$ in $\G^1[2\Nfrak^2,2]$ we have
$$ \theta_n(\chi)|_{(n+1/2) t} \gamma = \chi_{4\Nfrak^2}(a_{\gamma})\theta_n(\chi).$$
This tells us that $\theta_n(\chi)$ is a modular form in $\boldM_{n+\frac{t}{2}}(4\Nfrak^2,\chi_{4\Nfrak^2}^{-1},\C)$. The explicit coefficients, given by \cite[(4.21)]{ShH3}, are as follows:  
$$ \lambda(\xi,\mathfrak{m},\theta_n(\chi),\chi) = \left\{ 
\begin{array}{cc}
2 \chi_{\infty}(\eta)\chi(\eta \mathfrak{m}) \xi^n & \mbox{ if } 0 \neq \xi = \eta^2 \in \mathfrak{m}^{-2},\\
\chi(\mathfrak{m}) & \mbox{ if } 0 = \xi, \mathfrak{f}=\rif \mbox{ and } n =0 ,\\
0 & \mbox{ otherwise}.
\end{array} \right. $$
For a Hecke character $\chi$, we define, following \cite[(4.8)]{ShH2}, the Gau\ss{}  sum of $\chi$
\begin{align*} 
G(\chi) =& \sum_{x \in \ffrak^{-1}\dfrak^{-1}/\dfrak^{-1}} \chi_{\infty}(x)\chi_{f}(x\dfrak\ffrak) \bexp_F(x).
\end{align*}
We conclude with the following proposition which generalizes a well-known result for $F=\Q$.
\begin{prop}\label{Theta-Atkin} Let $\chi$ be a Hecke character of conductor $\Nfrak$. Then 
\begin{align*} \theta(\tau,\chi)|\tau(4\nfrak^2) =C(\chi) \theta(\tau,\chi^{-1}) \end{align*}
for $C(\chi)= G(\chi){\N(\Nfrak)}^{-1/2}\chi(\dfrak\Nfrak)$.
\end{prop}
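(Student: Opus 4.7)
The approach is to work adelically via the Weil representation on $L^2(F_f)$. Realise $\theta(\tau,\chi)=\theta_0(\tau,\eta_\chi)$, where $\eta_\chi\colon F_f\to\C$ is the Schwartz--Bruhat function extending $\chi_f^{-1}$ from $\hat{\rif}^\times$ by zero to $F_f$, normalised so as to reproduce the Fourier expansion $\lambda(\xi,\mathfrak{m};\theta_0(\chi),\chi)=2\chi_\infty(\eta)\chi(\eta\mathfrak{m})$ recalled just above the statement. Using the second, Bruhat-cell expression for $\tau(\nfrak^2)$ given in the text (with $\nfrak$ replaced by $2\nfrak$), the operator $\tau(4\nfrak^2)$ acts as right-translation by $r_\Omega(w_f)$, where
\[
w_f=\begin{pmatrix} 0 & -(\dfrak\nfrak)^{-1}\\ \dfrak\nfrak & 0\end{pmatrix}.
\]
Because $\theta_0$ is covariant for the Weil representation $\rho$ on the Schwartz function variable, one has $\theta(\tau,\chi)\,|\,\tau(4\nfrak^2)=\theta_0(\tau,\rho(w_f)\eta_\chi)$, so it suffices to identify $\rho(w_f)\eta_\chi=C(\chi)\eta_{\chi^{-1}}$.

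I would then decompose $w_f=d(\dfrak\nfrak)\cdot J$ into a product of a diagonal element and $J=\bigl(\begin{smallmatrix} 0 & -1\\ 1 & 0\end{smallmatrix}\bigr)$. Under $\rho$, the element $J$ acts as the $\bexp_F$-Fourier transform for the self-dual Haar measure on $F_f$ (giving $\hat{\rif}$ volume $D_F^{-1/2}$), while $d(\dfrak\nfrak)$ acts by a scaling contributing the Jacobian $|\dfrak\nfrak|_\Aa^{1/2}=|D_F\N(\Nfrak)|^{-1/2}$. A place-by-place Gauss sum computation shows that $\widehat{\eta_\chi}$ is supported on $(\dfrak\Nfrak)^{-1}\hat{\rif}^\times$, where it equals an appropriate shift of $\chi_f$ weighted by the local Gauss sums; multiplying over all finite places these assemble into the global $G(\chi)$. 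Composing with $d(\dfrak\nfrak)$ cancels the inner shift and produces the additional factor $\chi(\dfrak\Nfrak)$, yielding $\rho(w_f)\eta_\chi=C(\chi)\eta_{\chi^{-1}}$ with $C(\chi)=G(\chi)\N(\Nfrak)^{-1/2}\chi(\dfrak\Nfrak)$, whence the proposition.

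The main obstacle will be the careful bookkeeping of normalization constants: the self-dual Haar measure on $F_f$ (responsible for the $D_F^{\pm 1/2}$ factors), the quadratic fourth-root-of-unity contributions $\zeta_\tau$ and $h(\tau,z)$ implicit in the half-integral Weil representation from Section \ref{HIntweight}, and the specific definition of $G(\chi)$ given in the text (which already builds in the shift by $\dfrak^{-1}$ of the additive character). As a sanity check, I would verify the constant $C(\chi)$ by evaluating the coefficient $\lambda(1,\rif;\,\cdot\,,\chi^{-1})$ on both sides via the explicit Fourier expansion quoted just above the proposition, and by specialising to $F=\Q$, where the identity reduces to a classical formula of Shimura.
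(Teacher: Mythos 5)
Your proposal is correct and follows essentially the same route as the paper: the paper likewise works adelically on the Schwartz-function variable, decomposes the Atkin--Lehner matrix as a diagonal element times $J=\bigl(\begin{smallmatrix}0&-1\\1&0\end{smallmatrix}\bigr)$, invokes the Fourier-transform/Gauss-sum formula for the action of $J$ (citing Shimura's Lemma 4.2 rather than redoing the local computation), and then uses the $r_P$-action of the diagonal matrix to rescale and cancel the shift, assembling $C(\chi)=G(\chi)\N(\Nfrak)^{-1/2}\chi(\dfrak\Nfrak)$ exactly as you describe. The only difference is that you propose to carry out the local Gauss-sum computation by hand where the paper quotes Shimura, which is a matter of exposition rather than substance.
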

\begin{proof}
We decompose $$ \left( \begin{array}{cc} 0  &  - \dfrak^{-1}\nfrak^{-1} \\ {\nfrak}\dfrak & 0  \end{array} \right) =  \left( \begin{array}{cc} \dfrak^{-1}\nfrak^{-1}  & 0  \\ 0 & {\nfrak}\dfrak \end{array} \right) \left( \begin{array}{cc} 0 & -1  \\ 1 & 0 \end{array} \right). $$
As in \cite[Lemma 4.2]{ShH3} we have 
$$\left( \begin{array}{cc} 0 & -1  \\ 1 & 0 \end{array} \right) \chi_f (x) = {\N(\dfrak)}^{-1/2}{\N(\nfrak)}^{-1}G(\chi) \chi(\dfrak\nfrak)\chi_f^{-1}(\dfrak\nfrak x).$$
Now, we are left to study the action of the diagonal matrix on $\chi$. As this matrix is in $P_{\fa}$, we can use \cite[(1.12)]{ShH3} to reduce ourself to local calculations
\begin{align*}
 r_P \left( \begin{array}{cc} \dfrak^{-1}\nfrak^{-1}  & 0  \\ 0 & {\nfrak}\dfrak \end{array} \right) \chi_f = & \prod_v r_P {\left( \begin{array}{cc} \dfrak^{-1}\nfrak^{-1}  & 0  \\ 0 & {\nfrak}\dfrak \end{array} \right)}_v \chi_v.
\end{align*}
With \cite[(1.9)]{ShH3} we see that the action of this diagonal matrix sends $\chi_v(x)$ to ${|\dfrak\nfrak|_v}^{-1/2}\chi_v(\dfrak_v^{-1}\nfrak_v^{-1} x)$, then 
$$\tau(4 \nfrak^2) \chi_{f} (x) =G(\chi){\N(\nfrak)}^{-1/2}\chi(\dfrak\Nfrak) \chi_{f}^{-1} (x) . $$
\end{proof}

\subsection{Eisenstein series}\label{EisSer}

We give a second example of half-integral weight forms which we shall use in the next sections. The aim of this section is to find a normalization of these series which shall allow us to $p$-adically interpolated its Fourier coefficients \ref{Eisen_other2}.\\
Let us fix an integral weight $k \in \Z[I]$, a congruence subgroup of level $\cfrak$, a Hecke character $\psi$ of finite conductor dividing $\cfrak$ and of infinite part of type ${(x/|x|)}^k$. Decompose 
\begin{align*}
G \cap P_{\fa}D(2^{-1}\Nfrak,2)= \coprod_{\afrak_i \in \mathrm{Cl}(F)} P \beta_i \G^1[2^{-1}\Nfrak,2],
\end{align*}
where $P$ is the Borel of $G$ and $\beta_i = \left( \begin{array}{cc} t_i & 0  \\ 0 & 1 \end{array} \right)$, for $t_i$ an id\`ele representing $\afrak_i$.\\
We pose 
\begin{align*}
E(z,s; k,\psi,\cfrak) =  & \sum_{\afrak_i \in \mathrm{Cl}(F)}  {N(\afrak_i)}^{2s+ \frac{1}{2}} \sum_{\gamma \in P_{i} \setminus \beta_i \G^1[2^{-1}\Nfrak,2]}   \psi(d_{\gamma}\afrak_i^{-1})\psi_{\infty}(d_\gamma)\frac{y^{st-k/2}{h(\gamma,z)}^{-1}{j(\gamma,z)}^{-k}}{ {|j(\gamma,z)|}^{2st-k} },
\end{align*}
where $P_i = P \cap \beta_i \G^1[2^{-1}\Nfrak,2] \beta_1^{-1}$. We follow mainly the notation of \cite[\S 1]{Im}, but we point out that our Eisenstein series $E(z,s; k,\psi,\cfrak) $ coincides with Im's series $E$ evaluated at $s +1/4$ and $\afrak = 2^{-1}\rif $, and we refer to {\it loc. cit.} for the statements without proof which will follow. \\

Take an element $\eta_1$  in $\Sl(F) $ such that  $\eta_1^{-1} \in Z(\fa)\eta_0$ and  an element $\tilde{\eta}$ in $M_{\fa}$ such that $pr(\tilde{\eta})=\eta_0$; we can define
\begin{align*}
E'(z,s; k,\psi, \cfrak) = & E(z,s; k,\psi, \cfrak)|_{k+\frac{1}{2}} \eta_1.
\end{align*}
We can see $E(z,s; k,\psi, \cfrak)$ as an adelic form, as in Section \ref{HIntweight}: for all non-negative integers $\rho$ we define 
$$ J(\tau,z) = {h(\tau,z)}^{2\rho+1}{(j(\tau,z)/|j(\tau,z)|)}^{k} $$ and a function $f$ on $M_{\Aa}$ as 
$$f(\tau) = \psi_f(d_p)\psi_{\cfrak}(d_w^{-1})|J(\tau, z_0)|J(\tau,z_0)^{-1} $$ if $\tau=pw$ is in $P_{\fa}D(2^{-1}\cfrak \dfrak, 2 \dfrak^{-1})$ and 0 otherwise. We define moreover, for  $\tau$ in $P_{\fa} C$, $\tau=p w'$, $\delta_{\tau}=|d_p|_{\Aa}$ and for $\tau \in pr^{-1} P_{\fa} C$, $\delta_{\tau}=\delta_{pr (\tau)}$.\\
The adelic Eisenstein series is defined by 
$$ E_{\Aa}(\xi,s; (2\rho+1)/2, k ,\psi,\cfrak) = \sum_{g \in P \setminus SL_2(F)} f(g\xi){\delta_{g\xi}}^{-2s -\rho -1/2}. $$
We have therefore $E(z,s; k,\psi, \cfrak) =E_{\Aa}(u,s; 1/2, k ,\psi,\cfrak)J(u,z_0) $, where $u$ is  such that $z=u(z_0)$.
Moreover, $$E'(z,s; k,\psi, \cfrak) = E_{\Aa}(u \tilde{\eta},s; \frac{1}{2}t, k ,\psi,\cfrak)J(u,z_0) . $$
For $k=\kappa t$, $\kappa>0$ and $s=\kappa/2$ we have:
\begin{align*}
 E(z,\kappa/2; \kappa t,\psi, \cfrak) \in & \boldM_{k+\frac{1}{2}}(\cfrak; \psi_{\cfrak}^{-1}),\\
 E'(z,\kappa/2; \kappa t,\psi, \cfrak) \in & \boldM_{k+\frac{1}{2}}(2, 2^{-1}\cfrak; \psi_{\cfrak}^{-1}).
\end{align*}
We define a normalization of the previous Eisenstein series 
\begin{align*}
\calE ' \left(z,s;k,\psi, \cfrak \right) = &  L_{\cfrak}(4s,\psi^2) E'\left(z,s;k,\psi, \cfrak \right), 
\end{align*}
where $L_{\cfrak}(s,\psi)$ stands for $L(s,\psi)\prod_{\qfrak \mid \cfrak}(1-\psi(\qfrak){\N(\qfrak)}^{-s})$. We define $\calE \left(z,s;k,\psi, \cfrak \right)$ analogously. 
Let $A =  \psi(\dfrak){D_F}^{\kappa-1}\N(2\cfrak^{-1})i^{\kappa d}\pi^d 2^{d\left(\kappa - \frac{1}{2}\right)} $. We consider now the case when $s$ is negative; we have the following result on the algebraicity of these series

\begin{theo}[\cite{Im}, Proposition 1.5]\label{EisFou}
The series $A^{-1}\calE '(z,\frac{1-\kappa}{2};\kappa t,\psi, \cfrak )$ has the following Fourier development
$$
L_{\cfrak}(1-2\kappa) + \sum_{0 \ll \xi \in 2\cfrak^{-1} } L_{\cfrak}(1-\kappa,\psi\omega_{\xi})\beta\left(\xi, \frac{1-\kappa}{2}\right) \bexp_{\infty}(\xi x),
$$
where $\omega_{\xi}$ is the Hecke character associated with $F(\sqrt{\xi})$ and 
\begin{align*}
\beta(\xi,s)= &\sum_{\everymath{\scriptstyle}\tiny{\begin{array}{c}
                                       \afrak^2\mathfrak{b}^2 |\cfrak \xi\\
                                       \afrak, \mathfrak{b} \mbox{ prime to } \cfrak
                                       \end{array}}}
\mu(\afrak) \psi \omega_{\xi}(\afrak){\psi(\mathfrak{b})}^2 {N(\mathfrak{b})}^{-2s}{N(\mathfrak{a})}^{1 -4s}.
\end{align*}
\end{theo}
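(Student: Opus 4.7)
The plan is to compute the Fourier expansion of $\calE'(z, s; \kappa t, \psi, \cfrak)$ directly by unfolding. First, I would pass to the adelic picture and use the Bruhat decomposition
\begin{align*}
SL_2(F) = P(F) \sqcup P(F)\, w\, N(F),
\end{align*}
where $w$ is the Weyl element, to split the sum defining $E_{\Aa}$ into the identity coset (contributing the constant term, which is essentially $y^{st-k/2}$ times a character value) and the big cell. Integrating over $F \setminus \fa$ to extract the $\xi$-th Fourier coefficient, the big cell part collapses to a product over places $v$ of local intertwining integrals of the form
\begin{align*}
W_{\xi,v}(s) = \int_{F_v} f_v\!\left(w \left(\begin{array}{cc} 1 & x \\ 0 & 1 \end{array}\right)\right) \bexp_v(-\xi x)\, dx,
\end{align*}
twisted appropriately by the Weil cocycle and by the action of $\eta_1$.

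Next, I would evaluate these local integrals case by case. At a finite place $v$ with $v \nmid \cfrak$ and $\xi \in \rif_v^\times$, the standard half-integral weight computation (analogous to Shimura 1975) produces the Euler factor
\begin{align*}
\frac{1 - \psi_v \omega_{\xi,v}(\varpi_v) q_v^{-s-1/2}}{1 - \psi_v^2(\varpi_v) q_v^{-2s}}\!\!,
\end{align*}
where $\omega_\xi$ is the quadratic character attached to $F(\sqrt{\xi})/F$; this character appears because the local quadratic Gauss sum forces a splitting of the integral according to the Hilbert symbol $(\xi, \cdot)_v$. At places $v \mid \cfrak$ the ramification of $\psi_v$ kills the denominator factor, producing only the "imprimitive" part; at places $v \mid \xi$, $v \nmid \cfrak$, a finite geometric sum over divisors of the $v$-part of $\xi$ arises, and bundling these sums across all such $v$ gives precisely the Dirichlet polynomial $\beta(\xi, s)$ with its Möbius inversion over $\afrak$ and sum over squares $\mathfrak{b}^2$. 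Multiplying the whole expression by $L_\cfrak(4s, \psi^2)$ clears exactly the denominators from the unramified places, and the remaining numerator factors assemble into $L_\cfrak(s+1/2, \psi\omega_\xi)$. Specializing $s = (1-\kappa)/2$ yields $L_\cfrak(1-\kappa, \psi\omega_\xi)$ in the $\xi$-th Fourier coefficient.

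For the Archimedean places, the local integrals are Whittaker functions of weight $\kappa + 1/2$ evaluated at $z_0 = i$. These reduce to classical confluent hypergeometric integrals; at the specific point $s = (1-\kappa)/2$ they degenerate (because $\kappa$ is a "near-boundary" weight), and each infinite place contributes a Gamma factor whose product combines with the powers of $\pi$, $2$, $i$, the discriminant $D_F$, and the character value $\psi(\dfrak)$ to produce exactly the constant $A = \psi(\dfrak) D_F^{\kappa-1}\N(2\cfrak^{-1}) i^{\kappa d}\pi^d 2^{d(\kappa - 1/2)}$. The constant term is handled separately: at $\xi = 0$ it comes from the identity coset together with the intertwined contribution of the Weyl element, and using the functional equation for $L(4s, \psi^2)$ together with the normalization produces $L_\cfrak(1 - 2\kappa)$ at our chosen $s$.

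The main obstacle is the careful local bookkeeping at places $v$ dividing $\xi$ but not $\cfrak$: one must show that the local $W_{\xi,v}(s)$, after incorporating the volume factors coming from $[{\nfrak}^2]$, $\eta_1$, and the compatibility between $r_P$ and $r_\Omega$ on the metaplectic cover, reassembles into the Möbius sum form of $\beta(\xi, s)$. A secondary technical point is matching the adelic normalization of the Weil representation (which implicitly selects the branch of $h(\tau, z)$) with the explicit classical factor $j(\gamma, z)^{-k} h(\gamma, z)^{-1}$ appearing in the definition of $E$, so that the Fourier coefficients are truly indexed by $\xi \in 2\cfrak^{-1}$ with integrality properties as stated.
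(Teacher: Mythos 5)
Your overall strategy is the right one, and it is in fact the only ``proof'' the paper offers: Theorem \ref{EisFou} is quoted from \cite{Im}, Proposition 1.5, with no argument given here, and Im's proof (following Shimura's treatment of half-integral weight Hilbert Eisenstein series) is precisely the adelic unfolding via the Bruhat decomposition, local Whittaker integrals at the finite places producing the quadratic character $\omega_{\xi}$ through the Weil-representation Gauss sums, and confluent hypergeometric integrals at infinity that degenerate at the near-holomorphy point. The paper itself carries out the archimedean half of exactly this computation in the proof of Proposition \ref{Eisen_other}, citing Shimura's formulas for $y^{\beta}\mathbf{\xi}(y,\xi;\alpha,\beta)$, so your plan for the infinite places is consistent with what the author does two pages later.

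There is, however, a concrete inconsistency in your local bookkeeping that would give the wrong answer if carried through. With the normalization of $s$ used here (exponent $y^{st-k/2}$, normalizing factor $L_{\cfrak}(4s,\psi^2)$, and $\beta(\xi,s)$ carrying $\N(\mathfrak{b})^{-2s}$ and $\N(\afrak)^{1-4s}$), the unramified local factor must have denominator $1-\psi_v^2(\varpi_v)q_v^{-4s}$ --- otherwise multiplication by $L_{\cfrak}(4s,\psi^2)$ does not clear it --- and numerator $1-\psi_v\omega_{\xi,v}(\varpi_v)q_v^{-2s}$, so that the numerators assemble into $L_{\cfrak}(2s,\psi\omega_{\xi})$, which at $s=\frac{1-\kappa}{2}$ is the claimed $L_{\cfrak}(1-\kappa,\psi\omega_{\xi})$. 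Your stated factor
\begin{align*}
\frac{1-\psi_v\omega_{\xi,v}(\varpi_v)q_v^{-s-1/2}}{1-\psi_v^2(\varpi_v)q_v^{-2s}}
\end{align*}
is off by a factor of two in both exponents: the numerators would assemble into $L_{\cfrak}(s+\tfrac{1}{2},\psi\omega_{\xi})$, which at $s=\frac{1-\kappa}{2}$ equals $L_{\cfrak}(1-\tfrac{\kappa}{2},\psi\omega_{\xi})$, not $L_{\cfrak}(1-\kappa,\psi\omega_{\xi})$, and the denominators would not match $L_{\cfrak}(4s,\psi^2)$ nor the exponent $1-4s$ appearing in $\beta(\xi,s)$. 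This is a normalization slip rather than a flaw in the method, but it must be fixed before the local integrals, the Dirichlet polynomial $\beta(\xi,s)$, and the global normalizing factor fit together as claimed. (Note also that the constant term $L_{\cfrak}(1-2\kappa)$ should carry the character $\psi^2$, arising as $L_{\cfrak}(4s-1,\psi^2)$ from the intertwining operator; the paper suppresses it.)
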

The fact that the Fourier expansion is shifted by $\cfrak$ is due to the fact that we are working with forms for the congruence subgroup $D(2, 2^{-1}\cfrak)$.  Similarly, if $\cfrak$ is the square of a principal  ideal in $\rif$ and $\cfrak_0$ an id\`ele representing this ideal, we can deduce the Fourier expansion of ${\N(\cfrak_0 2^{-1})}^{-\kappa}A^{-1}\calE' (z,\frac{1-\kappa}{2};\kappa t,\psi, \cfrak )[\cfrak^2_0 4^{-1} ]$ simply by taking the same sum but over all $0 \ll \xi \in 2^{-1}\rif$.\\

We introduce now the notion of nearly-holomorphic modular form. It has been extensively studied by Shimura. Let $k$ be in $\Z\left[ \frac{1}{2} \right][I]$, and $v $ in $\Z[I]$ when $k$ is integral. Let $\G$ be a congruence subgroup of $\gl(F_{\infty})$ or $\Sl(F_{\infty})$, according to the fact that the weight is integral or half-integral, and $\psi$ a finite order character of this congruence subgroup. Let $\beta \in \Z[I]$, $\beta \geq 0$, we define the space of nearly-holomorphic forms of degree at most $\beta$, $\boldN_{k,v}^{\beta}(\G,\psi;\C)$ as the set of all functions 
\begin{align*} 
 f : \h^I \rightarrow \C
\end{align*}
such that $f|_{k,v}\gamma (z) = \psi(\gamma)f(z) $ for all $\gamma$ in $\G$ and $f$ admits a generalized Fourier expansion 
\begin{align*}
 f = \sum_{i=0}^{\beta} \sum_n^{\infty} a_n^{(i)} q^n \frac{1}{{(4 \pi y)}^i}
\end{align*}
at all cups for $\G$.\\
In case of integral weight, it is possible to define adelic nearly-holomorphic forms for compact open subgroups of $\gl(\faf)$ as in \cite[\S 1]{Hr3}; to each such form we can associate form an $h$-tuple of complex forms as we defined.\\
We warn the reader that the function $g_i(z)= \sum_n^{\infty} a_n^{(i)} q^n$ are not modular forms for $\G$, except when $i=\beta$.\\
Let now be $l$ in $\Z[I]$, $\lambda$ in $\mathbb{R}$ and $\sigma$ in $I$, we define a (non-holomorphic) differential operator on $\h^I$ by 
\begin{align*}
 \partial_{\lambda}^{\sigma} = & \frac{1}{2 \pi i}\left (\frac{\lambda}{2 i y_{\sigma}} + \frac{\textup{d} } {\textup{d} z_{\sigma}}\right ), \\
 \partial_{k}^{l} = & \prod_{\sigma \in I}\left(\partial_{k_{\sigma}+2 l_{\sigma} -2 }^{\sigma} \cdots \partial_{k_{\sigma}}^{\sigma} \right),
\end{align*}
and here $z_{\sigma} = x_{\sigma} + i y_{\sigma}$ is the variable on the copy of $\h$ indexed by $\sigma$.\\
Let $f$ be in $\boldN_{k,v}^s(\G,\psi;\C)$, if $k > 2s$ (when $F\neq \Q$ this condition is automatically satisfied)  then we have the following decomposition 
\begin{align*}
 f = \sum_{i=0}^{s} \partial_{k-2i}^{i}f_i  
\end{align*}
with each $f_i$ holomorphic and modular. A proof of this, when $F=\Q$, is given in \cite[Lemma 7]{Sh5} and for general $F$ the proof is the same.\\ 
For such a form $f$ we define the {\it holomorphic projection} $H(f):=f_0$. For any $f$ in $\boldN_{k,v}^s(\G,\psi;\C)$ we define the {\it constant term projection} $c(f)=g_0$. If we see $f$ as a polynomial in $\frac{1}{{(4 \pi y)}^i}$, $c(f)$ is then the constant term. We will see when the weight is integral that $c(f)$ is a $p$-adic modular form. This two operators will be used to calculate the values of the $p$-adic $L$-function in Section \ref{padicL}.\\
We have the relation 
\begin{align}\label{Massgamma}
\partial_{k}^{l}(f |_{k,v}\gamma) = &(\partial_{k}^{l}f )|_{k+2l,v-l}\gamma 
\end{align} 
which tells us that $\partial_{k}^{l}$ sends nearly-holomorphic modular forms for $\G$ of weight $(k,v)$ to nearly-holomorphic modular forms for $\G$ of weight $(k+2r,v-r)$.\\
This operator allows us to find some relation between Eisenstein series of different weights.  Let $k$, $l$ be in $\Z[I]$, $k,l >0$. Then we have by the proof of  \cite[Proposition 1.1]{Im}
\begin{align}\label{deltaEis} 
\delta_{k + \frac{1}{2}t}^{l} E(z,s; k,\psi, \cfrak) = & {(4\pi)}^{-l}b_l\left( -\frac{k}{2} - \left(s + \frac{1}{2} \right)t \right) E(z,s; k + 2l,\psi, \cfrak) 
\end{align}
 where 
$$  b_l(x)= \prod_{\sigma \in I}\prod_{j=0}^{l_{\sigma} -1} (x_{\sigma} - j) \mbox{ for } x=(x_{\sigma}) \in \C^{I}. $$
If $k=\kappa t$ and $2s = 1 -\kappa$ then we obtain  $b_l(-t)={(-1)}^{\sum l_v}\prod_{v \in \bolda } (l_v)! $.\\
In particular, in the case we will need later, this coefficient is not zero. Let $k= m +2t - 2v \geq 0$ be an integral weight, let $k_0$ be the minimum of the $k_{\sigma}$'s  and $s$ an integer such that $m-k_0 -1  < s  $, we have that $b_v\left( \frac{2v - st -2t }{2} \right) \neq 0 $.\\
For $x$ in $\mathbb{R}^{I}$, we pose $\G_{\infty}(x)= \prod_{\sigma \in I} \G(x_{\sigma})$. \\
Let $k= m +2t - 2v \geq 0$ be an integral weight, let $k_0$ be the minimum of the $k_{\sigma}$'s  and $s$ an integer such that $m+1 \leq s  \leq m + k_0 -1 $, and $n \in \lgr 0,1 \rgr$, $n \not\equiv s (2)$ and $\psi(-1)=(-1)^{k-d-nd}$.\\
In the notation of \cite[Lemma 4.2]{ShH4} we pose, for $\sigma$ in $I$, $\alpha_{\sigma} = \frac{s-m -n +k_{\sigma}}{2}$, $\beta_{\sigma} = \frac{s-m -k_{\sigma} +n+1}{2}$.
\begin{prop}\label{Eisen_other}
  For $k$, $s$, $m$, $n$ and $\psi$ as above we have 
\begin{align*}
\calE '\left(z,\frac{s-m}{2}; k-(n+1)t,\psi, \cfrak \right)  = & A'_0 \sum_{0 \leq j \leq - \beta} e_j(k,s,\psi)\frac{\G_{\infty}(1-\alpha -j)/ \G_{\infty}(1 -\alpha) }{{(4 \pi y)}^{j}} 
\end{align*}
where  
\begin{align*}
  A_0' = &  i^{(n+1/2)d - k}{\pi}^{\alpha} {\G_{\infty}(\alpha)}^{-1} {2}^{k - \left(n+\frac{1}{2}\right)d } {(-1)}^{-d(n+1) + \sum_{\sigma} (k_{\sigma})} \times \\ 
      & \times \psi(\dfrak){\N(\dfrak)}^{m-s}\N(2\cfrak^{-1})\bexp_{\infty}([F:\Q]/8),\\
e_j(k,s,\psi) =&  \sum_{\xi \in 2\cfrak^{-1}, \xi \gg 0} \xi^{ -\beta -j} g_{f}(\xi,(s-m)/2)L_{\cfrak}(s-m,\psi\omega_{2\xi})\bexp_{\infty}(\xi z). 
  \end{align*} 
Here $g_{f}(\xi,s)$ is a product over the primes $\qfrak$ dividing $\xi\cfrak$ but prime with $\cfrak$ of polynomials in $\psi(\qfrak)\N(\qfrak)^s$. 
\end{prop}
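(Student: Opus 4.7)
The approach is to compute the Fourier expansion of $\calE'\bigl(z,\tfrac{s-m}{2};k-(n+1)t,\psi,\cfrak\bigr)$ by starting from the general explicit formula for the Fourier coefficients of a half-integral weight Hilbert Eisenstein series, due to Shimura \cite[Lemma~4.2]{ShH4}. For fixed $S$, that formula factors the $\xi$-th Fourier coefficient of $E'(z,S;K,\psi,\cfrak)$, via Bruhat decomposition and Iwasawa unfolding, into a product of local archimedean factors (a confluent hypergeometric function depending on $\alpha_\sigma$, $\beta_\sigma$ and $\xi_\sigma y_\sigma$) and local non-archimedean factors (an Euler polynomial $g_f$ at primes dividing $\cfrak\xi$ together with a Dirichlet $L$-value).

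The heart of the proof is then to specialize this general formula at our parameters. Under the hypotheses $m+1\le s\le m+k_0-1$, $s\not\equiv n\pmod 2$, and $\psi(-1)=(-1)^{k-d-nd}$, one checks that $\beta_\sigma$ is a non-positive integer at every $\sigma\in I$, while $\alpha_\sigma$ is a half-integer. At such arguments the archimedean confluent hypergeometric factor degenerates to an explicit polynomial of multi-degree $-\beta$ in the variables $1/(4\pi y_\sigma)$ whose coefficients are exactly the ratios $\Gamma_\infty(1-\alpha-j)/\Gamma_\infty(1-\alpha)$. Substituting this polynomial into the Fourier expansion and collecting the terms by their total degree in $1/(4\pi y)$ produces the sum over $j$ in the proposition, with the layer of degree $j$ being $e_j(k,s,\psi)$ (the Dirichlet $L$-value $L_\cfrak(s-m,\psi\omega_{2\xi})$ and $g_f(\xi,(s-m)/2)$ coming from the non-archimedean factors, and the exponent $\xi^{-\beta-j}$ coming from the archimedean ones after extracting $\xi^{-\beta}$).

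It remains to identify the global normalization $A_0'$. The constants arise from four sources: (i) the prefactor $L_\cfrak(4s,\psi^2)$ built into the definition of $\calE'$; (ii) the Gauss sums at finite places that appear when passing from $E$ to $E'=E|_{k+\tfrac12}\eta_1$; (iii) the Gamma factors $\pi^\alpha\Gamma_\infty(\alpha)^{-1}$, powers of $i$, $2$, $(-1)$, and the $8$-th root of unity $\bexp_\infty([F:\Q]/8)$ produced by the archimedean computation at half-integer arguments; and (iv) the factor $\psi(\dfrak)\N(\dfrak)^{m-s}\N(2\cfrak^{-1})$ coming from the measure normalization of the adelic Eisenstein series. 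Bundling all of these together yields precisely $A_0'$.

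The main obstacle is that one would like to argue more cheaply by iterating the differentiation formula \eqref{deltaEis} starting from the already explicit expansion of Theorem~\ref{EisFou}, but the parity condition $s\not\equiv n\pmod{2}$ makes it impossible to realize $\calE'(z,\tfrac{s-m}{2};k-(n+1)t,\psi,\cfrak)$ as $\delta^l$ applied to a parallel-weight Eisenstein series at a Theorem~\ref{EisFou} point ($l_\sigma$ would be a half-integer). One is therefore forced into Shimura's general formula, and the delicate bookkeeping — especially the matching of signs, Gauss sums, and the $8$-th root of unity — is the only truly careful computation in the argument; this is carried out with Shimura's conventions from \cite{ShH3,ShH4}.
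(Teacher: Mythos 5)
Your proposal is correct and follows essentially the same route as the paper: the proof there also reduces to Shimura's explicit Fourier expansion (Theorem 6.1 of \cite{ShH4}), observes that at these parameters the $\alpha_\sigma$ are positive half-integers and the $\beta_\sigma$ non-positive integers, and then evaluates the confluent hypergeometric factors via the Whittaker symmetry $W(y,\alpha_\sigma,\beta_\sigma)=W(y,1-\beta_\sigma,1-\alpha_\sigma)$ and Hida's finite-sum formula, which is exactly the ``degeneration to a polynomial of multi-degree $-\beta$ in $1/(4\pi y_\sigma)$'' you describe. Your remark that the parity condition $s\not\equiv n \pmod 2$ blocks the cheaper route through the differential operators applied to Theorem \ref{EisFou} is a sensible observation, consistent with why the paper invokes the general formula here.
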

\begin{proof}
 We just have to explicitate the Fourier coefficients denoted by $y^{\beta}\mathbf{\xi}(y,\xi;\alpha,\beta)$ in  \cite[Theorem 6.1]{ShH4}. They are essentially hypergeometric functions.\\ 
Notice that our definition is different from his; in fact we have 
\begin{align*}
 E'\left(z,s; k-(n+1)t,\psi, \cfrak\right) & =  y^{k/2}E_{\mathrm{Sh}}'(z,s;1/2,k,\psi).
\end{align*}
Note that in our situation the $\alpha_{\sigma}$'s are positive half-integers and the $\beta_{\sigma}$'s are non-positive integers, for all $\sigma$. \\
Using \cite[(6.9 b)]{Hr3}, the evaluation of  $y_{\sigma}^{\beta_{\sigma}}\mathbf{\xi}(y_{\sigma},\xi_{\sigma};\alpha_{\sigma},\beta_{\sigma})$ reduces to the following cases: if $\xi_{\sigma} >0$, we find
$$  i^{\beta_{\sigma}-\alpha_{\sigma}}{(2\pi)}^{\alpha_{\sigma}} {\G(\alpha_{\sigma})}^{-1}{2}^{-\beta_{\sigma}} \xi_{\sigma}^{\alpha_{\sigma} -1 } e^{-4 \pi y \xi_{\sigma}/2} W(4 \pi y\xi_{\sigma},\alpha_{\sigma},\beta_{\sigma})$$ for $W$ the Whittaker function given in \cite[page 359]{Hr3}
$$W(y,\alpha_{\sigma},\beta_{\sigma})= {\G(\beta_{\sigma})}^{-1} {y}^{-\beta_{\sigma}} \int_{\mathbb{R}^+}e^{-yx} {(x+1)}^{\alpha_{\sigma}-1}{x}^{\beta_{\sigma}-1}. $$ 
If $\xi_{\sigma}= 0 $ we find:
$$ i^{\beta_{\sigma}-\alpha_{\sigma}}{\pi}^{\alpha_{\sigma}}2^{\alpha_{\sigma} +\beta_{\sigma}} \frac{\G(\beta_{\sigma}+\alpha_{\sigma}-1)}{\G(\alpha_{\sigma})\G(\beta_{\sigma})}{(4\pi y)}^{1-\alpha_{\sigma}}$$
If $\xi_{\sigma} <0$ we find : 
$$ i^{\beta_{\sigma}-\alpha_{\sigma}}{(2\pi)}^{\beta_{\sigma}}  {\G(\beta_{\sigma})}^{-1}{2}^{-\alpha_{\sigma}} y^{\beta_{\sigma}-\alpha_{\sigma}} {|\xi_{\sigma}|}^{\alpha_{\sigma} -1} e^{-4 \pi y \xi_{\sigma}/2}W(- 4 \pi y \xi_{\sigma},\beta_{\sigma},\alpha_{\sigma}) .$$ 
Notice that $W(y,1,\beta_{\sigma})= 1$.\\
If $\xi$ is not zero and not a totally positive element, then for at least one $\sigma$ the corresponding hypergeometric function is 0 (because of the zero of ${\G(\beta_{\sigma})}^{-1}$). \\
So we are reduced to evaluate these $W$'s only for $\xi_{\sigma} \geq 0$. By \cite[Theorem 3.1]{Sh6} we have $W(y,\alpha_v,\beta_v) =W(y,1-\beta_v,1 -\alpha_v)$. 
As $1 -\beta_{\sigma} \geq 1$, we can use \cite[(6.5)]{Hr3} to obtain
\begin{align*}
y^{\beta}\xi(y,\xi;\alpha,\beta) = & i^{\beta-\alpha}{\pi}^{\alpha} {\G(\alpha)}^{-1}{2}^{\alpha-\beta}e^{\frac{-4 \pi y\xi}{2}} \sum_{j=0}^{-\beta} { -\beta \choose  j} \frac{\G_{\infty}(1 - \alpha +j)}{\G_{\infty}(1-\alpha)} \frac{\xi^{\alpha -1 -j}}{{(4 \pi y)}^{j}}. 
\end{align*}
\end{proof}
We point out that  $\calE'\left(z,\frac{k_0}{2}; k_0 t,\psi, \cfrak\right)$ is an holomorphic Hilbert modular form for all $k_0 \geq 1$.\\
Suppose now $\cfrak = \nfrak^2$, we are interested now in the Fourier expansion of $\calE '(z,\frac{s-m}{2}; k-(n+1)t,\psi, \Nfrak^2)| [\nfrak^2] $. 
Let us define, for all $u$ in $\fa$, $t(u)= r_P \left( \begin{array}{cc} 1 & u \\ 0 & 1 \end{array} \right)$ and recall the function on $M_{\fa}$
\begin{align*} 
 f(\tau) & = \psi_f(d_p)\psi_{\cfrak}(d_w^{-1})|J(\tau, z_0)|J(\tau,z_0)^{-1} \:\:\:\:\;\; (\tau=pw)
\end{align*}
 defined above. Let $\iota:= r_{\Omega} \left( \begin{array}{cc} 0 & -1 \\ 1 & 0 \end{array} \right)_f$. Let $\tilde{z} = r_P \left( \begin{array}{cc} y^{1/2} & x \\ 0 & y^{-1/2} \end{array} \right)$ for $y=(y_{\sigma}) \in \R_+^{I}$ and $x=(x_{\sigma}) \in \R^{I}$. We give now a useful lemma 
\begin{lemma}\label{fdelta}
For $u$ in $F_f$, we have $f(\iota t(u) \tilde{z} \tau(\nfrak^2/4)  ) \neq 0$ if and only if $ u \in 2 \dfrak^{-1} \rif$, and then 
\begin{align*}
f(\gamma \tau(\nfrak^2/4)  ) |\delta_{ \gamma \tau(\nfrak^2/4)}|^{-2s -  \frac{1}{2}}_{\Aa} = 
\psi_f(2^{-1} \nfrak){\N(\nfrak 2^{-1})}^{-2s - \frac{1}{2}} f(\gamma' \eta_0  ) |\delta_{\gamma' \eta_0 }|^{-2s - \frac{1}{2}}_{\Aa} 
\end{align*}
for $u_v' = {(u \nfrak^{2}4^{-1})}_v  $, $\gamma =\iota t(u)\tilde{z}$ and $\gamma' =\iota t(u')\tilde{z}$.
\end{lemma}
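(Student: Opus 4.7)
The plan is a direct computation at the finite places, tracking how the factors in $\iota\, t(u)\, \tilde z\, \tau(\nfrak^2/4)$ rearrange. First, since $\tilde z \in P_\fai$ while $\tau(\nfrak^2/4)$ is purely finite-adelic, these commute (up to trivial central corrections) and the analysis reduces to the finite component. At finite places, the matrix underlying $\tau(\nfrak^2/4)$ factors, up to the metaplectic cocycle, as $\eta_2 \cdot \mathrm{diag}(c, c^{-1})_f$ with $c$ a finite id\`ele whose valuations are controlled by $\nfrak/2$.

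Applying the commutation $\eta_2\,\mathrm{diag}(c, c^{-1}) = \mathrm{diag}(c^{-1}, c)\,\eta_2$ and then conjugating $\eta_2$ into $\eta_0$ via the fixed diagonal matrix $\mathrm{diag}(2, 1)_f$ isolates a diagonal element of $P_\fa$ on the left. Using in addition the identities $\mathrm{diag}(a, b)\, t(v) = t(va/b)\, \mathrm{diag}(a, b)$ and $\iota\, \mathrm{diag}(a, b) = \mathrm{diag}(b, a)\, \iota$, one arrives at an equality in $M_\fa$ of the shape
\begin{align*}
\iota\, t(u)\, \tilde z\cdot \tau(\nfrak^2/4) = p \cdot \iota\, t(u')\, \tilde z \cdot \eta_0,
\end{align*}
where $u'_v = (u\nfrak^2 4^{-1})_v$ and $p \in P_\fa$ is the diagonal id\`ele-matrix with $d_p = 2^{-1}\nfrak$ at finite places and trivial archimedean component. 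The substitution $u \mapsto u'$ is exactly the trace of transporting the diagonal part of $\tau(\nfrak^2/4)$ across $t(u)$.

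From this factorization the non-vanishing criterion follows: since $p \in P_\fa$, one has $f(\iota t(u)\tilde z \cdot \tau(\nfrak^2/4)) \neq 0$ if and only if $\iota t(u')\tilde z \cdot \eta_0 \in P_\fa\, D(2^{-1}\nfrak^2\dfrak, 2\dfrak^{-1})$. A direct Iwasawa-type decomposition of the finite part $\iota\, t(u')_f\, \eta_{0,f} = \bigl(\begin{smallmatrix} -\dfrak & 0 \\ u'\dfrak & -\dfrak^{-1}\end{smallmatrix}\bigr)_f$ shows that this occurs exactly when $u'\dfrak \in \rif$ at every finite place, equivalently $u \in 2\dfrak^{-1}\rif$ after back-substitution through $u' = u\nfrak^2/4$.

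The comparison of values then comes from the multiplicativity of $f$ under $P_\fa$: one has $f(p\cdot -) = \psi_f(d_p)\, f(-) = \psi_f(2^{-1}\nfrak)\, f(-)$, while $\delta_{p\cdot -} = |d_p|_\Aa\, \delta_{-}$ gives the modulus contribution $|d_p|_\Aa^{-2s - 1/2} = \N(\nfrak 2^{-1})^{-2s - 1/2}$. The main technical hurdle is the bookkeeping of the metaplectic cocycle: multiplication in $M_\fa$ is not literal matrix multiplication but carries Weil-index roots of unity, so one must verify that all such phase corrections cancel between the two sides of the displayed identity. This should follow from the symmetric appearance of the corrections together with the invariance of $f$ under the central torus of $M_\fa$, using the explicit cocycle formulas recalled in Section \ref{HIntweight}.
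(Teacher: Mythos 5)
Your overall strategy is the same as the paper's: both arguments reduce to an explicit computation of the finite part of $\iota\, t(u)\,\tilde z\,\tau(\nfrak^2/4)$, the observation that it differs from $\iota\, t(u')\,\tilde z\,\eta_0$ by left multiplication by a diagonal element of $P_{\fa}$, and a comparison of the two Iwasawa-type decompositions $\xi = r_P(p)\, r_{\Omega}(w)$ to extract the constants. Packaging this as a single identity $\gamma\,\tau(\nfrak^2/4) = p\cdot\gamma'\,\eta_0$ is a reasonable way to organize the same computation; it is not a genuinely different route.

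There is, however, a concrete error in your non-vanishing step. The criterion is membership in $P_{\fa}\, D(2^{-1}\Nfrak^2\dfrak, 2\dfrak^{-1})$, so the lower-left entry of the $r_{\Omega}$-part must land in $2^{-1}\Nfrak^2\dfrak\hat{\rif}$ with the lower-right entry a unit at $v\mid \dfrak\Nfrak$; your stated condition ``$u'\dfrak\in\rif$ at every finite place'' drops the level $\nfrak^2$ entirely. The inconsistency is visible in your own back-substitution: $u'\dfrak\in\hat{\rif}$ with $u'=u\nfrak^2 4^{-1}$ gives $u\in 4\nfrak^{-2}\dfrak^{-1}\hat{\rif}$, not $u\in 2\dfrak^{-1}\hat{\rif}$. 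Carrying the level through (the correct intermediate condition is, up to the paper's normalizations, $u'\in 2^{-1}\nfrak^{2}\dfrak^{-1}\hat{\rif}$) does recover $u\in 2\dfrak^{-1}\rif$, which is how the paper argues directly from the explicit lower-triangular form of $(\iota\, t(u)\,\tau(\nfrak^2/4))_v$. Two further points. First, in $\Sl$ the diagonal entries of $p$ are mutually inverse, and matching the modulus forces $d_p = 2\nfrak^{-1}$ (so that $|d_p|_{\Aa}^{-2s-\frac{1}{2}} = \N(\nfrak 2^{-1})^{-2s-\frac{1}{2}}$), not $d_p = 2^{-1}\nfrak$ as you wrote; with your choice the exponent of the norm factor comes out with the wrong sign. (The $\psi_f$ argument in the lemma's statement is itself at odds with the paper's later use of $\psi_f(2\nfrak^{-1})$, so I would not press the character factor, but the norm factor is unambiguous.) Second, the metaplectic phase cancellation that you defer with ``should follow'' is precisely where the paper inputs $h(\tau(\nfrak^2/4),z)=j(\tau(\nfrak^2/4),z)=1$ together with the compatibility of $r_P$ and $r_{\Omega}$ on their overlap; without some such verification the identity in $M_{\fa}$, and hence the whole comparison, is not actually established.
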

\begin{proof}
 For all finite place $v$, we have 
\begin{align*}
 (\iota t(u)  \tau(\nfrak^2/4))_v & = \left( \begin{array}{cc} -(2^{-1} \dfrak\nfrak)_v  & 0 \\ (u 2^{-1} \dfrak\nfrak)_v  & -(2 \dfrak^{-1}\nfrak^{-1})_v \end{array} \right).
\end{align*}
We have by definition that $f(\xi)\neq 0 $ if and only if $\xi \in P_{\fa}D(2^{-1} \Nfrak^2 \dfrak, 2 )$. Let us write then 
\begin{align*}
 \iota t(u) z \tau(\nfrak^2/4) = &
 r_P \left( \begin{array}{cc} \ast & \ast \\ 0 & e \end{array} \right) r_{\Omega}\left( \begin{array}{cc} * & * \\ c & d \end{array} \right).
\end{align*}
 We know that $d$ can be chosen as a $v$-adic unit for all $v \mid \dfrak\Nfrak$; combining this with the explicit expression of $\iota t(u) z \tau(\nfrak^2/4) $, we obtain $u \in 2 \dfrak^{-1} \rif  $ and this proves the first part of the lemma.\\
Concerning the second part of the lemma, the right hand side has been calculated in \cite[Lemma 4.2]{ShH4}. 
We have (compare this formula with \cite[(4.19)]{ShH4}) 
\begin{align*} 
\delta_{(\iota t(u) \tau(\nfrak^2/4))_v}= |u \dfrak \nfrak 2^{-1}|_{v} = \N( \nfrak / 2)|u' \dfrak |_{v}  
\end{align*}
and $h(\tau(\nfrak^2/4),z)= j(\tau(\nfrak^2/4),z)=1$. If we write \begin{align*} \iota t(u')  \eta_0& =
 r_P \left( \begin{array}{cc} \ast & \ast \\ 0 & e' \end{array} \right) r_{\Omega}\left( \begin{array}{cc} * & * \\ c' & d' \end{array} \right),\end{align*}
 we see we can choose $e=2\nfrak^{-1} e'$ and $d=d'$, and this allows us to conclude.
\end{proof}

We can now state the following proposition:
\begin{prop}\label{Eisen_other2}
 
Let $k$, $s$, $m$, $n$ and $\psi$ as in Proposition \ref{Eisen_other}, we have 
\begin{align*}
\calE' \left(z,\frac{s-m}{2}; k-(n+1)t,\psi, \nfrak^2\right)|\left[\frac{{\nfrak}^2}{4}\right]  = & A_0 \sum_{0 \leq j \leq - \beta} e'_j(k,s,\psi)\frac{\G_{\infty}(1-\alpha -j)/ \G_{\infty}(1 -\alpha) }{{(4 \pi y)}^{j}} 
\end{align*}
where  
\begin{align*}
  A_0 = &  i^{\left(n+\frac{1}{2}\right)d - k}{\pi}^{\alpha} {\G_{\infty}(\alpha)}^{-1} {2}^{k - \left(n+\frac{3}{2}\right)d } {(-1)}^{-d(n+1) + \sum_{\sigma} (k_{\sigma})} \times \\ 
      & \times \psi(\dfrak  \nfrak 2^{-1}) D_F {\N(\dfrak\nfrak 2^{-1})}^{m-s-1} \bexp_{\infty}([F:\Q]/8),\\
e'_j(k,s,\psi) =& L_{\cfrak}(2(s-m)-1,\psi\omega_{2\xi}) +  \sum_{\xi \in 2^{-1}\rif, \xi \gg 0} \xi^{ -\beta -j} g_{f}(\xi,(s-m)/2)L_{\cfrak}(s-m,\psi\omega_{2\xi})\bexp_{\infty}(\xi z). 
  \end{align*} 
Here $g_{f}(\xi,s)$ is a product over the primes $\qfrak$ dividing $2\xi$ but prime with $2$ of polynomials in $\psi(\qfrak)\N(\qfrak)^s$.\\ 
Moreover, let $\kappa $ be an integer, $\kappa \geq 2$, then 
\begin{align*}
 A_0^{-1} \calE '\left(z,\frac{1-\kappa}{2};\kappa t,\psi, \cfrak \right)|\left[\frac{{\nfrak}^2}{4}\right] = L_{\cfrak}(1-2\kappa) + \sum_{0 \ll \xi \in 2^{-1} } L_{\cfrak}(1-\kappa,\psi\omega_{\xi})\beta\left(\xi, \frac{1-\kappa}{2}\right) \bexp_{\infty}(\xi x),
\end{align*}
for $A_0 = i^{\kappa d}\pi^d 2^{d\left(\kappa -  \frac{1}{2}\right)} \psi_f(\dfrak  \nfrak 2^{-1})D_F{\N(\dfrak\nfrak 2^{-1})}^{\kappa -2}$.

\end{prop}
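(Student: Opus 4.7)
The plan is to adapt the proof of Proposition \ref{Eisen_other} by inserting the operator $\left[\frac{\nfrak^2}{4}\right]$ and using Lemma \ref{fdelta} to reduce every term to an integral of the type already evaluated in Shimura's computation. Starting from the adelic expression
\begin{align*}
E'\!\left(z,\tfrac{s-m}{2};k-(n+1)t,\psi,\nfrak^2\right) = E_{\Aa}\!\left(u\tilde{\eta},\tfrac{s-m}{2};\tfrac{t}{2},k-(n+1)t,\psi,\nfrak^2\right)J(u,z_0),
\end{align*}
where $u(z_0)=z$, I would first rewrite the right-hand side, after application of $\left[\frac{\nfrak^2}{4}\right]$, as a sum over $P\backslash SL_2(F)$ of terms $f(g\,u\tilde{\eta}\,\tau(\nfrak^2/4))\,\delta_{g u\tilde{\eta}\tau(\nfrak^2/4)}^{-(s-m)-1/2}$ scaled by $\N(\nfrak/2)^{-1/2}$ from the definition of $[\cdot]$.

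Next I would split the coset decomposition into the small cell (contributing the constant term in the Fourier expansion) and the big cell, parametrized by representatives $g=\iota\, t(x)$ for $x\in F$. For the big-cell contribution, Lemma \ref{fdelta} shows that only $u\in 2\dfrak^{-1}\rif$ contribute, and converts each term into the analogous term at $\eta_0$ in place of $\tau(\nfrak^2/4)$, at the cost of the scalar $\psi_f(2^{-1}\nfrak)\,\N(\nfrak 2^{-1})^{-2s-1/2}$ and a change of variable $u'=u\cdot\nfrak^2/4$. Under this change of variable the lattice indexing the Fourier expansion is rescaled from $2\cfrak^{-1}=2\nfrak^{-2}$ (the index set in Proposition \ref{Eisen_other}) to $2^{-1}\rif$, which is precisely the range of summation in the announced formula. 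Exactly as in Proposition \ref{Eisen_other} the resulting coefficients in $\xi$ reduce to products of Whittaker and hypergeometric factors, and the Dirichlet series in $\afrak,\mathfrak{b}$ coprime to $\cfrak$ becomes the expected sum over ideals coprime to $2$, since the Euler factors at primes dividing $\nfrak$ are restored by the change of variable.

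The collected normalization—combining the $\N(\nfrak/2)^{-1/2}$ from $[\cdot]$, the $\N(\nfrak/2)^{-2s-1/2}$ and $\psi_f(2^{-1}\nfrak)$ from Lemma \ref{fdelta}, and the factor $\psi(\dfrak)\N(\dfrak)^{m-s}\N(2\cfrak^{-1})$ already present in $A_0'$—yields precisely the modified constant $A_0=i^{(n+1/2)d-k}\pi^{\alpha}\G_{\infty}(\alpha)^{-1}2^{k-(n+3/2)d}(-1)^{-d(n+1)+\sum_\sigma k_\sigma}\psi(\dfrak\nfrak/2)D_F\,\N(\dfrak\nfrak/2)^{m-s-1}\bexp_{\infty}([F:\Q]/8)$ stated in the proposition. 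For the second formula one repeats the same procedure, but starts from Theorem \ref{EisFou} in place of Proposition \ref{Eisen_other}: the constant term $L_{\cfrak}(1-2\kappa)$ comes from the small cell and is preserved (up to the same scalar absorbed in $A_0$), while the non-constant Fourier part transforms exactly as above, shifting the indexing lattice to $2^{-1}\rif$. The main obstacle is purely bookkeeping—keeping track of the half-integral weight factors of automorphy, of the powers of $\N(\nfrak/2)$ coming from both $\delta$-terms and from the definition of $[\nfrak^2/4]$, and of the twist of $\psi$ by $\N(\dfrak\nfrak/2)$—so that the two constants $A_0$ and $A_0'$ match with the precise exponents in the statement.
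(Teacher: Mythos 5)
Your overall strategy coincides with the paper's: write the series adelically, use Lemma \ref{fdelta} to convert each big-cell term involving $\tau(\nfrak^2/4)$ into the corresponding term at $\eta_0$, and thereby reduce to the Fourier-coefficient computation already carried out for Proposition \ref{Eisen_other} (resp.\ Theorem \ref{EisFou}), with the index lattice rescaled from $2\cfrak^{-1}$ to $2^{-1}\rif$. The identification of the surviving cosets, the role of the small cell for the new constant term, and the appearance of the Euler factors at primes dividing $\nfrak$ are all handled correctly.

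The gap is in the final bookkeeping of $A_0$, which is the actual content of the proposition. You claim $A_0$ is obtained from $A_0'$ by multiplying by the operator normalization $\N(\nfrak 2^{-1})^{-1/2}$ and the scalar $\psi_f(2^{-1}\nfrak)\N(\nfrak 2^{-1})^{-2s-1/2}$ of Lemma \ref{fdelta}. Evaluated at $s=(s-m)/2$, these two contribute $\psi(\nfrak 2^{-1})\N(\nfrak 2^{-1})^{m-s-1}$, which does account for the change from $\psi(\dfrak)\N(\dfrak)^{m-s}$ to $\psi(\dfrak\nfrak 2^{-1})D_F\N(\dfrak\nfrak 2^{-1})^{m-s-1}$ --- but it leaves the factor $\N(2\cfrak^{-1})=2^{d}\N(\nfrak)^{-2}$ of $A_0'$ untouched, whereas in $A_0$ this has become $2^{-d}$ (compare the exponents $2^{k-(n+\frac{1}{2})d}$ in $A_0'$ against $2^{k-(n+\frac{3}{2})d}$ in $A_0$). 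Your product of scalars therefore misses a factor $\N(\nfrak)^{2}2^{-2d}=\N(\nfrak^2 4^{-1})$. The paper supplies this missing piece by recomputing the local integrals $c_v(\sigma,s)$ in Shimura's Fourier-coefficient formula with the Haar measure giving $\rif_v$ volume $1$: for $\sigma\in 2^{-1}\dfrak^{-1}\rif$ one finds $c_v(\sigma,s)=|2|_{F_v}$ at the places $v\mid 2\nfrak$, the other places being unchanged, and this is precisely what replaces $\N(2\nfrak^{-2})$ by $2^{-d}$. Your remark that ``the Euler factors at primes dividing $\nfrak$ are restored by the change of variable'' gestures at this phenomenon, but the change of variable alone does not renormalize those local integrals; as written, your derivation of $A_0$ would come out wrong by $\N(\nfrak^2 4^{-1})$.
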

\begin{proof}
We give a proof only of the first formula as the proofs of the two formulae are identical.\\
The fact that the Fourier expansion is indexed by $\xi \gg 0$ and $\xi$ in $2^{-1} \rif$ can be proved exactly as in \cite[page 430]{Sh7}. Using  the adelic expression, \cite[pag. 300]{ShH4} and Lemma \ref{fdelta}, we see that up to a factor $\psi_f(2 \nfrak^{-1}){\N(\nfrak 2^{-1})}^{m-s-1}$, the sum which gives  $\calE '(z,\frac{s-m}{2}; k-(n+1)t,\psi, \nfrak^2)|[{\nfrak}^2 /4]$ is the same as the one for $\calE '(z,\frac{s-m}{2}; k-(n+1)t,\psi, \cfrak)$ which can be found in \cite[pag. 300]{ShH4}. We can calculate than the Fourier coefficients as in \cite[\S 5]{ShH4}; we choose a measure on $F_v$ which gives volume $1$ to the valuation ring of $F_v$. \\  
Then we have, in the notation \cite[\S 5]{ShH4}, for $\sigma \in 2^{-1}\dfrak^{-1} \rif$,
\begin{align*}
 c_v(\sigma, s) = |2|_{F_v}, \; \; \:\: v \mid 2\nfrak.
\end{align*}
  This implies that we have to substitute the factor $\N(2 \nfrak^{-2})$ which appears in the factor $A'_0$ defined in Proposition \ref{Eisen_other} with $2^{-d}$. The integrals at the other places are unchanged. 
\end{proof}


\subsection{The $p$-adic theory again}\label{halfp}
We want now define the notion of a $p$-adic modular form of half-integral weight. This is done in order to construct in Section \ref{p-measure} some measures interpolating the forms introduced in the previous section and which will be used to construct the $p$-adic $L$-function in Section \ref{padicL}. In particular, we shall construct a $p$-adic trace \ref{ptrace} to pass from ``$\Sl$''-type $p$-adic modular forms (such as the product of two half-integral weight forms) to ``$\gl$''-type $p$-adic modular forms. This is done interpolating $p$-adically the traces defined in \cite[Proposition 4.1]{Im}.\\ 
We will construct half-integral weight $p$-adic modular forms as in \cite{Wu}; we will define them them via $q$-expansion. Although this approach is hardly generalizable to more general context where no $q$-expansion principle ia available, it will be enough for our purpose.\\
For a geometric approach using the Igusa tower, we refer the reader to \cite{DT}.\\
We fix as above an ideal $\Nfrak$ of $\rif$ such that $4 \mid \Nfrak$ and let $k$ be an half integral weight. 
For all $f$ in $\boldM_{k}(\Nfrak,\psi,\C)$, consider the Fourier expansion from Proposition \ref{FExp} 
\begin{align*}
 f(z)=& \sum_{\xi \in F} \lambda(\xi,\rif;\boldf,\psi)\bexp_{\infty}(\xi z/2).
\end{align*}
This gives us an embedding (because of the uniqueness of the Fourier expansion and the transformation properties of $f$) of $\boldM_{k}(\Nfrak,\psi,\C)$ into $\C[[q]]$.\\ 
For a subalgebra $A$ of $\C$ we define $\boldM_{k}(\Nfrak,\psi,A)$ as $\boldM_{k}(\Nfrak,\psi,\C) \cap A[[q]]$. 
By the $q$-expansion principle \cite[Proposition 8.7]{DT}, if $p$ is unramified, we are indeed considering geometric modular forms of half-integral weight defined over $A$. \\
Let $K$ a finite extension of $\Q_p$ and $\oo$ its ring of integer, choose a number field $K_0$, of the same degree as $K$, dense in $K$ for the $p$-adic topology induced by the fixed embeddings chosen at the beginning and let $\oo_0$ its ring of integers.
We define 
\begin{align*} 
\boldM_{k}(\Nfrak,\psi,\oo) = & \boldM_{k}(\Nfrak,\psi,\oo_0) \otimes \oo, \\
 \boldM_{k}(\Nfrak,\psi,K)  = & \boldM_{k}(\Nfrak,\psi,\oo)\otimes K .
\end{align*}
It is possible, multiplying by a suitable $\theta$ series, to show the independence of the choice of $K_0$ and that these spaces coincide with the completion w.r.t. the $p$-adic topology of $\boldM_{k}(\Nfrak,\psi,\oo_0)$ and $\boldM_{k}(\Nfrak,\psi,K_0)$. The $p$-adic topology is defined as the maximum of the $p$-adic norm of the coefficients in the $q$-expansion.
We define 
\begin{align*}
\overline{\boldM}_{\mathrm{half}}(\Nfrak,\oo) = & \overline{ \sum_{k \in \Z[I] + \frac{1}{2} t}\bigcup_{ r \in \mathbb{N}_{>0},\psi} \boldM_{k}(\Nfrak p^r,\psi,\oo)}, \\ 
\overline{\boldM}_{\mathrm{half}}(\Nfrak,K) = & \overline{ \sum_{k \in \Z[I] + \frac{1}{2} t}\bigcup_{ r \in \mathbb{N}_{>0},\psi} \boldM_{k}(\Nfrak p^r,\psi,K)}.
\end{align*}
If $p$ is unramified in $F$, this is again compatible with the geometric construction \cite[Application page 608]{DT}. From the considerations in \cite[\S 9]{DT}, it could be possible in most cases to prove that the reunion on both $p$-level and weight is superfluous.\\
The same construction allows us to define $\overline{\boldS}_{\mathrm{half}}(\Nfrak,\oo)$ and $\overline{\boldS}_{\mathrm{half}}(\Nfrak,K)$.\\
Let $\chi$ be an Hecke character of conductor $\cfrak$ of finite order. We consider it both as an id\`ele character $\chi=\chi_{f}\chi_{\infty}$ for two character of $\faft$ and $\fait$ such that $\chi$ is trivial on $F^{\times}$ and as a function on $\rif/ \cfrak$ which is the character $\chi_{\cfrak}=\prod_{\qfrak \mid \cfrak} \chi_{\qfrak} $ on ${(\rif/ \cfrak)}^{\times}$ and $0$ otherwise. \\
\begin{lemma}
Let $\chi$ be an Hecke character of $\mathrm{Cl}(\cfrak)$ and 
\begin{align*}
 f= & \sum_{\xi} \lambda(\xi,\rif;f,\psi)q^{\xi/2}
\end{align*} 
be an element of $\boldM_{k}(\Nfrak,\psi,\C)$, $k$ an half-integral weight. Then 
\begin{align*}
 \sum_{\xi} \chi(\xi \rif)\lambda(\xi,\rif;f,\psi)q^{\xi/2}
\end{align*} 
belongs to $\boldM_{k}(\Nfrak\cfrak^2,\psi\chi^2,\C)$ and we will denote it by $f|\chi $.
\end{lemma}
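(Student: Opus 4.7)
The plan is to use the classical Gauss-sum expansion of a twist, adapted to the setting of Hilbert modular forms of half-integral weight, and then to verify the transformation law by conjugating elements of the new congruence subgroup by unipotent matrices.

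First, using the Gauss sum $G(\chi)$ introduced before Proposition \ref{Theta-Atkin} together with the orthogonality of characters, one has for every $\xi\in F$ the identity
\begin{align*}
 \chi(\xi\rif) \;=\; \frac{1}{G(\chi^{-1})} \sum_{u} \chi_{\infty}^{-1}(u)\chi_f^{-1}(u\dfrak\cfrak)\bexp_F(u\xi),
\end{align*}
where $u$ runs over a set of representatives of $\cfrak^{-1}\dfrak^{-1}/\dfrak^{-1}$, and the sum vanishes when $\xi\rif$ is not coprime to $\cfrak$. Substituting into the Fourier expansion of Proposition \ref{FExp} and using $f(z+2u) = \sum_{\xi}\lambda(\xi,\rif;f,\psi)\bexp_{\infty}(\xi u)\bexp_{\infty}(\xi z/2)$ exhibits $f|\chi$ as an explicit finite $\C$-linear combination of translates of $f$. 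Consequently $f|\chi$ is holomorphic on $\h^I$, and its Fourier expansion automatically satisfies the support conditions of Proposition \ref{FExp}.

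Second, to verify the transformation law under $\G^1[2^{-1}\Nfrak\cfrak^2, 2]$, I fix $\gamma=\bigl(\begin{smallmatrix} a & b \\ c & d\end{smallmatrix}\bigr)$ in this group, so that $c \in 2^{-1}\Nfrak\cfrak^2$. Writing $T_u = \bigl(\begin{smallmatrix} 1 & 2u \\ 0 & 1\end{smallmatrix}\bigr)$, a direct matrix computation produces a unique $u'\equiv d^2 u\bmod \dfrak^{-1}$ (using $ad\equiv 1\bmod\cfrak$) such that $T_u\gamma T_{-u'}\in \G^1[2^{-1}\Nfrak, 2]$. Since the unipotent matrices $T_u$ and $T_{u'}$ contribute trivially to the half-integral weight cocycle $h(-,z)$ via the lift $r_P$, the modularity of $f$ gives
\begin{align*}
 (f|_k T_u)|_k\gamma \;=\; \chi_{-1}(d)\,\psi_{\Nfrak}(a+2uc)\, (f|_k T_{u'}),
\end{align*}
with $\chi_{-1}(d)\in\{\pm 1\}$ depending only on $d\bmod 4$, which is harmless because $4\mid\Nfrak$. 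Summing over $u$ against the weight $\chi_\infty^{-1}(u)\chi_f^{-1}(u\dfrak\cfrak)$ and performing the change of variable $u\mapsto d^2 u$ (a bijection of the index set since $(d,\cfrak)=1$) multiplies the Gauss sum by $\chi^2(d)$. Combined with $\psi_{\Nfrak}(a+2uc)\equiv \psi_{\Nfrak}(a)\bmod\cfrak^2$, this yields
\begin{align*}
 (f|\chi)|_k \gamma \;=\; (\psi\chi^2)_{\Nfrak\cfrak^2}(a_\gamma)\,(f|\chi),
\end{align*}
which is precisely the defining transformation law of an element of $\boldM_k(\Nfrak\cfrak^2,\psi\chi^2,\C)$.

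The chief technical nuisance is the local bookkeeping: one must identify the adelic character $\chi$ with the ideal-theoretic character $\chi(\xi\rif)$ via the Gauss sum $G(\chi)$, verify that the substitution $u\mapsto d^2 u$ transforms the character weight exactly by $\chi^2(d)$, and check that the auxiliary quadratic factor $\chi_{-1}$ from the half-integral weight cocycle $h$ drops out. All of this is of exactly the same nature as in the classical case $F=\Q$; once the local factors are aligned, comparison of Fourier coefficients with the explicit formula from the first step concludes the proof.
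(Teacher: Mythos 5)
Your proposal is correct and follows essentially the same route as the paper: both define the twist as a Gauss-sum weighted combination of the unipotent translates $f|T_u$ over representatives of $\cfrak^{-1}2\dfrak^{-1}/2\dfrak^{-1}$, read off the Fourier coefficients $\chi(\xi\rif)\lambda(\xi,\rif;f,\psi)$ from orthogonality, and obtain the level $\Nfrak\cfrak^2$ and Nebentypus $\psi\chi^2$ from the conjugation $T_u\gamma T_{-u'}$ with $u'\equiv d^2u$. The only difference is that you carry out this last verification explicitly, whereas the paper delegates it to the reference \cite[\S 7.F]{Hr3}.
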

\begin{proof}
We proceed  as in \cite[\S 7.F]{Hr3}. We define 
\begin{align*} 
 f|\chi = & {G(\chi)}^{-1}\sum_{u } \chi(u) f|\left( \begin{array}{cc} 
1 & u \\ 
0 & 1 
\end{array}\right),
\end{align*}
for $u \in \rif_{\cfrak}$ which runs over a set of representative of $\cfrak^{-1}2\dfrak^{-1}/ 2\dfrak^{-1}$. 
We have \begin{align*}
\lambda(\xi,y\rif;f|\chi,\psi)= & {G(\chi)}^{-1} \lambda(\xi,y\rif;f,\psi)\sum_u \chi(u) \bexp_F(y^2\xi u /2) 
        \end{align*}
Recall that $\bexp_{\qfrak}(x_{\qfrak}) = 1 $ if $x_{\qfrak}$ is in $\dfrak^{-1}_{\qfrak}$, for $\qfrak$ a non-archimedean place, and $\sum_{u} \chi(u) =0$ if $\chi$ is not trivial. \\
We have in particular  $\lambda(\xi,\rif;f|{\chi},\psi)= \chi(\xi )\lambda(\xi,\rif;f,\psi)$. 
\end{proof}
Let  $\Cc(\rif_{\pfrak}/\pfrak^e,\oo)$ be the set of functions from $\rif_{\pfrak}/\pfrak^e $ to $\oo$, as a $\oo$-module it is spanned by the characters of $\rif_{\pfrak}^{\times}/\pfrak^r $ and the constant function. So we can define the operator $|\phi$ for all function $\phi$ in $\Cc(\rif_{\pfrak}/\pfrak^e,\oo)$. 
Let $\Cc(\rif_{\pfrak},\oo)$ be the set of continuous functions from $\rif_p$ to $\oo$, the locally constant function are dense in this set. For all $\phi$ in $\Cc(\rif_{\pfrak},\oo)$ we define the operator $|\phi$ on $\overline{\boldM}_{\mathrm{half}}(\Nfrak,\oo)$ in the following way: let $\lgr \phi_n \rgr$ be some locally constant functions such that $\phi = \lim_n \phi_n$, then we pose $\boldf|\phi = \lim_n \boldf|\phi_n$.\\
We can also define on $\overline{\boldM}_{\mathrm{half}}(\Nfrak,\oo)$ the differential operator $\textup{d}^{\sigma}$, for $\sigma \in I$; take $y$ in $\rif_p^{\times}$, we have the continuous map $(\phantom{e})^{\sigma}: y \mapsto y^{\sigma}$. We define $\textup{d}^{\sigma}\boldf = \boldf |(\phantom{e})^{\sigma}$. We have then $\lambda(\xi,\rif;\textup{d}^{\sigma}\boldf,\psi)=\xi^{\sigma}\lambda(\xi,\rif;\boldf,\psi)$. As $y \mapsto y^{-1}$ is a continuous map, we can define also $\textup{d}^{-\sigma}$ such that 
\begin{align*}
\lambda(\xi,\rif;\textup{d}^{-\sigma}\boldf,\psi) = \iota_p(\xi)\xi^{-\sigma}\lambda(\xi,\rif;\boldf,\psi),
\end{align*}
where $\iota_p(\xi)= 0$ if $(p,\xi) \neq \rif$ and $1$ otherwise. We define for all $r \in \Z[I]$ the operator $\textup{d}^r = \prod_{\sigma \in I} {\textup{d}^\sigma}^{r_\sigma}$.\\  
We have an action of $\rif_p^{\times}$ on $\overline{\boldM}_{\mathrm{half}}(\Nfrak,\oo)$; it acts via the Hecke operator $T(b,b^{-1})$ defined in \ref{ptheory}.  $T(b,b^{-1})$ acts by right multiplication by the matrix $r_P \left( \begin{array}{cc} 
b & 0 \\ 
0 & b^{-1} 
\end{array}\right)_f$, as this element normalizes the congruence subgroup $D(2^{-1}\Nfrak,2) $. This actions commutes with the action of ${(\rif/\Nfrak p^r\rif)}^{\times}$ via the diamond operators. We can describe this action in the complex setting as follows. \\
Let $(b',b)$ in ${(\rif/\Nfrak\rif)}^{\times} \times \rif_p^{\times}$ and take $b_0$ in $\faft$ which is projected to $(b',b)$.  For $f$ in $\boldM_{k}(\Nfrak p^r,\psi,\oo)$ we pose $ f | (b',b) = b^{k} f | \gamma_b$ with $\gamma_b \in \Sl(F)$ such that 
\begin{align*}
\gamma_b^{-1} r_P \left( \begin{array}{cc} 
b_0 & 0 \\ 
0 & b_0^{-1} 
\end{array}\right)_f \in D(2^{-1}\Nfrak,2)\Sl(F_{\infty})
\end{align*}
and then extend it by continuity on the whole $\overline{\boldM}_{\mathrm{half}}(\Nfrak,\oo)$. Here we are implicitly assuming that the coefficients ring $\oo$ contains all the square roots of elements of $\rif_p^{\times}$.\\
Such an action is compatible with the action of $\Sl(\hat{\rif})$ on the forms of integral weight as defined in \cite[\S 7.C]{Hr3}.\\

Take now $f $ in $\boldM_{k_1+ \frac{t}{2}}(\Nfrak,\psi_1,A)$ and  $g $ in $\boldM_{k_2+ \frac{t}{2}t}(\Nfrak,\psi_2,A)$; the product $fg$ is a modular form in $\boldM_{k_1+ k_2 + t} (\G^1[2^{-1}\Nfrak,2],\psi_1\psi_2\chi_{-1},A)$ where $\chi_{-1}$ is the quadratic character modulo $4$ defined before via the automorphy factor $h(\gamma,z)$.\\
It is clear that this product induces a map 
\begin{align*}
 \overline{\boldM}_{k_1+ \frac{t}{2}}(\Nfrak,\oo) \times \overline{\boldM}_{k_2+ \frac{t}{2}}(\Nfrak,\oo) \rightarrow \overline{\boldM}_{k_1 +k_2 +t}(\G^1[2^{-1}\Nfrak,2],\oo)
\end{align*}
which on the level of $q$-expansion is just multiplication of formal series.\\

We point out that when $F = \Q$, the congruence subgroups which are considered for the integral weights and the ones for the half-integral weights are the same, because in $\Z$ the only totally positive unit is $1$, while when $F\neq \Q$ the totally positive units are of positive rank.\\ 
This poses a problem because in the sequel we shall need to consider the product of two half-integral weight modular forms as a ``$\gl$-type modular form'' whereas it is only a form of ``$\Sl$-type''.\\ 
We follow the ideas of  \cite[Proposition 4.1]{Im} to work out this problem.\\
For simplicity, we pose $\G[\Nfrak]:= \G[2^{-1} \Nfrak, 2]$ and $\G^1[\Nfrak]:=\G^1[2^{-1} \Nfrak, 2]$. Let $k,v$ be {\it integral weights} and $A$ a $p$-adic $\oo$-algebra of characteristic $0$, we define 
\begin{align}
\overline{\boldM}_{k}(\G^1[\Nfrak],A) & = \overline{ \bigcup_{ r \in \mathbb{N}_{>0},\psi_0} \boldM_{k}(\G^1[\Nfrak p^r],\psi_0,A)}, \notag\\
\overline{\boldM}_{k,v}(\G[\Nfrak],A) & = \overline{ \bigcup_{ r \in \mathbb{N}_{>0},\psi,\psi'} \boldM_{k}(\G[\Nfrak p^r],\psi,\psi',A)},\notag\\
\overline{\boldM}(\G^1[\Nfrak],A)  & = \overline{ \sum_{k \in \Z[I]} \overline{\boldM}_{k}(\G^1[\Nfrak],A)}, \label{kunion} \\
\overline{\boldM}(\G[\Nfrak],A)  & = \overline{ \sum_{k, v \in \Z[I]} {\boldM}_{k,v}(\G[\Nfrak],A)},\notag
\end{align}
where completion is taken w.r.t. the $p$-adic topology defined as the maximum of the $p$-adic norm of the coefficients in the $q$-expansion.\\
We consider now the three following tori of $\gl(\rif/\Nfrak p^r\rif)$: 
\begin{align*}
\mathbb{T}_{\mathrm{ss}} = & \left\{ \left( \begin{array}{cc} 
a & 0 \\ 
0 & a^{-1} 
\end{array}\right)| a \in  {(\rif/\Nfrak p^r\rif)}^{\times} \rgr,\\
\mathbb{T}_{Z} = & \left\{ \left( \begin{array}{cc} 
a & 0 \\ 
0 & a 
\end{array}\right)| a \in  {(\rif/\Nfrak p^r\rif)}^{\times} \rgr,\\
\mathbb{T}_{\alpha} = & \left\{ \left( \begin{array}{cc} 
a & 0 \\ 
0 & 1 
\end{array}\right)| a \in  {(\rif/\Nfrak p^r\rif)}^{\times} \rgr.
\end{align*}
Let $\psi_0$ resp. $\psi$, $\psi'$ be a character of the torus $\mathbb{T}_{\mathrm{ss}}$, resp. $\mathbb{T}_{Z}$, $\mathbb{T}_{\alpha}$. We say that they form a {\it compatible triplet} if 
\begin{align*}
\psi_0\left( \left( \begin{array}{cc} 
a & 0 \\ 
0 & a^{-1} 
\end{array}\right) \right) &= \psi\left( \left( \begin{array}{cc} 
a^{-1} & 0 \\ 
0 & a^{-1} 
\end{array}\right) \right) \psi' \left( \left( \begin{array}{cc} 
a^{2} & 0 \\ 
0 & 1 
\end{array}\right) \right). 
\end{align*}

\begin{prop}
Let $\psi_0$, $\psi$ and $\psi'$ as above forming a compatible triplet. Suppose that $\psi$ factors through ${(\rif/\Nfrak p^r\rif)}^{\times} / E$. Let $w \in \Z^d$ be such that $k-2w=mt$, $m\geq 0$  and pose, as in Section \ref{ComplexHilbert}, $v=t-w$.\\
We have a map 
$$
\begin{array}{cccc}
\mathrm{Tr}_{\Sl}^{\gl}(v,\psi') : & \overline{\boldM}_{k}(\G^1[\Nfrak],\oo) & \rightarrow & \overline{\boldM}_{k,v}(U[{\Nfrak}],\oo)
\end{array}
$$
which is equivariant for the action of $\G[\Nfrak]$ and $\rif_p^{\times}$ and such that if $f$ is a modular form in $\boldM_k(\G^1[\Nfrak p^r],\oo;\psi_0)$, then $\mathrm{Tr}_{\Sl}^{\gl}(v,\psi')(f)$ is a modular form in $\boldM_{k,v}(U[\Nfrak p^r], \oo;\psi,\psi')$.
\end{prop}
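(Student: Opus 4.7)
The strategy is to construct the map explicitly at each finite level $\Nfrak p^r$ and then extend by $p$-adic continuity, running in reverse the trace map of \cite[Proposition 4.1]{Im}. The key observation is that a $\gl$-adelic form of weight $(k,v)$ and characters $(\psi,\psi')$ is determined by its restriction to $\Sl(\Aa)$ together with the action of the split torus $\mathbb{T}_\alpha$ of matrices $\mathrm{diag}(a,1)$. Indeed, combining the determinant map with the strict class number decomposition \eqref{classdecomp} yields
\begin{align*}
\gl(F_f) = \coprod_{i=1}^{h(\Nfrak p^r)} \gl(F) \cdot \mathrm{diag}(a_i,1) \cdot \Sl(F_f) \cdot U[\Nfrak p^r],
\end{align*}
so every $g\in\gl(F_f)$ has a decomposition $g=\alpha\cdot\mathrm{diag}(a_i,1)\cdot s\cdot u$, unique up to multiplication by totally positive units and up to $\G^1[\Nfrak p^r]$.

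For $f \in \boldM_k(\G^1[\Nfrak p^r],\psi_0,\oo)$ with adelic realization $\boldf$, I would define $\mathrm{Tr}^{\gl}_{\Sl}(v,\psi')(f)$ by the rule
\begin{align*}
\mathrm{Tr}^{\gl}_{\Sl}(v,\psi')(f)\bigl(\alpha\cdot\mathrm{diag}(a_i,1)\cdot s\cdot u\bigr) := \psi(\det u)\,\psi'(a_i\,d_u)\,\lgr a_i\rgr^{v}\,\boldf(s),
\end{align*}
where $d_u$ is the bottom-right entry of $u$ and the factor $\lgr a_i\rgr^{v}$ is the weight twist from Section \ref{ComplexHilbert} which converts $\Sl$-weight $k$ into $\gl$-weight $(k,v)$. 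Well-definedness reduces to two checks. First, replacing $(\alpha,a_i)$ by $(\alpha\cdot\varepsilon,a_i\varepsilon^{-1})$ for a totally positive unit $\varepsilon$ forces the factors $\psi'(\varepsilon)$ and $\lgr\varepsilon\rgr^{-v}$ to compensate the $\Sl$-transformation by $\alpha\cdot\mathrm{diag}(\varepsilon^{-1},\varepsilon^{-1})\cdot\mathrm{diag}(\varepsilon^{2},1)$; since $\varepsilon\in E$, the hypothesis that $\psi$ factor through $(\rif/\Nfrak p^r\rif)^\times/E$ provides exactly the cancellation needed. Second, replacing $(s,u)$ by $(s\gamma,\gamma^{-1}u)$ for $\gamma \in \G^1[\Nfrak p^r]$ forces the $\Sl$-transformation law of $f$ (given by $\psi_0$) to align with the $\gl$-laws $(\psi,\psi')$: this is precisely the content of the compatible triplet condition
\begin{align*}
\psi_0\bigl(\mathrm{diag}(a,a^{-1})\bigr) = \psi\bigl(\mathrm{diag}(a^{-1},a^{-1})\bigr)\,\psi'\bigl(\mathrm{diag}(a^{2},1)\bigr).
\end{align*}

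Once the map is defined, equivariance under $\G[\Nfrak]$ follows because right multiplication by $\G[\Nfrak]$ permutes the $(s,u)$ parts of the decomposition while leaving each $\mathrm{diag}(a_i,1)$ class unchanged up to elements of $\hat\rif^\times$ already tracked by $\psi'(d_u)$; similarly, the action of $\rif_p^\times$ via $T(b,b^{-1})$ commutes with the class-group decomposition because $b$ is a $p$-unit. Since the finite-level definition is $\oo$-linear and preserves $q$-expansions in an evident way, it extends by continuity to a map on the $p$-adic completions $\overline{\boldM}_k(\G^1[\Nfrak],\oo) \to \overline{\boldM}_{k,v}(U[\Nfrak],\oo)$.

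The main obstacle is the well-definedness check against the totally positive units $E$: one has to trace how the weight-$v$ factor $\lgr a_i\rgr^{v}$ and the character $\psi'$ combine under replacement of $a_i$ by $a_i\varepsilon^{-1}$ and verify that the ambiguity is absorbed precisely by the hypothesis that $\psi$ is trivial on the image of $E$. Once this bookkeeping is correct, the remaining properties follow formally from the adelic/classical dictionary of Section \ref{ComplexHilbert} and from the $q$-expansion principle used in Section \ref{halfp}.
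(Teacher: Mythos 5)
There is a genuine gap: your formula is not well-defined, and the missing ingredient is precisely the averaging over totally positive units that is the actual content of the proposition. The decomposition $g=\alpha\cdot\mathrm{diag}(a_i,1)\cdot s\cdot u$ is ambiguous by more than the two moves you check. Take a totally positive unit $\varepsilon\in E$ with $\varepsilon\equiv 1\bmod \Nfrak p^r$ that is \emph{not} a square in $\rif^{\times}$; such units exist as soon as $F\neq\Q$, since $E$ has positive rank. The global matrix $\beta=\mathrm{diag}(\varepsilon,1)\in\gl(F)$ can be absorbed into $\alpha$, and its finite part factors as $\mathrm{diag}(\varepsilon,\varepsilon^{-1})_f\cdot\mathrm{diag}(1,\varepsilon)_f$, with the first factor in $\Sl(F_f)$ and the second in the level group. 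This produces a second decomposition of the same $g$ whose $\Sl(F_f)$-component differs from $s$ by left multiplication by $\mathrm{diag}(\varepsilon,\varepsilon^{-1})_f$. Since $\boldf$ is left-invariant under the \emph{global} $\Sl(F)$, evaluating your formula on this second decomposition replaces $\boldf(s)$ by the value of $f|_{k,w}\varepsilon$ (classically, $f(\varepsilon^2 z)$ up to automorphy factors), and for a non-square $\varepsilon$ this is a genuinely different form: $E\G^1[\Nfrak p^r]$ has index $|E/E^2|$ in $\G[\Nfrak p^r]$ modulo the center, and a $\G^1$-modular form has no reason to transform by $\psi'$ under the extra cosets. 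No character bookkeeping can repair this, because the discrepancy $f|_{k,w}\varepsilon-\psi'(\varepsilon)f$ is an honest nonzero modular form in general. Your ``first check'' only treats the harmless ambiguity coming from scalar matrices (equivalently, left translation by $\mathrm{diag}(\varepsilon,\varepsilon^{-1})\in\Sl(F)$ combined with a central shift), which indeed costs nothing; the problematic cosets are those of non-square determinant, and the hypothesis that $\psi$ factor through ${(\rif/\Nfrak p^r\rif)}^{\times}/E$ does not touch them.

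The paper repairs exactly this point by first forming the finite average $\mathrm{Tr}_{E}(f)=\frac{1}{2^{d-1}}\sum_{\eps_i}\psi'(\eps_i^{-1})\,f|_{k,w}\eps_i$ over a set of representatives of $\G[\Nfrak p^r]/E\G^1[\Nfrak p^r]\cong E/E^2$. The averaged form satisfies $\mathrm{Tr}_E(f)|_{k,w}\varepsilon=\psi'(\varepsilon)\mathrm{Tr}_E(f)$ and is therefore modular for the larger group $\G[\Nfrak p^r]$; only then does one assemble copies over the strict class group via the isomorphism $\I_{\Nfrak p^r}$ of Section \ref{ComplexHilbert}, weighting the $i$-th component by $\psi^{-1}(\afrak_i)$ --- and it is at this second step, not the first, that the hypothesis on $\psi$ being trivial on $E$ is used (so that $\psi$ descends to $\mathrm{Cl}(\Nfrak p^r)$). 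Note that for $F=\Q$ one has $E=\{1\}$ and your construction does work, but that is exactly the case in which the proposition is unnecessary, as the paper itself points out at the start of Section \ref{halfp}.
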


\begin{proof}
We point out that we are adding just one variable, namely $m$. \\

We extend $f$ to $E\G^1[\Nfrak]$ simply requiring that it is invariant for the diagonal action of the units.\\
Let $R$ be a set of representatives of $\G[\Nfrak p^r] / {E}\G_{1}[\Nfrak p^r]\cong E/ {E}^2 $, we define  
\begin{align*}
\mathrm{Tr}_{E}(f) = \frac{1}{2^{d-1}} \sum_{\eps_i \in R} \psi'(\eps_i^{-1})f |_{k,w} \eps_i.
\end{align*}

For all $\gamma$ in $\G_{1}[\Nfrak p^r]$, we have $  \eps_i\gamma = \gamma_1  \eps_i $, with $\psi_0(\gamma)=\psi_0(\gamma_1)$. Moreover, the matrix  $
\left( \begin{array}{cc} 
\eps & 0 \\ 
0 & 1 
\end{array}\right)$ acts via $\psi'(\eps)$.\\
We call the space of such forms $\boldM_{k,v}(\G[\Nfrak p^{r}],\oo;\psi_0,\psi')$. 

It is  easy to see that $\mathrm{Tr}_{E}$ commutes with the following inclusions:
\begin{align*}
  \boldM_k(\G^1[\Nfrak p^r],\oo;\psi_0) \hookrightarrow & \boldM_k(\G^1[\Nfrak p^{r+1}],\oo;\psi_0),\\ 
  \boldM_{k,v}(\G[\Nfrak p^r],\oo;\psi_0,\psi') \hookrightarrow & \boldM_{k,v}(\G[\Nfrak p^{r+1}],\oo;\psi_0,\psi').
\end{align*}
At the beginning of Section \ref{ComplexHilbert} we decomposed the space $\boldM_{k,w}(U(\Nfrak p^r))$ of adelic modular forms for $U(\Nfrak p^r)$ into complex ones. Let $\afrak_i$ be a set of representatives for the strict class group of $F$ modulo $\Nfrak p^r$. As $\psi$ factors through ${(\rif/\Nfrak p^r\rif)}^{\times} / E$, we can see it as a character of $\mathrm{Cl}(\Nfrak p^r)$. We define
$$
\begin{array}{cccc}
\I :& \boldM_{k,w}(\G[\Nfrak],\psi_0,\psi')& \rightarrow & \boldM_{k,w}(U(\Nfrak),\psi,\psi')\\
 & f & \mapsto & \I_{\Nfrak p^r}(f,\ldots, \psi^{-1}(\afrak_i) f, \dots,\psi^{-1}(\afrak_{h(\Nfrak p^r)}) f),
\end{array}
$$
where $\I_{\Nfrak p^r}$ is the isomorphism defined in Section \ref{ComplexHilbert}. We can choose $\afrak_i$  in a compatible way when $r$ grows such that it is compatible with the obvious inclusion 
\begin{align*}
\boldM_{k,w}(U(\Nfrak p^{r})) \hookrightarrow \boldM_{k,w}(U(\Nfrak p^{r+1}))
\end{align*}
and compatible also with the Nebentypus decomposition.\\
We define $\mathrm{Tr}_{\Sl}^{\gl}(v,\psi') := \I \circ \mathrm{Tr}_{E}$. We can extend this map to the space of $p$-adic modular forms Nebentypus by Nebentypus and then to the completion, as it is clearly continuous. It is equivariant by construction.\\
\end{proof}

Let us show that the trace morphisms defined above vary $p$-adically continuously. We decompose $\rif_p^{\times}$ as the product of a torsion part $\mu$ and its free part $\boldW'$ which we identify with ${(1+ p \Z_p)}^d$. Fix elements $a_j$ for $j=1, \ldots, d$ which generates $\boldW'$.\\
Choose as before a set $R$ of representative (independent of $r$) for $\G[\Nfrak p^r] / {E}\G_{1}[\Nfrak p^r]$. Let $q=p^f$ and $s $ big enough such that all the torsion of $\rif_p^{\times}$ is killed by $(q-1)p^s$. Fix $\psi_0$, $\psi$ and $\psi'$ as above, then we have if  $v \equiv v' \bmod (q-1)p^s$
\begin{align*} 
\mathrm{Tr}_{E}(v,\psi')f = & \frac{1}{2^{d-1}} \sum_{R} \psi'(\eps_i^{-1}){\mathrm{det}(\eps_i)}^w f|_{k,0}  \eps_i \\ 
\equiv & \frac{1}{2^{d-1}} \sum_{R} \psi'(\eps_i^{-1}){\mathrm{det}{(\eps_i)}}^{w'} f|_{k,0}  \eps_i \bmod p^s\\ 
     = &  \mathrm{Tr}_{E}(v',\psi')f \bmod p^s.
\end{align*} 

This shows that the trace morphism $\mathrm{Tr}_{E}(v,\psi')$ can be $p$-adically continuously interpolated over $\rif_p^{\times}$.\\ 
For each $y$ in $\rif_p^{\times}$, we write the projection of $y$  to $\boldW'$ as $({\lla y \rra}_1, \ldots, {\lla y \rra}_d)$, we have by definition
\begin{align*}
{\lla y \rra}_j= &  a_j^{\log_p({\lla y\rra}_j)/\log_p(a_j)}.
\end{align*}
Let us identify $\oo[[\boldW']]$ with $\oo[[X_1, \ldots, X_d]]$, in  such a way  that $a_j$ corresponds to $1+X_j$ and define $g_i=\mathrm{det}(\eps_i)$. We pose
\begin{align*}
\mathrm{Tr}_{\Sl}^{\gl} :\overline{\boldM}_{k} (\G^1[\Nfrak],\oo) &  \rightarrow  \overline{\boldM}_{k}(U(\Nfrak), \oo[[\rif_p^{\times}]]) \\
 f & \mapsto \I \left( \frac{1}{2^{d-1}} \sum_{R} \psi'(\eps_i^{-1}) A_{g_i}(X) f|_{k,0} \eps_i\right)   
\end{align*}
where $A_{y}(X)=\prod_j (1+X_j)^{\log_p({\lla y \rra}_j)/\log_p(a_j)}$.\\
For all points $P$ of $\mathrm{Spec}(\oo[[\rif_p^{\times}]])$ of type $a \mapsto \psi'(a)a^{v}$, we obtain a commutative diagram 
$$
 \xymatrix{ 
 {\overline{\boldM}_{k} (\G^1[\Nfrak],\oo)} \ar[r]^-{\tiny{\mathrm{Tr}_{\Sl}^{\gl}}}  & \overline{\boldM}_{k}(U(\Nfrak), \oo[[\rif_p^{\times}]])\ar[d]^P \\
{{\boldM}_{k} (\G^1[\Nfrak p^r],\psi_0, \oo)} \ar[u] \ar[r]^-{\tiny{\mathrm{Tr}_{\Sl}^{\gl}(v,\psi'_P)}} & {\boldM}_{k} (U(\Nfrak),\psi,\psi'_P,\oo)  }
$$

for $\psi'_P(\zeta)=\psi'(\zeta)\zeta^{-v} $ (\cite[page 337]{Hr3}).\\

Consider the action of $\rif_p^{\times}$ defined above, we see exactly as in \cite[page 334]{Hr3} that $b \in \rif_p^{\times}$ acts via $b^{k}$. This shows that the sum over $k$ in (\ref{kunion}), before taking completion,  is a direct sum. Substituting in the above definition $A_{g_i}(X) f|_{k,0} \eps_i$ by $f|T(g_i,1)$, for $T(g_i,1)$ the Hecke operator defined in Section \ref{ptheory}, we can extend $\mathrm{Tr}_{\Sl}^{\gl}$ to the map below
\begin{align}\label{ptrace}
\mathrm{Tr}_{\Sl}^{\gl} :\overline{\boldM} (\G^1[\Nfrak],\oo) &  \rightarrow  \overline{\boldM}(U(\Nfrak), \oo[[\rif_p^{\times}]]).
\end{align}

We show now some compatibility with the Hecke action;  Shimura in \cite[\S 5]{ShH3} has defined Hecke operators $T'(p)$ in the half-integral weight case.\\ 
There is a correspondence between the integral weight Hecke operator $T_0(p^2)$ and the half-integral weight Hecke operator $T'(p)$. Notice that we can define an operator $T(p)$ on ${\boldM}_{k,v}(\G[\Nfrak], \oo)$ because $T(p)$, as defined in Section \ref{ptheory}, does not permute the connected components of the Shimura variety associated with $V_1(\Nfrak)$.\\ 
Using \cite[Proposition 7.4]{Hr3} (note that there is a misprint in the expression of $\bolda(y,\boldf| T_0^n(p))$ and that the correct expression can be found in \cite[(2.2b)]{Hr3}), we see that $T_0(p^n)$ is given by
\begin{align*}
 f=\sum_{0 \ll \xi \in F^{\times} } a(\xi,f)q^{\xi/2} \mapsto & f|T_0(p^n) = p^{nv}{\lgr p \rgr}^{-nv}\sum_{0 \ll \xi \in F^{\times} } a(p^n \xi,f)q^{\xi/2}
\end{align*}

Using \cite[Proposition 5.4]{ShH3}  we see that the operator $T'(p^n)$ defined by Shimura is given by
\begin{align*}
 f=\sum_{0 \ll \xi \in F^{\times} } a(\xi,f)q^{\xi/2} \mapsto & f|T'(p^n)=p^{-nk}\sum_{0 \ll \xi \in F^{\times} } a(p^{2n} \xi,f)q^{\xi/2}.
\end{align*}
So we have then $T(p^{2n}) = {\N(p^n)}^{(m+2)}T'(p^n)$. We can think of $T'(p^n)$ as the operator on ``unitarized'' Hilbert modular forms (for which $v=-k/2$). \\
This compatibility allows us to notice the following; let $\chi$ be a Hecke character of conductor $p^n$ and $f$ and $g$ two forms in $\boldM_{k_1+ \frac{t}{2}}(\Nfrak,\psi_1,\oo)$ and $\boldM_{k_2+ \frac{t}{2}}(\Nfrak,\psi_2,\oo)$, then as in \cite[Proposition 7.4]{Hr3} we obtain 
\begin{align*}
 T_0(p^n)(f \; g|\chi)= & \chi_{\infty}(-1){T_0(p^n)}(f|\chi \; g).
\end{align*}
A similar statement applies to linear combinations of characters. In particular for the idempotent $e$ of Section \ref{ptheory} and all $r$ in $\Z[I]$ we have  
\begin{align*}
 e (f \; g|d^r)= & {(-1)}^{r} e (f|d^r \; g).
\end{align*}

\section{The $L$-function for the symmetric square}\label{LFun}
Nothing new is presented in this section; we shall first recall the definition of the symmetric square $L$-function and present in details the Euler factors at place of bad reduction for $\boldf$. We explain in formula \ref{thetashift} the origin of one of the Euler type factor at $p$ which appears in the interpolation formula of the $p$-adic $L$-function \ref{T1}.
\subsection{The imprimitive $L$-function}\label{ComLFun}
Let $\boldf$ be a Hilbert modular form of level $\Nfrak$, weight $(k,w)$. We put $v=t-w$, and let $m$ be the non-negative integer such that $(m+2)t=k+2v$. Suppose that $\boldf$ is an eigenvector for the whole Hecke algebra and let $\psi$ be the finite order character such that $\boldf | T(z,z)= \psi(z) \boldf$. Let $\lambda$ be the morphism of the Hecke algebra such that $\boldf |h=\lambda(h)$ and pose  
\begin{align*}
\sum_{\mfrak \subset \rif} \frac{\lambda(T_0(\mfrak))}{{\N(\mfrak)}^s} & = \prod_{\qfrak \in Spec^{\circ}(\rif)} {(1 - \lambda(T(\qfrak)) \N(\qfrak)^{-s} + \psi({\qfrak}){\N(\qfrak)}^{m+1-2s} )}^{-1}.
\end{align*}
We decompose each Euler factor as
\begin{align*}
(1 - \lambda(T(\qfrak)) \N(\qfrak)^{-s} + \psi({\qfrak}){\N(\qfrak)}^{m + 1-2s} ) & = (1 - \alpha(\qfrak)\N(\qfrak)^{-s})(1 - \beta(\qfrak)\N(\qfrak)^{-s}).
\end{align*}
Take a Hecke character $\chi$ of $F$ such that $\chi_{\infty}(-1)= (-1)^{nt}$ with $0 \leq n \leq 1 $ and $nt \equiv k \bmod 2$.
The $L$-function of the symmetric square of the Galois representation associated with $\boldf$ coincides, up to some Euler factor at the bad primes, with 
\begin{align*}
\Ll (s,\boldf,\chi) =  & \prod_{\qfrak \in Spec^{\circ}(\rif)} {D_{\qfrak}(\chi(\qfrak){\N(\qfrak)}^{-s})}^{-1}
\end{align*}
where 
\begin{align*}
 D_{\qfrak}(X)=& (1 - {\alpha(\qfrak)}^{2}X)(1 - \alpha(\qfrak)\beta(\qfrak)X)(1 - {\beta(\qfrak)}^{2}X).
\end{align*}
The following elementary identity holds
\begin{align*}
\Ll (s,\boldf,\chi) = & L_{\Nfrak}(2s-2,\N^{2m}\psi^2\chi^2) \sum_{ \mathfrak{m} \subset \rif} {\lambda(T(\mfrak^2))} \chi(\mathfrak{m}) {\N({\mathfrak{m}})}^{-s}.
\end{align*}
We want to express such a $L$-function in terms of the Petersson product of $\boldf$ with a product of two half-integral weight modular forms. 
We define now the Rankin product of two modular forms for a congruence subgroup of $\Sl(\fa)$ following \cite[\S 3]{Im}. Let $\boldf$ be a modular form of integral weight $k$ and $\boldg$ of half-integral weight $l'=l+  \frac{t}{2}$, using the Fourier coefficients defined in Proposition \ref{FExp}, we define the  Rankin product $D(s,\boldf,\boldg)$ as
\begin{align*} 
D(s,\boldf,\boldg) = & \sum_{W}{\lambda(\xi,\mathfrak{m};\boldf,\psi)}\lambda(\xi,\mathfrak{m};\boldg,\phi)\xi^{-(l+k)/2}{\N(\xi{\mathfrak{m}}^2)}^{-s}.  
\end{align*}
where $W$ is the set of classes $(\xi,\mfrak)$, modulo the equivalence relation $(\xi,\mfrak)=(\xi\xi_0^2,\xi_0^{-1} \mfrak)$.
Notice that we have changed the definition of Im, as we do not conjugate the coefficients of $\boldf$.\\
Suppose now that we have $\boldf$ of weight $k$ and $\boldg = \theta_{nt}(\chi)$ with $n\equiv k \bmod 2$, we obtain  
\begin{align*}
 D(s,\boldf,\theta_n(\chi)) = & \sum_{W}{\lambda(\xi^2,\mathfrak{m};\boldf)}2 \chi_{\infty}(\xi)\chi(\xi \mathfrak{m}) \xi^{-k}{\N(\xi^2{\mathfrak{m}}^2)}^{-s} \\  
 = & \sum_{W}{\lambda(1,\xi \mathfrak{m};\boldf)}2 \chi(\xi \mathfrak{m}) {\N(\xi^2{\mathfrak{m}}^2)}^{-s} \\
 = &\sum_{ \mathfrak{m} \subset \rif}{\lambda(1,\mathfrak{m};\boldf)}2 \chi( \mathfrak{m}) {\N({\mathfrak{m}}^2)}^{-s}.
\end{align*}
If $\boldf=(\boldf_1,\cdots,\boldf_{h(\Nfrak)})$ is a modular form for $U(\Nfrak)$, then define $\boldf^{(2)} = \boldf | 
\left( 
\begin{array}{cc}
2^{-1} & 0 \\
0 & 1
\end{array}
\right)_{f}$.\\
We have then $\boldf_1^{(2)}(z)=\boldf_1(2z)$. Similarly to \cite[Proposition 2.2]{Im}, using (\ref{ShiHida}) for $\boldf^{(2)}$ we obtain that
\begin{align}\label{lambdaap}
 \lambda\left(1,y; \boldf_1(2 z),\psi \right) =2^{-v}\psi_{\infty}^{-1}(y)\N(y)^{-2-m} y_p^{2v}\bolda_p(y^2,\boldf).
\end{align}

Using the explicit form of $\lambda(T(\qfrak))$ in terms of the Fourier coefficient given in \ref{ptheory}, we conclude
\begin{align}
 D(s,\boldf_1(2z),\theta_n(\chi)) = & 2^{-v+1} \sum_{ \mathfrak{m} \subset \rif}{\lambda(T(\mfrak^2))}\chi( \mathfrak{m}) {\N({\mathfrak{m}})}^{-2s-m-2}, \notag \\
 D\left(\frac{s-m-1}{2},\boldf_1(2 z),\theta_n(\chi)\right)= & 2^{-v+1}\frac{ \Ll({s+1, \boldf};\chi)}{L_{\Nfrak}(2s-2m,\psi^2\chi^2)}.\label{DL} 
\end{align}

We quote the following proposition \cite[(3.13)]{Im} which gives an integral expression of the Rankin product defined above
\begin{prop}\label{IntExpre}
Let $f$ be a Hilbert modular form  in $\boldM_{k}(\G^1[2^{-1} \Nfrak, 2],\psi_1,\C)$  for an integral weight $k$, and $g$ a Hilbert modular form  of half integral weight $l'$ in $\boldM_{k}(\Nfrak,\psi_2,\C)$. Define 
\begin{align*}
 R = & \lgr \gamma \in \G^1[2^{-1}\Nfrak, 2] | a_{\gamma} \equiv 1 \bmod \Nfrak  \rgr .
\end{align*}
 and let $\Phi$ be a fundamental domain for $R \setminus \h^I$.  Then
\begin{align*}
\int_{\Phi} {f} {g}{\mathcal {E}} y^k \textup{d}\mu(z) = & B D_F^{-1/2} {(2\pi)}^{ - d(s-3/4) - \frac{k+l'}{2}} \G_{\infty}\left(\left(s-\frac{3}{4}\right)t + \frac{k+l'}{2}\right)D\left(s-\frac{1}{2},f,g\right) 
\end{align*}
where 
\begin{align*}
B = & 2\left[{\G^1[2^{-1}\Nfrak, 2]}: \lgr \pm 1 \rgr R\right]{N(2\dfrak^{-1})},\\
\mathcal {E}= & \mathcal {E}\left(z,s; k-l'-\frac{1}{2}t,\chi_{-1}\psi_2\psi_1^{-1}, \Nfrak\right).
\end{align*}
\end{prop}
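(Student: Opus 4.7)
The plan is a classical Rankin--Selberg unfolding of the standard shape. By the definition of $\mathcal{E}$ as an iterated coset sum indexed by the narrow class representatives $\mathfrak{a}_i$ and the quotients $P_i \setminus \beta_i \Gamma^1[2^{-1}\mathfrak{N},2]$, the integrand $f\,\overline{g}\,\mathcal{E}\,y^k$ is, up to a character, $\Gamma^1[2^{-1}\mathfrak{N},2]$-invariant: the half-integral cocycle $|h(\gamma,z)|^{-2}$ contributes exactly the factor $\chi_{-1}$, which combines with the Nebentypi $\psi_1,\psi_2$ of $f,g$ to match the character $\chi_{-1}\psi_2\psi_1^{-1}$ carried by the summand of $\mathcal{E}$. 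First I would unfold the outer $\gamma$-sum against the fundamental domain for $R$, turning the integral over $\Phi$ into a finite disjoint sum of integrals on $P_i \backslash \mathfrak{H}^I$, i.e.\ on strips of the form $(\mathbb{R}/\mathfrak{a}_i\mathfrak{d}^{-1})^I \times (\mathbb{R}_+)^I$.

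Next I would substitute the Fourier expansions of $f$ and $g$ from Proposition \ref{FExp} into the unfolded integral. Integration over $x \in (\mathbb{R}/\mathfrak{a}_i\mathfrak{d}^{-1})^I$ kills all non-diagonal pairs of Fourier indices and collapses the double sum onto $\sum_{0 \ll \xi} \lambda(\xi,\mathfrak{a}_i;f,\psi_1)\,\overline{\lambda(\xi,\mathfrak{a}_i;g,\psi_2)}$. The remaining integration over $y \in (\mathbb{R}_+)^I$ is a product of one-variable Mellin integrals of $y^{(s-3/4)t + (k+l')/2 - t} e^{-2\pi \xi y}$, producing the factor
$$(2\pi \xi)^{-(s-3/4)t - (k+l')/2}\,\Gamma_{\infty}\!\left((s-3/4)t + (k+l')/2\right).$$
Summing over $\xi$, over the classes $\mathfrak{a}_i$, and quotienting by the equivalence used in the definition of $W$ assembles the Dirichlet series $D(s-\tfrac12, f, g)$ with the normalization of Section \ref{ComLFun}.

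The remaining work is bookkeeping for the constant $B D_F^{-1/2}$: the factor $D_F^{-1/2}$ records the volume of $(\mathbb{R}/\mathfrak{a}_i\mathfrak{d}^{-1})^I$ computed via the different $\mathfrak{d}$; the factor $\mathcal{N}(2\mathfrak{d}^{-1})$ enters through the conversion \eqref{ShiHida} between Shimura's and Hida's Fourier normalizations (which is needed because $g$ is half-integral and the lattice in Proposition \ref{FExp} involves the factor of $2$); and the index $[\Gamma^1[2^{-1}\mathfrak{N},2] : \{\pm 1\} R]$ arises because $\mathcal{E}$ is assembled as a sum over $P_i \setminus \beta_i \Gamma^1[2^{-1}\mathfrak{N},2]$ while the unfolding is performed against the smaller group $R$, with the $\{\pm 1\}$ factor coming from the kernel of the action on $\mathfrak{H}^I$.

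The main obstacle is the matching of transformation characters in the unfolding step: one must verify that the $R$-transformation character of $f\,\overline{g}\,y^k$ is exactly $\chi_{-1}\psi_2\psi_1^{-1}$ restricted to $R$, which reduces to identifying the square of the half-integral factor of automorphy with $\chi_{-1}$ and checking that $R$ is small enough for all Nebentypi involved to be well-defined characters. This is essentially the computation carried out in \cite[Proposition~3.1 and (3.13)]{Im}; given the identification of characters, the Rankin--Selberg mechanism then proceeds in parallel to the elliptic case.
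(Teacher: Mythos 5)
The paper does not actually prove this proposition: it is quoted verbatim (up to the removal of complex conjugation on the Fourier coefficients of $f$) from \cite[(3.13)]{Im}, and your Rankin--Selberg unfolding --- unfold against $R$, integrate out $x$ to collapse onto the diagonal, Mellin-transform in $y$ to produce the $\G_{\infty}$ and $(2\pi)$ factors, and track the index $[\G^1[2^{-1}\Nfrak,2]:\lgr\pm1\rgr R]$ and the lattice covolume $D_F^{-1/2}\N(2\dfrak^{-1})$ --- is exactly the standard argument underlying Im's computation, so your approach coincides with the source's. The one place to be careful is where the anti-holomorphy sits: since the paper's $D(s,f,g)$ carries \emph{unconjugated} coefficients of both $f$ and $g$, the diagonal collapse in the $x$-integration must come from pairing the holomorphic $g$ against $\overline{f^c}$ (which is how the proposition is restated afterwards through the Petersson product $\lla \boldf^c|\tau'(\nfrak), \cdot \rra$), rather than producing the factor $\overline{\lambda(\xi,\mathfrak{m};g,\psi_2)}$ appearing in your sketch; this is a normalization/bookkeeping point rather than a gap in the argument.
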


Let $\chi$ be a Hecke character of level $\cfrak$ and  $\chi_0$ be the associated primitive character of conductor $\cfrak_0$.  Let us investigate now the relation between $D(s,\boldf_1(2z),\theta_n(\chi))$ and $D(s,\boldf_1(2z),\theta_n(\chi_0))$ in order to make explicit some of the Euler factors at $p$ (more precisely, the one denoted by $E_1$) which will appear in the interpolation formula of the $p$-adic $L$-function of Section \ref{padicL}.\\
Suppose that $p$ divides the level of $\boldf$ and suppose that $\cfrak/\cfrak_0$ is divisible only by primes above $p$. Let $\left[\efrak^2 \right]$ be the operator defined in \ref{halfp} and suppose $\efrak \rif | p$, we have 
\begin{align}\label{ThetaLambda}
 D(s,\boldf_1(2 z),\theta_{nt}(\chi_0)|\left[\efrak^2\right]) = & 2^{-v+1} \sum_{ \mathfrak{m} \subset \efrak \rif} \chi(\mfrak d^{-1}){\lambda(T(\mfrak^2))} {\N({\mathfrak{m}})}^{-2s-m-2} \notag \\
 = & \lambda(T(\efrak^2)){\N(\efrak)}^{-2s-m-2} 2^{v+1} \frac{\Ll(2s+m+2, {\boldf}, \chi)}{L_{ p \Nfrak}(4s+2,\psi^2\chi^2)}.
\end{align}
and using the formula
\begin{align*}
 \theta_{nt}(\chi) & = \sum_{\efrak\mid p} \mu(\efrak)\chi_0(\efrak) \theta_{nt}(\chi_0)|[\efrak^2] 
\end{align*}
we can conclude that 
\begin{align}\label{thetashift}
  D(s,\boldf_1(2 z),\theta_{nt}(\chi_0)) = & \prod_{\pfrak_i | \cfrak/\cfrak_0 }(1 - \lambda(T(\pfrak_i^2)){\N(\pfrak_i)}^{-2s -m -2}) D(s,\boldf_1(2 z),\theta_{nt}(\chi))
\end{align}

In \cite[Theorem 5.3]{Im} it is shown, for a Hilbert modular form  $\boldf$ of weight $(k, - k/2)$, the algebraicity and Galois equivariance of  the values $\Ll (s_0,\boldf,\chi)$ when divided by suitable periods, where $s_0$ ranges over the critical integers (in the sense of Deligne \cite{Del})  for the symmetric square. \\
Let $k_0$ be the minimum of $k_{\sigma}$ for $\sigma $ in $I$ and $n \in \lgr 0,1 \rgr$ such that $\chi(-1)={(-1)}^{nt}$. Using Deligne's formalism, one finds (see \cite{Im}) that the critical integers are 
\begin{align*}
\left\{ 0 \leq s < k_0  -n -3/2, s \equiv k_0 - n (2) \mbox{ or } -k_0 +n + 1/2 < s \leq -1, s \not\equiv k_0 -n (2)    \right\} .
\end{align*}
As $\Ll({s, \boldf},\chi)=\Ll({s - m -1, \boldf^u},\chi)$, supposing that $m \geq s \geq 0 $, we have that $s+1$ is a critical integer when $s \not\equiv n \bmod 2$ and $ m+1 \leq s  \leq m + k_0 -1 $. The other half of the critical values corresponds to  $s \equiv n \bmod 2$ and $ m-k_0 +n +2 \leq s  \leq m  $. \\

For any $\rho$ two dimensional representation, $\mathrm{Ad}(\rho)$ denotes the adjoint representation of $\rho$ on $\mathfrak{s}\lfrak_2$, the Lie algebra of $\Sl$. It is known that $\mathrm{Ad}(\rho)$ is the twist of 	$\mathrm{Sym}^2(\rho)$ by the inverse of the $\mathrm{det}(\rho)$. We can define a na\"ive $L$-function $\Ll(s,\mathrm{Ad}(\boldf),\chi)$ for $\mathrm{Ad}(\rho_{\boldf})$ which is then
\begin{align*}
 \Ll(s,\mathrm{Ad}(\boldf),\chi) =  & \Ll (s,\boldf,\chi\N^{-m-1}\psi^{-1})\\
 = &  L_{\Nfrak}(2s,\chi^2) \sum_{ \mathfrak{m} \subset \rif} \frac{\lambda(T(\mfrak^2)) \chi\psi^{-1}( \mathfrak{m})}{{\N({\mathfrak{m}})}^{s +m+1}}.
\end{align*}
We have introduced the $L$-function of $\mathrm{Ad}(\rho)$ because in the next subsection we will describe explicitly the Euler factors at the bad primes for it and in the literature such a classification is given in term of the adjoint representation, and we prefer to follow the classical references.

\subsection{The completed $L$-function}\label{Extrafactors}
Let $\boldf$ be a Hilbert modular form of weight $(k,v)$, with $(m+2)t=k+2v$, and of character $\psi$. Let $\lambda$ be the corresponding morphism of the Hecke algebra and let $\pi({\boldf})$ be the automorphic representation of $\gl(\fa)$ spanned by $\boldf$. In \cite{GJ}, the authors construct an automorphic representation of ${\mbox{GL}_3}(\fa)$ denoted  $\hat{\pi}({\boldf})$ and usually called the base change to ${\mbox{GL}_3}$ of $\pi({\boldf})$. It is standard to associate to $\hat{\pi}({\boldf})$ a complex $L$-function $\LL(s,\hat{\pi}({\boldf}))$ which satisfies a nice functional equation and coincides, up to some Euler factors at the primes for which ${\pi}({\boldf})$ is ramified and factors at infinity, with $\Ll(s,Ad(\boldf))$. We explicit now the functional equation as we will need it in the Appendix \ref{App} to show that the $p$-adic $L$-function that we will construct is holomorphic (unless $\boldf$ has CM). \\
For a Hecke character of $F$, the automorphic representation $\hat{\pi}({\boldf})\otimes \chi$ is defined via its $L$-factor at all primes. For any place $v$ of $F$, we pose 
\begin{align*}
 L_v(s,\hat{\pi}(\boldf),\chi) = & \frac{L_v(s,{\pi}{({\boldf})}_v \otimes \chi_v \times \tilde{\pi}{({\boldf})}_v) }{L_v(s, \chi_v) },
\end{align*}
 where $\tilde{\phantom{  }}$ denotes the contragredient and ${\pi}{(\boldf)}_v  \times \tilde{\pi}{(\boldf)}_v$ is a representation of $\gl(F_v)\times\gl(F_v)$. \\ 
The completed $L$-function \begin{align*}
 \LL(s,\hat{\pi}({\boldf}),\chi) = & \prod_v L_v(s,\hat{\pi}(\boldf),\chi) 
\end{align*}
is holomorphic over $\C$ except in a few cases which correspond to CM-forms with complex multiplication by $\chi$ \cite[Theorem 9.3]{GJ}.\\
In  addition to the original article by Gelbart and Jacquet, two very good references for the classification of the $L$-factors at bad prime are \cite[\S 1]{Sc} for $F=\Q$ and \cite[\S 7]{HiT}, where the authors work in the context of nearly-holomorphic forms.\\
Let $\pi = \pi (\boldf)$ and let $\qfrak$ be a place where $\pi$ ramifies and let $\pi_{\qfrak}$ be the component at $\qfrak$. By twisting by a character of $F_{\qfrak}^{\times}$, we may assume that $\pi_{\qfrak}$ has  minimal conductor among its twist. In fact, this does not change the factor at $\qfrak$, as one sees from the explicit calculation given in \cite[Proposition 1.4]{GJ}.
We distinguish the following four cases
\begin{itemize}
	\item[(i)]   $\pi_{\qfrak}$ is a principal series $\pi(\eta,\nu)$, with both $\eta$ and $\nu$ unramified,
	\item[(ii)]  $\pi_{\qfrak}$ is a principal series $\pi(\eta,\nu)$ with $\eta$ unramified,
	\item[(iii)] $\pi_{\qfrak}$ is a special representation $\sigma(\eta,\nu)$ with $\eta$, $\nu$ unramified and $\eta\nu^{-1} = |\phantom{e}|_{\qfrak}$,
	\item[(iv)]  $\pi_{\qfrak}$ is supercuspidal. 
\end{itemize}
We will partition the set of primes dividing the conductor of $\boldf$ as $\Sigma_1,\cdots,\Sigma_4$ according to this decomposition.
Let $\varpi_{\qfrak}$ be  a uniformizer of $F_{\qfrak}$. Just for the next three lines, to lighten notation, we assume, by abuse of notation, that when  a character is ramified its value on  $\varpi_{\qfrak}$ will be $0$.
The Euler factor ${L_{\qfrak}(\hat{\pi}_{\qfrak}\otimes \chi_{\qfrak},s)}^{-1}$ is then 

\begin{itemize}
	\item[(i)] $(1-\chi_{\qfrak}\nu^{-1}\eta(\varpi_{\qfrak})\N(\qfrak)^{-s})(1-\chi_{\qfrak}(\varpi_{\qfrak})\N(\qfrak)^{-s})(1-\chi_{\qfrak}\nu\eta^{-1}(\varpi_{\qfrak})\N(\qfrak)^{-s})$,
	\item[(ii)] $(1-\chi_{\qfrak}\nu^{-1}\eta(\varpi_{\qfrak})\N(\qfrak)^{-s})(1-\chi_{\qfrak}(\varpi_{\qfrak})\N(\qfrak)^{-s})(1-\chi_{\qfrak}\nu\eta^{-1}(\varpi_{\qfrak})\N(\qfrak)^{-s})$,
	\item[(iii)] $(1-\chi_{\qfrak}(\varpi)\N(\qfrak)^{-s-1})$,
	\end{itemize}

The supercuspidal factors are slightly more complicated and depend on the ramification of $\chi_{\qfrak}$. They are classified by \cite[Lemma 1.6]{Sc}; we recall them briefly. Let $\qfrak$ be a prime such that $\pi_{\qfrak}$ is supercuspidal, and let $\xi_{\qfrak}$ be the unramified quadratic character of $F_{\qfrak}$. If $\chi_{\qfrak}^2$ is unramified, let $\lambda_1$ and $\lambda_2$ the two ramified characters such that $\chi_{\qfrak}\lambda_i$ is unramified (for completeness, we can suppose $\lambda_{1}=\chi_{\qfrak}$ and $\lambda_2=\chi_{\qfrak}\xi_{\qfrak}$). We consider the following disjoint subsets of $\Sigma_4$, the set of cuspidal primes:
\begin{align*}
\Sigma_4^0 &=\left\{\qfrak \in \Sigma_4 : \chi_{\qfrak} \mbox{ is unramified and } \pi_{\qfrak}\cong\pi_{\qfrak}\otimes\xi_{\qfrak}  \right \},\\
\Sigma_4^1 &=\left\{\qfrak \in \Sigma_4 : \chi_{\qfrak}^2 \mbox{ is unramified and } \pi_{\qfrak}\cong\pi_{\qfrak}\otimes\lambda_i \mbox{ for } i=1,2 \right \},\\
\Sigma_4^2 &=\left\{\qfrak \in \Sigma_4 : \chi_{\qfrak}^2 \mbox{ is unramified and } \pi_{\qfrak}\not\cong\pi_{\qfrak}\otimes\lambda_1 \mbox{ and }\pi_{\qfrak}\cong\pi_{\qfrak}\otimes\lambda_2   \right \}, \\
\Sigma_4^3 &=\left\{\qfrak \in \Sigma_4 : \chi_{\qfrak}^2 \mbox{ is unramified and } \pi_{\qfrak}\not\cong\pi_{\qfrak}\otimes\lambda_2 \mbox{ and }\pi_{\qfrak}\cong\pi_{\qfrak}\otimes\lambda_1 \right \}.
\end{align*}
If $\qfrak$ is in $\Sigma_4$ but not in $\Sigma_4^i$, for $i=0,\cdots, 3$, then $L_{\qfrak}(s,\hat{\pi}_{\qfrak},\chi_{\qfrak})=1$.
If $\qfrak$ is in $\Sigma_4^0$, then $$ {L_{\qfrak}(s,\hat{\pi}_{\qfrak},\chi_{\qfrak})}^{-1}=1+\chi(\varpi_{\qfrak})\N(\qfrak)^{-s}$$
and if $\qfrak$ is in $\Sigma_4^i$, for $i=1,2,3$ then 
$${L_{\qfrak}(s,\hat{\pi}_{\qfrak},\chi_{\qfrak})}^{-1}=\prod_{j \mbox{ s.t.} \pi_{\qfrak}\cong\pi_{\qfrak}\otimes\lambda_j} (1-\chi_{\qfrak}\lambda_j(\varpi_{\qfrak})\N(\qfrak)^{-s}).$$

If $\sigma$ is an infinite place, the $L$-factor at $\sigma$ depends only on the parity of the character by which we twist. As we are interested in the symmetric square, we consider the twist by $\psi_{\sigma}\chi_{\sigma}$, $\chi$ as before.  We suppose that the parity of $\psi^{-1}_{\sigma}\chi_{\sigma}$  is independent of $\sigma$. Let $\kappa=0,1$ according to the parity of $m$, from \cite[Lemma 1.1]{Sc} we have  $L(s-m-1,\hat{\pi}_{\sigma},\chi_{\sigma}\psi_{\sigma})=\G_{\mathbb{R}}(s-m -\kappa)\G_{\mathbb{C}}(s-m-2+k_{\sigma})$ for the complex and real $\G$-functions
\begin{align*}
\G_{\mathbb{R}}(s) = & \pi^{-s/2}\G(s/2), \\
\G_{\mathbb{C}}(s) = & 2{(2\pi)}^{-s}\G(s).
\end{align*}
We define 
\begin{align*}\calE_{\Nfrak}(s,\boldf,\chi)=& \prod_{\qfrak |\Nfrak } (1-\chi(\qfrak)\lambda(T(\qfrak))^2\N(\qfrak)^{-s}){L_{\qfrak}(s-m-1,\hat{\pi}_{\qfrak},\psi_{\qfrak}^{-1}\chi_{\qfrak})}.
\end{align*}
Note that $\lambda(T(\qfrak))=0$ if $\pi$ is not minimal at $\qfrak$ or if $\pi_{\qfrak}$ is a supercuspidal representation.
We multiply then  $\Ll (s,\boldf,\chi)$, the imprimitive $L$-function, by $\calE_{\Nfrak}(s,\boldf,\chi)$ to get 
\begin{align*}
 L(s,\mathrm{Sym}^2(\boldf),\chi):= & L(s-m-1,\hat{\pi}(\boldf)\otimes \chi\psi) \\
 = & \Ll (s,\boldf,\chi)\calE_{\Nfrak}(s,\boldf,\chi).
\end{align*}
We can now state the functional equation
\begin{align*}
\LL(s,\hat{\pi}(\boldf),\chi) & =  \beps(s,\hat{\pi}(\boldf),\chi)\LL(1-s,\hat{\pi}(\boldf),\chi^{-1}),\\
\LL(s,\mathrm{Sym}^2(\boldf),\chi) & =\beps(s-m-1,\hat{\pi}(\boldf),\chi\psi)\LL(2m+3-s,\mathrm{Sym}^2(\boldf^c),\chi^{-1}).
\end{align*}

For a finite place $\qfrak$ of $F$, let us recall the proper normalization $\beps_{\qfrak}$ of the $\beps$-factor which makes it  algebraic and Galois equivariant.  In \cite{GJ}, the authors use the Langlands normalization, see \cite[\S 3.6]{Tate}, which we will denote by $\beps_{L,\qfrak}$. For a fixed place $\qfrak$, it gives to $\oo_{F_{\qfrak}}$ the measure ${|\dfrak_{F_{\qfrak}/\Q_q}|}^{-1/2}$. This is not the $\beps$-factor we want to use, so we multiplying $\beps_{L,\qfrak}$ by the factor ${\N(\dfrak_{F_{\qfrak}/\Q_q})}^{-s+1/2}$ and we denote this new factor by $\beps_{\qfrak}$. This is the $\beps$-factor used in \cite{Sc}.\\
If $\qfrak$ is such that $\pi_{\qfrak}$ is a principal series $\pi(\eta,\nu)$ or a special representation $\sigma(\eta,\nu)$, we have explicitly  \cite[Proposition 1.4, 3.1.2]{GJ}
\begin{align*} 
 \beps_{\qfrak}=\beps(s, \hat{\pi}_{\qfrak},\chi_{\qfrak}) = & \frac{\beps(s,\pi_{\qfrak}\otimes \chi_{\qfrak}\eta^{-1})\beps(s,\pi_{\qfrak}\otimes \chi_{\qfrak}\nu^{-1})}{\beps(s,\chi_{\qfrak})}.
\end{align*}
In fact, if $\pi_{\qfrak}$ and $\chi_{\qfrak}$ are both unramified, then $\beps(\hat{\pi}_{\qfrak},\chi_{\qfrak})=1$.\\
We know from \cite[\S 6.26.1, Theorem]{BH} that if $\pi_{\qfrak}$ is a principal series  $$\beps(s,\pi_{\qfrak}\otimes \chi_{\qfrak}\eta^{-1})= \beps(s, \chi_{\qfrak})\beps(s,\nu \chi_{\qfrak}\eta^{-1})$$ and that if $\pi_{\qfrak}= St$ is a special representation and $\chi_{\qfrak}$ is ramified then
$$\beps(s,\pi_{\qfrak}\otimes \chi_{\qfrak})=  {\beps(s, \chi_{\qfrak})}^2.$$  
For a supercuspidal $\pi_{\qfrak}$ we do not need an explicit formula for  the $\beps$-factor.\\
For all prime ideal $\qfrak$,   we know from \cite[page 475]{GJ} that $\beps_{\qfrak,L}(s,\pi_{1,\qfrak} \times \pi_{2,\qfrak})$ is  $\N(\qfrak^c)^{-s} \beps_{\qfrak,L}(0,\pi_{1,\qfrak} \times \pi_{2,\qfrak})$, where $c$ is a positive integer such that $\qfrak^c$ is the exact conductor of $\pi_{1,\qfrak} \times \pi_{2,\qfrak}$.\\
The $\beps$-factor at infinity is given by \cite[1.12]{Sc} 
$\beps(s,\hat{\pi}_{\infty},\chi_{\infty})= (-1)^{md}{\chi(-1)}^{-1/2}$.\\
Our interest is to see how the $\beps$-factor  changes under twist by gr\"ossencharacters $\chi'$ of $p^r$-level.  This behavior  is studied in \cite[Lemma 1.4 b)]{Sc}, but there the form $\boldf$ is supposed to be of level prime to $p$. We have then to make explicit the $\beps$-factor ${\beps(s,\chi_{\qfrak})}$; let $\alpha_{\qfrak}$ be the conductor of $\chi_{\qfrak}$ and $e_{\qfrak}$ the ramification index of $F_{\qfrak}$ over $\Q_q$.  If we define \begin{align*}
G(\chi_{\qfrak}) = & \chi^{-1}_{\qfrak}(\varpi_{\qfrak}^{e_{\qfrak}+\alpha_{\qfrak}})\sum_{x \bmod \varpi_{\qfrak}^{\alpha_{\qfrak}}} \chi_{\qfrak}(x)    \bexp_{F_{\qfrak}}\left(\frac{x}{\varpi^{e_{\qfrak}+\alpha_{\qfrak}}}\right) \end{align*}
we have then 
\begin{align*}
 {\beps(s,\chi_{\qfrak})} = {\N(\varpi_{\qfrak}^{e_{\qfrak}+\alpha_{\qfrak}})}^{-s} {\N(\varpi_{\qfrak}^{e_{\qfrak}})}^{1/2}G(\chi_{\qfrak}).
\end{align*}
We can summarize this discussion in the following lemma
\begin{lemma}\label{epsfactor}
Let $\hat{\pi}$ be the automorphic representation of $\mathrm{GL}_3(\fa)$ associated with $\boldf$. Suppose that $\pi_{p}:=\otimes_{\pfrak | p} \pi_{\pfrak}$ is a product of principal series $\pi_{\pfrak}\cong \pi(\eta,\nu)$ or special representations $\pi_{\pfrak}\cong \sigma(\eta,\nu)$. Let $\chi$ be any gr\"ossencharacter and $\chi'$ a finite order character of the class-group modulo $p^{\infty}$, we have 
\begin{align*}
 \beps(s,\hat{\pi},\chi\chi') = & \beps(s,\hat{\pi},\chi) \prod_{\pfrak} \frac{{C(\chi_{\pfrak}\chi'_{\pfrak})}^{-s}{C(\nu\chi'_{\pfrak}\chi_{\qfrak})}^{-s}{C(\nu^{-1}\chi'_{\pfrak}\chi_{\pfrak})}^{-s}}{ {C(\chi_{\pfrak})}^{-s}{C(\nu\chi_{\qfrak})}^{-s}{C(\nu^{-1}\chi_{\pfrak})}^{-s} } \times \\ 
& \times \frac{{G(\chi_{\pfrak}\chi'_{\pfrak})}{G(\nu\chi'_{\pfrak}\chi_{\qfrak})}{G(\nu^{-1}\chi'_{\pfrak}\chi_{\pfrak})}}{{G(\chi_{\pfrak})}{G(\nu\chi_{\qfrak})}{G(\nu^{-1}\chi_{\pfrak})}}.
\end{align*}
\end{lemma}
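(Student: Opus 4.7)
The plan is to reduce everything to the place-by-place factorization of the global $\beps$-factor. Since the character $\chi'$ has conductor supported on primes above $p$ and $\chi'_v=1$ for all $v\nmid p$, we have $\beps_v(s,\hat{\pi},\chi\chi')=\beps_v(s,\hat{\pi},\chi)$ for every place $v\nmid p$, including the archimedean ones. Consequently, the ratio $\beps(s,\hat{\pi},\chi\chi')/\beps(s,\hat{\pi},\chi)$ is a product over $\pfrak\mid p$ of the local ratios $\beps_{\pfrak}(s,\hat{\pi}_{\pfrak},\chi_{\pfrak}\chi'_{\pfrak})/\beps_{\pfrak}(s,\hat{\pi}_{\pfrak},\chi_{\pfrak})$.

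At each such $\pfrak$, the formula recalled just before the lemma gives
\[
\beps_{\pfrak}(s,\hat{\pi}_{\pfrak},\chi_{\pfrak})=\frac{\beps(s,\pi_{\pfrak}\otimes\chi_{\pfrak}\eta^{-1})\,\beps(s,\pi_{\pfrak}\otimes\chi_{\pfrak}\nu^{-1})}{\beps(s,\chi_{\pfrak})}.
\]
Under the standing hypothesis that $\pi_{\pfrak}\cong\pi(\eta,\nu)$ or $\sigma(\eta,\nu)$, I would then apply the Bushnell--Henniart expansions cited in the excerpt, namely $\beps(s,\pi_{\pfrak}\otimes\chi)=\beps(s,\chi)\beps(s,\nu\eta^{-1}\chi)$ in the principal series case, and $\beps(s,\mathrm{St}\otimes\chi)=\beps(s,\chi)^2$ in the Steinberg case. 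After renaming the characters (absorbing the fixed local parameters $\eta^{\pm 1}$ into new names, and using that the central character of $\hat{\pi}_{\pfrak}$ is trivial so that the three factors arrange themselves into $\chi_{\pfrak}$, $\nu\chi_{\pfrak}$, $\nu^{-1}\chi_{\pfrak}$), the local factor becomes
\[
\beps_{\pfrak}(s,\hat{\pi}_{\pfrak},\chi_{\pfrak})=\beps(s,\chi_{\pfrak})\,\beps(s,\nu\chi_{\pfrak})\,\beps(s,\nu^{-1}\chi_{\pfrak}).
\]
Note that since $\chi'_{\pfrak}$ is ramified at every $\pfrak\mid p$ occurring in the product, the Steinberg formula is applicable in both cases $\pi_{\pfrak}\otimes\chi_{\pfrak}\chi'_{\pfrak}$ and $\pi_{\pfrak}\otimes\chi_{\pfrak}$ (possibly up to absorbing an unramified shift, which gives only a power of $\N(\varpi_{\pfrak})^{-s}$ that is tracked by the $C(\cdot)^{-s}$-factors).

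For each of these three character $\beps$-factors, I would invoke the explicit formula
\[
\beps(s,\psi_{\pfrak})=\N(\varpi_{\pfrak}^{e_{\pfrak}+\alpha_{\psi_{\pfrak}}})^{-s}\,\N(\varpi_{\pfrak}^{e_{\pfrak}})^{1/2}\,G(\psi_{\pfrak})
\]
recalled in the text just before the lemma. The point is that the factor $\N(\varpi_{\pfrak}^{e_{\pfrak}})^{1/2}$ coming from the local different depends only on $\pfrak$, not on the twisting character, so it cancels when forming the ratio $\beps(s,\hat{\pi}_{\pfrak},\chi_{\pfrak}\chi'_{\pfrak})/\beps(s,\hat{\pi}_{\pfrak},\chi_{\pfrak})$. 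What survives is precisely the ratio of the $C(\cdot)^{-s}$-factors and the ratio of the Gau\ss{} sums, which is exactly the formula in the statement. Taking the product over $\pfrak\mid p$ gives the claim.

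The main obstacle I expect is purely bookkeeping: tracking how the unramified parameters $\eta,\nu$ interact with the twisting, and making sure that cases where $\eta$ or $\nu$ is itself ramified (so that some of the Bushnell--Henniart formulas must be applied slightly differently) do not introduce extra factors. Under the hypothesis of the lemma one of $\eta,\nu$ is unramified, and for the Steinberg case both are; this is precisely what makes the final answer so clean, and it is also the reason the supercuspidal primes are excluded.
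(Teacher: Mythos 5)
Your argument is correct and is essentially the paper's own: the lemma is stated there as a summary of the immediately preceding discussion, which carries out exactly your steps — locality of the twist at $p$, the factorization of $\beps(s,\hat{\pi}_{\pfrak},\chi_{\pfrak})$ into three character $\beps$-factors via the Bushnell--Henniart formulas, and cancellation of the $\pfrak$-dependent constant $\N(\varpi_{\pfrak}^{e_{\pfrak}})^{1/2}$ in the ratio, leaving the conductor powers and Gau\ss{} sums. Your caveat about unramified local components of $\chi'$ and the Steinberg case is at the same level of (in)formality as the paper's own treatment, so nothing further is needed.
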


 
\section{Some useful operators}\label{operators}
We define in this section certain operators which will be useful for the construction of $p$-adic $L$-functions in Section \ref{padicL}.

\subsection{The Atkin-Lehner involution on $\gl$}\label{Atkin}
Let $\Nfrak$ be an integral ideal and $\nfrak$ a finite id\`ele which represents $\Nfrak$: $\nfrak \rif = \Nfrak$. \\
Let  $\boldf$ be an element of $\boldM_{k,w}(U_0(\Nfrak),\psi,\psi')$, where $k$ and $w$ are two integral weights. We define 
\begin{align*}
 \boldf| \tau'(\nfrak)  =  & \psi^{-1}(det(x))\boldf\left(x\left( 
\begin{array}{cc}
 0 & -1 \\
 \dfrak^{2}\nfrak & 0 
\end{array}
\right)\right).
\end{align*}
This operator does not change the level but the Nebentypus of $\boldf| \tau'(\nfrak)$ is $(\psi^{-1},{\psi'}^{-1})$. \\

Take an in integral ideal $\mathfrak{L}$ prime to $p$, and a finite id\`ele $\mathfrak{l}$ such that $\mathfrak{L}=\mathfrak{l}\rif$ and $\mathfrak{l}_p =1$.
We define a {\it level raising} operator
\begin{align*}
  [\mathfrak{l}] : \boldf \mapsto \N(\mathfrak{L})^{-1} \boldf|
\left(
\begin{array}{cc}
 \mathfrak{l^{-1}} & 0\\
 0 & 1
\end{array}
\right).
\end{align*}
If the level of $\boldf$ is big enough, at least $V_1(\Nfrak)$, we have independence of the choice of $\mathfrak{l}$.

Which is the relation between $\tau'(\nfrak)$, $[\mathfrak{l}]$ and $\tau'(\nfrak\mathfrak{l})$?
We have 
\begin{align*}
\boldf|\tau'(\nfrak)|[\mathfrak{l}] (x) = & \N(\mathfrak{l})^{-1}\chi^{-1}(det (x) {\mathfrak{l}}^{-1}) \boldf\left( x \left(
\begin{array}{cc}
 \mathfrak{l^{-1}} & 0\\
 0 & 1
\end{array}
\right) 
\left(
\begin{array}{cc}
 0 & -1\\
 \dfrak^2\nfrak & 0
 \end{array}
\right)  \right) \\
 =  &  {\N(\mathfrak{l})}^{-m-1}\boldf|\tau'(\nfrak\mathfrak{l}).
 \end{align*}
 
If $\mathfrak{m}\rif | \mathfrak{n}\rif$ we have also 
\begin{align*}
\boldf|[\mathfrak{m}]|\tau'(\nfrak) (x)=  {\N(\mathfrak{m})}^{-1}\boldf |\tau'(\mathfrak{n}\mathfrak{m}^{-1}). 
\end{align*}

The operator $\tau'(\nfrak^2)$ differs from the operator of half-integral weight $\tau(\nfrak^2)$ defined at the end of Section \ref{HIntweight} by a constant which corresponds to the central character of $\boldf$: $\boldf|\tau'(\nfrak^2)= \psi(\nfrak\dfrak)\N(\nfrak\dfrak)^{m}\boldf|\tau(\nfrak^2)$. \\

Suppose now that $l'$ is an half-integral weight, $l'=l+ \frac{t}{2}$, and let $\nfrak, \mfrak$ be two id\`eles which represent $\Nfrak$ and $\Mfrak$, with  $\Mfrak | \Nfrak$. For the operators  $\tau(\nfrak^2)$ and $[\mfrak^2]$ defined at the end of Section \ref{HIntweight}, we have 
\begin{align*} 
f|[\mfrak^2]|\tau(\nfrak^2)= & f|\tau(\nfrak^2 \mfrak^{-2}) \N(\Mfrak)^{-1/2}, \\
f|\tau(\nfrak^2)|[\mfrak^2]= & f|\tau(\nfrak^2 \mfrak^2 ) \N(\Mfrak)^{-1/2} .
\end{align*}

\subsection{Some trace operators}
We recall some trace operators defined in \cite[7.D,E]{Hr3}. Let $\Nfrak$ and $\mathfrak{L}$ two integral ideals of level prime to $p$ such that $ \mathfrak{L}\mathfrak{N} = \Mfrak$, we define a trace operator
$$ 
\begin{array}{cccc}
Tr_{\Mfrak/\mathfrak{N}} : & \overline{\boldM}_{k,w}(U(\mathfrak{N},\mathfrak{L})) & \rightarrow & \overline{\boldM}_{k,w} (U(\mathfrak{N}))\\
& \boldf & \mapsto & \sum_{x \in U(\mathfrak{N},\mathfrak{L}) / U(\mathfrak{N})} \boldf|x
\end{array}.$$
It naturally extends to $p$-adic modular form if $p$ is coprime with $\Mfrak$.\\
We define then a {\it twisted} trace operator
$$ 
\begin{array}{cccc}
T_{\Mfrak/\Nfrak} = 
 Tr_{\Mfrak/\mathfrak{N}}\circ \left(
\begin{array}{cc}
 \mathfrak{l} & 0\\
 0 & 1
\end{array}
\right)  : & \overline{\boldM}_{k,w} (U(\Mfrak)) & \rightarrow &  \overline{\boldM}_{k,w} (U(\Nfrak)),
\end{array}
$$
where $\mathfrak{l}$ and $\nfrak$ are two id\`eles representing the ideals $\mathfrak{L}$ and $\Nfrak$.
\subsection{The Petersson product}
We recall briefly the definition of the Petersson inner product given in \cite[\S 4]{Hr3}. For $\boldf$ and $\boldg$ in $\boldM_{k,v}(U_0(\Nfrak),\psi,\psi')$, we define
\begin{align*}
\lla \boldf,\boldg \rra_{\Nfrak} = \int_{X_0(\Nfrak)} \overline{\boldf(x)} \boldg(x) |\mathrm{det}(x)|_{\Aa}^m \textup{d} \mu_{\Nfrak}(x),
\end{align*}
where $X_0(\Nfrak)$ is the Shimura variety associated with $U_0(\Nfrak)C_{\infty +}$ and $\mu_{\Nfrak}$ is a measure on $X_0(\Nfrak)$ which is induced from the standard measure on the Borel of $\gl(\fa)$. Let us point out that we do not divide by the volume of the corresponding Shimura variety.\\
Let $h$ be the strict class number of $F$ and $\afrak_i$ a set of representatives, using the decomposition 
\begin{align*}
X_0(\Nfrak) = &  \bigcup_{i=1}^h \h^{I}/\G[\Nfrak\afrak_i,\afrak_i^{-1}],   
\end{align*}
we have 
\begin{align*}
\lla \boldf,\boldg\rra_{\Nfrak} = \sum_{i=1}^h  \lla \boldf_i,\boldg_i \rra_{\Nfrak\afrak_i}
\end{align*}
where 
\begin{align*}
\lla \boldf_i,\boldg_i\rra_{\Nfrak\afrak_i} = \N(\afrak_i)^{m} \int_{\h^{I}/\G[\Nfrak\afrak_i,\afrak_i^{-1}]} \overline{\boldf_i(z)}\boldg_i(z) y^{k}\textup{d} \mu(z)
\end{align*}
 $y=\mathrm{Im}(z)$ and $\mu(z)$ is the standard measure on $\h^I$ invariant under linear fractional transformations.\\
Denote by $\boldf^c$ the Hilbert modular form whose Fourier coefficients are the complex conjugate of $\boldf$. If we define $(\boldf,\boldg)=\lla \boldf^c|\tau'(\nfrak),\boldg\rra$ (we dropped from the notation the dependence on the level), we have then that the Hecke algebra is self-adjoint this Petersson product $(-,-)$. We have, \cite[7.2]{Hr3},  the following adjunction formula  
\begin{align*}
  \lla \boldf, \boldg| T_{\mathfrak{L}/\Nfrak} \rra_{\mathfrak{N}} = & {\{\mathfrak{L}/\Nfrak \}}^{-v} {\N(\mathfrak{L}/\Nfrak)}^{1-m}\lla \boldf|[\mathfrak{m}],\boldg \rra_{\mathfrak{L}} 
\end{align*}
and consequently, if $\mathfrak{L}$ and $\Nfrak$ are prime to $p$,
\begin{align*} 
 \lla \boldf |\tau'(\mathfrak{l}), \boldg| T_{\Nfrak/\mathfrak{L}} \rra_{\mathfrak{N}} = & \lla \boldf|\tau'(\nfrak), \boldg \rra_{\mathfrak{L}} .
\end{align*}
Let $\boldf$ be a Hilbert modular form in $\boldM_{k,v}(U_0(2^{-1}\Nfrak p^r,2),\psi,\psi')$ and $g$ be a Hilbert modular form  of half integral weight $l'$ in $\boldM_{l'}(\Nfrak p^r,\psi_2,\C)$, we can now restate Proposition \ref{IntExpre} in the following way
\begin{align*} 
\lla \boldf^c, \mathrm{Tr}_{\Sl}^{\gl}(v,\psi') \left({g}\mathcal{E}\left(z,s; k-l'-\frac{1}{2}t,\chi_{-1}\psi_2\psi{\psi'}^{-2}, \Nfrak\right)\right) \rra_{\Nfrak} = & 
\end{align*}
\begin{align*}
  =& 2^{d}  D_F^{-3/2} {(2\pi)}^{ - d(s-3/4) - \frac{k+l'}{2}} \times  \\
  &\times \G_{\infty}\left(\left(s-\frac{3}{4}\right)t + \frac{k+l'}{2}\right) \mathcal{D}\left(s-\frac{1}{2},\boldf_1,g\right). 
\end{align*}


\section{Arithmetic measures}\label{p-measure}

In this section we recall the notion of an arithmetic measure and construct some of them, in the spirit of \cite{H6,Wu}. We will first construct a many variable measure $\calE^{\chi\chi_{-1}}_c\ast \Theta_{\chi} |[\mathfrak{l^2}] $ which will be used for the construction of the $p$-adic $L$-function in $[F:\Q]+2+\delta$ variable; then, we will construct a one variable measure $E^{\chi,+}_c$ will be used for the construction of the ``improved'' $p$-adic $L$-function (see Section \ref{padicL}). \\
An {\itshape arithmetic measure of half integral weight}  is a measure from a $p$-adic space $V$ on which $\rif_p^{\times}$ acts to the space of $p$-adic modular forms of half-integral weight which satisfies certain conditions (cfr. \cite[\S 4 ]{H6}).\\
More precisely, an arithmetic measure of half integral weight is a $\oo$-linear map $\mu: \Cc(V,\oo) \rightarrow \overline{\boldM}_{\mathrm{half}}(\Nfrak,\oo)$ such that
\begin{itemize}
	\item[A.1] There exist a non-negative integer $\kappa$ such that for all $\phi \in \Ll\Cc(V,\oo)$, there exists an integer $r$ such that 
	\begin{align*}
	 \mu(\phi) \in &  \boldM_{\left(\kappa + \frac{1}{2}\right)t}(\Nfrak p^r, \overline{\oo})
	\end{align*}
	\item[A.2] There is a finite order character $\psi : \rif_p^{\times} \rightarrow \oo^{\times}$ such that for the action $|$ of $\rif_p^{\times}$ defined in Section \ref{halfp}, 
	\begin{align*}
	 \mu(\phi)| b = {b}^{\kappa t + \frac{1}{2}t} \psi(b) \mu(\phi|b),
	\end{align*}
	where $ \phi|b(v) =  \phi(b^{-1}v)$.
	\item[A.3] There is a continuous function $\nu:V \rightarrow \oo$ such that
	\begin{align*}
            (\nu | b) (v) = b^{2t} \nu(v) \mbox{ and } & \textup{d}^t (\mu(\phi)) = \mu(\nu \phi). 
         \end{align*}	
\end{itemize}
	
We say that such a measure is {\itshape supersingular} if $ \iota_p \mu = \mu$ and {\itshape cuspidal} if $\mu$ has values in $\overline{\boldS}_{\mathrm{half}}(\Nfrak,\oo)$. 
Under some hypotheses, such as Leopodt's conjecture (but even under weaker hypotheses, cfr. \cite[(8.2)]{Hr3}), it is possible to show that supersingular implies cuspidal as in \cite[Lemma 4.1]{H6}.\\
We can define an arithmetic measure of half-integral weight for $\clpinf$ after obvious changes in the properties {\it A.1 - A.3}. Note that the action of $\rif_p^{\times}$ is trivial on the closure of global units. \\
Before giving some examples, we recall that we have the following theorem on the existence of $p$-adic $L$-function for Hecke character of a totally real field \cite{DelRib}
\begin{theo}\label{Ribet}
Let $\chi$ be a primitive character of finite order of conductor $\cfrak$. For all $c \in \clpinf$, we have a measure $\zeta_{\chi,c}$ on $\clpinf$ such that
\begin{align*} 
\int_{\clpinf} \psi(z){\lla \N_p(z)\rra}^n \textup{d}\zeta_{\chi,c}(z) = & (1- \chi\psi(c){\N(c)}^{n+1}) \prod_{\pfrak|p}(1-(\psi\chi)_0(\pfrak){\N(\pfrak)}^{n}) L(-n,(\psi\chi)_0). 
\end{align*}
for all $n \geq 0$ and for all finite order character $\psi$, where $(\chi\psi)_0$ denotes the primitive character associated with $\chi\psi$.
\end{theo}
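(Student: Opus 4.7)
The plan is to follow the strategy of Deligne and Ribet, building $\zeta_{\chi,c}$ from the constant terms of a $p$-adic family of Hilbert-Eisenstein series of parallel weight.

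First, I would recall that for every integer $k \geq 1$ and every finite order character $\eta$ of $\clpinfc$ whose conductor divides $\cfrak p^r$, there is a Hilbert-Eisenstein series $G_{k,\eta}$ of parallel weight $kt$ and nebentypus $\eta$ whose Fourier expansion is
\begin{align*}
G_{k,\eta} = 2^{-d} L(1-k, \eta_0) \prod_{\substack{\pfrak \mid p \\ \pfrak \nmid \mathfrak{f}(\eta_0)}}\!\!(1 - \eta_0(\pfrak)\N(\pfrak)^{k-1}) + \sum_{0 \ll \xi} \Bigl( \sum_{\mathfrak{a} \mid \xi\dfrak} \eta(\mathfrak{a})\N(\mathfrak{a})^{k-1} \Bigr) q^\xi.
\end{align*}
The non-constant coefficients are obviously $p$-integral, and the function $\mathfrak{a} \mapsto \eta(\mathfrak{a})\N(\mathfrak{a})^{k-1}$ is, as $(\eta, k)$ varies, precisely the evaluation at $\mathfrak{a}$ of the tautological character of $\clpinfc$. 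Hence the non-constant part of the assignment $(\eta, k) \mapsto G_{k,\eta}$ interpolates $p$-adically continuously over the character space of $\clpinfc$.

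Next, I would apply Deligne-Ribet's $c$-regularization: for each $c \in \clpinfc$ one modifies $G_{k,\eta}$ into a new form $G^{(c)}_{k,\eta}$ whose non-constant Fourier coefficients are still integral divisor sums (and hence still $p$-adically continuous in $(\eta, k)$), while the constant term acquires an extra factor $(1 - \eta(c)\N(c)^{k})$, becoming
\begin{align*}
2^{-d} (1 - \eta(c)\N(c)^{k}) L(1-k, \eta_0) \prod_{\pfrak \mid p}(1 - \eta_0(\pfrak)\N(\pfrak)^{k-1}).
\end{align*}
Setting $k = n+1$ and $\eta = \chi\psi$ this matches the right-hand side of the theorem.

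The key remaining step, and the main obstacle, is the $q$-expansion principle for Hilbert modular forms (the arithmetic heart of Deligne-Ribet's original paper): because the Hilbert modular variety has several cusps, $p$-integrality of the non-constant Fourier expansion at one cusp does not \emph{a priori} imply $p$-integrality of the constant term there, and a genuine global argument on the Hilbert modular variety is required. Once it is granted, one concludes that the constant term of $G^{(c)}_{k,\eta}$ is also continuous and integral in $(\eta, k)$, and hence defines an element of $\oo[[\clpinfc]]$; by Mahler-type expansion this element is exactly the desired measure $\zeta_{\chi,c}$, and the interpolation formula is then a direct reading of the constant-term computation above.
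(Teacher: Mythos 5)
The paper does not prove this statement: it is quoted verbatim as a known theorem, with a citation to Deligne--Ribet, and is used as a black box in the construction of the Eisenstein measures of Section 6. Your sketch is a faithful outline of exactly the argument that citation stands for --- parallel-weight Hilbert--Eisenstein series with divisor-sum coefficients, the $c$-regularization producing the factor $(1-\chi\psi(c)\N(c)^{n+1})$, and the $q$-expansion principle on the Hilbert modular variety as the one genuinely non-formal input controlling the constant term --- so there is no divergence of approach to report, only the caveat that the hard geometric step you correctly isolate is precisely the content of the cited reference rather than something reproved here.
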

Here $\N_p$ is the $p$-adic cyclotomic character. As in the case of Kubota-Leopoldt $p$-adic $L$-functions, if $\chi$ is odd, then the above measure is $0$ for all $n$. In the sequel, fix $c \in \clpinf$ such that $\lla \N_p(c) \rra$ generates the free part of the $p^{\infty}$-cyclotomic extension of $F$.\\
It is possible to interpolate the values of the imprimitive $L$-function. What we have to do is to remove the factor at prime ideals $\qfrak$, for $\qfrak$ prime to $p$ and ranging in a fixed finite set of prime ideals. We multiply the measure $\zeta_{\chi,c}$ by the factor 
\begin{align*}
(1-(\psi\chi)_0(\qfrak){A_{ \N_p(\qfrak) }(X)})
\end{align*}
for $\qfrak$ in this fixed set. Here $A_z(X)$ denotes the formal power series ${(1+X)}^{\log_p(z)/\log_p(u)}$.\\
Let us explain how this theorem implies that the Eisenstein series of Theorem \ref{EisFou} can be $p$-adically interpolated, by interpolating their Fourier coefficients as $p$-adic analytic functions.\\
Fix an integral ideal $\Mfrak$ of $F$ prime to $p$ and divisible by $4$. Up to enlarging $\Mfrak$, we can suppose that it is the square of a principal ideal. Call $\afrak_p$ and $\mathfrak{b}_p$ the image of $\afrak$ and $\mathfrak{b}$ in $\N_p(\clpinfm)$.\\ 
The first {\it Eisenstein measure} (of level $\Mfrak$) which we define is

\begin{align*} 
\int_{\clpinfm} \psi(z) \textup{d}E^{\chi}_{ss,c}(z) = & \sum_{\tiny{\begin{array}{c}
                                            0 \ll \xi \in \rif , \\
                                              (\xi,p)=\oo
                                             \end{array}}}  q^{\xi/2}  \sum_{\tiny{\begin{array}{c}
                                             \afrak^2\mathfrak{b}^2 | \xi\\
                                             \afrak, \mathfrak{b} \mbox{ prime to } p\Mfrak
                                             \end{array}}}
\mu(\afrak) \omega_{\xi}(\afrak)\N(\mathfrak{b}) \int_{\clpinfm} \psi|(\afrak_p\mathfrak{b}^2_p) \textup{d}\zeta_{\chi\omega_{\xi},c}
\end{align*}
Here the action of $z$ in $\clpinfm$ on $\Cc(\clpinfm,\oo)$ is given by $\phi|z(v)=\phi(z^{-1}v)$.\\
We see that for a character $\psi$ of $\clpinfm$ of finite order of $p$-conductor $p^{\alpha}$, $\alpha={(\alpha_{\pfrak})}_{\pfrak \mid p}$, with $\alpha_{\pfrak} \geq 0 $ for all prime ideal $\pfrak$,   we have 

\begin{align*}
 E^{\chi}_{ss,c} ({\psi(z)\N(z)}^{m-1}) = &(1- \omega^{-n}\chi\psi(c){\N(c)}^{m}) A_0^{-1} \times \\
  & \times {\calE '}\left(z,\frac{1-m}{2};mt,\psi\chi,\Mfrak^2 p^{2s}\right)  |[\mfrak^2\varpi^{2\alpha}4^{-1}]|\iota_p \\
 & \in \boldM_{mt+\frac{1}{2}t}(\Mfrak^2 p^{2\alpha},\chi^{-1}\psi^{-1}),
\end{align*} 
for $A_0=  i^{md}{\pi}^{d}  {2}^{(m-\frac{1}{2})d } \psi(\dfrak  \nfrak 2^{-1})D_F{\N(\dfrak\nfrak 2^{-1})}^{m-2}$ of Proposition \ref{Eisen_other2}.\\
Here $\varpi$ is a product of fixed uniformizers at $\pfrak_i$ for $\pfrak_i |p$. In general, we shall use the notation $p^{\alpha}= \prod_{i=1}^e \pfrak_i^{\alpha_i}$, with $\alpha = (\alpha_i)$ and $\pfrak_1, \ldots, \pfrak_e$ the divisors of $p$ in $\rif$.\\
We define then the measure $\mathcal{E}_c^{\chi}$ on $\boldG=\clpinfm \times \rif_p^{\times}$; let $\psi$ be a finite order character of $\clpinfm$ and $\psi'$ a finite order character of $\rif_p^{\times}$ which we supposed induced by a finite order Hecke character of $F$ which we denote by the same symbol. We define for the function $a^{v}\psi'(a)\psi(z){\N_p(z)}^{m}$

\begin{align*}
\int_{\boldG} \psi(z){\N_p(z)}^{m}a^{v}\psi'(a) \textup{d}\mathcal{E}_c^{\chi}(z,a) = & \textup{d}^{-v} \left(\int_{\clpinf} \psi(z){\psi'}^{-2}(z){\N_p(z)}^m \textup{d}E^{\chi}_{ss,c} \right).
\end{align*}
Note that such a functions are dense between the continuous function on $\boldG$. We have that $\mathcal{E}^{\chi}_c$  is an arithmetic measure of half-integral weight. In fact, it verifies {\it A.1} with $\kappa = 1$.\\
For {\it A.2}, we define an action of $b \in \rif_p^{\times}$  on $\Cc(\boldG,\oo)$ as $\phi_1(a)\phi_2(z)| b = \phi_1(a{b}^{-2})\phi_2(zb)$. To show that 
\begin{align*}
 \mathcal{E}_c^{\chi}(\phi_1,\phi_2) |b = \mathcal{E}_c^{\chi}((\phi_1,\phi_2)|b)
\end{align*}
it is enough to check the formula on functions of type $a^{v}\psi'(a)\psi(z)$ with $\psi$ and $\psi'$ characters of finite order. Notice that $(\textup{d}^{\sigma}f)| b ={b}^{2\sigma}\textup{d}^{\sigma}(f|b)$; we have then
\begin{align*}
 \mathcal{E}_c^{\chi}(a^{v}\psi'(a)\psi(z)) |b = & (\textup{d}^{-v}E_{ss,c}^{\chi}({\psi'}^{-2}\psi(z)))|b \\
  = & A' {b}^{-2v} \textup{d}^{-v}({\calE}'\left(z,0; t,\psi{\psi'}^{-2},\Mfrak^2 p^{2\alpha}\right)  |[\varpi^{2\alpha}\mfrak^2 4^{-1}]|\iota_p | b) \\
  = & A' {b}^{-2v} \psi^{-1}{\psi'}^2(b)\textup{d}^{-v}{\calE}'\left(z,0; t,\psi{\psi'}^{-2},\Mfrak^2 p^{2 \alpha}\right)|[\varpi^{2 \alpha}\mfrak^2 4^{-1}]|\iota_p \\
  = & \mathcal{E}_c^{\chi}((a^{v}\psi'(a)\psi(z))|b),
\end{align*}
for $A'=(1- \omega^{-n}\chi\psi(c){\N(c)}^{m}) A_0^{-1}$.\\
Then {\it A.3} is verified by $(a,z)\mapsto a^{-t}$.\\

Furthermore, we define a third Eisenstein measure $E^{\chi,+}_c$ on $\clpinf$. Let $\chi$ be a Hecke character of finite order, we define $E^{\chi,+}_c$ as the restriction on the divisor $D := ( Y +1 - \lla \N_p(c) \rra {(X+1)}^{2})$  of the following measure $\mu$

\begin{align*} 
\int_{\clpinf^2} \phi_1(z_1)\phi_2(z_2) \textup{d}\mu = &(1-\chi(c)\N_p(c)(1+X)) \int_{\clpinf}\phi_2  \textup{d}\zeta_{\chi^2,c} + \\
                                       & + (1-\chi^2(c)\N_p^2(c)(1+Y))\sum_{\tiny{\begin{array}{c}
                                             \xi \gg 0, \\
                                             \xi \in \rif
                                             \end{array}}} q^{\xi/2}  \times \\
                                           & \left(  
                                         \sum_{\everymath{\scriptstyle}
\tiny{\begin{array}{c}
                                             \afrak^2\mathfrak{b}^2 | \sigma\\
                                             (\afrak, \mathfrak{b},  p\cfrak)=1
                                             \end{array}}}
\mu(\afrak) \omega_{\xi}(\afrak)\N(\mathfrak{b}) \int_{\clpinf} \phi_1|\afrak_p\mathfrak{b}_p^2 \textup{d}\zeta_{\chi\omega_{\xi},c} \right).
\end{align*}

We define now a { \it theta measure}; fix a Hecke character $\chi$ of level $\cfrak$, we pose
$$
\begin{array}{cccc}
\Theta_{\chi}: & \Cc(\clpinfc,\oo) & \rightarrow & \overline{\boldM}_{\mathrm{half}}(4\cfrak^2,\oo) \\
&  \eps & \mapsto &\sum_{\xi \gg 0 }\chi\eps(\xi){\N(\xi)}^{\alpha} q^{\frac{\xi^2}{2}} ,
\end{array}
$$
where  $\alpha \in \lgr 0,1 \rgr$ and  $\eps$ a character of finite order of conductor $p^s$, such that $\chi\eps(-1)={(-1)}^{\alpha}$. We have seen that $\Theta_{\chi}(\eps) \in \boldS_{\frac{t}{2}+ \alpha t }(4 \cfrak^2 p^{2s} , \chi\eps)$. \\
We define now the convolution of two measures of half-integral weight; let $\mu_1 $ and $\mu_2$ be two measures, defined respectively on $\boldG$ and $\clpinf$, the {\it convolution} $\mu_1 \ast \mu_2$ is defined by
\begin{align*}
 \int_{\clpinf}\left(\int_{\boldG} \Phi(z^{-1} z_1)(z,a)\textup{d}\mu_1\right)\textup{d}\mu_2, 
\end{align*}
for $z_1 \in \clpinf$, $(z,a) \in \boldG = \clpinf \times \rif_p^{\times}$ and $\Phi: \clpinf^* \rightarrow \Cc(\boldG,\oo) $ a continuous  morphism. Here $\phantom{e}^*$ denotes the $\oo$-dual. If we let $\boldG$ acts on $\clpinf \times \boldG$ via 
\begin{align*} 
 (z_0,a_0)(z_1,z,a)= (z_1 z_0^{-1},zz_0, a a_0^{-1} ),
\end{align*}
  we see that this action is compatible with the action of $\rif_p^{\times}$ on $\Cc(\boldG,\oo)$  defined above if we send 
\begin{align*}
 \rif_p^{\times} \ni b \mapsto (b^{-1},b^2) \in \clpinf \times  \rif_p^{\times}.
\end{align*}
 
Let $\chi$ be a character of conductor $\cfrak$ and $\Nfrak$ a fixed ideal (which in what follows will be the level of our Hilbert modular form), and pose $\Mfrak=lcm(4,\cfrak^2,\Nfrak^2)$ Let us write $\Lfrak^2=\frac{\Mfrak}{4 \cfrak^2}$ and let $\lfrak$ be an id\`ele representing $\Lfrak$. We take $\mu_1=\calE^{\chi\chi_{-1}}_c$ (of level $\Mfrak$) and $\mu_2=\Theta_{\chi}|[\mathfrak{l^2}]$. Recall that $\chi_{-1}$ is the character corresponding to the extension $F(i)$ defined in Section \ref{HIntweight}.  We have then that $\calE^{\chi\chi_{-1}}_c\ast \Theta_{\chi} |[\mathfrak{l^2}] $ is a measure which takes values in $\overline{\boldM}_{k} (\G^1[\Mfrak],\oo)$.

We compose now with $ \mathrm{Tr}_{\Sl}^{\gl}$ and we see that such an action is compatible with the action of $\boldG$ on $\overline{\boldm}(\Mfrak(p^{\infty}),\oo)$, so what we have just constructed is a morphism of $\oo[[\boldG]]$-modules.


\section{Some $p$-adic $L$-functions}\label{padicL}
In this section we construct two $p$-adic $L$-functions for the symmetric square (Theorem \ref{T1}); to do this, we first recall the definition of the {\it $p$-adic Petersson product} $l_{\lambda}$  which is a key tool in the construction of $p$-adic $L$-function {\it \`a la Hida}.\\
 Let us denote by $\delta$ Leopoldt's defect for $F$ and $p$; the first $p$-adic $L$-function $\Ll_p(Q,P)$ has $[F:\Q]+2+2\delta$ and  to construct it we use the method of Hida \cite{H6} as generalized by Wu \cite{Wu}. We improve the result of Wu (which requires the strict class number of $F$ to be equal to $1$) and correct some minor errors.\\ 
We construct also a one variable $p$-adic $L$-function $\Ll^+_p(Q,P)$ which we call the ``improved'' $p$-adic $L$-function. It is constructed similarly to \cite{HT}; instead of considering the convolution of measures, we multiply the measure $E^{\chi,+}_c$ of the previous section by a fixed theta series $\theta(\chi)$. By doing so, we lose the cyclotomic variable but in return we have that when $\chi$ is not of conductor divisible by $p$ we do not have to consider theta series of level divisble by $p$. As a consequence, in the notation of Theorem \ref{T1}, the factor $E_1(Q,P)$ (whose origin has been explained in \ref{thetashift}) does not appear. These two $p$-adic $L$-functions are related by Corollary \ref{Coroll} which is the key for the proof of Theorem \ref{MainTh}; indeed, $E_1(Q,P)$ is exactly the Euler factor which brings the trivial zero for $\boldf$ as in Theorem \ref{MainTh}.\\
We recall that we defined $\LL=\oo[[\boldW]]$, for $\boldW$ the free part of $\boldG$. Let $\boldI$ be a finite, integrally closed extension of $\LL$ and let $\boldF$ be a family of nearly-ordinary forms which corresponds by duality to a morphism $\lambda:\boldh^{\mathrm{n.ord}}(\Nfrak,\oo) \rightarrow \boldI$  as in Section \ref{ptheory}. We suppose that $\lambda$ is associated with a family of $\Nfrak$-new forms. In the follow, we will denote by $\boldh^{\mathrm{ord}}$, resp. $\boldI^{\mathrm{ord}}$, $\boldF^{\mathrm{ord}}$ the ordinary part of $\boldh^{\mathrm{n.ord}}$, resp. $\boldI$, $\boldF$, i.e. the specialization at $v=0$.\\
Such a morphism $\lambda$ induces two finite order characters $\psi$, resp. $\psi'$ of the torsion part of $\clpinf$, resp. $\rif_p^{\times}$. We define, following \cite[\S 9]{Hr3}, the congruence ideal of $\lambda$.  By abuse of notation, we denote again by $\lambda$ the following morphism
\begin{align*}
\lambda : \boldh^{\mathrm{n.ord}}(\Nfrak,\oo)  \otimes_{\LL} \boldI \rightarrow \boldI
\end{align*}
which is the composition of multiplication $\boldI \otimes \boldI \rightarrow \boldI$ and  $\lambda \otimes \mathrm{id}_{\boldI}$.\\
Let $\boldK$ be the field of fraction of $\boldI$; such a morphism $\lambda$ induce a decomposition of $\LL$-algebra
\begin{align*}
 \boldh^{\mathrm{n.ord}}(\Nfrak,\oo)  \otimes_{\LL} \boldK \cong \boldK \oplus \boldB,
\end{align*}
where the projection on $\boldK$ is induced by $\lambda$ and $\boldB$ is a complement. Let us call $1_{\lambda}$ the idempotent corresponding to the projection onto $\boldK$ and $\mathrm{proj}$ the projection on $\boldB$. We define the congruence ideal  
\begin{align*}
C(\lambda) := & (\boldI \oplus \mathrm{proj}(\boldh^{\mathrm{n.ord}}(\Nfrak,\oo)  \otimes_{\LL} \boldI) ) / (\boldh^{\mathrm{n.ord}}(\Nfrak,\oo)  \otimes_{\LL} \boldI). 
\end{align*} 
We know that $C(\lambda)$ is a $\boldI$-module of torsion and we fix an element $H$ of $\boldI$ such that $H C(\lambda)=0$.\\
Let us define $\overline{\boldS}(\Nfrak p^{\infty},\psi,\psi',\oo)$ as the part of $\overline{\boldS}(\Nfrak p^{\infty},\oo)$ on which the torsion of $\boldG$ acts via the finite order characters $\psi$ and $\psi'$ defined as above. Write $\hat{\boldI}= \mathrm{Hom}_{\LL}(\boldI,\LL)$; we have a $\oo$-linear form 
\begin{align*}
l_{\lambda}: \overline{\boldS}(\Nfrak p^{\infty},\psi,\psi',\oo)\otimes_{\LL} \hat{\boldI} \rightarrow \oo .
\end{align*}
Let us denote by $X(\boldI)$ the subset of arithmetic points of $\mathrm{Spec}(\boldI)$; let $P$ be in $X(\boldI)$ such that $P$ resticted to $\LL$ is of type $(m_P,v_P,\eps_P,\eps'_P)$. Let $p^{\alpha}$ be the smallest ideal divisible by the conductors of $\eps_P$ and $\eps'_P$. The point $P$ corresponds by duality  to a form $\boldf_P$ of weight $(k_P=(m_P+2)t -2v_P=\sum_{\sigma \in I}k_{P,\sigma}\sigma, w_P=t-v_P)$, level $\Nfrak p^{\alpha}$ and character $\psi_P=\psi \omega^{-m}\eps_P$, $\psi'_P(\zeta,w)=\eps'_P(w)\psi'(\zeta)\zeta^{-v}$, where we have decomposed $\rif_p^{\times}=\mu \times \boldW'$ as in the end of section \ref{halfp}, and $a \in \rif_p^{\times}$ corresponds to $(\zeta,w)$. When there is no possibility of confusion, in the follow we will drop the subscript $\phantom{e}_P$.\\
We have an explicit formula for $ l_{\lambda_P}:= P \circ l_{\lambda} $ given by  \cite[Lemma 9.3]{Hr3}
\begin{align*}
 l_{\lambda_P}(\boldg) = & H(P)\frac{\lla \boldf_P^{c}|\tau'(\nfrak\varpi^{\alpha}), \boldg \rra_{\Nfrak p^{\alpha}}}{\lla \boldf_P^{c}|\tau'(\nfrak\varpi^{\alpha}), \boldf_P \rra_{\Nfrak p^{\alpha}}} 
 \end{align*}
for all $\boldg \in \boldS_{k_P,w_P}(\Nfrak p^{\alpha},\psi_P,\psi'_P,\Qb )$. Here $\varpi$ denotes the product over $\pfrak \mid p$ of some fixed uniformizers $\varpi_{\pfrak}$.\\

It is clear that we can extend this morphism in a unique way to $\overline{\boldm}_{k,w}(\Nfrak p^{\infty},\oo)$, using the duality with $\boldH_{k,w}(\Nfrak p^{\infty},\oo)$ \cite[Theorem 3.1]{Hr3} and then projecting onto $\boldh_{k,w}(\Nfrak p^{\infty},\oo)$.
We have 
\begin{lemma}
Let $\boldf_P$ and $\lambda$ as above, let $\boldg \in \boldM_{k_P,w_P}(\Nfrak p^{\beta},\psi_P,\psi_P',\overline{\Q})$ with $\beta \geq \alpha$. Then 
\begin{align*}
l_{\lambda_P}(\boldg) & ={H(P)}{\lambda(T(\varpi^{\beta - \alpha}))}^{-1}  \frac{\lla \boldf_P^{c}|\tau(\nfrak\varpi^{\beta}), \boldg \rra_{\beta}}{\lla \boldf_P^{c}|\tau(\nfrak\varpi^{\alpha}), \boldf_P \rra_{\alpha}}
 \end{align*}
\end{lemma}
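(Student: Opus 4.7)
The plan is to deduce the level-$\beta$ formula from the formula of \cite[Lemma 9.3]{Hr3} stated immediately above (at level $\alpha$) by pulling $\boldg$ down to level $\alpha$ via the trace operator $T_{\Nfrak p^\beta/\Nfrak p^\alpha}$ of Section~\ref{operators}, and then pushing the pairing back up using the adjunction formula.

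Concretely, I would first apply the base-case formula to $\boldg|T_{\Nfrak p^\beta/\Nfrak p^\alpha}\in\boldS_{k_P,w_P}(\Nfrak p^\alpha,\psi_P,\psi'_P,\Qb)$, obtaining
\begin{align*}
l_{\lambda_P}\!\left(\boldg|T_{\Nfrak p^\beta/\Nfrak p^\alpha}\right) = H(P)\,\frac{\lla\boldf_P^c|\tau'(\nfrak\varpi^\alpha),\,\boldg|T_{\Nfrak p^\beta/\Nfrak p^\alpha}\rra_\alpha}{\lla\boldf_P^c|\tau'(\nfrak\varpi^\alpha),\,\boldf_P\rra_\alpha}.
\end{align*}
Next, I would use the adjunction relation of Section~\ref{operators}, namely $\lla\boldf|\tau'(\mathfrak{l}),\boldg|T_{\Nfrak/\mathfrak{L}}\rra_{\Nfrak}=\lla\boldf|\tau'(\nfrak),\boldg\rra_{\mathfrak{L}}$, applied with $\mathfrak{L}=\Nfrak p^\alpha$, $\Nfrak=\Nfrak p^\beta$ and $\mathfrak{l}=\nfrak\varpi^\beta$; this turns the numerator into $\lla\boldf_P^c|\tau'(\nfrak\varpi^\beta),\boldg\rra_\beta$.

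Finally, I would invoke the identity
\begin{align*}
l_{\lambda_P}\!\left(\boldg|T_{\Nfrak p^\beta/\Nfrak p^\alpha}\right) = \lambda(T(\varpi^{\beta-\alpha}))\,l_{\lambda_P}(\boldg),
\end{align*}
which follows from the fact that $l_{\lambda_P}$ factors through the $\lambda$-isotypic projection together with the observation that, on nearly-ordinary forms, the trace $T_{\Nfrak p^\beta/\Nfrak p^\alpha}=Tr_{\Nfrak p^\beta/\Nfrak p^\alpha}\circ\bigl(\begin{smallmatrix}\varpi^{\beta-\alpha}&0\\0&1\end{smallmatrix}\bigr)$ acts on the $\boldf_P$-eigenline as the Hecke eigenvalue $\lambda(T_0(\varpi^{\beta-\alpha}))=\lambda(T(\varpi^{\beta-\alpha}))$. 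Solving for $l_{\lambda_P}(\boldg)$ and combining the three identities produces the claimed formula.

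The main obstacle is justifying the last two identities in the $p$-power case: the adjunction relation and the twisted trace-versus-Hecke-eigenvalue identity of Section~\ref{operators} are stated there for levels prime to $p$, so one must verify that the matrix computations extend verbatim to $\mathfrak{l}=\varpi^{\beta-\alpha}$, and one must check that the normalization constants which appear in both the adjunction formula (the factor $\{\mathfrak{L}/\Nfrak\}^{-v}\N(\mathfrak{L}/\Nfrak)^{1-m}$) and the conversion between the trace operator and $\lambda(T(\varpi^{\beta-\alpha}))$ cancel exactly, so that no extra constants remain in the final expression.
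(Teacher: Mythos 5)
Your proof is correct and is essentially the paper's own argument: the paper sets $\boldg'=\boldg|T_0(p^{\beta-\alpha})$ and uses the self-adjointness of the Hecke algebra for the pairing $(\boldf,\boldg)=\lla \boldf^c|\tau'(\cdot),\boldg\rra$, and since the twisted trace $T_{\Nfrak p^{\beta}/\Nfrak p^{\alpha}}$ coincides (up to the normalizations you mention) with $T_0(\varpi^{\beta-\alpha})$ on forms of level divisible by $p$, your three steps reproduce the same computation in different notation. The only point to watch is the one you already flag: the paper works with $T_0(y)=\lgr y^{-v}\rgr T(y)$ and carries the resulting factor $\lgr \varpi^{-v(\beta-\alpha)}\rgr$ through the adjunction step, where it cancels so that only $\lambda(T(\varpi^{\beta-\alpha}))^{-1}$ survives in the final formula.
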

\begin{proof} 
This is proven in \cite{Mok} on pages 29-30 in the case $v=0$. If $\alpha = \beta $ it is clear. Otherwise, we write $\boldg ' = \boldg | T_0(p^{\beta - \alpha})$ and proceed as in \cite[Proposition 4.5]{H1} 
\begin{align*}
{H(P)}^{-1}{\lambda(T_0(\varpi^{\beta - \alpha}))}l_{\lambda_P}(\boldg) & = l_{\lambda_P}(\boldg ') \\
               & =  \frac{\lla \boldf_P^{c}|\tau'(\nfrak\varpi^{\alpha}), \boldg' \rra_{{\alpha}}}{\lla \boldf_P^{c}|\tau'(\nfrak\varpi^{\alpha}), \boldf_P \rra_{{\alpha}}}\\
               & =  \frac{\lla \boldf_P^{c}|\tau'(\nfrak\varpi^{\alpha})| T^*_0(p^{\beta - \alpha}), \boldg \rra_{{\beta}}}{\lla \boldf_P^{c}|\tau'(\nfrak\varpi^{\alpha}), \boldf_P \rra_{{\alpha}}}\\
 & =  \lgr \varpi^{-v(\beta - \alpha)} \rgr \frac{\lla \boldf_P^{c}|\tau'(\nfrak\varpi^{\beta}), \boldg \rra_{{\beta}}}{\lla \boldf_P^{c}|\tau'(\nfrak\varpi^{\alpha}), \boldf_P \rra_{{\alpha}}}.
 \end{align*}
\end{proof}
We can now state the main theorem of the section
\begin{theo}\label{T1}
Fix an adelic character $\chi$ of level $\mathfrak{c}$, such that $\chi_{\sigma}(-1)=1$ for all $\sigma|\infty$. We have two $p$-adic $L$-functions $\Ll_p(Q,P)$ in the total ring of fractions of  $\oo[[X ]] \hat{\otimes}  \boldI$ and $\Ll_p^+(P)$ in the fraction field of $\boldI^{ord}$ such that the following interpolation properties hold 
\begin{itemize} 
	\item [i)] for (almost) all arithmetic points $(Q,P)$ of type $(s_Q,\eps_Q; m_P,\eps_P, v_P, \eps_P')$, with $ m_P - k_{P,0}  + 2  \leq s_Q \leq m_P $ (for $k_{P,0}$ equal to the minimum of $k_{P,\sigma}$'s) and such that the $p$-part of the conductor of $\omega^s\eps_Q^{-1}\chi^{-1}\psi_P{\psi_P'}^{-2} $ is $p^{\alpha}$, $\alpha$ positive integer,  the following interpolation formula holds 
\begin{align*} 
\Ll_p(Q,P) = & C_1 E_{1}(Q,P)E_{2}(Q,P)\frac{2^d \Ll(s_Q+1, \boldf_P,\eps_Q^{-1}\omega^{s_Q}\chi^{-1})}{(2\pi)^{ds}\Omega(\boldf_P)},
\end{align*}
\end{itemize}
where the Euler factor $E_1(Q,P)$ and $E_2(Q,P)$ are defined below.\\  
Suppose now that $\chi$ is of conductor not divisible by all $\pfrak | p$. Suppose moreover that $\lambda|_{\clpinf} = \psi^{(p)}\omega^{m_0}$, with $\psi^{(p)}$ of conductor coprime to $p$, then we have a generalized $p$-adic $L$-function $\Ll_p(P)$ in the fraction field of $\boldI^{ord}$ satisfying the following interpolation property
\begin{itemize}
\item [ii)] for (almost) all arithmetic points $P$ of type $(m_P,\eps_P)$, with $ m_P \geq 0$ and $m_P\equiv m_0 \bmod p-1$, the following  formula holds 
\begin{align*} 
\Ll_p(P) = C_1 E'_1(P)E_{2}(P)\frac{\Ll(1, \boldf_P,\chi_0)}{\Omega(\boldf_P)},
\end{align*}
where the Euler factor $E_1'(P)$ and $E_2(P)$ are defined below.
\end{itemize}
\end{theo}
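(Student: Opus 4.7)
The plan is to package the measures built in Section \ref{p-measure} into $p$-adic distributions with values in $\overline{\boldS}^{\mathrm{n.ord}}(\Nfrak p^\infty,\psi,\psi',\oo)$, and then apply the $p$-adic Petersson product $l_\lambda$. Concretely, for $\Ll_p(Q,P)$, I would set
\begin{align*}
\Ll_p(Q,P) := l_\lambda\Bigl( e \circ c \circ \mathrm{Tr}_{\Sl}^{\gl}\bigl( \calE_c^{\chi\chi_{-1}} \ast \Theta_\chi \bigr|[\mathfrak{l}^2]\bigr) \Bigr),
\end{align*}
where $e$ is the ordinary idempotent of Section \ref{ptheory} and $c$ denotes the constant-term projection of Section \ref{EisSer}, both of which act on $p$-adic modular forms because we can read them off on $q$-expansions. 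Since $\mathrm{Tr}_{\Sl}^{\gl}$ is $\oo[[\rif_p^\times]]$-linear and the convolution is an $\oo[[\boldG]]$-morphism, the output lands in the correct ring; the factor $H$ in $l_{\lambda_P}$ and the congruence ideal are what forces us to invert elements of $\boldI$.

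The key computation is to evaluate the specialization at an arithmetic point $(Q,P)$. By construction, $Q\circ\calE_c^{\chi\chi_{-1}}$ gives a classical half-integral weight Eisenstein series as in Proposition \ref{Eisen_other2}, while $Q\circ\Theta_\chi|[\mathfrak{l}^2]$ is (a level-adjusted twist of) the theta series $\theta_{n_Q t}(\chi\eps_Q)$. Combining with Proposition \ref{IntExpre} and the formula for $l_{\lambda_P}$, one gets
\begin{align*}
\Ll_p(Q,P) = H(P)\,\lambda(T(\varpi^{\beta-\alpha}))^{-1} \cdot \frac{\lla \boldf_P^c | \tau'(\nfrak\varpi^\beta),\; \calE \cdot \theta \rra}{\lla \boldf_P^c | \tau'(\nfrak\varpi^\alpha),\, \boldf_P\rra} \cdot (\mathrm{power\ of\ }2\pi),
\end{align*}
and Proposition \ref{IntExpre} converts the numerator into the Rankin product $D$, after which (\ref{DL}) identifies $D$ with $\Ll(s_Q+1,\boldf_P,\eps_Q^{-1}\omega^{s_Q}\chi^{-1})$ divided by an abelian $L$-factor. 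The constant $C_1$ absorbs the archimedean gamma factors and $\pi$'s, and the choice of period $\Omega(\boldf_P)$ matches Im's period so the whole expression is algebraic. The Euler factor $E_2(Q,P)$ arises from the normalization $T(\varpi^{\beta-\alpha})^{-1}$ together with the explicit $1-\chi\psi(c)\N(c)^\bullet$ factors built into the Eisenstein and Kubota–Leopoldt measures, while $E_1(Q,P)$ is precisely the discrepancy measured by (\ref{thetashift}) between the non-primitive theta series produced by $\Theta_\chi|[\mathfrak{l}^2]$ and the primitive $\theta_n(\chi_0 \eps_{Q,0})$ attached to the $L$-function of $\boldf_P$.

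For the improved $p$-adic $L$-function $\Ll_p^+(P)$, the construction is parallel but one replaces the convolution by multiplication with the fixed theta series $\theta(\chi)$ of prime-to-$p$ level, applied to the Eisenstein measure $E_c^{\chi,+}$ of Section \ref{p-measure}:
\begin{align*}
\Ll_p^+(P) := l_\lambda\Bigl( e \circ c \circ \mathrm{Tr}_{\Sl}^{\gl}\bigl( E_c^{\chi,+}\cdot \theta(\chi)\bigr) \Bigr).
\end{align*}
Because the hypothesis that $\chi$ has conductor not divisible by every $\pfrak \mid p$ and that $\lambda|_{\clpinf}$ has prescribed wild inertia lets us fix the $p$-level of $\theta$ in advance, the theta series occurring under specialization is already primitive at $p$; hence the factor coming from (\ref{thetashift}) degenerates to the simpler $E'_1(P)$ and the cyclotomic variable is absent. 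The rest of the computation parallels the first case, ultimately yielding $\Ll(1,\boldf_P,\chi_0)/\Omega(\boldf_P)$ up to $C_1 E'_1(P)E_2(P)$.

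The main obstacle is of a bookkeeping nature: one must keep track of (i) the passage from the adelic Fourier expansion of the nearly-holomorphic half-integral weight Eisenstein series to the $q$-expansion seen by $c$, checking that the Euler factors which drop out under $c$ combine with those implicit in $E_c^{\chi,\bullet}$ to yield the stated $E_1,E_2$; (ii) the effect of the operators $[\mathfrak{l}^2]$, $\tau'(\nfrak\varpi^\alpha)$ and $\mathrm{Tr}_{\Sl}^{\gl}$ on central characters and level, which produce the scalar $C_1$ and the $2^d$ in the interpolation; and (iii) the compatibility between $T_0(p^{\beta-\alpha})$ and $T'(p^\bullet)$ acting on the half-integral weight side, already spelled out at the end of Section \ref{halfp}, which is what ultimately allows $E_2$ to be expressed purely in terms of $\lambda(T(\varpi^\bullet))$. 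Holomorphicity (i.e.\ membership in $\boldI$ rather than its field of fractions) for almost all $P$ is deferred to Appendices \ref{App} and \ref{AppB}, and is not needed for the interpolation formula itself.
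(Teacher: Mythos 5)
Your construction is the paper's in all essential respects: convolve the Eisenstein measure with the theta measure, pass from $\Sl$ to $\gl$ via $\mathrm{Tr}_{\Sl}^{\gl}$, apply the ordinary projector and the $p$-adic Petersson functional $l_\lambda$, and unwind the specializations through Proposition \ref{IntExpre} and formula (\ref{DL}); your identification of $E_1$ with the discrepancy (\ref{thetashift}) is exactly right. However, as written your definition of $\Ll_p(Q,P)$ does not satisfy the stated interpolation formula, because you never divide out three non-constant factors which the paper removes at the outset by setting $\Ll_p = {(\Delta(X,Y)\,\calE_2(X,Y)\,H)}^{-1}\, l_{\lambda}\, e\, \mathrm{Tr}_1(\cdots)$. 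First, the congruence number $H(P)$ occurring in $l_{\lambda_P}$ (which you correctly record in your intermediate formula) must be cancelled by dividing by $H\in\boldI$; otherwise it contaminates $C_1$ with a $P$-dependent quantity. Second, the regularizing factor $(1-\chi\psi(c)\N(c)^{\bullet})$ coming from the Deligne--Ribet measure is \emph{not} part of $E_2(Q,P)$ --- $E_2$ involves only Euler data at $p$ attached to $\lambda(T(\varpi_{\pfrak}))$ and the characters at $\pfrak\mid p$ --- so your attribution of it to $E_2$ is off; it is interpolated by the two-variable function $\Delta(X,Y)$, by which one must divide (its zero locus is what produces the CM pole analyzed in Proposition \ref{reszeta}). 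Third, the Euler factors at the primes above $2$, interpolated by $\calE_2(X,Y)$, must likewise be divided out; their zeros account for the finitely many excluded points behind the word ``almost''.

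There are also two level/group mismatches that prevent $l_\lambda$ from being applied to your expression as it stands: the product of the two half-integral-weight forms lives at level $U(2^{-1}\Mfrak,2)$ with $\Mfrak=\mathrm{lcm}(4,\cfrak^2,\Nfrak^2)$, whereas $l_\lambda$ is a functional on forms of level $\Nfrak p^{\infty}$. The paper inserts the diagonal shift $\Xi_2$ by $2$ and the twisted trace $\mathrm{Tr}_1=T_{\Mfrak/\Nfrak}$ before applying $l_\lambda$; these are not cosmetic, since the adjunction of $T_{\Mfrak/\Nfrak}$ against $\tau'(\nfrak\varpi^{\alpha})$ and the conversion of the theta series by $\tau(\mfrak^2\varpi^{2\beta})$ via Proposition \ref{Theta-Atkin} are precisely what produce the Gau\ss{} sum and the remaining constants in $C_1$. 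The same remarks apply to part ii), where moreover the paper multiplies the measure $E_c^{\chi,+}$ by the level-raised \emph{primitive} series $\theta(\chi_0)|[\lfrak^2]$ rather than by $\theta(\chi)$. None of this changes the strategy, but without these corrections the object you define interpolates $H(P)\Delta(Q,P)\calE_2(Q,P)$ times the stated right-hand side.
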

The term $C_1=C_1(Q,P)$ can be found in the proof of the theorem and it is a non-zero  algebraic number;  $\Omega(\boldf_P)$ is the Petersson norm of $\boldf^{\circ}_P$, the primitive form associated with $\boldf_p$, times ${(2 \pi) }^{d-2v}$.\\ 
We want to point out that number of independent variables of $\Ll_p(Q,P)$ is $[F:\Q]+1+\delta$, as  along the lines $s_Q -m_P = c$, for $c$ a fixed integer, this function is constant.\\
In the statement of the theorem we have to exclude a finite number of points which are the zeros of an Iwasawa function interpolating an Euler factor at $2$ for which the interpolation formula, a priori, does not hold. If $2$ divides the conductor of $\boldf_P$ or $\chi$, then this function is identically $1$. If $2$ does not divide neither the conductor of $\boldf_P$ or of $\chi$, then a zero of this Iwasawa function appears for $P$ such that $\boldf_P$ is not primitive at  $p$ and $Q=(m_P,\mathbf{1})$. In the Appendix we will weaken this assumption.\\
The power $ {(2\pi)}^{ds}$ corresponds essentially to the periods of the Tate's motive; this corresponds, from the point of view of $L$-functions, to consider the values at $s+1$ instead that in $1$.\\  
According to Deligne's conjecture \cite[\S 7]{Del}, we would expect as a period for the symmetric square, instead of $\Omega(\boldf_P)$, the product of the plus and minus period associated with $\boldf$ via the Eichler-Harder-Shimura isomorphism (see for example \cite[Definition 2.9]{Dim}); but it is well-known that the Petersson norm of $\boldf_P$ differs from  Deligne's period by $\pi^{d-2v}$ and a non-zero algebraic number (see \cite[Main Theorem]{Yo}). The power $\pi^{2v}$ gives (part of) the factor at infinity of the automorphic $L$-function of $\mathrm{Sym}^2(\boldf_P)$ (see Section \ref{Extrafactors}).   \\
The term $E_1(Q,P)$ and $E_2(Q,P)$ are the Euler factor at $p$ which has to be removed to allow the $p$-adic interpolation of the special values as predicted by Coates and Perrin-Riou \cite{CPR}.\\
The factor $E_2(Q,P)$ contains the Euler factor of the primitive $L$-function which are missing when $\boldf_P$ is not primitive; we define, if $\boldf_P$ is not primitive at $\pfrak_i$ 
\begin{align*}
E_{\pfrak_i}(Q,P) = &(1 - (\chi^{-1}\eps_Q^{-1}\omega^{s}\psi_P)_0(\pfrak_i){\N(\pfrak_i)}^{m-s})\times \\
& \times (1 - (\chi^{-1}\eps_Q^{-1}\omega^{s}\psi_P^2)_0(\pfrak_i) {\lambda(T(\varpi_{\pfrak_i}))}^{-2}{\N(\varpi_i)}^{2m +1 -s}) 
\end{align*}
and if $\boldf_P$ is  primitive at $\pfrak_i$ then $E_{\pfrak_i}(Q,P) =1$.\\
The factor $E_2(Q,P)$ is consequently
\begin{align*}
E_2(Q,P)= & \prod_{\pfrak_i | p} E_{\pfrak_i}(Q,P).
\end{align*}
 The factor $E_1(Q,P)$ comes from the fact that we are using theta series of level divisible by $p$ but whose conductor is not necessarily divisible by $p$. For two points  $Q$ and $P$ as above, let us denote by $p^\beta$ the minimum power of $p$ such that the conductors of all the characters appearing in $Q$ and $P$ and the ideal $\varpi \rif$ divide $p^{\beta}$, and let us denote by $p^{\alpha_0}$ the $p$-part of the conductor of $\chi\eps_Q\omega^{-s}$. We have 
\begin{align*}
E_1(Q,P) = & \lambda(T(\varpi^{2\beta - 2\alpha_0})){\N(\varpi^{\beta - \alpha_0})}^{-(s+1)}\prod_{\pfrak | p}(1-{(\chi\eps_Q\omega^{-s})}_0(\pfrak)\N(\pfrak)^{s}{\lambda(T(\varpi_{\pfrak}))}^{-2}).
\end{align*}
The factor $E_1$ is the Euler factor which gives the trivial zero in the case of Theorem \ref{MainTh} and that we remove in the second part of the above theorem. The factor $E'_1(P)$ which appears in that second part is $\lambda(T(\varpi^{2\alpha - 2\alpha_0})){\N(\varpi^{\alpha - \alpha_0})}^{-2}$.\\
We point out that the factor $E_1$ is in reality the second term of the factor $E_2$ when evaluated at $2m + 3-s$ (so it an Euler factor of the dual motive of $\mathrm{Sym}^2(\boldf)$).\\

This theorem deals with the imprimitive $L$-functions. We will show the analog of  Theorem \ref{T1}, i) in the Appendix \ref{AppB} for the $p$-adic $L$-function interpolating the primitive one. \\

As an immediate consequences we have the following 
\begin{coro}[Important Corollary]\label{Coroll}
Let $Q_0$ be the point $(0,\mathbf{1})$ of $\mathrm{Spec}(\oo\left[\left[  \clpinf \right]\right])$, then we have the following factorization in the fraction field of $\boldI^{\mathrm{ord}}$: 
\begin{align}\label{fact}
\Ll_p(Q_0,P)=& \prod_{\pfrak_i | p}(1 - {\lambda(T(\varpi_i))}^{-2}) \Ll_p^{+}(P).
\end{align}
\end{coro}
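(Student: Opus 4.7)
The plan is to prove the identity by comparing both sides on the Zariski-dense subset of arithmetic points of $\mathrm{Spec}(\boldI^{\mathrm{ord}})$, using the explicit interpolation formulas already established in Theorem \ref{T1}.

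First I would observe that both sides of \eqref{fact} lie in the fraction field of $\boldI^{\mathrm{ord}}$: the right-hand side obviously, and the left-hand side because specializing $\Ll_p(Q,P)$ at $Q = Q_0$ gives an element of the total ring of fractions of $\boldI$, whose restriction to the ordinary locus (via $\boldI \twoheadrightarrow \boldI^{\mathrm{ord}} = \boldI / P^{\mathrm{ord}}$) lands in the fraction field of $\boldI^{\mathrm{ord}}$. Since $\boldI^{\mathrm{ord}}$ is a Noetherian integral domain and the arithmetic points $P$ of type $(m_P,\eps_P)$ with $m_P\geq 0$ and $m_P\equiv m_0\bmod p-1$ are Zariski dense, it suffices to check equality of the two meromorphic functions at a Zariski-dense family of such arithmetic $P$'s (avoiding the finite exceptional set and any poles).

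Next I would evaluate part (i) of Theorem \ref{T1} at $(Q_0,P)$, that is at $s_Q=0$ and $\eps_Q=\mathbf{1}$. The range $m_P - k_{P,0} + 2 \leq s_Q = 0 \leq m_P$ is satisfied for all sufficiently large $m_P$. With these choices, using the hypothesis of part (ii) that $\chi$ is unramified at (every) $\pfrak\mid p$, the integer $\alpha_0$ appearing in the definition of $E_1(Q,P)$ vanishes, so
\begin{align*}
E_1(Q_0,P) & = \lambda(T(\varpi^{2\beta}))\,\N(\varpi^{\beta})^{-1}\prod_{\pfrak\mid p}\bigl(1-\chi(\pfrak)\,\lambda(T(\varpi_{\pfrak}))^{-2}\bigr),
\end{align*}
whereas part (ii) gives $E_1'(P) = \lambda(T(\varpi^{2\beta}))\N(\varpi^{\beta})^{-2}$ (after identifying the auxiliary level $\beta$ used in the two constructions). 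The factor $E_2$ is the same on both sides since it does not depend on the auxiliary $Q$-variable beyond the character entering into the primitive part of the $L$-value.

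The main step then is to verify that the two explicit algebraic constants $C_1(Q_0,P)$ and $C_1(P)$ coming from the two constructions differ precisely by the factor $2^d \N(\varpi^{\beta})$ which is needed to convert the ratio $E_1(Q_0,P)/E_1'(P)$ into $\prod_{\pfrak\mid p}(1-\chi(\pfrak)\lambda(T(\varpi_{\pfrak}))^{-2})$, and further that (in the setup of the corollary, where the statement is read without an extra $\chi$ appearing in the product) the character $\chi$ at primes above $p$ is trivial, so the product reduces to $\prod_{\pfrak_i\mid p}(1-\lambda(T(\varpi_i))^{-2})$. This bookkeeping will be the main obstacle: tracking every normalizing factor (the Gauss sums, the powers of $D_F$ and $\N(\dfrak)$, the $2^d$, and the differing periods coming from the convolution $\calE^{\chi\chi_{-1}}_c\ast \Theta_{\chi}$ versus the product with the fixed theta series) through the constructions of the two measures. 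Once this matching is done, equality of the interpolation values at a Zariski-dense set of $P$'s yields \eqref{fact} as an identity in the fraction field of $\boldI^{\mathrm{ord}}$.
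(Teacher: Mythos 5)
Your proposal is correct and is essentially the argument the paper intends: the corollary is presented as an immediate consequence of Theorem \ref{T1}, obtained by specializing the interpolation formula of part (i) at $Q=Q_0$, comparing it with the formula of part (ii) at a Zariski-dense set of arithmetic points $P$ of $\mathrm{Spec}(\boldI^{\mathrm{ord}})$, and observing that the two sides differ exactly by the ratio $E_1(Q_0,P)/E_1'(P)=\prod_{\pfrak_i\mid p}(1-\lambda(T(\varpi_i))^{-2})$ once the constants are matched. Your explicit attention to the density of the admissible weights and to the triviality of $\chi$ at $p$ fills in bookkeeping the paper leaves implicit, but the route is the same.
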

Note that a similar formula could be proved for all fixed $v$. \\
\begin{theo}\label{1var}
Let $\boldf=\boldf_P$ be a nearly ordinary form of Nebentypus $(\psi,\psi')$ and weight $k\geq 2t$ which we decompose as $k = m +2t -2v$. Let $\chi$ be a Hecke character of level $\mathfrak{c}$, such that $\chi_{\sigma}(-1)=1$ for all $\sigma|\infty$. Then we have a a formal series $G(X,\boldf,\chi)$ in $\oo((X))\left[ \frac{1}{p}\right]$  such that for all finite order character $\eps$ of $1+p\Z_p \cong u^{\Z_p}$, of conductor $p^{\alpha_0}$, and  $s \in [m-k_0+ 2, m] $ with $n\equiv s$ which are not a pole of $G(X)$ we have 
\begin{align*}
 G(\eps(u)u^s -1,\boldf,\chi) = & i^{(s+1)d-k} 2^{-d\frac{s+n}{2}}  s! D_F^{s-\frac{3}{2}}\times \\
            & G(\chi\eps\omega^{-s}_0)  \eta^{-1}(\mfrak \varpi^{\alpha} \dfrak 2^{-1})  \chi\eps\omega^{-s}_0(\dfrak\cfrak\varpi^{\alpha_0})  {\N(\mfrak )}^{s} \times\\
            & \frac{ \psi(\dfrak\mfrak\varpi^{\beta}) {\N(\mathfrak{l}) }^{-2[s/2]}    {\N(\varpi)}^{(s+1) \beta -\alpha_0 +\frac{(\alpha -\alpha') m}{2} } }{\lambda(T(\varpi^{2\beta-\alpha'}))  W'(\boldf) S(P) \prod_{\pfrak} \frac{\eta\nu(\dfrak_{\pfrak})}{|\eta\nu(\dfrak_{\pfrak})|} \prod_{J}G(\nu\psi')} \times \\
            & E_1(s+1)E_2(s+1) \frac{ 2^d \Ll(s+1,\boldf,\chi^{-1}\eps^{-1}\omega^{s})}{{(2 \pi )^{ds}}\Omega(\boldf)}.
\end{align*}
for $\eta=\omega^{s}\eps^{-1}\chi_{-1}^{-1}\chi^{-1}\psi{\psi'}^{-2}$.
\end{theo}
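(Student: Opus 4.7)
The strategy is to deduce Theorem \ref{1var} from part i) of Theorem \ref{T1} by specializing the two-variable $p$-adic $L$-function $\Ll_p(Q,P)$ to the cyclotomic line, that is, by fixing the nearly ordinary family at the single arithmetic point $P$ corresponding to $\boldf$ and viewing what remains as a function of the cyclotomic variable $Q$ alone. Concretely, the specialized series $G(X,\boldf,\chi)$ is obtained from $\Ll_p(Q,P)$ via the identification of $\mathrm{Spec}(\oo[[1+p\Z_p]])$ with $\mathrm{Spec}(\oo[[X]])$ sending a finite-order character $\eps$ of conductor $p^{\alpha_0}$ and an integer $s$ to the point $X = \eps(u)u^s - 1$. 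The series lies in $\oo((X))[1/p]$ because $\Ll_p(Q,P)$ belongs to the total ring of fractions of $\oo[[X]]\wot \boldI$; the localization at $p$ is harmless after multiplying by a suitable power of $H(P)$ appearing in the definition of $l_\lambda$.

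First I would unwind the constant $C_1=C_1(Q,P)$ of Theorem \ref{T1}. Tracing through the construction in Section \ref{padicL}, $C_1$ is the product of the archimedean factor coming from Proposition \ref{IntExpre} (the $\G_\infty$ and $(2\pi)^{\bullet}$ terms), the constant $A_0$ of Proposition \ref{Eisen_other2} evaluated at $(k,s,\psi)$ adapted to the present setting, the Gauss sum $G(\chi\eps\omega^{-s}_0)$ coming from the twisting operator in Section \ref{halfp}, the factor $C(\chi)$ of Proposition \ref{Theta-Atkin} applied to the theta series appearing in the convolution $\calE^{\chi\chi_{-1}}_c\ast\Theta_\chi|[\mathfrak l^2]$, and the various powers of $\N(\mfrak)$, $\N(\dfrak)$, $\N(\varpi)$ accumulated from the level-raising and Atkin--Lehner operators of Section \ref{operators} used to bring the convolution into a form on which $l_\lambda$ can be evaluated via the adjunction formula. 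Combining these with the Hecke eigenvalue $\lambda(T(\varpi^{2\beta-\alpha'}))^{-1}$ coming from the level-change lemma for $l_{\lambda_P}$ and with the factors $\prod_J G(\nu\psi')$ from the canonical periods $W'(\boldf)S(P)$ hidden inside $H(P)/\lla \boldf^c|\tau'(\nfrak\varpi^\alpha),\boldf\rra$ should reproduce precisely the prefactor written in Theorem \ref{1var}.

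Next I would verify that the Euler factors $E_1$ and $E_2$ of Theorem \ref{T1} specialize unchanged to those of Theorem \ref{1var}: by construction $E_2(Q,P)$ depends only on the characters of $Q$ and $P$ through the value $\eps_Q^{-1}\omega^{s_Q}\chi^{-1}\psi_P$, which after the cyclotomic specialization becomes the factor $E_2(s+1)$; similarly $E_1$ depends only on the conductor $p^{\alpha_0}$ of $\chi\eps\omega^{-s}$ and on the Hecke eigenvalue $\lambda(T(\varpi))$, matching the $E_1(s+1)$ above. The archimedean normalization $(2\pi)^{ds}$ and the sign $i^{(s+1)d-k}$ come from the explicit form of the $\G_\infty$-factor and the Deligne-versus-Petersson period comparison explained just after the statement of Theorem \ref{T1}.

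The main obstacle is the bookkeeping: one has to follow precisely how the operators $[\mathfrak l^2]$, $\tau(\nfrak^2)$, $[\mfrak^2 4^{-1}]$, $\mathrm{Tr}_{\Sl}^{\gl}$ of Sections \ref{HalfHilbert}--\ref{operators} modify the Fourier coefficients, the level and the Nebentypus, while simultaneously keeping track of the interpolation formula of Theorem \ref{Ribet} through the Eisenstein measure $\calE^{\chi\chi_{-1}}_c$. Once this is done, comparing the resulting identity with the formula of Proposition \ref{IntExpre}, equations \eqref{DL} and \eqref{thetashift}, and the explicit Petersson product expression for $l_{\lambda_P}$ yields the interpolation formula claimed in Theorem \ref{1var}, up to the finitely many poles of $G(X,\boldf,\chi)$ which correspond to zeros of the Iwasawa function interpolating the anomalous Euler factor at $2$ discussed after Theorem \ref{T1}. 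A more careful analysis in Appendix \ref{App} removes these excluded points.
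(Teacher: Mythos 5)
Your proposal is correct and follows the same route as the paper: Theorem \ref{1var} is not given a separate proof there, but is exactly the specialization to a fixed arithmetic point $P$ of the explicit formula for $\Ll_p(Q,P)$ obtained at the end of the proof of Theorem \ref{T1}~i), where the constant $C_1(Q,P)$ is computed from precisely the ingredients you list (Proposition \ref{IntExpre}, the constant $A_0$ of Proposition \ref{Eisen_other2}, Proposition \ref{Theta-Atkin}, the level-change lemma for $l_{\lambda_P}$, and the Petersson-norm comparison giving $W'(\boldf)S(P)\prod_J G(\nu\psi')$). Nothing essential is missing from your sketch beyond the bookkeeping you already acknowledge.
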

Here $\beta=\alpha_0$ if $\eps$ is not trivial, and $\beta =1$ otherwise. The factors $E_1(s+1)$ and  $E_2(s+1)$ are the factors $E_1(Q,P)$ and $E_2(Q,P)$ above, for $P$ such that $\boldf_P = \boldf$ and $Q$ of type $(s,\eps)$. The factor $S(P)$ is defined below.\\
We explain the $p$-part of the fudge factor.
Fix a prime $\pfrak$ dividing $p$ and let $\pi_{\pfrak}$ be the local component of $\pi(\boldf)$, as in Section \ref{Extrafactors}.  Then $\pi_{\pfrak}=\pi(\eta,\nu)$ or $\pi_{\pfrak}=\sigma(\eta,\nu)$, according to the fact that $\pi_{\pfrak}$ is a principal series or a special representation. In particular we have $\boldf | T(\varpi_{\pfrak})= \eta(\varpi_{\pfrak}) \boldf $.\\ 
Let $\psi'_{\pfrak}$ be the local component at $\pfrak$ of $\psi'$. We know that $\eta\psi'_{\pfrak}$ is unramified. Let us denote by $\varpi_{\pfrak}^{\alpha'_{\pfrak}}$ the conductor of $\nu\psi'_{\pfrak}$. Then $p^{\alpha'}$ is the $p$-part of the conductor of the representation $\pi(\boldf) \otimes \psi'$.

The factor $S(P)$ is defined in \cite[page 355]{Hr3} as a product over $\pfrak | p$ and each factor depends only on the local representation at $\pfrak$; if $\pi_{\pfrak}$ is special the factor  is $-1$, if $\pi_{\pfrak}$ is a ramified principal series, the factor  is $\eta^{-1}\nu(\varpi_{\pfrak}^{\alpha'_{\pfrak}}) {|\varpi_{\pfrak}^{\alpha'_{\pfrak}}|}_{\pfrak}^{-1}$ and finally  if $\pi_{\pfrak}$ is unramified the factor is $$(1 - \eta^{-1}\nu(\varpi_{\pfrak}) {|\varpi_{\pfrak}|}_{\pfrak}^{-1})(1 - \eta^{-1}\nu(\varpi_{\pfrak}) ).$$ 
Moreover, let us denote by $\boldf^u$ the unitarization of $\boldf$,  as defined in Section \ref{ComplexHilbert}, and if we suppose $\boldf$ primitive of conductor $\Mfrak p^{\alpha}$ then we know that $\boldf^u | \tau'(\mfrak \varpi^{\alpha})= W(\boldf)\boldf^{u,c}$,  for  an algebraic number $W(\boldf)$ of complex absolute value equal to $1$ (here $c$ stands for complex conjugation). Moreover $W(\boldf)$ can be written as a product of local $W_{\qfrak}(\boldf)$, and we write it as $W(\boldf)=W'(\boldf)\prod_{\pfrak \mid p} W_{\pfrak}(\boldf)$. For the explicit expression of  $W_{\pfrak}(\boldf)$, we refer to \cite[\S 4]{Hr3}. From the formulae in {\it loc. cit.} we see that the factors $\frac{\eta\nu(\dfrak_{\pfrak})}{|\eta\nu(\dfrak_{\pfrak})|}$ and $G(\nu\psi')$ come from $W_{\pfrak}$. The remaining part of $W_{\pfrak}$ is incorporated in the Hecke eigenvalue $\lambda(T(\varpi^{\alpha'}))$. \\
We warn the reader that the proof of Theorem \ref{T1} is  computational, and the reader is advised to skip it on a first reading. We recall in the following lemma some well-known results  which we will use when evaluating the $p$-adic $L$-function; recall from Section \ref{EisSer} that we have defined $H$, the {\it holomorphic projector}, $\partial_p^q$, the Mass-Shimura differential operator, for $\sigma \in I$, $\textup{d}^{\sigma}$, the holomorphic differential operator on $\h^{I}$, and $c$, the {\it constant term projection} of a nearly-holomorphic modular form.
\begin{lemma}\label{lemma}
Let $f \in \boldS_{k}(\G^1[\Nfrak])$, $g \in \boldN^{s_1}_{l + \frac{t}{2}}(\G^1[\Nfrak]) $ and $h \in \boldN^{s_2}_{m+ \frac{t}{2}}(\G^1[\Nfrak,\afrak])$ with $k,l,m \in \Z[I]$ and  $l > 2s_1$, $m > 2s_2$ (recall that these two condition are automatic if $F\neq \Q$). Let $r \in \Z[I]$.\\ 
Then we have
\begin{align*}
\lla f, H(g) \rra = & \lla f, g \rra,\\
H(g\partial_{m + \frac{t}{2}}^{r}h)= & {(-1)}^r H(\partial_{l+ \frac{t}{2}}^r g h), \\
eH(g\partial_{m+ \frac{t}{2}}^{r} h) = & e (g \textup{d}^r h).
\end{align*} 
\end{lemma}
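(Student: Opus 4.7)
The three assertions are standard facts about the interplay between the Maass--Shimura operator $\partial$, the holomorphic projector $H$, and the $p$-adic differential $\textup{d}$ on the ordinary part, so the plan is essentially to unpack each in turn using the structure of nearly-holomorphic forms described around \eqref{Massgamma}.

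For the first identity, my plan is to exploit the canonical decomposition $g=\sum_{i=0}^{s_1}\partial_{l+\frac{t}{2}-2i}^{\,i} g_i$ with each $g_i$ holomorphic, which exists because the hypothesis $l>2s_1$ makes the decomposition unique. By linearity, $\lla f,g\rra=\sum_i \lla f,\partial^{\,i}g_i\rra$. The Maass-type raising operator $\partial$ admits a formal adjoint under the Petersson pairing (an analogue of the Maass lowering operator involving $\partial/\partial\overline{z}$), and this adjoint annihilates holomorphic cusp forms. Hence every term with $i\geq 1$ vanishes against $f$, leaving $\lla f,g_0\rra=\lla f,H(g)\rra$.

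For the second identity, the first input is the Leibniz rule for $\partial$, which holds because weights are additive under products: a direct computation from the formula $\partial_{k}^{\sigma}=\frac{1}{2\pi i}\bigl(\frac{k}{2iy_\sigma}+\frac{d}{dz_\sigma}\bigr)$ gives $\partial_{l+m+t}(gh)=(\partial_{l+t/2}g)\,h+g\,(\partial_{m+t/2}h)$. The second input is that $H\circ\partial=0$ on nearly-holomorphic forms: if $F=\sum_i \partial^{\,i}F_i$, then $\partial F=\sum_i \partial^{\,i+1}F_i$ has vanishing $i=0$ component. Applied to $F=gh$, this yields $H((\partial g)h)+H(g\,\partial h)=0$, i.e.\ the $r=1$ case. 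The general case then follows by induction on $r$: writing $\partial^{\,r}h=\partial(\partial^{\,r-1}h)$ and applying the $r=1$ statement repeatedly produces the sign $(-1)^r$.

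For the third identity, the plan is to compare $\partial^{\,r}$ with $\textup{d}^{\,r}$ via the intermediate operator $c$, the constant-term-in-$1/(4\pi y)$ projector. First, an induction on $r$ shows that $\partial^{\,r}_{m+t/2}h-\textup{d}^{\,r}h$ lies in the ideal generated by negative powers of $4\pi y$, so $c(g\,\partial^{\,r}h)=g\,\textup{d}^{\,r}h$ (as a $p$-adic $q$-expansion). Second, on the ordinary part one has the identification $e\circ H=e\circ c$: this is because both $H$ and $c$ pick out $g_0$ in the decomposition $\sum\partial^{\,i}g_i$ once one commutes past the ordinary projector $e$, using that $e$ preserves the nearly-holomorphic filtration and kills the ``non-classical'' discrepancy between $H$ and $c$. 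Combining these two facts gives $eH(g\,\partial^{\,r}h)=e\,c(g\,\partial^{\,r}h)=e(g\,\textup{d}^{\,r}h)$.

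The main obstacle I anticipate is part (3): the comparison $e\circ H=e\circ c$ is the non-formal ingredient and must be justified by appeal to the theory of (Hilbert) nearly-holomorphic and $p$-adic modular forms (in the spirit of Katz--Hida, and more recently Urban's framework cited earlier in the paper). Parts (1) and (2), by contrast, are essentially formal once the decomposition $g=\sum \partial^{\,i}g_i$ and the Leibniz rule are in hand.
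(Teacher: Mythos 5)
Your proposal is correct and follows essentially the same route as the paper, which disposes of the three identities by citing Shimura's Lemma 4.11 and Hida's Propositions 7.2 and 7.3; your arguments (adjointness of the lowering operator for the first, Leibniz plus $H\circ\partial=0$ for the second, the comparison of $H$ with the constant-term projector $c$ for the third) are precisely the standard proofs of those cited results. The one point you should make explicit in part (3) is the mechanism behind $e\circ H=e\circ c$: writing $F=\sum_i\partial^i F_i$ one gets $c(F)-H(F)=\sum_{i\geq 1}\textup{d}^i F_i$, and each such term is killed by $e$ because $T_0(p)\circ\textup{d}^{\sigma}$ equals $p^{\sigma}\,\textup{d}^{\sigma}\circ T_0(p)$ up to units, so that $e(\textup{d}^{\sigma}f)=0$ --- this is exactly the fact the paper's proof singles out.
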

\begin{proof}
The first formula is \cite[Lemma 4.11]{ShH6}.\\
The second formula can be proven exactly as \cite[Proposition 7.2]{Hr3}.\\
The last one can be shown as in \cite[Proposition 7.3]{Hr3}, writing $\partial_{m+ \frac{t}{2}}$ in term of $\textup{d}^{j}$ for $0 \leq j \leq r$ (in $\Z[I]$) and noticing that $e(\textup{d}^{\sigma} f)=0$ for all $\sigma$ in $I$.
\end{proof}
\begin{proof}[Proof of Theorem \ref{T1} i)]
We pose $\Mfrak=lcm(4,\cfrak^2,\Nfrak^2)=4 \cfrak^2 \Lfrak^2$ as in the end of Section \ref{p-measure}. Fix two id\`eles $\mfrak$ and $\mathfrak{l}$  such that $\mfrak^2$ represents $\Mfrak$ and $\mathfrak{l}$ represents $\Lfrak$. We suppose moreover that $\mfrak\lfrak^{-1}$ represents $\cfrak$. To lighten the following formulae, let us pose 
\begin{align*}
 \mathrm{Tr}_1 = T_{\Mfrak/\Nfrak} \,\, ,  \mathrm{Tr}_2= Tr_{\Sl}^{\gl}. 
\end{align*}
For each $\qfrak$ dividing $2$ in $\rif$, we pose 
\begin{align*}
\calE_{\qfrak}(X,Y)=& (1-\psi^2\chi^{-2}(\qfrak)A_{\N(\qfrak)}^{-2}(X) A_{\N(\qfrak)}^2(Y)), \\
\calE_{2}(X,Y)=& \prod_{\qfrak |2} \calE_{\qfrak}(X,Y),
\end{align*}
where $X$ is a variable on the free part of $\Z_p^{\times}$ in $\clpinf$ (it corresponds to the variable $Q$) and $Y$ on the free part of $\Z_p^{\times}$ embedded in the fist component of $\boldG=\clpinfm \times \rif_p^{\times}$ (it corresponds to the variable $m$ of $P$).\\
As in the end of Section \ref{halfp}, we have $A_{z}(X)={(1+X)}^{\log_p(z)/\log_p(u)}$, for $u$ a fixed topological generator of $1+p\Z_p$.\\
We pose moreover $\Delta(X,Y)=\left(1-{\psi'}^{-2}\chi_{-1}\chi^{-1}\psi(c)\N(c)\frac{1+Y}{1+X}\right)$ where, as in Section \ref{p-measure}, $c$ is chosen such that $\lla \N(c) \rra$ correspond to the generator $u$ fixed above.  \\ 
We define a first $p$-adic $L$-function 
$$ \Ll_p = {(\Delta(X,Y) \calE_2(X,Y)H)}^{-1}l_{\lambda} e  \mathrm{Tr}_1 ((( \mathrm{Tr}_{2} (\calE^{\chi\chi_{-1}}_c \ast \Theta_{\chi} |[\mathfrak{l^2}]) |) )\Xi_2) $$
where $\Xi_2 = 
{\left( \begin{array}{cc} 
2 & 0\\
0 & 1 \end{array} \right)}_f $. \\
The  operator $\Xi_2$ is necessary as the family $\boldF$ is of level $U(\Nfrak)$ while our convolution of measures  is of level $U(2^{-1} \Mfrak ,2)$. 

This $p$-adic $L$-function is an element of the total fraction field of $\oo[[\clpinf ]] \hat{\otimes}  \boldI$. In fact, $\boldI$ is a reflexive $\LL$-module as it is integrally closed and finite over $\LL$.
We want to evaluate $\Ll_p$ on an arithmetic point $(Q,P) \in X(\clpinf) \times X (\boldI)$ of type $( s,\eps_Q; m,\eps_P, v, \eps_P')$ with $s \leq m$. We let $\psi_P$ and $\psi'_P$ as in the beginning of this section. Let $n \in \lgr 0,1 \rgr $, $n \equiv s \bmod 2$, we evaluate
\begin{align*}
 g = & ({\Delta(X,Y)}^{-1}\calE^{\chi\chi_{-1}}\ast \Theta_{\chi} |[\mathfrak{l^2}])(Q,P)\\
   = & {\Delta(Q,P)}^{-1}\left(\left(\int_{\clpinf } \eps_Q\omega^{-s}(z_1){\N_p(z_1) }^{s} \textup{d}\Theta_{\chi}(z_1)\right)|\left[\mathfrak{l^2} \right] \times \right.\\ 
 & \times \left. \int_{\boldG} \eps_Q^{-1} \omega^{s}\psi_P(z){\lla \N_p(z) \rra}^{m-s} \psi_P'(a) a^{v} \textup{d}\calE^{\chi\chi_{-1}}_c(z,a)\right) \\
   = & A_0^{-1}{\N(\mathfrak{l}) }^{-2[s/2]} \left( \textup{d}^{[s/2]t}(\theta_n(\chi\eps_Q\omega^{-s})|\left[\mathfrak{l^2} \right])\right. \times \\ 
     & \left. \times \textup{d}^{-v} \left( {\calE}'\left(z,\frac{s-m}{2};s_0 t,\eta\right)|[\mfrak^2 \varpi^{2 \beta} 4^{-1}] \right) \right)  
\end{align*}

for $s_0 = {m-s+1}$, $\eta = \omega^s\eps_Q^{-1}\chi_{-1}^{-1}\chi^{-1}\psi_P{\psi_P'}^{-2} $, $\beta$ such that the conductors of $\eps_Q$, $\eps_P$ and $\eps'_P$ and the level of $\boldf_P$ divide $p^{\beta}$  and, we recall,
\begin{align*}
 A_0=\eta(\mfrak \varpi^{\alpha} \dfrak 2^{-1})D_F{\N({\mfrak} \varpi^{ \beta}\dfrak 2^{-1})}^{m-s-1}i^{s_0 d}\pi^d 2^{d\left(s_0 - \frac{1}{2}\right)} .
\end{align*}
For an integral id\`ele $y$ we use, by abuse of notation, the expression $\N(y)$ to denote the norm of the corresponding ideal $y\rif$.\\
Here we have used the relation $\textup{d}^{t}[\lfrak^2]={\N(\mathfrak{l})}^2[\lfrak^2]\textup{d}^{t}$. We use Lemma \ref{lemma}, formula (\ref{Massgamma}), the property that $e$ commutes with the operator $\I$ and $\mathrm{Tr}_2$, as seen at the end of Section \ref{halfp}, and the formula  (\ref{deltaEis}) applied two times  to obtain 

\begin{align*}
 \Ll_p(Q,P) = & \frac{\lla \boldf_P^{c}|\tau'(\nfrak \varpi^{2 \alpha}),  e  \mathrm{Tr}_1 ( \mathrm{Tr}_{2} (g) |\Xi_2) \rra_{\Nfrak p^{2\alpha}}} {\lla \boldf_P^{c}|\tau'(\nfrak\varpi^{\alpha}), \boldf_P \rra_{\Nfrak p^{\alpha}}}, \\
 =  & C \frac{\lla \boldf_{P}^{c}|\tau(\mfrak^2 \varpi^{2 \beta }),  \mathrm{Tr}_{2} (g) |\Xi_2\rra_{\Mfrak p^{2\beta}}}{\lla \boldf_P^{c}|\tau'(\nfrak\varpi^{\alpha}), \boldf_P \rra_{\Nfrak p^{\alpha}}}, \\ 
 = & C  \frac{\lla \boldf_{P,1}^{c}|\Xi_2^{-1}, h|\tau(\mfrak^2 \varpi^{2 \beta } ) \rra_{\Mfrak p^{2\beta}}}{\lla \boldf_P^{c}|\tau'(\nfrak\varpi^{\alpha}), \boldf_P \rra_{\Nfrak p^{\alpha}}},
\end{align*}
for 
\begin{align*}
h = & \theta_{n}(\chi\eps\omega^{-s})|\left[\mathfrak{l}^2\right] \times \calE\left(z,\frac{s-m}{2};k-(n+1)t,\eta\right)|\tau(\mfrak \varpi^{2\beta}),\\
C = & \psi_P(\mfrak \varpi^{\beta}\dfrak) \G_{\infty}(([s/2]+1)t-v)2^{ 2v - sd+nd -ds_0 +\frac{1}{2}d}  {\pi}^{ v- [s/2]d -d } i^{(s+1)d-k}\times \\
 &  \eta^{-1}( \mfrak \varpi^{\alpha} \dfrak 2^{-1}){\N(\mathfrak{l}) }^{-2[s/2]}{\N({\mfrak} \varpi^{ \beta} 2^{-1})}^{1/2}{\N(\mfrak \varpi^{ \beta} \dfrak 2^{-1})}^{s}{\lambda(T(\varpi^{\alpha -2\beta}))}
\end{align*}
 where we used the formula for the change of variable for the Petersson product and the relation
\begin{align*}
 \Xi_2 \tau'(\mfrak^2)\Xi_2^{-1} =  \left(
\begin{array}{cc}
 0 & -2\\
 2^{-1} \dfrak^2\mfrak^2 & 0
 \end{array}
\right)_f.
\end{align*}

We apply the duplication formula of the $\G$ function to the factors $\G_{\infty}\left( \left(\left[ \frac{s}{2}\right] + n + \frac{1}{2}\right)t - v \right)$ coming from Propostion \ref{IntExpre}  and to the $\G$ factor appearing in the constant $C$ above, to obtain, for $z=\left[\frac{s}{2}\right] - v_{\sigma} +\frac{n+1}{2}$, that
\begin{align*}
 \G(z)\G(z+1/2)= 2^{2v_{\sigma} - s}(s - 2v_{\sigma})! \sqrt{\pi}.
\end{align*}

We are left to evaluate $\theta_{n}(\chi\eps\omega^{-s})|\left[\mathfrak{l}^2\right]|\tau(\mfrak^2 \varpi^{2 \beta} ) $. Let us denote by $\chi'$ the primitive character associated with $\chi\eps\omega^{-s}$ and let us denote by $p^{\alpha_0}$ the $p$-part of the conductor of $\chi'$. Recall the relations given at the end on Section \ref{Atkin}, Proposition \ref{Theta-Atkin} and the formula 
\begin{align*}
\theta_{n}(\chi\eps\omega^{-s}) = \sum_{\efrak \rif \mid p} \mu(\efrak)\chi'(\efrak) \theta_{n}(\chi')|[\efrak^2].
\end{align*}
We have then 

\begin{align*}
 \theta_{n}(\chi')|\left[\mathfrak{l}^2\right]|\tau(\mfrak^2 \varpi^{2 \beta } ) = &  G(\chi'){\N(2^{-1}\mfrak \varpi^{\alpha_0})}^{-1/2}\chi'(\dfrak\cfrak\varpi^{\alpha_0}) {\N(\varpi^{\beta-\alpha_0})}^{1/2}   \times \\
  & \times \sum \mu(\efrak)\chi'(\efrak){\N(\efrak)}^{-1} \theta_{n}({\chi'}^{-1})| \left[\frac{\varpi^{2\beta}}{\varpi^{2\alpha_0}\efrak^2} \right].
\end{align*}
We know from \cite[Lemma 5.3 (vi)]{Hr3} an explicit expression for  $\lla \boldf_P^{c}|\tau'(\nfrak\varpi^{\alpha}), \boldf_P \rra_{\Nfrak\varpi^{\alpha}}$ in terms of the Petersson norm of the primitive form $\boldf^{\circ}_P$ associated with $\boldf_P$ (and $\psi'_P$).\\ 
We refer to the discussion after Theorem \ref{1var} for the notation, we have then

\begin{align*}
  \frac{\lla \boldf_P^{c}|\tau'(\nfrak\varpi^{\alpha}), \boldf_P \rra_{\Nfrak\varpi^{\alpha}}}{\lla \boldf_P^{\circ}, \boldf_P^{\circ} \rra_{\Nfrak\varpi^{\alpha'}}} = & \N(\varpi)^{-(\alpha -\alpha') m /2} {\lambda(T(\varpi^{\alpha -\alpha'}))}\psi_{\infty}(-1)  \times \\ 
& \times W'(\boldf_P) S(P)  \prod_{\pfrak} \frac{\eta\nu(\dfrak_{\pfrak})}{|\eta\nu(\dfrak_{\pfrak})|} \prod_{J}G(\nu\psi'),
\end{align*}
where $J$ is the set of $\pfrak | p$ such that $\pi_{\pfrak}$ is a ramified principal series.\\
We can conclude that
\begin{align*}
\Ll_p (Q, P) = & i^{(s+1)d-k} 2^{1+ 2v -(s+m)d +nd  -d\frac{s+n}{2}} \pi^{2v -d(s+1)} (st-2v)! D_F^{s-3/2}   \times \\
            & G(\chi\eps\omega^{-s}_0)  \eta^{-1}(\mfrak \varpi^{\alpha} \dfrak 2^{-1})  \chi\eps\omega^{-s}_0(\dfrak\cfrak\varpi^{\alpha_0}) \psi_P(\dfrak \mfrak \varpi^{\beta})\times\\
            & \frac{  {\N(\mathfrak{l}) }^{-2[s/2]} {\N(\mfrak)}^{s}  {\N(\varpi)}^{(s+1)\beta -\alpha_0 +\frac{(\alpha -\alpha') m}{2} } }{\lambda(T(\varpi^{2\beta - \alpha'}))  W'(\boldf_P) S(P) \prod_{\pfrak} \frac{\eta\nu(\dfrak_{\pfrak})}{|\eta\nu(\dfrak_{\pfrak})|} \prod_{J}G(\nu\psi')\psi_{\infty}(-1)} \times \\
            & E_1(Q,P)E_2(Q,P) \frac{ \Ll(s+1,\boldf,\chi^{-1}\eps_Q^{-1}\omega^{s})}{2^{d-2v}\lla \boldf_P^{\circ}, \boldf_P^{\circ} \rra_{\Nfrak\varpi^{\alpha'}}}.
\end{align*}

The factors $E_1(Q,P)$ and $E_2(Q,P)$ are the ones defined after \ref{T1}. To obtain the explicit expression of $E_1(Q,P)$ we have used the formula (\ref{thetashift}).
\end{proof}
\begin{proof}[Proof of ii)]
We define the improved $p$-adic $L$-function on the ordinary Hecke algebra; let $\boldI^{\mathrm{ord}}$ be the ordinary part of $\boldI$, i.e. the fiber above $v=0$. It is a finite flat extension of $\oo[[\boldW_0]]$, where $\boldW_0$ is the free part of $\clpinf$ corresponding, via class field theory, to the cyclotomic extension.\\ 
We pose $\Mfrak=lcm(4,\cfrak^2,\Nfrak^2)=4 \cfrak^2 \Lfrak^2$ as above. We fix the same two id\`eles $\mfrak$ such that $\mfrak^2$ represents $\Mfrak$ and $\mathfrak{l}$ which represents $\Lfrak$ as before. We pose again
\begin{align*}
 \mathrm{Tr}_1 = T_{\Mfrak/\Nfrak} \,\, ,  \mathrm{Tr}_2= Tr_{\Sl}^{\gl} 
\end{align*}
For each $\qfrak$ dividing $2$ in $\rif$, we pose 
\begin{align*}
\calE_{\qfrak}(Y)=& (1-\psi^2\chi_0^{-2}(\qfrak)A_{\N(\qfrak)}^2(Y))\\
\calE_{2}(Y)=& \prod_{\qfrak |2} E_{\qfrak}(Y),
\end{align*}
where $Y$ is a coordinate  of $\oo[[\boldW_0]]$ (it corresponds to the variable $m$ of the weight).\\
As before we set 
\begin{align*}
 \Delta(Y)= & 1-\chi_{-1}\chi^{-1}\psi(c)\N(c)(1+Y), \\ 
\Delta'(Y)= & 1-\chi(c)^{-2}\psi^2{\N(c)}^{2} \lla \N(c) \rra(1+Y)^{2}.
\end{align*}

We define the improved $p$-adic $L$-function  
\begin{align*}
 \Ll^{+}_p =  {(\Delta(Y) \Delta'(Y) \calE_2(Y)H)}^{-1} l_{\lambda} e \mathrm{Tr}_1 \left(\mathrm{Tr}_{2}  \left(\theta(\chi_0)|\left[ \lfrak^{2}  \right]E^+\right) \right)| \Xi_2     & \in \boldI^{\mathrm{ord}}. 
 \end{align*}
It can be seen that it is an element of $\boldI^{\mathrm{ord}}$ as in \cite[\S 6.2]{Mok} from the duality of \cite[Theorem 3.1]{Hr3}.\\
Let $P$ be an arithmetic point of type $(\eps_P,m)$, such that $\eps_P$ factors through the cyclotomic character. Let $k=m+2$, we have
\begin{align*}
l_{\lambda_P} e \mathrm{Tr}_1 ( \I \circ \mathrm{Tr}_{2}  \theta(\chi_0)|\left[ \lfrak^{2}  \right]E^+ )| \Xi_2   =  & \frac{ \lla \boldf_P^{c}|\tau'(\nfrak \varpi^{ \alpha}),  e  \mathrm{Tr}_1 ( \mathrm{Tr}_{2} g |\Xi_2)   \rra }{\lla \boldf_P^{c}|\tau(\Nfrak\omega^{\alpha}), \boldf_P \rra},
\end{align*}
for
\begin{align*}
g = & \theta(\chi_0)|\left[ \lfrak^{2}  \right]{\N(\mfrak \varpi^{ \beta} 2^{-1})}^{-1/2}A_0^{-1} \mathcal{E}\left(z,\frac{2-k}{2}, (k-1)t,\psi_P \chi_1^{-1}\chi_0^{-1}\right)|\tau(\mfrak\varpi^{2\alpha}). 
\end{align*}
With  calculations analogous to the previous one we obtain 
\begin{align*}
 \Ll^{+}_p = &   
             \frac{\chi_{-1}\chi_0^{-1}(\dfrak\varpi^{\alpha} \mfrak 2^{-1})\chi_0(\dfrak\cfrak\varpi^{\alpha_0}) D_F^{-\frac{3}{2}} 2^{1-k+2d}   {\N(\varpi)}^{\frac{\alpha- \alpha'}{2}m} }{\lambda(T(\varpi^{2\alpha_0 - \alpha'}))  W'(\boldf_P) S(P) \prod_{\pfrak} \frac{\eta\nu(\dfrak_{\pfrak})}{|\eta\nu(\dfrak_{\pfrak})|} \prod_{J}G(\nu\psi')} \times \\
&  \times i^{d-k}G(\chi_0)\psi_P^{-1}(2)\frac{ \Ll(1,\boldf,\chi_0^{-1})}{ {(2\pi)}^{d}  \lla \boldf_P^{\circ}, \boldf_P^{\circ} \rra_{\Nfrak\varpi^{\alpha'}}}.
 \end{align*}

\end{proof}

Before the end of the section, we give a proposition about the behavior of $\Ll_p(Q,P)$ along the element 
\begin{align*}
\Delta(X,Y)=\left(1-\chi_{-1}^{-1}\chi^{-1}{\psi'}^{-2}\psi(c)\N(c)\frac{1+Y}{1+X}\right).
\end{align*}

\begin{prop}\label{reszeta}
Suppose that $\omega\chi^{-1}\psi{\psi'}^{-2}$ is quadratic imaginary, that its conductor is prime to $p$ and that the family associated with $\lambda$ has CM by this character. Then $\Ll_p(Q,P)$ has a pole along $\Delta(X,Y)=0$ of the same order as the $p$-adic zeta function. Otherwise $\Ll_p(Q,P)$ is holomorphic along $\Delta(X,Y)=0$.
\end{prop}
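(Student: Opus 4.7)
The divisor $\Delta(X,Y)=0$ appears in the denominator of $\Ll_p(Q,P)$ to compensate for the factor $(1-\chi\chi_{-1}\omega_\xi\psi(c)\N(c)^{n+1})$ of Theorem \ref{Ribet} that enters Ribet's measure $\zeta_{\chi\chi_{-1}\omega_\xi,c}$ in the construction of the Eisenstein measure $\calE^{\chi\chi_{-1}}_c$. The plan is thus to compute the residue along $\Delta(X,Y)=0$ of the Fourier expansion of $\calE^{\chi\chi_{-1}}_c\ast\Theta_\chi|[\mathfrak{l}^2]$, and then to analyse when this residue survives the two traces and the $\oo$-linear form $l_\lambda$.

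First I would identify which Fourier coefficients (indexed by $\xi\in F^{\times}$) of the convolution can contribute a pole along $\Delta(X,Y)=0$. The quotient $\zeta_{\chi\chi_{-1}\omega_\xi,c}/\Delta(X,Y)$ has a pole along this divisor if and only if the character $\chi\chi_{-1}\omega_\xi$, transported to the variables $(X,Y)$ through the test character attached to $(Q,P)$, becomes trivial on $\clpinf$. Comparing this with the explicit expression of $\Delta(X,Y)$ forces $\omega_\xi=\omega\chi^{-1}\psi{\psi'}^{-2}$, up to the wild part at $p$ which is excluded by the hypothesis that the conductor be prime to $p$. Hence the character $\mu:=\omega\chi^{-1}\psi{\psi'}^{-2}$ must be a nontrivial quadratic character of $F$, and the sign condition $\chi_\infty(-1)=1$ combined with the weight parity forces $\mu$ to be totally odd, i.e. quadratic imaginary. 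When $\mu$ is not of this form no Fourier coefficient can contribute, and $\Ll_p(Q,P)$ is holomorphic along $\Delta=0$; this takes care of the second statement.

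Second, assuming $\mu$ quadratic imaginary of conductor prime to $p$, I would compute the residue explicitly. Collecting the contributions of all $\xi$ in the square class of $F^\times$ singled out by $\mu$, and using that the residue of $\zeta_{\chi\chi_{-1}\omega_\xi,c}/\Delta$ is (a non-zero algebraic multiple of) the residue of the $p$-adic Dedekind zeta function of $F(\sqrt{\mu})$, the sum of the Fourier coefficients assembles, up to non-zero invertible factors, into a theta series $\theta_\mu$ attached to the imaginary quadratic extension $F(\sqrt{\mu})$, of weight and level matching those of $\boldf_P$. This is the $p$-adic counterpart of the classical fact that the residue of $\zeta_{F(\sqrt{\mu})}(s)$ at $s=1$ encodes the class number of the quadratic field.

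Finally I would apply $l_\lambda$ to $\theta_\mu$. By the explicit formula $l_{\lambda_P}(\boldg)=H(P)\lla\boldf_P^c|\tau'(\nfrak\varpi^\alpha),\boldg\rra/\lla\boldf_P^c|\tau'(\nfrak\varpi^\alpha),\boldf_P\rra$, this amounts to computing a Petersson pairing, which vanishes unless $\boldf_P$ matches the Hecke eigenvalues of $\theta_\mu$ at almost all primes, i.e. unless $\rho_{\boldf_P}\cong\mathrm{Ind}_{F(\sqrt{\mu})}^F(\eta)$ for some Hecke character $\eta$. Equivalently, $\rho_{\boldf_P}\cong\rho_{\boldf_P}\otimes\mu$, which is precisely the CM condition. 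Letting $P$ vary in the family, this means that $\lambda$ itself has CM by $\mu$. In this case the residue survives and yields a simple pole of $\Ll_p(Q,P)$ along $\Delta(X,Y)=0$, of the same order as that of the $p$-adic Dedekind zeta function of $F$ along the corresponding divisor.

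The main obstacle is the residue computation of the second step: one must track all normalizations through Section \ref{padicL} to check that the collected Fourier coefficients genuinely assemble into the non-zero theta series $\theta_\mu$ (rather than into an accidentally vanishing combination), and that no extra cancellation occurs from the trace operators $T_{\Mfrak/\Nfrak}$ and $\mathrm{Tr}_{\Sl}^{\gl}$. Once this non-vanishing is secured, the pole order is automatically the same as the pole order of the $p$-adic zeta function, because the only other building block of the residue is a locally analytic, non-vanishing factor.
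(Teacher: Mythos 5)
Your overall strategy coincides with the paper's: compute the residue of the Eisenstein measure Fourier coefficient by Fourier coefficient, show it vanishes unless $\omega\chi^{-1}\psi{\psi'}^{-2}$ is imaginary quadratic, and in that case kill the residue form by orthogonality to a non-CM eigenform. Your third step is essentially the paper's argument: there the residue form $g$ has $\xi$-th coefficient supported on $\xi=\xi_2^2+\xi_1^2\xi_0$, i.e.\ on norms from $F(\sqrt{-\xi_0})$, and one picks a prime $\xi\rif$ inert in $F(\sqrt{-\xi_0})$ with $\lambda_P(T(\xi\rif))\neq 0$, so that $T(\xi\rif)$ acts by a non-zero scalar on $\boldf_P$ and by $0$ on $g$. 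This support-plus-Chebotarev argument is cheaper than your plan of identifying the residue with an actual theta eigenform $\theta_\mu$ and computing its Petersson pairing; you correctly flag that identification as the hard part, and the point is that you never need it.

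The genuine gap is in your first step. You assert that $\zeta_{\chi\chi_{-1}\omega_\xi,c}/\Delta(X,Y)$ has a pole along $\Delta=0$ \emph{if and only if} the character becomes trivial. That is not a proof and is not the actual mechanism: an Iwasawa function attached to a nontrivial character does not vanish on a divisor merely because it is not the zeta function, so holomorphy of the quotient for nontrivial $\eta=\omega\chi^{-1}\psi{\psi'}^{-2}\omega_{-\xi}$ has to be established by showing that the specific value sitting on $\Delta=0$ is zero. The paper does this in two strokes: (1) if $\eta$ is not totally even, the Deligne--Ribet measure $\zeta_{\eta,c}$ is identically zero (the remark after Theorem \ref{Ribet}); (2) if $\eta$ is totally even and nontrivial, the relevant residue is $\int_{\clpinf}{\lla \N_p(z)\rra}^{-1}\textup{d}\zeta_{\eta}$, which lies just outside the interpolation range, and one approximates the exponent $-1$ by $s_0-1$ with $s_0$ odd and $s_0\equiv 0\bmod p^n$: each interpolated value is a multiple of $L(1-s_0,\eta)$, which vanishes by the parity of the archimedean $\Gamma$-factors, so the limit is zero. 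Since $\omega_{-\xi}$ is totally odd, the only way to escape both mechanisms for some $\xi$ is for $\omega\chi^{-1}\psi{\psi'}^{-2}$ to equal some $\omega_{-\xi_0}$, i.e.\ to be imaginary quadratic; in that case, and only for $\xi$ in the square class of $\xi_0$, the integral becomes the residue of the $p$-adic zeta function (a multiple of the $p$-adic regulator), which is exactly why the proposition asserts a pole ``of the same order as the $p$-adic zeta function'' rather than an unconditional simple pole. Without this $L$-value computation your argument does not rule out a pole contributed by a nontrivial $\eta$, nor does it explain why the surviving coefficient is tied to the zeta residue.
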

\begin{proof}
We proceed as in \cite[Proposition 5.2]{H6}.  First of all, notice that $\Delta(X,Y)$ has a simple zero along $s=m+1$, $\eps_Q=\eps_P=\eps'_P=\mathbf{1}$ if and only if the $p$-part of $\omega\chi^{-1}\psi {\psi'}^{-2}$ is  trivial. \\ 
The $\xi$-th Fourier coefficient of the Eisenstein series is a multiple of $\int_{\clpinf}{\lla \N_p(z) \rra}^{-1}\textup{d}\zeta_{\eta} $, where $\eta=\omega\chi^{-1}\psi {\psi'}^{-2}\omega_{-\xi}$ and $\omega_{-\xi}$ is the primitive quadratic character associated with $F(\sqrt{-\xi})$. So if $\psi{\psi'}^{-2}\chi^{-1}$ is not imaginary quadratic, then  \begin{align*}
\int_{\clpinf} {\lla \N_p(z) \rra}^{-1}\textup{d}\zeta_{\eta} =0 
\end{align*} 
for all $\xi$. In fact, we can take $s_0\equiv 0 \bmod p^n$, $s_0$ odd, and we have 
then  
\begin{align*}
\int_{\clpinf} {\lla \N_p(z) \rra}^{s_0-1}\textup{d}\zeta_{\eta} = \ast L(1-s_0,\eta). 
\end{align*} 
which is $0$ as $\eta$ is even. We know then that the Eisenstein series we obtain is $0$.\\
If  $\psi{\psi'}^{-2}\chi^{-1}=\omega_{-\xi_0}$, then the value of this integral is a non zero multiple (because the missing Euler factors at primes dividing $\Nfrak$ but not the conductor of $\omega_{-\xi_0}$ do not vanish) of the $p$-adic regulator of the global units of $F$ \cite{Colmez}.\\
Let 
\begin{align*}
 g =  &  (\textup{d}^{[s/2]t}\theta(\chi\omega^{-s}))|\left[\mathfrak{l^2} \right] \times \textup{d}^{-v} \left( {\calE}'(z,\frac{1}{2}; s_0 t,\eta)|[\mfrak^2 \varpi^{2 \beta} 4^{-1}] \right); 
 \end{align*}
we have to show that if $\boldf_P$ has not CM by a $\omega_{-\xi_0}$ then $g$  is killed by the Petersson product with $\boldf_P$.\\
 A necessary condition for $\lambda(\xi,\rif; g, \psi_P\psi_P^{-2})$ to be different from $0$ is $\xi= \xi_2^2 + \xi_1^2 \xi_0$, with $\xi_1$ and $\xi_2$ in $\rif$. Then $\xi$ is a norm from $F(\sqrt{-\xi_0})$ to $F$. Take $\xi$ such that $\xi \rif$ is a prime ideal of $F$ which remains prime in $F(\sqrt{-\xi_0})$ (i.e. it is not a norm) and such that $ \lambda_P(T(\xi\rif))\neq 0$. So $T(\xi\rif)$ acts as a non-zero scalar on $\boldf_P$ and as $0$ on $g$, and $g$ must be orthogonal to $\boldf_P$.
\end{proof}
If $\lambda$ has CM by $\omega_{-\xi_0}$, then Leopoldt's conjecture for $p$ and $F$ is equivalent to the fact that $\Ll_p(Q,P)$ has a simple pole along $\Delta(X,Y)=0$.

\section{A formula for the derivative}\label{proofMT}
We can apply now the classic method of Greenberg and Stevens \cite{SSS} to the formula (\ref{fact}) in order to prove Theorem \ref{MainTh} which has been stated in the introduction and which we recall now.
\begin{theo}
Let $p\geq 3$ be a prime such that there is only one prime ideal of $F$ above $p$ and let $\boldf$ be a Hilbert  cuspidal eigenform  of parallel weight $2$, trivial Nebentypus and conductor $\Nfrak p$. Suppose that $\Nfrak$ is squarefree and divisible by all the primes of $F$ above $2$; suppose moreover that $\pi(\boldf)_p$ is the Steinberg representation. Then the formula for the derivative in Conjecture \ref{MainCo} is true (i.e. when $g=1$). \end{theo}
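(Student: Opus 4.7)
The plan is to follow the Greenberg--Stevens strategy for exceptional zeros, applied to the two-variable $p$-adic $L$-function $\Ll_p(Q,P)$ of Theorem \ref{T1}. Under the hypotheses that $\pfrak$ is the unique prime above $p$ and that $\boldf$ is Steinberg at $\pfrak$ with trivial Nebentypus, the ordinary Hida family $\boldI^{\mathrm{ord}}$ through $\boldf$ is one-dimensional in the weight, parameterised by $m_P$, and the nearly-ordinary Hecke eigenvalue satisfies the Steinberg relation
$$\lambda(T(\varpi))(P)^{2}=\N(\pfrak)^{m_P}$$
at every arithmetic point $P$ of parallel weight with trivial character at $\pfrak$. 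This rigidity, coming from the classification of nearly-ordinary Steinberg deformations of $\rho_{\boldf}|_{D_{\pfrak}}$, is the key algebraic input.

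The first step is to show that $\Ll_p(Q,P)$ vanishes identically along the line $L=\{s_Q=m_P\}$ passing through $(Q_0,P_0)$, where $P_0$ is the point corresponding to $\boldf$ and $Q_0=(0,\mathbf{1})$. Indeed, the interpolation formula of Theorem \ref{T1}(i) contains in the factor $E_1(Q,P)$ the local piece
$$1-\N(\pfrak)^{s_Q}\lambda(T(\varpi))(P)^{-2},$$
which vanishes on $L$ by the Steinberg relation. Since the arithmetic points where the interpolation formula applies are Zariski dense in $L$ (using also the analyticity result of Appendix \ref{App}), this forces $\Ll_p|_{L}\equiv 0$. Differentiating the identity $\Ll_p(s_Q,s_Q)=0$ at $(Q_0,P_0)$ yields
$$\partial_{s_Q}\Ll_p(Q_0,P_0)=-\partial_{m_P}\Ll_p(Q_0,P_0),$$
converting the desired cyclotomic derivative into a derivative along the weight.

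Next I would apply the factorization of Corollary \ref{Coroll} on the slice $Q=Q_0$:
$$\Ll_p(Q_0,P)=\bigl(1-\lambda(T(\varpi))(P)^{-2}\bigr)\,\Ll_p^{+}(P).$$
At $P_0$ the Euler factor has a simple zero because $\lambda(T(\varpi))(P_0)=\pm1$ but varies nontrivially along the family, so
$$\partial_{m_P}\Ll_p(Q_0,P_0)=\left.\frac{d}{dm_P}\bigl(1-\lambda(T(\varpi))^{-2}\bigr)\right|_{P_0}\cdot\Ll_p^{+}(P_0).$$
The derivative of the Euler factor equals $2\lambda(T(\varpi))(P_0)^{-3}\lambda(T(\varpi))'(P_0)$, which by the computation of Hida \cite{HIwa} coincides, up to the explicit normalizing constant predicted by Conjecture \ref{MainCo}, with the Greenberg analytic invariant $\Ll(\mathrm{Sym}^{2}(\boldf))$. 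The value $\Ll_p^{+}(P_0)$ is computed directly from Theorem \ref{T1}(ii) and produces $\calE^{*}(\mathrm{Sym}^{2}(\boldf))\,L(1,\mathrm{Sym}^{2}(\boldf))/\Omega(\boldf)$ times the algebraic constants appearing in the interpolation formula. Assembling the four displays above yields exactly Conjecture \ref{MainCo} for $g=1$.

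The main obstacle will be the rigorous verification of the Steinberg relation $\lambda(T(\varpi))^{2}=\N(\pfrak)^{m_P}$ throughout the arithmetic locus of the family: it relies on the hypothesis that $\Nfrak$ is squarefree (so that newness is preserved under specialization) and on the explicit description of the nearly-ordinary filtration at $\pfrak$ for Steinberg deformations; once this is in place, the Zariski density argument producing the vanishing on $L$ is essentially automatic. A secondary but essential step is matching the analytic derivative $d\lambda(T(\varpi))/dm_P$ with the cohomological $\Ll$-invariant of Greenberg, which one reads off Hida's formulas in \cite{HIwa} and whose non-vanishing, in the elliptic-curve case, is ensured by the \emph{Th\'eor\`eme de Saint-\'Etienne} recalled in the introduction.
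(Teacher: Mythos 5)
Your overall route is the one the paper takes in Section \ref{proofMT} (vanishing of the two-variable function on the line $s_Q=m_P$, swapping $\partial_s$ for $-\partial_k$, the factorization of Corollary \ref{Coroll}, and the Mok--Hida formula for $\partial_k\lambda(T(\varpi))$ in terms of the $\Ll$-invariant), but the justification you give for the first and most delicate step rests on a false premise. You claim that the Steinberg relation $\lambda(T(\varpi))(P)^2=\N(\pfrak)^{m_P}$ holds at \emph{every} arithmetic point of parallel weight with trivial character at $\pfrak$, and you propose to establish it by a rigidity argument for nearly-ordinary Steinberg deformations. This cannot work: at a point $P$ of parallel weight $k_P=m_P+2>2$ the eigenvalue $\lambda(T(\varpi))(P)$ is a $p$-adic unit (ordinarity), whereas $\N(\pfrak)^{m_P}$ has positive valuation, so the relation fails for every $m_P>0$. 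Equivalently, a newform of weight $>2$ that is Steinberg at $\pfrak$ with trivial Nebentypus has $v_p(a_{\pfrak})=(k-2)/2>0$ and is never ordinary; the ordinary specializations of the family in weight $>2$ are therefore $p$-old ($p$-stabilizations of forms of level $\Nfrak$), not Steinberg at $\pfrak$. The ``main obstacle'' you flag at the end --- verifying the Steinberg relation throughout the arithmetic locus --- is thus not an obstacle to be overcome but a statement that is simply false.

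The identical vanishing of $\Ll_p$ along $s_Q=m_P$ is nevertheless true, but for a different reason at almost all of the points used in your density argument: when $\boldf_P$ is not primitive at $\pfrak$ (weight $>2$), it is the factor $E_2(Q,P)$ --- specifically its first piece $1-(\psi_P)_0(\pfrak)\N(\pfrak)^{m_P-s_Q}$, which vanishes at $s_Q=m_P$ for trivial Nebentypus --- that kills the interpolation formula; the piece of $E_1$ you isolate supplies the zero only at the weight-two Steinberg point itself. Since the product $E_1E_2$ vanishes at a Zariski-dense set of points of the line either way, the conclusion $\Ll_p|_{s_Q=m_P}\equiv 0$ stands and the remainder of your argument goes through essentially as in the paper. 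A minor secondary point: you should not assert that $1-\lambda(T(\varpi))^{-2}$ has a \emph{simple} zero at $P_0$; that is equivalent to the non-vanishing of $\Ll(\mathrm{Sym}^2(\boldf))$, which is only known (via the Th\'eor\`eme de Saint-\'Etienne) in the elliptic-curve case, and the derivative formula is in any case just the first-order Taylor coefficient and holds regardless.
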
 
We fix a Hilbert modular form $\boldf$ of parallel weight $2$ and of Nebentypus $\psi$ trivial at $p$.\\
Let $\boldI^{\mathrm{ord}}$ be the integral closure of an irreducible component of $\boldh^{\mathrm{ord}}$ and let $P_{\boldf}$ be an arithmetic point of $X(\boldI^{\mathrm{ord}})$ corresponding to $\boldf$. Let us denote by $\lambda$ the corresponding structural morphism from $\boldh^{\mathrm{ord}}$ to $\boldI^{\mathrm{ord}}$. Let $L^{\mathrm{ord}}_p(Q,P)$ be the $p$-adic $L$ functions obtained by specializing the $p$-adic $L$-function of Theorem \ref{T1} to $\oo[[\clpinf]] \hat{\otimes} \boldI^{\mathrm{ord}}$. In particular, we have that, up to some non-zero factor,  $L^{\mathrm{ord}}_p(s,P_{\boldf})$ coincides with $L_p(s,\mathrm{Sym}^2(\boldf))$, the $p$-adic $L$ function associated with $\boldf$. \\
We remark that if $\boldf$ is primitive, then the first statement of Theorem \ref{controlThm} tell us that $\boldI$ is \'etale at $P_{\boldf}$ over $\LL$.
This allows  us to define a $\boldI^{\mathrm{ord}}$-algebra structure $\varphi$ (which depends on the point $P_{\boldf}$) on the field of meromorphic functions around $2$ of fixed, positive radius of convergence $\mathrm{Mer}(D_2)$, which is a subfield of $\Qb_p((k-2))$.\\
Namely we consider the continuous group homomorphism $$
\begin{array}{cccc}
 \varphi_0 : & \clpinf & \rightarrow  & \mathrm{Mer}(D(2))^{\times} \\
 & \afrak & \mapsto &  ( k \mapsto \psi(\N(\afrak)){\lla \N(\afrak) \rra}^{k-2}).
\end{array}$$
This morphism extends to a continuous algebra homomorphism 
 $$
\begin{array}{cccc}
 \varphi_0 : & \oo[[\clpinf]] & \rightarrow  & \mathrm{Mer}(D(2)).
\end{array}$$
Let us denote by $ \boldI^{\mathrm{ord}}_{P_{\boldf}}$ the localization-completion of $ \boldI^{\mathrm{ord}}$ at ${P_{\boldf}}$. \\
As $\boldI^{\mathrm{ord}}$ is \'etale around $P_{\boldf}$ over $\oo[[X]]$ and because the ring of meromorphic functions of fixed, positive radius of convergence is henselian \cite[Theorem 45.5]{Nag} we have that it exist a morphism $\varphi$ such that 
$$ 
\xymatrix{ 
 {\boldI^{\mathrm{ord}}_{P_{\boldf}}} \ar[r]^-{\varphi}  & \mathrm{Mer}(D(2)) \\
{\oo[[\clpinf]].} \ar[u] \ar[ur]^-{ \varphi_0  }}
$$

As $\boldI^{\mathrm{ord}}$ is generated by a finite number of elements $\lambda(T(\qfrak))$,  we deduce that there exists a disc of positive radius around $2$ such that $\varphi(i)$ is convergent for all $i$ in $\boldI^{\mathrm{ord}}$.\\ 
For each $k$ in this disc, we can define a new point $P_k$ of $\boldI^{\mathrm{ord}}$;   $P_k(i)=\varphi(i)(k)$. It is obvious that $P_2= P_{\boldf}$. So, it makes sense now to derive elements of $\boldI^{\mathrm{ord}}$ with respect to $k$.\\
For all arithmetic points $P$ of $\boldI^{\mathrm{ord}}$ and $i$ in $\boldI^{\mathrm{ord}}$, we have that $P(i)=\varphi(i)(2)$.\\
We can do the same also for the cyclotomic variable, that is we define 
$$
\begin{array}{cccc}
 \varphi' : & \clpinf & \rightarrow  & \mathrm{Mer}(D(0))^{\times} \\
 & \afrak & \mapsto &  ( s \mapsto {\lla \N(\afrak) \rra}^{s}),
\end{array}$$ 
where $\mathrm{Mer}(D(0))$ denotes the  ring of meromorphic functions of fixed, positive radius of convergence around $0$, and then we extend $\varphi' $ to a continuous algebra homomorphism 
 $$
\begin{array}{cccc}
 \varphi' : & \oo[[\clpinf]] & \rightarrow  & \mathrm{Mer}(D(0)).
\end{array}$$
We define then  
 $$
\begin{array}{cccc}
 \varphi \otimes \varphi': & \oo[[\clpinf]] \hat{\otimes} \boldI^{\mathrm{ord}} & \rightarrow  & \mathrm{Mer}(D(0,2)).
\end{array}$$ where $\mathrm{Mer}(D(0,2))$ denotes the ring of meromorphic functions of fixed, positive radius of convergence around $(0,2)$. We consider $\mathrm{Mer}(D(0,2))$ as a subring of  $\Qb_p((s,k-2))$. Let us denote by $L_p(s,k)$ the image of $L^{\mathrm{ord}}_p(Q,P)$ through $\varphi \otimes \varphi'$ (notice that we have a change of variable $s \mapsto s -1$ from the $p$-adic $L$ function of the introduction).\\
Before proving the theorem, we recall some facts about the arithmetic $\Ll$-invariant of the Galois representation of  $\mathrm{Ind}_F^{\Q}(\mathrm{Ad}(\rho_{\boldf}))$; it has been calculated, under certain hypotheses, in \cite[Theorem 3.73]{HIwa} following the definition given in \cite{TTT}.  \\
Untill further notice, suppose $p$ unramified. The main hypothesis required in the proof of \cite[Theorem 3.73]{HIwa} is the so-called  $R=T$ theorem (\cite[(\mbox{vsl})]{HIwa}). Namely, let   $\boldh^{\mathrm{cycl}}(\Nfrak,\oo)$ be the cyclotomic Hecke algebra (defined in Section \ref{ptheory} of this paper). Let  $$
\begin{array}{cccc}
 Def_{\overline{\rho}_{\boldf}} : & CLN(\oo) & \rightarrow  & Set
\end{array}$$ be the deformation functor from the category of local, profinite $\oo$-algebra with same residue fields as $\oo$ into sets which associate to $A$ in $CNL(\oo)$ the set of $A$-deformations which satisfy certain ramification conditions outside $p$ and  an ordinarity condition at $\pfrak$ dividing $p$.  Let $R^{\mathrm{ver}}$ be the minimal versal hull for $Def_{\overline{\rho}_{\boldf}}$ and let $\mfrak$ be the ideal of  $R^{\mathrm{ver}}$  corresponding to $\rho_{\boldf}$. Let $T_{\boldf}$ be the localization-completion of  $\boldh^{\mathrm{cycl}}(\Nfrak,\oo)$ at ${P_{\boldf}}$. We have then a morphism $u: R^{\mathrm{ver}} \rightarrow T_{\boldf}$. Let $R$ be the  localization-completion of $R^{\mathrm{ver}}$ at $\mfrak$ and let $T$ be the localization-completion of $T_{\boldf}$ at $\mfrak$. Then the above mentioned condition (\mbox{vsl}) states  that $u: R \rightarrow T$
 is an isomorphism. \\
Such a condition holds in many cases thanks to the work of Fujiwara \cite{Fu}. For the Hilbert modular form of Theorem \ref{MainTh}, a proof can be found in \cite[Theorem 3.50]{HIwa}. \\ 
Let  $\boldI^{\mathrm{cycl}}$ be the irreducible component of $\boldh^{\mathrm{cycl}}(\Nfrak,\oo)$ to which $\boldf$ belong and let $\lambda^{\mathrm{cycl}}$ be the corresponding structural morphism; $\boldI^{\mathrm{cycl}}$ is finite flat over $\oo[[x_{\pfrak}]]_{\pfrak |p}$. The $\Ll$-invariant is expressed in term of partial derivative of $\lambda^{\mathrm{cycl}}(T_0(\pfrak))$ with respect to $x_{\pfrak}$. This is not surprising,  as from its own definition, the $\Ll$-invariant is defined in term of the Galois cohomology  of the restriction of the representation to $\mathrm{Gal}(\Qb/ \Q(\mu_{p^{\infty}}))$ and, as we said at the end of Section \ref{ptheory}, the cyclotomic Hecke-algebra takes into account only modular forms whose local Galois representation at $\pfrak$ is of fixed type outside  $\mathrm{Gal}(F_{\pfrak}(\mu_{\p^{\infty}})/F_{\pfrak})$. \\
 In the particular case of an elliptic curve $E$ with multiplicative reduction at all $\pfrak|p$, the calculation of the $\Ll$-invariant does not require to consider  all the  all the cyclotomic deformations, but only the ordinary, as explained below. \\
If $E$ has split multiplicative reduction at $\pfrak$, let us denote by $Q_{\pfrak}$ the Tate period of $E$ over $F_{\pfrak}$ and pose $q_{\pfrak}=\N_{F_{\pfrak}/\Q_p}(Q_{\pfrak})$. If the reduction at $\pfrak$ is not split, then ${\lambda^{\mathrm{cycl}}(T(\varpi_{\pfrak}))(2)}=-1$; we consider a quadratic twist $E_d$ such that $E_d$ has split multiplicative reduction and we define  $Q_{\pfrak}$ and $q_{\pfrak}$ as above. We have  then a Galois theoretic formula for the $\Ll$-invariant \cite[Corollary 3.74]{HIwa} 
\begin{align*}
 \Ll(\mathrm{Ind}_F^{\Q}(\mathrm{Ad}(\rho_{\boldf}))) &=  \prod_{\pfrak | p} \frac{\log_p(q_{\pfrak})}{ord_p(q_{\pfrak})}.
\end{align*}
The right hand side as been calculated by  Mok in term of logarithmic derivative of the character 
\begin{align*}
\lambda : \boldh^{\mathrm{ord}} \rightarrow \boldI^{\mathrm{ord}}  .                                                                                           \end{align*}
 We obtain from \cite[Proposition 8.7]{Mok} 
\begin{align*}
\frac{\partial \lambda(T(\varpi_{\pfrak}))}{\partial k} (2){\lambda(T(\varpi_{\pfrak}))(2)}^{-1} =-\frac{1}{2} f_{\pfrak} \frac{\log_p(q_{\pfrak})}{ord_p(q_{\pfrak})}
\end{align*}
where $f_{\pfrak}=[\F_{\pfrak}:\F_p]$. \\
We remark that the proof of \cite[Proposition 8.7]{Mok} works without any change when $p$ is $2$, $3$ or ramified in $F$.\\
The presence of $f_{\pfrak}$ has been explained in the introduction of \cite{HTate1} and is related to the Euler factors which we removed at $p$ in the formula presented in Conjecture \ref{MainCo}. More precisely, in this formula appears the product $E^*(m+1,\boldf)$ of all the non-zero Euler factors which have to be removed to obtain $p$-adic interpolation.  Among the factor which we removed, there is $(1-p^{(1-s) f_{\pfrak}})$ for the primes $\pfrak$ such that ${\lambda(T(\varpi_{\pfrak}))}^{-2}=1$. We can rewrite this as $ \prod_{\zeta \in \mu_{f_{\pfrak}}}  (1-\zeta p^{1-s})$.  In the case of the elliptic curve $E$, we have the trivial zeros at $s=1$; hence the factor $E^*(1,\boldf)$ should then contain $ \prod_{1 \neq \zeta \in \mu_{f_{\pfrak}}}  (1-\zeta )=f_{\pfrak}$. \\
We remark that the fact that the $\Ll$-invariants of $\rho_{\boldf}$ and $\mathrm{Ad}(\rho_{\boldf})$ are the same is not a coincidence. Let $W$ be the part of Hodge-Tate weights $1$ and $0$ of this two representations restricted to $\mathrm{Gal}(\Qb_p/\Q_p)$ (which are isomorphic); we have that $W$ is a non-split extension of $\Q_p$ by $\Q_p(1)$  and this ensures that the $\Ll$-invariant is determined only by the restriction to $W$ (as said at the end of page 169 of \cite{TTT}).

\begin{proof}[Proof of Theorem \ref{MainTh}]
 Using the interpolation formula of \ref{T1} we see that $L_p(k-2,k)=0$. In particular, we apply the operator $\frac{\textup{d}}{\textup{d} k}$ and we get
\begin{align*}
{\frac{\textup{d}L_p(s,k)}{\textup{d}s}}|_{s=k-2} & = - {\frac{\textup{d}L_p(s,k)}{\textup{d}k}}|_{s=k-2}.
\end{align*}
If we suppose that $2$ divides the level of $\boldf$, than we have that $E_2(0,k)$, defined in the proof of Theorem \ref{T1}{\it i)}, is identically $1$. Let $\Sigma_p$ be set of $\pfrak \mid p$ such that $\boldf$ is a Steinberg representation at $\pfrak$, and $\Pi_p$ the set  $\pfrak \mid p$ such that $\boldf$ is a principal series  representation at $\pfrak$. Let us denote by $g_1$ the cardinality of $\Sigma_p$, from the factorization in \ref{fact}, we also have
\begin{align*}
L_p(0,k) = & {2}^{g_1}\prod_{\pfrak \in \Sigma_p}\frac{\lambda'(T(\varpi_{\pfrak}))(2)}{{\lambda(T(\varpi_{\pfrak}))(2)}^3}\prod_{\pfrak \in \Pi_p}(1 -{\lambda(T(\varpi_{\pfrak}))(2)}^{-2} ) E_{\pfrak}(Q_0,P_{\boldf}) \frac{\Ll(1,\boldf)}{\Omega(\boldf)} (k-2)^{g_1} + \\
&  + ({(k-2)}^{g_1+1}).
\end{align*}
If we suppose that the Nebentypus of $\boldf$ is trivial and that the conductor of $\boldf$ is squarefree, then we have that $\boldf$ is Steinberg at all primes dividing its conductor. Using the explicit description of the Euler factors at the ramification primes for $\boldf$ given in Section \ref{Extrafactors}, case {\it (iii)}, we see that there aren't any missing  Euler factors and consequently  the primitive $L$-function and the imprimitive one coincide.\\
Using the explicit expression of the $\Ll$-invariant in term derivative of Hecke eigenvalue given above, we obtain for $\boldf$ as in the theorem
\begin{align*}
{\frac{\textup{d}L_p(s,2)}{\textup{d}s}}|_{s=0} = \Ll(\mathrm{Ind}_F^{\Q}(\mathrm{Ad}(\rho_{\boldf}))) f_p\frac{L(1,\mathrm{Sym}^2(\boldf))}{\Omega(\boldf)}.
\end{align*}
\end{proof}
We remark that we have something more; if there is at least a prime $\pfrak$ at which $\boldf$ is Steinberg and  $g\geq 2$, then the order at $s=0$ of $L_p(s,\mathrm{Sym}^{2}(\boldf))$ is at least $2$ (keeping the assumptions on the conductor outside $p$).\\
Let us point out that the hypothesis that $2$ divides the conductor of $\boldf$ is necessary, as the missing Euler factor at $2$ which we remove when interpolating the imprimitive $L$-function is zero. But a more carefull study of the missing Euler factor at $\Nfrak$ could allow us to weaken the hypothesis on the conductor of Theorem \ref{MainTh}, as not all the missing Euler factors have to vanish.\\ 
For example, we could prove Conjecture \ref{MainCo} for $L(s,\mathrm{Sym}^2(\boldf),\chi)$ when the character $\chi$ is very ramified modulo $2\Nfrak$.\\
Let us give now the example of an elliptic curve $E$ over $\Q$. Let $q$ be a prime of potentially good reduction of $E$, we have to exclude the cases when one of the missing Euler factor at $q$ is $(1-q^{1-s})$. We have that in this case the missing Euler factors have been explicitly calculated in \cite{CS}; let $l$ be an auxiliary prime different from $2$ and $q$, $\Q_q(E[l])$ the extension of $\Q_q$ generated by the coordinates of the torsion points of order $l$ of $E$ and $I_q$ the inertia of $\mathrm{Gal}(\Q_q(E[l])/\Q_q)$. By \cite[Lemma 1.3, 1.4]{CS} we have that there is no Euler factor of type $(1-q^{1-s})$ at $q$ if and only if one of the following is satisfied:
\begin{itemize}
\item[a)] $I_q$ is not cyclic, 
\item[b)] $I_q$ is cyclic and  $\Q_q(E[l])/\Q_q$ is not abelian. 
\end{itemize}
We have the following corollary:
\begin{coro}
Let $E$ be an elliptic curve over $\Q$ and let $\boldf$ be the associated modular form. Suppose that $E$ has multiplicative reduction at $p$, even conductor and that for all prime of additive reduction one between {\it a)} or {\it b)} is satisfied, then \ref{MainCo} for $\boldf$ is true.
\end{coro}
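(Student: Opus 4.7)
The plan is to deduce the corollary from Theorem \ref{MainTh} by relaxing the squarefree hypothesis on $\Nfrak$, using the Coates--Schmidt analysis of local Euler factors at primes of additive reduction. The hypotheses on $E$ immediately provide the two essential inputs of Theorem \ref{MainTh}: multiplicative reduction at $p$ yields that $\pi(\boldf)_p$ is Steinberg, while the evenness of the conductor gives $2 \mid \Nfrak$, which neutralizes the potential zero at $2$ of the Iwasawa function appearing in the interpolation formula of Theorem \ref{T1}.

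I would then trace precisely where squarefreeness is used in the proof of Theorem \ref{MainTh}. It enters only in the passage from the imprimitive $L$-function, which is what is directly interpolated, to the primitive one, via the product $L(s,\mathrm{Sym}^2(\boldf)) = \Ll(s,\boldf,\mathbf{1})\calE_{\Nfrak}(s,\boldf,\mathbf{1})$: under squarefreeness $\boldf$ is Steinberg at every bad prime $q \neq p$, so by the case (iii) formulae of Section \ref{Extrafactors} the local factors of $\calE_{\Nfrak}$ are finite and non-zero at $s=1$. The remaining steps---the factorization of Corollary \ref{Coroll}, the identification of the $\Ll$-invariant with a logarithmic derivative of $\lambda(T(\varpi_p))$, and the extraction of the leading term---do not depend on squarefreeness and transfer verbatim. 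The corollary thus reduces to showing that $\calE_{\Nfrak}(1,\boldf,\mathbf{1})$ is finite and non-zero under the new hypotheses.

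This final point is the heart of the argument. At primes of multiplicative reduction away from $p$, the type (iii) analysis still applies. At primes of potentially multiplicative additive reduction, a minimal quadratic twist reduces to Steinberg with non-vanishing local factor. The delicate case is potentially good additive reduction, where $\pi(\boldf)_q$ is of type (ii) or (iv) in the classification of Section \ref{Extrafactors}. By \cite[Lemma 1.3, 1.4]{CS}, the local Euler factor of $L(s,\mathrm{Sym}^2(E))$ at $q$ contains the problematic factor $(1 - q^{1-s})$---the unique one vanishing at $s=1$---if and only if $I_q$ is cyclic \emph{and} $\Q_q(E[l])/\Q_q$ is abelian. Hypothesis a) or b) rules out exactly this configuration, so $\calE_{\Nfrak}(1,\boldf,\mathbf{1})$ is non-zero and the corollary follows.

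The main obstacle I anticipate is the careful local analysis in the additive cases; once the Coates--Schmidt criterion is invoked, the remaining verification that the non-vanishing of the local $L$-factor translates to the non-vanishing of the corresponding factor of $\calE_{\Nfrak}$---taking into account the extra term $(1-\chi(\qfrak)\lambda(T(\qfrak))^2\N(\qfrak)^{-s})$ in its definition---is a direct bookkeeping exercise, using that $\lambda(T(q)) = 0$ in the supercuspidal case and $|\lambda(T(q))|=1$ otherwise.
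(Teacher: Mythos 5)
Your proposal is correct and follows exactly the paper's route: Theorem \ref{MainTh} reduces everything to the non-vanishing of the missing Euler factors of $\calE_{\Nfrak}$ at $s=1$, the evenness of the conductor handles the prime $2$, and the Coates--Schmidt criterion (conditions a) and b)) excludes the problematic factor $(1-q^{1-s})$ at primes of potentially good additive reduction. The extra bookkeeping you mention (the factor $(1-\chi(\qfrak)\lambda(T(\qfrak))^2\N(\qfrak)^{-s})$, which is harmless since $\lambda(T(q))=0$ or $\pm1$ for an elliptic curve) is consistent with the discussion preceding the corollary in the paper.
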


\section{A formula for higher order derivative}\label{proofBC}
In this section we want to provide an example of \ref{MainCo} in which $g \geq 2$. We begin considering a form $\boldf$ has in \ref{MainTh} and we show that \ref{MainCo} is true (under mild hypothesis) for $\mathrm{Sym}^2(\boldg)$, where $\boldg$ denotes an abelian base change (is a sense we specify below) of $\boldf$. We follow closely the strategy of \cite[\S 9]{Mok}.
Let $F$ be a totally real number field as before, and $E$ a totally real abelian extension of $F$ of degree  $n$. For this whole section, we will suppose for simplicity that $p$ is unramified in $E$, even though we belive this is not necessary, to simplify the calculations.
We fix, for this section, $H=\mathrm{Gal(E/F)}$. Let $\boldf$ be a Hilbert modular form for $F$, following  \cite[Chapter 3, \S 6]{AC} we can define its base-change $\boldg$ to $E$. Let $\qfrak'$ be a prime of $E$ above a prime $\qfrak$ of $F$, suppose that $\boldf$ at $\qfrak$ is minimal but not supercuspidal, then $\bolda(\qfrak',\boldg)=\bolda(\N_{E/F}(\qfrak'),\boldf)$. Let us denote by $\hat{H}$ the group of characters of $H$; via class field field theory, we can identify each element of $\hat{H}$ with a Hecke character of $F$ factoring through $F^{\times}\N_{E/F}(\mathbb{A}^{\times}_E)$.  We have then
\begin{align*}
L(s,\mathrm{Sym}^2 (\boldg))=\prod_{\phi \in \hat{H}}L(s,\mathrm{Sym}^2 (\boldf),\phi).
\end{align*}
Note that if $\boldf$ is nearly-ordinary (resp. ordinary), then $\boldg$ is nearly-ordinary (resp. ordinary) too.\\
Assuming that Conjecture \ref{MainCo} holds for $\boldf$ we shall show that the same is true for $\boldg$ by factoring the $p$-adic $L$-function $L_p(s,\mathrm{Sym}^2(\boldg))$  in terms of $L_p(s,\mathrm{Sym}^2(\boldf),\phi)$.
\begin{lemma}\label{Gausssum}
Let $\chi$ a gr\"ossencharacter of $F$ of conductor $\cfrak$ and $\chi_E=\chi \circ \N_{E/F}$. Assuming $D_{E/F}$ and $\cfrak$ coprime, for the Gau\ss{} sum defined before Proposition \ref{Theta-Atkin}, we have
$$G(\chi_E) = \chi(D_{E/F}) {G(\chi)}^n.$$
\end{lemma}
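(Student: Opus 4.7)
The plan is to decompose both Gauss sums into products of local Gauss sums at the non-archimedean places and to analyze the local factors prime by prime. Write $G(\chi)=\prod_{\qfrak} G_{\qfrak}(\chi_{\qfrak})$ with the normalization that $G_{\qfrak}(\chi_{\qfrak})=1$ whenever $\chi_{\qfrak}$ is unramified and $\qfrak$ is unramified over $\Q$, and similarly for $G(\chi_E)$ as a product over primes $\qfrak'$ of $E$. The hypothesis $(\cfrak,D_{E/F})=1$ partitions the non-trivially contributing finite places of $F$ into two disjoint sets: those dividing $\cfrak$ and those dividing $D_{E/F}$.

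First I would handle the primes $\qfrak\mid\cfrak$. By assumption such $\qfrak$ are unramified in $E$, so for each $\qfrak'\mid\qfrak$ the local extension $E_{\qfrak'}/F_{\qfrak}$ is unramified of residue degree $f(\qfrak'/\qfrak)$, and $\chi_{E,\qfrak'}=\chi_{\qfrak}\circ \N_{E_{\qfrak'}/F_{\qfrak}}$ has conductor the extension of $\cfrak_{\qfrak}$ to $\rif_{E,\qfrak'}$. The Davenport--Hasse lifting relation for Gauss sums then gives $G_{\qfrak'}(\chi_{E,\qfrak'})=G_{\qfrak}(\chi_{\qfrak})^{f(\qfrak'/\qfrak)}$; in our Hecke-character normalization the signs that appear in the finite-field version cancel. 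Taking the product over $\qfrak'\mid\qfrak$ and using $\sum_{\qfrak'\mid\qfrak} f(\qfrak'/\qfrak)=n$, the total contribution from such $\qfrak$ is $G_{\qfrak}(\chi_{\qfrak})^{n}$, so the product over all $\qfrak\mid\cfrak$ yields the factor $G(\chi)^n$.

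Next I would treat the primes $\qfrak\mid D_{E/F}$. Here $\chi_{\qfrak}$ and each $\chi_{E,\qfrak'}$ are unramified, so the only non-trivial input to the local Gauss sum is the contribution of the local different of $E$ over $\Q$, which factors as $\dfrak_{E_{\qfrak'}}=\dfrak_{E_{\qfrak'}/F_{\qfrak}}\cdot\dfrak_{F_{\qfrak}}$. Unwinding the definition of $G_{\qfrak'}$ at an unramified character shows that the deviation from $1$ is precisely $\chi_{E,\qfrak'}(\dfrak_{E_{\qfrak'}/F_{\qfrak}})=\chi_{\qfrak}(\N_{E_{\qfrak'}/F_{\qfrak}}\dfrak_{E_{\qfrak'}/F_{\qfrak}})$. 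Multiplying over $\qfrak'\mid\qfrak$ and then over $\qfrak\mid D_{E/F}$, and invoking the conductor--discriminant identity $\N_{E/F}(\dfrak_{E/F})=D_{E/F}$, produces exactly $\chi(D_{E/F})$.

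The main obstacle is bookkeeping with conventions: the different of $E/\Q$ splits as $\dfrak_{E/\Q}=\dfrak_{E/F}\cdot(\dfrak_{F/\Q}\rif_E)$, and one must keep track of how this interacts with the global additive character $\bexp_E=\bexp_F\circ\mathrm{Tr}_{E/F}$ so that the shift by $\dfrak_{E/F}$ really yields $\chi(D_{E/F})$ and not a spurious unit. Combining the two contributions gives $G(\chi_E)=\chi(D_{E/F})G(\chi)^n$, as desired.
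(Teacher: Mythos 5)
Your strategy is genuinely different from the paper's: you argue locally, decomposing both Gauss sums into local factors and invoking Davenport--Hasse at the primes dividing $\cfrak$, whereas the paper argues globally, comparing the $\varepsilon$-factors in the functional equations of $L(s,\chi_E)$ and of $\prod_{\phi\in\hat{H}}L(s,\chi\phi)$, and then feeding in the conductor--discriminant formula $\prod_{\phi}\ffrak_{\phi}=D_{E/F}$ together with $\prod_{\phi}G(\phi)=\N_{F/\Q}(D_{E/F})^{1/2}$. Your treatment of the primes dividing $D_{E/F}$ (the different contribution producing $\chi(D_{E/F})$ via $\N_{E/F}(\dfrak_{E/F})=D_{E/F}$) is in substance the same bookkeeping the paper's route performs, and is fine.

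The gap is at the primes $\qfrak\mid\cfrak$: the assertion that ``in our Hecke-character normalization the signs that appear in the finite-field version cancel'' is unjustified and in general false. Davenport--Hasse reads $-g(\chi\circ \mathrm{N}_{\F_{q^{f}}/\F_{q}})=(-g(\chi))^{f}$, so each prime $\qfrak'\mid\qfrak$ contributes a sign $(-1)^{f(\qfrak'/\qfrak)-1}$, and the product over the $g_{\qfrak}$ primes above $\qfrak$ is $(-1)^{n-g_{\qfrak}}$; passing to the Hecke normalization only multiplies by $\chi_{\qfrak}^{-1}(\varpi^{\alpha+e})$ and rescales the additive character, neither of which produces a compensating sign. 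Already for $E/F$ quadratic and $\qfrak\mid\cfrak$ inert this sign is $-1$, and it cannot be absorbed into $\chi(D_{E/F})$, which in your argument arises from a disjoint set of primes. (In the global approach this same constant appears as $\prod_{\phi\in\hat{H}}\phi(\qfrak)$ through the twisting formula $G(\chi\phi)=\chi(\ffrak_{\phi})\phi(\cfrak)G(\chi)G(\phi)$, and indeed $\prod_{\phi}\phi(\qfrak)=(-1)^{(f_{\qfrak}-1)g_{\qfrak}}$, so the two computations agree --- but neither makes the sign vanish; since it does not appear in the statement, you must either prove it is $+1$ under the hypotheses in force or carry it into the formula.) A second, smaller point: classical Davenport--Hasse applies to characters of conductor exactly $\qfrak$; for $\alpha_{\qfrak}\geq 2$ you need the inductivity in degree zero of local $\varepsilon$-factors together with the Langlands--Deligne $\lambda$-constants of the unramified extensions $E_{\qfrak'}/F_{\qfrak}$, which is precisely where these constants reappear.
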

\begin{proof} 
As in \cite[Lemma 9.4]{MokT}, we use the functional equation for an Hecke character which we recall in Appendix \ref{App} and the factorization $L(\chi_E,s)$ in term of $L(\chi\phi,s)$, for $\phi$ in $\hat{H}$. \\
We let $\cfrak_{\phi}$ be the conductor of $\chi\phi$ and $\ffrak_{\phi}$ the conductor of $\phi$. Using the formula for the discriminant in tower of extensions 
\begin{align*}
 D_{E} = \N_{F/\Q}(D_{E/F}) D_{F}^n,
\end{align*}
 we obtain the equality
\begin{align*}
\prod_{\phi \in \hat{H}} G(\chi\phi) {\N_{F/\Q}(\cfrak_{\phi})}^{-s} & = G(\chi_E){\N_{E/\Q}(\cfrak)}^{-s}\N_{F/\Q}(\dfrak_{E/F})^{-s +1/2}.
\end{align*}
As $\cfrak$ in unramified in $E$, we can split the Gau\ss{} sums and the conductor of $\chi\phi$ for all $\phi$ into a $\cfrak$-part and a prime-to-$\cfrak$-part. We conclude with the following formulae from class field theory \cite[\S7 (11.9),(6.4)]{Neu}
\begin{align*}
\prod_{\phi \in \hat{H}} \ffrak_{\phi} =   D_{E/F} , \: \: \: \:  \prod_{\phi \in \hat{H}}G(\phi) =   \N_{F/\Q}(D_{E/F}^{1/2}).
\end{align*}
\end{proof}

Let us denote by $S(\boldf)$ (resp. $S(\boldg)$) the factor $S(P) $ defined after Theorem \ref{1var} for $P$ the point corresponding to $\boldf$ (resp. $\boldg$). 
We show now the following
\begin{prop}\label{Factpfun}
 For $E$, $F$, $\boldg$,  $\boldf$ as above. Let $\chi$ be a finite order Hecke character of $F$, and suppose that the conductor of $\boldf$ and $\chi$ are both unramified in the extension $E/ F$.    We have then, for all point  $u^s\eps(u)-1$ in the interpolation range of Theorem \ref{1var},
\begin{align*}
 G(u^s\eps(u)-1,\boldg,\chi_{E}) = & C \prod_{\phi \in \hat{H}} G(u^s\eps(u) -1 ,\boldf,\chi\phi),
\end{align*}
where $G(X,\boldf,\chi\phi)$ (resp. $G(X,\boldg,\chi_{E})$) is the Iwasawa function of Theorem \ref{1var} giving the $p$-adic $L$-function $L_p(s,\mathrm{Sym}^2(\boldf),\chi\phi)$ (resp. $L_p(s,\mathrm{Sym}^2(\boldg),\chi_{E})$) and 
\begin{align*}
 C={\N_{F/\Q}(D_{E/F})}^{-2}\frac{{(S(\boldf)W(\boldf)'\Omega(\boldf))}^{n}}{S(\boldg)W(\boldg)'\Omega(\boldg)}.
\end{align*}
\end{prop}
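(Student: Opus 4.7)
The plan is to prove the factorization by comparing the two sides as $p$-adic analytic functions on the cyclotomic disc and checking the identity at sufficiently many arithmetic specialisations. Both sides are elements of $\oo((X))[1/p]$ by Theorem \ref{1var}, so by a standard density argument it suffices to verify the identity on an infinite set of points $X=u^s\eps(u)-1$ in the common interpolation range, and in fact from the Zariski density it even suffices to verify at the critical integers $s$ with trivial Nebentypus character $\eps$.

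First I would write down side-by-side the interpolation formulas from Theorem \ref{1var} for $G(u^s\eps(u)-1,\boldg,\chi_E)$ and for each $G(u^s\eps(u)-1,\boldf,\chi\phi)$, $\phi\in\hat{H}$. On the complex $L$-value side, we already have the factorization
\begin{align*}
L(s+1,\boldg,\chi_E^{-1}\eps^{-1}\omega^s) \;=\; \prod_{\phi\in\hat H} L(s+1,\boldf,(\chi\phi)^{-1}\eps^{-1}\omega^s),
\end{align*}
valid since the base-change $L$-function factors through characters of $H$, and the periods contribute the ratio $\Omega(\boldg)/\Omega(\boldf)^n$ which is absorbed into $C$. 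For the archimedean and discriminant-type factors (powers of $i$, $\pi$, $2$, $s!$, $D_F$) the comparison is a purely numerical matter: since $[E:\Q]=n[F:\Q]$ and $D_E=\N_{F/\Q}(D_{E/F})D_F^n$, the ratios of these powers collapse into the constant $\N_{F/\Q}(D_{E/F})^{-2}$ displayed in $C$.

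The heart of the computation is the comparison of the three kinds of ``local'' factors. For the Gauss sums $G(\chi_E\eps\omega_0^{-s})$ versus $\prod_\phi G(\chi\phi\eps\omega_0^{-s})$, Lemma \ref{Gausssum} applies directly (here we use the hypothesis that the conductors of $\boldf$ and $\chi$ are unramified in $E/F$, so all characters entering the product have conductor coprime to $D_{E/F}$). For the Euler-type factors $E_1$ and $E_2$, one uses that the Hecke eigenvalue of $\boldg$ at a prime $\qfrak'$ of $E$ above $\pfrak\mid p$ is $\lambda(T(\varpi_\pfrak))^{f(\qfrak'/\pfrak)}$ while the roots of the Hecke polynomial of $\boldf\otimes\phi$ at $\pfrak$ are the twists of the roots at $\boldf$; summing over $\phi\in\hat H$ and over primes $\qfrak'\mid\pfrak$ of $E$, the usual identity
\begin{align*}
\prod_{\phi\in\hat H}(1-\phi(\pfrak)\alpha X) \;=\; \prod_{\qfrak'\mid\pfrak} (1-\alpha^{f(\qfrak'/\pfrak)} X^{f(\qfrak'/\pfrak)})
\end{align*}
(and its cousins for $\alpha^2$, $\alpha\beta$, $\beta^2$) produces the exact factorisation of $E_1E_2$ for $\boldg$ into the corresponding product for $\boldf\otimes\phi$. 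Finally, the remaining fudge factors $S(P)$, $W'$, the ratios involving $\nu$, $\eta$, $\psi'$ and the factors $\lambda(T(\varpi^{\bullet}))$ factor under base change in exactly the same multiplicative way, because each is a product over primes above $p$ of purely local quantities depending only on the local representation at $\pfrak$.

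The main obstacle I expect is bookkeeping: tracking the conductors $\alpha$, $\alpha'$, $\alpha_0$, $\beta$ and the associated powers of $\N(\varpi)$ on both sides, and checking that the global constants (discriminants, signs, powers of $2$ and $\pi$) assemble into exactly the constant $C$ stated in the proposition. To control this I would first do the computation at $s=0$ with $\eps=\mathbf{1}$, where most of the Euler-type factors simplify, isolate the constant $C$ from that single equation, and only afterwards verify that the dependence on $s$ and $\eps$ matches identically on both sides, using that all $s$- and $\eps$-dependence on each side is by construction the complex $L$-value times an explicit elementary function that factors the same way on both sides.
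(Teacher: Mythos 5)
Your proposal follows essentially the same route as the paper's proof: both sides are evaluated via the interpolation formula of Theorem \ref{1var}, the complex factorization $L(s,\mathrm{Sym}^2(\boldg),\chi_E)=\prod_{\phi\in\hat H}L(s,\mathrm{Sym}^2(\boldf),\chi\phi)$ gives the equality of $L$-values, Lemma \ref{Gausssum} together with $D_E=\N_{F/\Q}(D_{E/F})D_F^n$ controls the Gau\ss{} sums and discriminants, and $d_E=n\,d_F$ takes care of the powers of $2$, $i$ and $\pi$. The one bookkeeping point the paper makes explicit that you omit is the identification of the $\Z_p$-cyclotomic extension of $E$ with that of $F$ via $\N_{E/F}$, under which a character $\eps$ of conductor $p^{\alpha_0}$ over $F$ acquires conductor $p^{n\alpha_0}$ over $E$ --- this is needed to write the evaluation formula correctly on the $\boldg$ side.
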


\begin{proof}
 We use Theorem \ref{1var} to evaluate both sides and show that they coincide.\\ 
We note that we are implicitly identifying the $\Z_p$-cyclotomic extension of $E$ with the one of $F$ via $\N_{E/F}$; in particular if $p^{\alpha_0}$ is the conductor of the character $\eps$, seen as a character of $\clpinf$ of $F$, then the conductor of $\eps_{E}$ is $p^{n \alpha_0}$. This is important to write correctly the evaluation formula of \ref{1var}.\\ 
 We see that the contributions of the factors with $2$ and $i$ match as $d_E= n d_F$. The complex formula for the $L$-function of $\mathrm{Sym}^2(\boldg)$ gives the equality of the $L$-values.  We use Lemma \ref{Gausssum} and the relation between $D_F$ and $D_E$ to control the contribution of the Gau\ss{} sums and discriminants.
\end{proof}

In particular this tells us that ${\Omega(\boldf)}^{n}$ and ${\Omega(\boldg)}$ differ by a non-zero algebraic number. We suppose that the conductor of $\boldf$ and $\chi$ are both unramified in the extension $E/ F$ to simplify the calculation, although it may not be necessary.\\
It is well known that for  $E/F$ an abelian extension of Galois group $H$ and for $\pfrak$ a prime ideal of $F$ which is  unramified in $E$, we have  
\begin{align*}
 \left|\lgr \phi \in \hat{H} \mbox{ s.t. } \phi(\pfrak)=1 \rgr \right| & = g,
\end{align*}
where  $g$ is the number of primes of $E$ above $\pfrak$.\\
From now on, suppose that $p$ is inert in $F$. Let $\pfrak$ be a prime ideal of $E$ above $p$ and let us denote by $f_{\pfrak}$ the residual degree of $E_{\pfrak}$, we have  the following 
\begin{theo}\label{MainThBC}
 Let $E$, $F$, $\boldg$,  $\boldf$ be as in Proposition \ref{Factpfun}. Suppose that $\boldf$ satisfies the hypothesis of Theorem \ref{MainTh}. Then we have 
\begin{align*}
 {\frac{\textup{d}^g L_p(s,\mathrm{Sym}^2(\boldg))}{\textup{d}s^g}}|_{s=0} = & C \Ll(\mathrm{Ind}_F^{\Q}(\mathrm{Ad}(\rho_{\boldg }))) {f_\pfrak^g} \frac{L_p(1,\mathrm{Sym}^2(\boldg))}{\Omega(\boldg)},
\end{align*}
where $g$ denotes the number of primes above $p$ in $E$ and $C$ is a non-zero algebraic number. 
\end{theo}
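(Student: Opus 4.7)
The plan is to begin by applying Proposition \ref{Factpfun} with $\chi$ trivial, which factors
$$L_p(s,\mathrm{Sym}^2(\boldg)) = C \prod_{\phi \in \hat{H}} L_p(s,\mathrm{Sym}^2(\boldf),\phi)$$
up to the period constant made explicit there. Because $p$ is inert in $F$ and unramified in $E$, standard class field theory gives that the number of $\phi \in \hat{H}$ with $\phi(\pfrak_F) = 1$ (where $\pfrak_F$ is the unique prime of $F$ above $p$) is exactly $g$, the number of primes of $E$ above $p$; these are the characters trivial on the decomposition subgroup $D \subset H$ at $\pfrak_F$, which is cyclic of order $n/g$.

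Next, I would extend Theorem \ref{MainTh} to the twisted $p$-adic $L$-functions $L_p(s,\mathrm{Sym}^2(\boldf),\phi)$ for each $\phi$ with $\phi(\pfrak_F) = 1$. The proof of Theorem \ref{MainTh} carries over verbatim in this situation: such a $\phi$ is locally trivial at $\pfrak_F$, so neither the Euler factor responsible for the trivial zero nor the local Galois representation at $p$ (which determines the $\Ll$-invariant) is altered by the twist. For the remaining $n - g$ characters, $\phi(\pfrak_F) \neq 1$ and the Euler factor at $p$ evaluates at $s = 0$ to the non-zero quantity $1 - \phi(\pfrak_F)$ (using $\lambda(T(\varpi_{\pfrak_F}))^2 = 1$, which holds because $\boldf$ is Steinberg at $\pfrak_F$), so the corresponding factor is holomorphic and non-vanishing at $s = 0$.

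I would then apply the Leibniz rule: exactly $g$ factors in the product vanish to first order at $s = 0$, so the $g$-th derivative there equals $g!$ times the product of the $g$ derivatives times the evaluation of the remaining $n-g$ factors. The derivative formula from Theorem \ref{MainTh} contributes $\Ll(\mathrm{Ind}_F^{\Q} \mathrm{Ad}(\rho_{\boldf})) \cdot f_{\pfrak_F}$ to each vanishing factor, so the $\Ll$-invariants multiply to $\Ll(\mathrm{Ind}_F^{\Q} \mathrm{Ad}(\rho_{\boldf}))^g$. This is exactly $\Ll(\mathrm{Ind}_E^{\Q} \mathrm{Ad}(\rho_{\boldg}))$, because the Tate periods of $\boldg$ satisfy $q_\pfrak = q_{\pfrak_F}^{[E_\pfrak:F_{\pfrak_F}]}$ at each of the $g$ primes $\pfrak$ of $E$ above $\pfrak_F$, which forces $\log(q_\pfrak)/\mathrm{ord}(q_\pfrak) = \log(q_{\pfrak_F})/\mathrm{ord}(q_{\pfrak_F})$ and hence the product over $\pfrak$ is exactly $\Ll(\mathrm{Ind}_F^{\Q} \mathrm{Ad}(\rho_{\boldf}))^g$.

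The main obstacle, and the subtle point of the proof, is reconciling the $f_{\pfrak_F}^g$ produced by the derivative formula with the $f_\pfrak^g$ appearing in the statement, since $f_\pfrak = [E_\pfrak : F_{\pfrak_F}] \cdot f_{\pfrak_F} = (n/g) \cdot f_{\pfrak_F}$. The missing factor $(n/g)^g$ must come from the non-vanishing Euler factors evaluated at $s = 0$: since each non-trivial character $\psi$ of the cyclic group $D$ extends to exactly $g$ characters $\phi$ of $H$, one computes
$$\prod_{\phi(\pfrak_F) \neq 1}(1 - \phi(\pfrak_F)) = \left(\prod_{1 \neq \zeta \in \mu_{n/g}}(1 - \zeta)\right)^g = (n/g)^g,$$
supplying exactly the needed factor so that $(n/g)^g f_{\pfrak_F}^g = f_\pfrak^g$. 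The remaining identities $\prod_\phi L(1,\mathrm{Sym}^2(\boldf),\phi) = L(1,\mathrm{Sym}^2(\boldg))$ and the period comparison from Proposition \ref{Factpfun} then assemble all of the pieces into the claimed formula, with the various non-zero algebraic factors absorbed into the constant $C$.
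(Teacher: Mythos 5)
Your proposal is correct and follows essentially the same route as the paper's proof: factor $L_p(s,\mathrm{Sym}^2(\boldg))$ via Proposition \ref{Factpfun}, apply the argument of Theorem \ref{MainTh} to each twist with $\phi(\pfrak_F)=1$, evaluate the $n-g$ non-vanishing factors, and combine $\prod_{\phi(\pfrak_F)\neq 1}(1-\phi(\pfrak_F))=(n/g)^g$ with $f_{\pfrak}=(n/g)f_{\pfrak_F}$ and the identity $\Ll(\mathrm{Ind}_F^{\Q}(\mathrm{Ad}(\rho_{\boldg})))=\Ll(\mathrm{Ind}_F^{\Q}(\mathrm{Ad}(\rho_{\boldf})))^g$. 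The extra $g!$ from the Leibniz rule is harmlessly absorbed into the constant $C$.
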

 \begin{proof}
  First of all from \cite[Proposition 8.7]{Mok} we obtain
\begin{align*}
 \Ll(\mathrm{Ind}_F^{\Q}(\mathrm{Ad}(\rho_{\boldg }))) = {\Ll(\mathrm{Ind}_F^{\Q}(\mathrm{Ad}(\rho_{\boldf })))}^g.
\end{align*}
Let us denote by $f_p$ the residual degree of $F_p$; whenever $\phi(p)=1$, we can show, in the same way as we proved Theorem \ref{MainTh}, that 
\begin{align*}
 {\frac{\textup{d} L_p(s,\mathrm{Sym}^2(\boldf),\phi)}{\textup{d}s}}|_{s=0} = & C_{\phi}  \Ll(\mathrm{Ind}_F^{\Q}(\mathrm{Ad}(\rho_{\boldf }))) f_p\frac{L(1,\mathrm{Sym}^2(\boldf),\phi)}{\Omega(\boldf)}.
\end{align*}
If $\phi(p)\neq 1$, we have instead 
\begin{align*}
 L_p(0,\mathrm{Sym}^2(\boldf),\phi)=C_{\phi} (1 - \phi(p))  \frac{L(1,\mathrm{Sym}^2(\boldf),\phi)}{\Omega(\boldf)}
\end{align*}
 If we write $f=n/g$, we have that 
\begin{align*}
\prod_{\phi \in \hat{H}, \phi(p)\neq 1} (1 - \phi(p)) = f^g 
 \end{align*}
then we use Proposition \ref{Factpfun} and the fact that $f_{\pfrak}=f_p f$  to conclude.
 \end{proof}
We remark that this method does not work for an abelian base change $\boldh$ of $\boldg$. In fact, if we want to apply the same strategy as in the proof of Theorem \ref{MainThBC}, we would need a formula for the derivative of  $L_p(s,\mathrm{Sym}^2(\boldg),\psi)$. But for the complex $L$-function $L(s,\mathrm{Sym}^2(\boldg),\psi)$ there is no factorization  in term of $L$-functions of $\mathrm{Sym}^2(\boldf)$ (unless $\psi$ is a base change from $F$ too).

\section{Application to the Main Conjecture}\label{AppMC}
In the introduction we said that Theorem \ref{MainTh} has application to the main conjecture for the symmetric square and the aim of this section is to explain how.\\ 
We suppose now $F=\Q$; let $G(T,\mathrm{Sym}^2(\boldf))$ be the Iwasawa function for $\boldf$ of Theorem \ref{1var}. We shall write 
\begin{align*}
L_p^{\mathrm{an}}(T,\mathrm{Sym}^2(\boldf))=\frac{\lla \boldf, \boldf \rra \pi^2}{i \Omega^+ \Omega^-}G(T,\mathrm{Sym}^2(\boldf)),
\end{align*}
for $\Omega^{\pm}$ the $\pm$ period associated with $\boldf$ via the Eichler-Shimura isomorphism. This change of periods is necessary for what follows and is due to the fact that our $p$-adic $L$-function has a denominator $H_{\boldf}$. Moreover, the evaluations of $L_p^{\mathrm{an}}(T,\mathrm{Sym}^2(\boldf))$ are compatible with Deligne's conjecture on special values and periods.\\
Let $L_p^{\mathrm{al}}(T,\mathrm{Sym}^2(\boldf))$ be the algebraic $p$-adic $L$-function defined as $\Psi(s)$ in \cite[Theorem 6.3]{ASG}; it is the characteristic ideal of a certain Selmer group. The Greenberg-Iwasawa main conjecture for $\mathrm{Ind}_F^{\Q}(\mathrm{Sym}^2(\boldf))$ says that the ideal generated by $L_p^{\mathrm{al}}(T,\mathrm{Sym}^2(\boldf))$ in $\oo[[T]]$ is the same as the one generated by $L_p^{\mathrm{an}}(T,\mathrm{Sym}^2(\boldf))$.\\
In what follows we will denote by $L_p^{\mathrm{al}}(0,\mathrm{Sym}^2(\boldf))^*$ resp. $L_p^{\mathrm{an}}(T,\mathrm{Sym}^2(\boldf))^*$ the first non-zero coefficient of $L_p^{\mathrm{al}}(T,\mathrm{Sym}^2(\boldf))$ resp. $L_p^{\mathrm{an}}(T,\mathrm{Sym}^2(\boldf))$.\\
Let $\eta$ be the characteristic ideal of the Pontryagin dual of $\mathrm{Sel}(\mathrm{Ad}(\rho_{\boldf}))$ (defined as in \cite{ASG}); in \cite[Theorem 6.3 (3)]{ASG} it is shown that $\mathrm{ord}_{T=0} L_p^{\mathrm{al}}(T,\mathrm{Sym}^2(\boldf))=1$ and
\begin{align*}
 L_p^{\mathrm{al}}(T,\mathrm{Sym}^2(\boldf))^* \mid \Ll(\mathrm{Ad}(\rho_{\boldf})) \eta.
\end{align*}
From the work of Urban \cite{Urb} we know that, in most of the cases for $F=\Q$ (for example when $\boldf$ is as in Theorem \ref{MainTh}), $L_p^{\mathrm{an}}(T,\mathrm{Sym}^2(\boldf))$ divides $L_p^{\mathrm{al}}(T,\mathrm{Sym}^2(\boldf))$ in $\oo[[T]][T^{-1}]$.\\
We have, from works of Hida and Wiles, a formula which equals the $p$-adic valuation of the special value $L_p^{\mathrm{an}}(0,\mathrm{Sym}^2(\boldf))$ and of $\eta$ \cite[1.2.3]{Urb}.  For totally real fields, we mention that in \cite{DimIha} the special value is related to the cardinality of the Fitting ideal. \\
If in addition we also know Conjecture \ref{MainCo}, then we deduce that the two series have the same order at $T$ and
\begin{align*}
L_p^{\mathrm{al}}(0,\mathrm{Sym}^2(\boldf))^* | L_p^{\mathrm{an}}(0,\mathrm{Sym}^2(\boldf))^* \;\;\:\: \mathrm{in} \;\; \oo.
\end{align*}
We can write $L_p^{\mathrm{al}}(T,\mathrm{Sym}^2(\boldf)) = A(T) L_p^{\mathrm{an}}(0,\mathrm{Sym}^2(\boldf))$, with $A(T)$ in $\oo[[T]]$; the above divisibility tells us that ${A(0)}^{-1}$ belongs to $\oo$, i.e. $A(T)$ is a unit (because its constant term is a unit) and the main conjecture is proven.

\appendix
\section{Holomorphicity in one variable}\label{App}
In this appendix, we want to show that the $p$-adic $L$-function of Theorem \ref{T1} {\it  i)} can be modified (dividing by suitable Iwasawa functions interpolating the missing Euler factors) into an Iwasawa function interpolating special values of the primitive complex $L$-function.\\ 
To do this, we follow an idea of Schmidt \cite{Sc} and Dabrowski and Delbourgo \cite{DD}; first we construct another $p$-adic $L$-function which interpolates the other half of critical values and then we relate the two $p$-adic $L$-functions via the functional equation of Section \ref{Extrafactors}. We shal use this results in the second part of the appendix to show that the many variable $p$-adic $L$-function is holomorphic too, following \cite{H6}.\\
Before doing this, we recall the functional equation satisfied by $L(s,\chi)$, for a primitive unitary gr\"ossencaracter $\chi$.  Let $\ffrak$ be its conductor, we define the complete $L$-function 
\begin{align*}\LL(s,\chi) = & \prod_{v|\infty}{\left(\pi^{-(s+p_v)/2}\G\left(\frac{s+p_v}{2}\right)\right)} L(s,\chi), \end{align*}
for $p_v=0$ or $1$ such that $\chi_v(-1)={(-1)}^{p_v}$.
It is well known since Hecke (see also Tate's thesis \cite{TateCF}) the existence of a functional equation 
\begin{align*}
\LL(s,\chi) = & \beps(s,\chi)\LL(1-s,\chi^{-1})
\end{align*}
for $\beps(s,\chi)=i^{\sum_v p_v} \chi_{\infty}(-1) G(\chi)\N(\ffrak)^{-s}{|\N(\dfrak)|}^{-s +1/2}$ 
where as before
\begin{align*} 
G(\chi)= & \sum_{x \in \ffrak^{-1}\dfrak^{-1}/\dfrak^{-1}} \chi_{\infty}(x)\chi_{f}(x\dfrak\ffrak) \bexp_F(x)\\
 =& \prod G(\chi_{\qfrak}),\\
 G(\chi_{\qfrak}) = &  \chi^{-1}_{\qfrak}(\varpi_{\qfrak}^{\alpha_{\qfrak}+e_{\qfrak}})\sum_{x \bmod \varpi_{\qfrak}^{\alpha_{\qfrak}}} \chi_{\qfrak}(x)  \bexp_{F_{\qfrak}}\left(\frac{x}{\varpi^{e_{\qfrak}+\alpha_{\qfrak}}}\right),
\end{align*}
for $e_{\qfrak}$ the ramification index of $F_{\qfrak}$ over $\Q_{q}$ and $\alpha_{\qfrak}$ such that $\qfrak^{\alpha_{\qfrak}}$ divides $\ffrak$ exactly. 
We can now modify Theorem \ref{Ribet} to see
\begin{prop}\label{RibetFE}
Let $\chi$ be a character of finite order of conductor $\ffrak$. Suppose $\chi_{\sigma}(-1)=1$ for all $\sigma \in I$. For all $c \in \clpinf$, we have a measure $\zeta'_{\chi,c}$ on $\clpinf$ such that, for all finite order characters $\eps$ and for all $s \geq 0$, we have 
\begin{align*}
 \int_{\clpinf} \eps(z){ \N_p(z) }^s \textup{d}\zeta'_{\chi,c}(z) = & (1- {(\chi\eps)}_0(c){\N_p(c)}^{s+1}) \prod_{\pfrak|p}(1-{(\chi\eps)}_0(\pfrak){\N(\pfrak)}^{-s}) \times \\
 & \times \Omega(\eps \chi,s) \frac{\prod_{\pfrak|p}G({(\chi\eps_0)}_{\pfrak})}{{\N(\varpi^{\alpha_0})}^{-s}} L(1+s,(\chi^{-1}\eps^{-1})_0).
 \end{align*}
where ${(\chi\eps)}_0$ is the primitive character associated with the character  ${\chi\eps}$ of $\rif/\cfrak p^{\alpha}$, $p^{\alpha_0}$ is the $p$-part of its conductor, and 
\begin{align*}
\Omega(\eps\chi,s)=& i^{\kappa d}  {D_{F,p}}^{s+\frac{1}{2}} \eps\chi_{\infty}(-1)\frac{{(\pi^{-(s+1+\kappa)/2}\G((s+1+\kappa)/2))}^d}{{(\pi^{(s-\kappa)/2}\G(-(s-\kappa)/2))}^d}.
\end{align*}
for  $\kappa=0,1$, congruent to $s$ modulo $2$ 
\end{prop}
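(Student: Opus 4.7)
The plan is to derive $\zeta'_{\chi,c}$ from the Deligne--Ribet measure $\zeta_{\chi,c}$ of Theorem \ref{Ribet} by inserting the Hecke--Tate functional equation for the primitive character $(\chi\eps)_0$. Concretely, I would start from the identity of Theorem \ref{Ribet} applied to $\zeta_{\chi,c}$:
\[
\int_{\clpinf}\eps(z)\lla \N_p(z)\rra^n\,\textup{d}\zeta_{\chi,c}=(1-(\chi\eps)(c)\N(c)^{n+1})\prod_{\pfrak|p}(1-(\chi\eps)_0(\pfrak)\N(\pfrak)^n)\,L(-n,(\chi\eps)_0),
\]
and substitute
\[
L(-n,(\chi\eps)_0)=\epsilon(-n,(\chi\eps)_0)\,\frac{\gamma_\infty(1+n,(\chi^{-1}\eps^{-1})_0)}{\gamma_\infty(-n,(\chi\eps)_0)}\,L(1+n,(\chi^{-1}\eps^{-1})_0)
\]
coming from $\Lambda(s,\psi)=\beps(s,\psi)\Lambda(1-s,\psi^{-1})$, noting that both $(\chi\eps)_0$ and its inverse share the infinity-type $\kappa$ since $F$ is totally real.

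The archimedean Gamma and $\pi$ factors should then be matched: a direct computation shows that the ratio $\gamma_\infty(-n,(\chi\eps)_0)/\gamma_\infty(1+n,(\chi^{-1}\eps^{-1})_0)$ cancels exactly the Gamma/$\pi$ part of the prescribed $\Omega(\eps\chi,s)$, leaving precisely $i^{\kappa d}D_{F,p}^{s+1/2}\eps\chi_\infty(-1)$ when we form $\Omega(\eps\chi,s)\cdot L(1+s,(\chi^{-1}\eps^{-1})_0)$. Next I would split the global ingredients of $\beps(-n,(\chi\eps)_0)$ according to whether they lie above $p$: the Gauss sum decomposes as $G((\chi\eps)_0)=G^{(p)}(\chi_0)\cdot\prod_{\pfrak|p}G((\chi\eps)_{0,\pfrak})$ and the conductor as $\N(\ffrak_{\chi\eps,0})^n=\N(\ffrak_\chi^{(p)})^n\cdot\N(\varpi^{\alpha_0})^n$. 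The prime-to-$p$ pieces depend only on $\chi$ and contribute an Iwasawa-analytic scalar absorbed into the measure, while the $p$-parts reproduce the explicit factor $\prod_{\pfrak|p}G((\chi\eps)_{0,\pfrak})/\N(\varpi^{\alpha_0})^{-s}$ appearing in the proposition, noting that the $p$-Gauss sums depend $p$-adic analytically on $\eps$ through Teichmüller expansions.

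The remaining step is to reconcile the Euler factors at primes above $p$: Theorem \ref{Ribet} removes $(1-(\chi\eps)_0(\pfrak)\N(\pfrak)^n)$, whereas the proposition prescribes $(1-(\chi\eps)_0(\pfrak)\N(\pfrak)^{-s})$. When $\pfrak$ divides the conductor of $(\chi\eps)_0$ both are $1$, so the issue is only at unramified $\pfrak$; there I would apply the identity $(1-aY^{-1})=-aY^{-1}(1-a^{-1}Y)$ with $Y=\N(\pfrak)^s$ and absorb the unit $-(\chi\eps)_0(\pfrak)^{-1}\N(\pfrak)^{-s}$ (interpolated as an Iwasawa function in $\oo[[\clpinf]]$) together with the already-present prime-to-$p$ constants. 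Collecting everything, the RHS of Theorem \ref{Ribet} is converted into the RHS of the proposition, and by duality this produces the claimed measure $\zeta'_{\chi,c}$ in $\oo[[\clpinf]]$.

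The main obstacle is checking $p$-adic analyticity of the transformation factors: the ratio of Euler factors at $\pfrak\mid p$ involves $\N(\pfrak)^{-s}$, which is unbounded in $s$, so the bookkeeping must be organized so that it is precisely absorbed by the Gauss sum factor $\prod G$ and the $\N(\varpi^{\alpha_0})^s$ term. Once this cancellation is carried out pointwise at arithmetic characters $(\eps,n)$, density of such characters in $\mathrm{Hom}_{\mathrm{cts}}(\clpinf,\overline{\Q}_p^{\times})$ and the compactness of $\clpinf$ ensure that the resulting functional extends to a bounded measure, yielding $\zeta'_{\chi,c}$.
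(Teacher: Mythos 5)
Your overall strategy is exactly the paper's: start from Theorem \ref{Ribet}, insert the Hecke functional equation $\LL(s,\psi)=\beps(s,\psi)\LL(1-s,\psi^{-1})$ at $s=-n$, collect the archimedean quotient into $\Omega(\eps\chi,s)$, and split the $\beps$-factor so that the prime-to-$p$ Gauss sum, the prime-to-$p$ conductor power $\N(\ffrak^{(p)})^{s}$ and the discriminant power ${(D_F')}^{s+1/2}$ --- all of which are constants times Iwasawa units of the form $A_z(X)$ --- are divided out of the measure, while $\prod_{\pfrak\mid p}G((\chi\eps)_{0,\pfrak})$, $\N(\varpi^{\alpha_0})^{s}$ and $D_{F,p}^{s+1/2}$ are left in the interpolation formula. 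Your computation of the $\G$-ratio and of the $\beps$-factor is correct and matches the stated $\Omega(\eps\chi,s)$ and the factor $\prod_{\pfrak\mid p}G((\chi\eps)_{0,\pfrak})/\N(\varpi^{\alpha_0})^{-s}$. One side remark is false, though harmless as long as you do not act on it: the local Gauss sums at $\pfrak\mid p$ are \emph{not} $p$-adically analytic in $\eps$ (the map $\eps\mapsto G((\chi\eps)_{0,\pfrak})$ is not even continuous on the dense set of arithmetic characters); that is precisely why they must be kept in the interpolation formula rather than absorbed into the measure.

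The step that does not close is your reconciliation of the Euler factors at $p$. Theorem \ref{Ribet} at $n=s$ hands you $\prod_{\pfrak\mid p}\bigl(1-(\chi\eps)_0(\pfrak)\N(\pfrak)^{s}\bigr)$, while the statement displays $\prod_{\pfrak\mid p}\bigl(1-(\chi\eps)_0(\pfrak)\N(\pfrak)^{-s}\bigr)$. Your identity $(1-aY^{-1})=-aY^{-1}(1-a^{-1}Y)$ turns the latter into $(1-(\chi\eps)_0(\pfrak)^{-1}\N(\pfrak)^{s})$ up to the factor $-(\chi\eps)_0(\pfrak)\N(\pfrak)^{-s}$: this carries the \emph{inverse} character where Theorem \ref{Ribet} carries the character itself, so the two still disagree unless $\chi\eps$ is quadratic. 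Worse, the correction factor you propose to ``absorb'' cannot be absorbed: for $\pfrak\mid p$ the quantity $(\chi\eps)_0(\pfrak)$ equals $0$ whenever $\eps$ is ramified at $\pfrak$ and is a root of unity otherwise, so $\eps\mapsto(\chi\eps)_0(\pfrak)$ is interpolated by no element of $\oo[[\clpinf]]$; only the \emph{product} of the Euler factor with the $L$-value (i.e.\ the $L$-function with the Euler factors at $p$ deleted) is an analytic object. The honest outcome of the construction is the factor $(1-(\chi\eps)_0(\pfrak)\N(\pfrak)^{s})$ inherited verbatim from Theorem \ref{Ribet}; the paper's own one-line proof tacitly keeps that factor and never performs the conversion you attempt. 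You should either prove the proposition with that factor, or flag the exponent in the statement as an inconsistency, rather than bridge it with a manipulation that is neither character-consistent nor $p$-adically meaningful.
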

\begin{proof}
Let $\ffrak^{(p)}$ be the conductor of $\chi$ outside $p$ and let $\dfrak^{(p)}$ (resp. $\dfrak_p$) be the prime-to-$p$ part (the $p$-part) of $\dfrak$. Let us denote $\N(\dfrak_{p})$ by $D_{F,p}$ and $\N(\dfrak^{(p)})$ by $D_F'$. From Theorem \ref{Ribet} and the functional equation above, we simply divide by the functions $A_{\N(\dfrak^{(p)}\ffrak^{(p)})}(X)$,  $N(\dfrak)^{-1/2}$ and the Gau\ss{} sums for $\qfrak|\ffrak'$.
\end{proof}

We recall that for an element $\cfrak$ of $\clpinf$, we have defined $\lla \N_p(\cfrak) \rra = \omega^{-1}(\N_p(\cfrak))\N_p(\cfrak)$ and that $\lla  \N_p(\cfrak)  \rra $ is an element of $1+p\Z_p$, so it makes sense to define 
\begin{align*}
A_{ \N_p(\cfrak) }(X)= & {(1+X)}^{\log_p(\lla  \N_p(\cfrak)  \rra)/\log_p(u)},
\end{align*}
where $u$ is a generator of $1+p\Z_p$. In case we have to consider the $L$-values of imprimitive characters, we can obtain the $p$-adic interpolation of the values $L_{\Nfrak}(1+n,(\psi\eps)^{-1})$ simply multiplying by $(1-(\psi\eps)_0(\qfrak)^{-1}{\lla  \N_p(\qfrak)  \rra}^{-1} {A_{ \N_p(\qfrak) }(X)}^{-1})$.\\

Fix now $\boldf$, a nearly ordinary form of Nebentypus $(\psi,\psi')$ and weight $k\geq 2t$ which we decompose as $k = (m +2)t -2v$ as in Theorem \ref{1var}. Suppose that $\psi'$ comes from a character of $\clpinf$. We give now the analogue of Theorem \ref{1var} for the other critical values, namely the one in the strip $\left[ m+2 m+k_0 \right] $.

\begin{theo}\label{1var2}
Fix a primitive adelic character $\chi$ of level $\mathfrak{c}$, such that $\chi_{\sigma}(-1)=1$ for all $\sigma|\infty$. We have a $p$-adic $L$-functions $\Ll^-_p(Q)$ in the fraction field of  $\oo[[\clpinf ]]$ such that the following interpolation property hold; for all arithmetic points $Q$ of type $(s,\eps_Q)$, with $ m - k_0 + 2  \leq s \leq m $   the following interpolation formula holds 
\begin{align*} 
\Ll_p^{-}(Q) = & C_2 E^{-}_{1}(Q)E^{-}_{2}(Q)E_3(Q)\frac{\Ll(2m +2 -s, \boldf,\eps_Q \omega^{-s}\psi^{-2} \chi)}{(2\pi)^{d s}\Omega(\boldf)}.
\end{align*}
\end{theo}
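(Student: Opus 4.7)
The strategy is to deduce Theorem \ref{1var2} from the already established Theorem \ref{1var} by means of the functional equation recalled in Section \ref{Extrafactors}. Indeed, under the substitution $s \mapsto 2m+2-s$, the strip $[m+2,m+k_0]$ is sent bijectively to $[m-k_0+2,m]$, which is precisely the interpolation range of Theorem \ref{1var}. So I would look for an identity of the shape
\begin{align*}
 \Ll_p^{-}(Q) = \Bigl(p\text{-adic }\beps\text{-factor}\Bigr)(Q) \cdot G(\eps_Q(u)u^{m-s}-1,\boldf,\eps_Q\omega^{-s}\psi^{-2}\chi)
\end{align*}
and simply define $\Ll_p^-$ by the right-hand side. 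The main analytic input is the functional equation
\begin{align*}
\LL(s,\mathrm{Sym}^2(\boldf),\chi) = \beps(s-m-1,\hat{\pi}(\boldf),\chi\psi)\LL(2m+3-s,\mathrm{Sym}^2(\boldf^c),\chi^{-1}),
\end{align*}
evaluated at $s$ corresponding to the shifted dual character; everything else is bookkeeping of Euler factors, conductors and $\G$-factors at infinity.

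The core technical step is the $p$-adic interpolation of $\beps(s-m-1,\hat{\pi}(\boldf),\chi\eps_Q\omega^{-s}\psi)$ as $(s,\eps_Q)$ varies. By Lemma \ref{epsfactor}, this $\beps$-factor factors as a product over places: at places prime to $p\Nfrak$ it is trivial; at the bad primes $\qfrak \mid \Nfrak$ it only introduces a fixed algebraic constant (independent of $\eps_Q$) and the ratios involving powers of $\N(\qfrak)^{-s}$, which are visibly Iwasawa functions; at the archimedean places it contributes the ratio of $\G$-factors appearing in $\Omega(\eps\chi,s)$ of Proposition \ref{RibetFE}; and at the places above $p$, where $\pi_{\pfrak}$ is principal series or special of the shape $\pi(\eta,\nu)$ or $\sigma(\eta,\nu)$, the factor decomposes into $G(\chi_{\pfrak}\eps_{Q,\pfrak})G(\nu\chi_{\pfrak}\eps_{Q,\pfrak})G(\nu^{-1}\chi_{\pfrak}\eps_{Q,\pfrak})$ divided by the same quantities at $\eps_Q = 1$, multiplied by the expected conductor-power. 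These Gauss sums are exactly the objects interpolated by the measures $\zeta'_{\chi,c}$ of Proposition \ref{RibetFE} (applied to the three twists $\chi$, $\chi\nu$, $\chi\nu^{-1}$), so we obtain a function $\mathcal{F}(Q) \in \mathrm{Frac}(\oo[[\clpinf]])$ that interpolates the prime-to-archimedean part of the $\beps$-factor times the ratio of $\G$-factors comes out automatically by comparing the interpolation formula in Theorem \ref{1var} with that in Proposition \ref{RibetFE}.

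With this in hand, I would define $\Ll_p^-(Q)$ as $\mathcal{F}(Q) \cdot G((\eps_Q\omega^{2m+2-s})(u)u^{2m+2-s}-1,\boldf,\eps_Q\omega^{-s}\psi^{-2}\chi)$ (after suitable specialization of the weight variable to $P_{\boldf}$ and a change of variable in the cyclotomic coordinate), and then verify on arithmetic points that the Euler factors which emerge have the shape predicted by the Coates--Perrin-Riou recipe for the dual motive: the factor $E_1^-(Q)E_2^-(Q)$ should consist of $(1-\alpha^2(\pfrak)\chi^{-1}\eps_Q^{-1}\psi^2\omega^s(\pfrak)\N(\pfrak)^{s-2m-1})$-type quantities (the ones not removed on the other side), and $E_3(Q)$ should absorb the residual local terms at $\Nfrak$ coming from Lemma \ref{epsfactor}. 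The constant $C_2$ is then determined by matching up archimedean factors, conductors, and Gauss sums.

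The main obstacle will be the supercuspidal primes $\qfrak \in \Sigma_4^i$, where Lemma \ref{epsfactor} does not give an explicit factorization of $\beps$ as a product of Gauss sums of characters of $F_{\qfrak}^{\times}$, so the local $\beps$-factor at such $\qfrak$ must be absorbed as a constant into $C_2$ (which is legitimate because $\chi$ and $\eps_Q$ are assumed unramified outside $p$, so the supercuspidal places contribute a $Q$-independent factor). A secondary nuisance is that $\Ll_p^-$, so defined, lies a priori in the total ring of fractions of $\oo[[\clpinf]]$ rather than in $\oo[[\clpinf]]$ itself; showing that the denominators introduced by $\mathcal{F}(Q)$ can be cancelled against zeros of $G$ (or, failing that, noting that Theorem \ref{1var2} only asserts existence in the fraction field) requires the holomorphicity argument that will be taken up in the rest of Appendix \ref{App}.
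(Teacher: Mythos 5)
Your reduction of Theorem \ref{1var2} to Theorem \ref{1var} via the functional equation has a genuine gap at its central step: the function $\mathcal{F}(Q)$ that is supposed to interpolate $\beps(s-m-1,\hat{\pi}(\boldf),\chi\eps_Q\omega^{-s}\psi)$ does not exist as an element of the fraction field of $\oo[[\clpinf]]$. By Lemma \ref{epsfactor}, the local $\beps$-factors at $\pfrak\mid p$ contain the Gau\ss{} sums $G(\chi_{\pfrak}\eps_{Q,\pfrak})$, $G(\nu\chi_{\pfrak}\eps_{Q,\pfrak})$, $G(\nu^{-1}\chi_{\pfrak}\eps_{Q,\pfrak})$ together with conductor powers $C(\cdot)^{-s}$ of the \emph{varying} character $\eps_Q$. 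Since $G(\eta)G(\eta^{-1})$ is (up to sign) the norm of the conductor, the $p$-adic valuations of these Gau\ss{} sums are unbounded as the conductor of $\eps_Q$ grows; on the other hand, Weierstrass preparation shows that any ratio of elements of $\oo[[X]]$ has eventually constant $p$-adic absolute value along the arithmetic points $\zeta u^{s}-1$. So no $\mathcal{F}(Q)\in\mathrm{Frac}(\oo[[\clpinf]])$ can take these values. Your appeal to Proposition \ref{RibetFE} does not repair this: $\zeta'_{\chi,c}$ interpolates the Gau\ss{} sum only in the inseparable combination $G((\chi\eps)_{\pfrak})\,\N(\varpi^{\alpha_0})^{s}\,L(1+s,(\chi\eps)^{-1})$ produced by the functional equation of the Hecke $L$-function; the $\beps$-factor cannot be peeled off from the reflected $L$-value $p$-adically. (A smaller slip: the reflected $L$-value $\Ll(2m+2-s,\cdot)$ corresponds to $G$ evaluated at $s'=2m+1-s\in[m+1,m+k_0-1]$, which lies \emph{outside} the interpolation range of Theorem \ref{1var}; the correct use of the functional equation keeps the cyclotomic point fixed and flips which $L$-value is read off, with $\chi$ replaced by $\chi^{-1}\psi^{2}$ and $\boldf$ by $\boldf^c$.)

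The paper avoids this by constructing $\Ll_p^-$ \emph{independently}: it runs the entire Rankin--Selberg machine a second time with a new Eisenstein measure $E_c^{\chi,-}$ whose $\xi$-th Fourier coefficient is precisely the interpolable package $\prod_{\pfrak}G(\eta_{\pfrak}^{-1})\cdot L_{p\Mfrak}(s_0-m,\eta\omega_{2\xi})$ with $s_0=2m+1-s$ (values to the right of the center, supplied by Proposition \ref{RibetFE}), pairs it against the theta measure $\Theta'_{\chi^{-1}\psi^{2}}$, and uses the constant-term projection $c(\cdot)$ in place of the holomorphic projection. The complex functional equation is then used only \emph{a posteriori}, in Proposition \ref{Functpadic}, to compare the two independently built functions. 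This independence is not a luxury: the whole holomorphicity argument of Theorem \ref{1varholo} rests on the fact that the potential poles of $G$ (zeros of $\calE^{+}$, $\calE_2$, $\Delta$) and of $\Ll_p^-$ (zeros of $\calE^{-}$, $\Delta^{-}$) are disjoint. If $\Ll_p^-$ were \emph{defined} from $G$ via the functional equation, as you propose, it would inherit the poles of $G$, Proposition \ref{Functpadic} would be a tautology, and the appendix would prove nothing.
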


The Euler factors $E^{-}_{1}(Q)$ and $E^{-}_{2}(Q)$ at $p$  could be thought as the factors $E_1(\tilde{Q})$ and $E_2(\tilde{Q})$, for $\tilde{Q}= \eps_Q^{-1}(u)u^{2m-2-s} -1$, the {\it symmetric } of $Q$; more precisely, they are
\begin{align*}
E_1(Q) =& \lambda(T(\varpi^{2\beta -2\alpha_0})){\N(\varpi^{\beta -\alpha_0})}^{s-2m-2}\times \\
 & \times \prod_{\pfrak | p}( 1 - (\chi^{-1}\psi^{2}\eps_Q^{-1}\omega^{s})_0(\pfrak) \N(\pfrak)^{2m+1-s}{\lambda(T(\pfrak))}^{-2}),\\
E_2(Q) =& \prod_{\pfrak \mid p} (1 - (\chi\eps_Q\omega^{-s}\psi^{-1})_0(\pfrak){\N(\pfrak)}^{s-m-1})\times \\
& \times (1 - (\chi\eps_Q\omega^{-s})_0(\pfrak) {\lambda(T(\varpi_{\pfrak}))}^{-2}{\N(\varpi)}^{s}).
\end{align*}
 The Euler factor $E_3(Q)$ is defined as
\begin{align*}
\prod_{\pfrak|p}\frac{G({\eta^{-1}}_{\pfrak})(1-{(\eta^{-1})}_0(\pfrak){\N(\pfrak)}^{m-s})} {(1-{(\eta)}_0(\pfrak){\N(\pfrak)}^{s-m-1})},
\end{align*}
for $\eta=\eps_Q\omega^{-s} \chi_{-1}\chi\psi^{-1}{\psi'}^{-2}$. The constant $C_2$ can be found the following proof of  Theorem \ref{1var2}.\\
Let us point out that this symmetric square $p$-adic $L$-function  interpolates the imprimitive special values,  but that none of the Euler factors which we have removed outside $p$ is zero. This means that if we could find a formula for the first derivative of it as we did in Theorem \ref{MainTh}, we could then prove Conjecture \ref{MainCo} in the case in which $p$ is inert {\it without any assumption on the conductor of $\boldf$}.

 \begin{proof}
We use the same notation of Sections \ref{operators} and \ref{padicL}. In particular we have $\Mfrak= 4 \cfrak^2 \Lfrak^2$.\\  We let $\alpha$ such that $\boldf$ is of level $p^{\alpha}$.
 Let $Q$ be a point  of type $(s,\eps_Q)$, with $ m - k_0 + 2  \leq s \leq m$ and $\eps_Q$ a finite order character as in the statement of the theorem; in what follows, we shall write $\eps$ for $\eps_Q$. We can define an Eisenstein measure ${E}_c^{\chi,-}$ on $\clpinf$ similarly to Section \ref{p-measure}; let us define its value on $Q$ as 
\begin{align*}
\int_{\clpinf} \eps(z){\lla \N_p(z) \rra}^{s} \textup{d}{E}_c^{\chi,-}(z) = & (1- \eta^{-1}(c){\N_p(c)}^{s_0-m})\Omega(\eta^{-1}, s_0-m-1) \times \\ 
 & \prod_{\pfrak \mid p} G(\eta_{\pfrak}^{-1})\times {\N(\varpi)}^{(s_0-m-1)\beta_0}  e'_0(k,s_0,\eta)|\nu_Q,
\end{align*}
where $\eta=\eps\omega^{-s} \chi_{-1}\chi\psi^{-1}{\psi'}^{-2}$, $\beta$ is such that $\eta$ is a character modulo $\rif/\Mfrak p^{\beta}$ (we do not assume $\eta$ primitive), $\beta_0$ is the exact $p$-power of the conductor of $\eta_0$ and $s_0= 2m+1-s$ and $\nu_Q$ is the continuous function on $\rif_p^{\times}$ define on $\xi \in F$, $\xi \gg 0$ by
\begin{align*}
\nu_Q( \xi) = \frac{ \prod_{\pfrak|p}(1-{(\eta^{-1}\omega_{\xi})}_0(\pfrak){\N(\pfrak)}^{s_0 -m-1})}{\prod_{\pfrak|p}(1-{(\omega_{\xi}\eta)}_0(\pfrak){\N(\pfrak)}^{m-s_0})}.
\end{align*}
 
We recall from Proposition \ref{Eisen_other2} 
\begin{align*}
{e}'_0(k,s_0,\eta) = & \sum_{\xi \in 2^{-1}, \xi \gg 0, (p,\xi)=\rif} \xi^{ -2v } g_{f}\left(\xi,s_0-m,\eta\right)L_{p\Mfrak}(s_0-m,\eta\omega_{2\xi})q^{\xi}.
\end{align*} Let us point out that it is a $p$-adic measure as its Fourier coefficients are the $p$-adic $L$-functions of Proposition \ref{RibetFE} (twisted by $\eps(z) \mapsto \eps^{-1}(z) \lla \N_p(z)^{m+2} \rra$) and some exponential functions as in Section \ref{p-measure}. The presence of the twist by the function $\nu_Q$ is due to the fact that the factor appearing in the interpolation formula of Proposition \ref{RibetFE} for $\chi=\eta\omega_{2\xi}$ is not independent of $\xi$. \\
The value of this measure corresponds, up to some normalization factors, with the {\it constant term projection} (cf. Section \ref{EisSer}) of  $\calE' \left(\frac{s_0-m}{2}, k -(n+1)t, \eta \right) |[\mfrak^2 \varpi^{2\beta}/4]$, with $n=0,1$ and $n \equiv s \bmod 2$.\\
 
Let $\Cfrak_0$ be the conductor (outside $p$) of $\chi^{-1}\psi^{2}$ and let $\cfrak_0$ be an id\`ele representing it. Let $\Theta_{\chi\psi^{2}}$ be the theta measure of Section \ref{p-measure} {\it which we consider now of level $\Cfrak_0$}. We define $\Theta_{\chi^{-1}\psi^2}' (\eps) = \Theta_{\chi^{-1}\psi^2} (\eps^{-1})$ and then we define the product measure $\mu:= \Theta'_{\chi^{-1}\psi^2}|\left[\frac{\mfrak^2}{\cfrak_0^2}\right] \times {E}_c^{\chi,-}$  as $p-1$ measures $\mu_i$ on $1+ p \Z_p$; for $s$ in $\Z_{\geq 0}$, $s\equiv i \bmod p-1$, we pose
\begin{align*}
\int_{1 + p\Z_p }\eps_Q(z){\lla \N_p(z) \rra}^{s}  \textup{d}\mu_i(x)= 
\end{align*}
\begin{align*}= \left(\int_{\clpinf}\omega^{-s}\eps_Q(z) \textup{d}\Theta'_{\chi^{-1}\psi^2}(z)\right)|\left[\frac{\mfrak^2}{4\cfrak_0^2}\right] \times \int_{\clpinf} \eps_Q {\N_p(z)}^{s} \textup{d}{E}_c^{\chi,-}. &
\end{align*}
Evaluating at $Q$ as above we obtain 
\begin{align*}
 &  {A_0}^{-1}(1- \eta^{-1}(c){\N_p(c)}^{s_0 -m}) \frac{\Omega(\eta^{-1}, s_0-m-1)}{{\N(\varpi^)}^{-(s_0-m-1)\beta_0}} \times \\
 & \times\theta(\chi^{-1}\omega^{s} \eps_Q^{-1}\psi^{2})|\left[\frac{\mfrak^2}{4\cfrak_0^2}\right] c\left(\calE' \left(\frac{s_0-m}{2}, k-(n+1)t, \eta  |[\mfrak^2 \varpi^{2\beta}2^{-1}] \right)\right)|\nu_Q.
\end{align*}
where  $A_0$ is given in Proposition \ref{Eisen_other2} and is equal to 
\begin{align*}
A_0 = &  i^{k -(n+1)d}{\pi}^{\alpha'} {\G_{\infty}\left(\alpha' \right)}^{-1} {2}^{k - \left(n+\frac{3}{2}\right)d } \times \\ 
      & \times  \eta(\mfrak \varpi^{\beta}  \dfrak 2^{-1}) {\N(\dfrak)}^{m-s_0}{\N(\mfrak \varpi^{\beta} 2^{-1})}^{m-s_0-1},\\
\alpha' = & \frac{(m-s +1 -n)t+k}{2}. 
\end{align*}
Let us  denote by  $\chi'$ the primitive character associated with $\omega^{s} \eps^{-1}\chi^{-1}\psi^{2}$. 
 We use the relations given in Section \ref{ComLFun} to obtain
\begin{align*}
\theta(\chi^{-1}\omega^{s} \eps^{-1}\psi^{2})\left[\frac{\mfrak^2}{4\cfrak_0^2}\right]|\tau(\mfrak^2 \varpi^{2\beta}) & =C(\chi'){\N(2^{-1}\mfrak\cfrak_0^{-1})}^{-1/2} {\N( \varpi^{\beta-\alpha_0})}^{1/2}\times \\
& \times \sum_{\efrak |p}{\N(\efrak)}\mu(\efrak)\chi'(\efrak)\theta({\chi'}^{-1})|\left[\frac{\varpi^{2\beta}}{\varpi^{2\alpha_0}\efrak^2} \right].
\end{align*}
For each $\qfrak$ dividing $2$ in $\rif$, we pose 
\begin{align*}
\calE^-_{\qfrak}(X)=& (1-\psi^{-2}\chi^{2}(\qfrak)A_{\N(\qfrak)}^{2}(X)2^{-2m-2} )\\
\calE^-_{2}(X)=& \prod_{\qfrak |2} \calE^-_{\qfrak}(X),
\end{align*}
where $X$ is a variable on the free part of the cyclotomic extension in $\clpinf$ and corresponds to the variable $Q$. We put moreover $\Delta^-(Y)=(1-\chi_{-1}\chi^{-1}{\psi}^{-1}{\psi'}^{-2}(c){\N_p(c)}^{m+2}{(1+Y)}^{-1})$ where $c$ is chosen such that $\lla \N(c) \rra$ correspond to the   fixed generator $u$.  \\
We define a $p$-adic $L$-function 
\begin{align*}
 \Ll^-_p (X) = & {( \Delta^-(X) \calE^-_2(X)H_{\boldf})}^{-1}l_{\lambda_{\boldf}} e  \mathrm{Tr}_1 ( \mathrm{Tr}_{2} \Theta'_{\chi} \times {E}_c^{\chi^{-1},-}| \Xi_2).
\end{align*}
where 
\begin{align*}
 \mathrm{Tr}_1 = T_{\Mfrak/\Nfrak} \,\, ,  \mathrm{Tr}_2= Tr_{\Sl}^{\gl}(v,\psi'), 
\end{align*} 
and $H_{\boldf}$ is the congruence number of $\boldf$ (the specialization of $H$, defined at the beginning of Section \ref{padicL}, at the point corresponding to $\boldf$). \\
Let us point out that we can move the twist by $\nu_Q$ from the Eisenstein series to the theta series as we did to prove the formulas at the end of Section \ref{halfp}, and that on $\theta(\chi^{-1}\omega^{s} \eps_Q^{-1}\psi^{2})|\left[\frac{\mfrak^2}{4\cfrak_0^2}\right]$ this twist is simply multiplication by $\psi^{(p)}(\chi^{(p)})^{-1}\chi_{-1}(p^{\alpha_0})E_3(Q)$.\\
We have then for $Q=X+1 -\eps(u)u^s$
\begin{align*}
 \Ll^-_p (\eps(u)u^s - 1)= &{\calE_2^-(\eps(u)u^s - 1)}^{-1}  \frac{{\N(\mfrak \varpi^{\beta} 2^{-1})}^{-\frac{1}{2}} E_3(Q) \Omega(\eta^{-1}, s_0-m-1)} {A_0{\lambda(T(\varpi^{2\beta-\alpha}))}{\N(\varpi)}^{(m-s_0+1)\beta_0}} \times \\
 &\times \frac{ \lla \boldf^{c}, \mathrm{Tr}_1 \circ  \mathrm{Tr}_{2} \left(\theta_n(\chi')|\left[\frac{\mfrak^2}{4\cfrak_0^2}\right] \calE \left(\frac{s_0-m}{2}, k-(n+1)t, \eta \right)\tau(\mfrak^2 \varpi^{2\beta}\right)  \rra_{\Mfrak p^{2\beta}} } {\lla \boldf^{c}|\tau'(\nfrak\varpi^{\alpha}), \boldf \rra_{\Nfrak p^{\alpha}}}, \\
 =  &\frac{C^{(p)}}{C_p} E^{-}_{1}(Q)E^{-}_{2}(Q)E_3(Q)\frac{\Ll(s_0+1, \boldf,\eps_Q \omega^{-s}\psi^{-2} \chi)}{\lla \boldf^{\circ}, \boldf^{\circ} \rra_{\Nfrak p^{\alpha}}}. 
\end{align*}
Here we have
\begin{align*}
C^{(p)}= &  2^{1-v} i^{(n+1)d-k} 2^{\left( n -\frac{3}{2} \right)d -k} {\pi}^{\frac{(s+n -m-1)t-k}{2}} {\G_{\infty}\left( \frac{(m-s-n+1)t+k}{2}\right)} \times \\
        & \times {(2\pi )}^{\frac{(s-m-n)d-k}{2}}\G_{\infty}\left( \frac{(m-s+n)t+k}{2}\right)  \times \\ 
      & \times  \eta^{-1}(\mfrak \varpi^{\beta}\dfrak 2^{-1}){\N(\dfrak)}^{m-s -\frac{1}{2}}{\N(\mfrak \varpi^{\beta} 2^{-1})}^{ m+ \frac{3}{2} -s}\times \\
      & \times \frac{\Omega(\eta^{-1}, m-s)}{{\N(\varpi)}^{(s-m)\beta_0}}G(\chi'){\N(\mfrak 2^{-1})}^{-1/2} {\N(\varpi^{\frac{1}{2}\beta-\alpha_0})}  \\
     & \times  {\N(2^{-1}\dfrak\mfrak\varpi^{\beta})}^m \psi(\mfrak\varpi^{\beta}\dfrak)  \\
    = & i^{(n+1)d-k} {2}^{ 1 + d(s+n -4m + \frac{s-n-11}{2} ) -2v}{\pi}^{(s-m)d-k} 2^{(s-m+1)d-k} ((m-s-1)t+k)! \times \\ 
      &  \times \eta^{-1}(\mfrak \varpi^{\beta}\dfrak 2^{-1})\psi(\mfrak\varpi^{\alpha}\dfrak) {\N(\dfrak)}^{2m -s -\frac{1}{2}}{\N(\varpi)}^{(m-s)\beta_0 +(2m-s + 2)\beta - \alpha_0 } \\
     &\times  G(\omega^s\eps^{-1}\chi^{-1}\psi^2)(\omega^s\eps^{-1}\chi^{-1}\psi^2)(\dfrak\cfrak_0\varpi^{\alpha_0})\\
  & \times \Omega(\eta^{-1}, m-s)  {\N(\mfrak)}^{2 m +1-s},\\
C_p = & \N(\varpi)^{-(\alpha -\alpha') m /2} {\lambda(T(\varpi^{2\beta -\alpha'}))}\psi_{\infty}(-1)  \times \\ 
      & \times W'(\boldf_P) S(P) \times \prod_{\pfrak} \frac{\eta\nu(\dfrak_{\pfrak})}{|\eta\nu(\dfrak_{\pfrak})|} \prod_{J}G(\nu\psi'),
\end{align*}
where we applied the duplication formula  
\begin{align*}
 & {\G_{\infty}\left( \frac{(m-s-n+1)t+k}{2}\right)}\G_{\infty}\left( \frac{(m-s+n)t+k}{2}\right) =   \\
  = & \G_{\infty}((m-s)t+k) 2^{d(s-m+1)-k} \pi^{1/2}. &
 \end{align*}
\end{proof}

We can now construct the primitive symmetric square $p$-adic $L$-function as a one variable Iwasawa function.\\ 
Recall the factor $\calE_{\Nfrak}(s,\boldf,\chi)$ defined in Section  \ref{Extrafactors} and the partition of the set of primes dividing $\Nfrak$ in the four subsets {\it i)}, {\it ii)}, {\it iii)}, {\it iv)}. \\
For each prime $\qfrak$ dividing $\Nfrak$, let $\boldf_{\qfrak}$ be a twist of $\boldf$ which is minimal among all the twists at $\qfrak$ (in the sense of Section \ref{Extrafactors}), and let $\lambda_{\qfrak}(T(\qfrak))$ be the Hecke eigenvalues of $\boldf_{\qfrak}$ at $T(\qfrak)$. In this case, let us denote by $\alpha_{\qfrak}$ and $\beta_{\qfrak}$ the two roots of the Hecke polynomial at $\qfrak$. The existence of a global character $\eta$ such that $\boldf \otimes \eta$ is of the desired type at $\qfrak$  is guaranteed by \cite{Chev}. \\
We define the following factors; if $\qfrak$ belongs to cases {\it i)} or {\it ii)} we have two possibilities, according to ${\lambda(T(\qfrak))}=0$ or not. Only for these two cases, we mean $\chi(\qfrak)=0$ if $\chi$ is ramified. In the first case, we put
\begin{align*}
\calE^+_{\qfrak}(X) = & \left(1-\chi^{-1}(\qfrak){\alpha^2_{\qfrak}} {\N(\qfrak)}^{-1}A_{\N(\qfrak)}^{-1}(X)\right) \left(1-\psi\chi^{-1}(\qfrak){\N(\qfrak)}^{m}A_{\N(\qfrak)}^{-1}(X)\right)\times \\
 &\left(1-\psi^{2}\chi^{-1}(\qfrak){\alpha_{\qfrak}(T(\qfrak))}^{-2}{\N(\qfrak)}^{2m+1} A_{\N(\qfrak)}^{-1}(X)\right),\\
\calE^-_{\qfrak}(X) = & \left(1-\psi^{-2}\chi(\qfrak){\alpha^2_{\qfrak}}{\N(\qfrak)}^{-2m-2}A_{\N(\qfrak)}(X)\right) \left(1-\psi^{-1}\chi(\qfrak) {\N(\qfrak)}^{-1-m}A_{\N(\qfrak)}(X)\right) \times \\
 &\left(1-\chi(\qfrak){\alpha^{-2}_{\qfrak}}A_{\N(\qfrak)}(X)\right),
  \end{align*}
  while if ${\lambda(T(\qfrak))} \neq 0$
  \begin{align*}
\calE^+_{\qfrak}(X) = & \left(1-\psi\chi^{-1}(\qfrak){\N(\qfrak)}^{m}A_{\N(\qfrak)}^{-1}(X)\right) 
 \left(1-\psi^{2}\chi^{-1}(\qfrak){\lambda_{\qfrak}(T(\qfrak))}^{-2}{\N(\qfrak)}^{2m+1} A_{\N(\qfrak)}^{-1}(X)\right),\\
\calE^-_{\qfrak}(X) = & \left(1-\psi^{-1}\chi(\qfrak) {\N(\qfrak)}^{-1-m}A_{\N(\qfrak)}(X)\right) \left(1-\chi(\qfrak){\lambda_{\qfrak}(T(\qfrak))}^{-2}A_{\N(\qfrak)}(X)\right).
  \end{align*}
  If $\qfrak$ is in case {\it iv)}, recall that there are several subcases; if ${(\psi\chi)}^2$ is ramified, we do not need to define any extra factors. \\
  If ${\psi\chi}$ is unramified, we define
  \begin{align*}
  \calE_{\qfrak}^+(X) & =1+\psi\chi^{-1}(\qfrak) {\N(\qfrak)}^{m}A_{\N(\qfrak)}^{-1}(X),\\
   \calE_{\qfrak}^-(X) & =1+\chi\psi^{-2}(\qfrak) {\N(\qfrak)}^{-m-1}A_{\N(\qfrak)}(X).
  \end{align*}
  For the remaining case, ${(\psi\chi)}^2$  unramified and ${\psi\chi}$ ramified, recall that in Section \ref{Extrafactors} we have defined two characters $\lambda_i$, $i=1,2$, such that ${\psi\chi\lambda_i}$ is unramified. The extra factor we need to define are
  \begin{align*}
  \calE_{\qfrak,i}^+(X) & = 1-\psi\chi^{-1}\lambda_i(\qfrak) {\N(\qfrak)}^{m}A_{\N(\qfrak)}(X)^{-1},\\
   \calE_{\qfrak,i}^-(X) & = 1-\chi\psi^{-1}\lambda_i(\qfrak) {\N(\qfrak)}^{-m-1}A_{\N(\qfrak)}(X) ,
  \end{align*}
  and then 
  \begin{align*}
  \calE_{\qfrak}^+(X) & = \prod_{i \mbox{ s.t.} \pi_{\qfrak}\cong\pi_{\qfrak}\otimes\lambda_i}\calE_{\qfrak,i}^+(X),\\
   \calE_{\qfrak}^-(X) & = \prod_{i \mbox{ s.t.} \pi_{\qfrak}\cong\pi_{\qfrak}\otimes\lambda_i}\calE_{\qfrak,i}^-(X).
  \end{align*}
  Let $G(X)$ the formal Laurent series of Theorem \ref{1var}. Then the $p$-adic $L$-function interpolating the primitive values is
  \begin{align*}
  F(X):= G(X)\prod_{\qfrak | \Nfrak} {\calE_{\qfrak}^+(X)}^{-1}. 
\end{align*}
We give the following proposition; 
\begin{prop}\label{Functpadic}
Let $\boldf$ be a neearly-ordinary Hilbert eigenform. For all fixed $s$ in the interpolation range and $\eps$ a finite order character of $1 +p\Z_p$, we have the following identity 
\begin{align*}
F(\eps(u)u^s-1) & = C_s \Ll_p^-(\eps(u)u^s-1) \prod_{\qfrak | \Nfrak} {\calE_{\qfrak}^-(\eps(u)u^s-1)}^{-1},
\end{align*}
with $\Ll_p^-(X)$ of Theorem \ref{1var2} and $C_s$ in $K$.
\end{prop}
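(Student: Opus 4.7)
The plan is to compare the interpolation formulae of $F(X)$ and of $\Ll_p^-(X) \prod_{\qfrak \mid \Nfrak} \calE_{\qfrak}^-(X)^{-1}$ at points of the form $X = \eps(u)u^s - 1$, for $s$ fixed in the (double) critical strip and $\eps$ a finite order character of $1+p\Z_p$ whose conductor is variable, and then to invoke the complex functional equation from Section \ref{Extrafactors} to show that the ratio depends only on $s$. First I would unwind the definition $F = G \cdot \prod_{\qfrak \mid \Nfrak} (\calE_\qfrak^+)^{-1}$ and apply Theorem \ref{1var} to express $F(\eps(u)u^s-1)$ in terms of the primitive $L$-value $L(s+1, \mathrm{Sym}^2(\boldf), \chi^{-1}\eps^{-1}\omega^s)$: the point is that the factor $E_2(Q,P)$ of Theorem \ref{1var} and the explicit Euler factors $\calE_\qfrak^+$ recorded in Section \ref{Extrafactors} have been designed precisely so that dividing $G$ by $\prod_{\qfrak \mid \Nfrak} \calE_\qfrak^+(X)$ turns the imprimitive Dirichlet series $\Ll$ into the completed primitive $L$-function $L(s,\mathrm{Sym}^2(\boldf),\chi)$. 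Similarly, Theorem \ref{1var2} together with the prescription of $\calE_\qfrak^-$ realizes $\Ll_p^-(X) \prod_{\qfrak}\calE_\qfrak^-(X)^{-1}$ as a $p$-adic interpolation of the dual primitive $L$-value $L(2m+2-s,\mathrm{Sym}^2(\boldf),\eps\omega^{-s}\psi^{-2}\chi)$.

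Next I would appeal to the complex functional equation
\begin{align*}
\LL(s+1,\mathrm{Sym}^2(\boldf),\chi^{-1}\eps^{-1}\omega^s) = \beps(s-m,\hat{\pi}(\boldf),\chi^{-1}\eps^{-1}\omega^s\psi) \, \LL(2m+2-s,\mathrm{Sym}^2(\boldf^c),\chi\eps\omega^{-s})
\end{align*}
established in Section \ref{Extrafactors}. Comparing the archimedean $\Gamma$-factors on both sides yields an elementary ratio of $\Gamma$-values and powers of $2\pi$ depending only on $s$, $k$, and the parity of $\eps\chi\omega^{-s}$ at infinity. The $\beps$-factors away from $p$ are finite products of local constants that do not depend on $\eps$ once $\eps$ is trivial modulo $\Nfrak$, hence contribute to the constant $C_s$. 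The key step is the $p$-part of the $\beps$-factor, for which Lemma \ref{epsfactor} provides the explicit decomposition into Gau\ss{} sums and local conductors of $\chi_\pfrak \eps_\pfrak$, $\nu\chi_\pfrak\eps_\pfrak$, $\nu^{-1}\chi_\pfrak\eps_\pfrak$; these are precisely the Gau\ss{} sums that occur in the evaluation formulae for $F$ and for $\Ll_p^-$ (compare the factor $G(\chi\eps\omega^{-s}_0)$ appearing in Theorem \ref{1var} with the analogous factor $G(\omega^s\eps^{-1}\chi^{-1}\psi^2)$ appearing in Theorem \ref{1var2}).

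I would then tabulate the Euler factors at $p$: the factor $E_1(Q,P)E_2(Q,P)$ of Theorem \ref{1var} and $E_1^-(Q)E_2^-(Q)E_3(Q)$ of Theorem \ref{1var2} are each a product of local terms at primes $\pfrak \mid p$, and a direct inspection shows that their quotient is exactly the ratio of $\beps_{\pfrak}$-factors predicted by Lemma \ref{epsfactor} at the primes above $p$, multiplied by the expected $\Gamma$-ratio from the dual functional equation (this is where the factor $E_3$ plays its role, producing the two missing $p$-adic Euler factors and the Gau\ss{} sum $G(\eta^{-1}_\pfrak)$ needed to match the $\beps$-factor). Once this identification is complete, the equality reduces to the complex functional equation and therefore holds with a constant $C_s \in K$ that collects all the terms that depend only on $s$ (the archimedean $\Gamma$-ratio, powers of $2$, $\pi$, $i$, $D_F$, $\N(\mfrak)$, and the local $\beps$-factors at primes dividing $\Nfrak$ away from $p$).

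The main obstacle in this programme will be the bookkeeping of Gau\ss{} sums and $p$-power conductors: although the structural reason behind the identity is clean, matching the explicit formulae of Theorems \ref{1var} and \ref{1var2} requires carefully tracking the power $\N(\varpi)^{\beta - \alpha_0}$ and the Gau\ss{} sum $G(\chi\eps\omega^{-s}_0)$ that appear when the conductor of $\eps$ grows, and then verifying via Lemma \ref{epsfactor} that these exactly reproduce the $p$-part of $\beps(s-m, \hat{\pi}(\boldf), \chi^{-1}\eps^{-1}\omega^s\psi)$ divided by what sits inside $\Ll_p^-$; any discrepancy must be shown to be $\eps$-independent, so that it can be absorbed into $C_s$. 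Having done this, the identity holds on the Zariski-dense subset of arithmetic points and hence, interpreting it as an equality of values and not of Iwasawa functions, is the desired pointwise relation.
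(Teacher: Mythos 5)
Your proposal is correct and follows essentially the same route as the paper: compare the interpolation formulae of Theorems \ref{1var} and \ref{1var2} at points $\eps(u)u^s-1$ with $\eps$ of varying conductor, feed in the complex functional equation of Section \ref{Extrafactors} together with Lemma \ref{epsfactor} for the $p$-part of the $\beps$-factor, and check that the resulting ratio is independent of $\eps$ (the paper does this last step by an explicit Gau\ss{}-sum computation and a normalization of the uniformizers $\varpi_{\pfrak}$, after restricting to $\eps$ of conductor exceeding the $p$-part of the conductors of $\boldf$, $\psi$ and $\chi$ so that $\beta=\alpha=\beta_0=\alpha_0$). The only point to make precise in your write-up is that one first obtains the identity for such large-conductor $\eps$ and then extends it to all $\eps$ by Zariski density of those points in the disc, i.e.\ by promoting it to an identity of meromorphic functions in $X$ for fixed $s$.
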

\begin{proof}
We fix $s$ in the interpolation range and we compare the two evaluations formula given in Theorem \ref{1var} and \ref{1var2} at points of type $(\eps,s)$ such that $\eps$ is not trivial and its conductor is bigger than the $p$-part of the conductor of $\boldf$, $\psi$ and $\chi$. \\
We will then use the complex functional equation given in \ref{Extrafactors}.\\
Recall that the factors at infinity that we need are 
\begin{align*}
\G_{\R,\infty}(s+1 -m - \kappa_S) = & \pi^{-\frac{s+1-m + \kappa_{\mathrm{S}}}{2}} \G_{\infty}\left( \frac{s+1-m-\kappa_{\mathrm{S}}}{2}\right),\\
\G_{\C,\infty}(s-m-1 + k) = & 2^{2v-sd}(st-2v)!\pi^{2v-d(s+1)} ,\\
\G_{\R,\infty}(m +3 - \kappa_S) = & \pi^{d\frac{s-m+\kappa_{\mathrm{S}}-2}{2}} \G_{\infty}\left( \frac{m+2-s-\kappa_{\mathrm{S}}}{2}\right), \\
\G_{\C,\infty}(m-s+ k) = & 2^{(s-m+1)d-k} \pi^{(s-m)d-k} (k+(m-s-1)t)!,
\end{align*}
where $\kappa_{\mathrm{S}} =0,1$ is congruent to $s+m$ modulo $2$ (notice that we are evaluating the functional equation at $s+1$).\\ 

In the notation of Theorem \ref{1var} and \ref{1var2} we have then $\beta=\alpha=\beta_0=\alpha_0$ and
\begin{align*}
\frac{F(X)\prod_{\qfrak | \Nfrak} {\calE_{\qfrak}^-(X)}}{\Ll_p^-(X)} =&  i^{(s-n)d} 2^{3md - 2sd + \frac{9}{2}d }D_{F,p}^{3s-3m+\frac{1}{2}} {D'_F}^{2s-2m -1}\times \\
 & \times \frac{{\N(\lfrak)}^{n-s} {\N(\mfrak)}^{2s-2m-1} {\N(\varpi^{\alpha})}^{3s-3m-1}G(\chi\eps\omega^{-s}) }{G(\chi^{-1}\eps^{-1}\omega^{s}\psi^2) \chi\psi^{-1}\omega^{-s}\eps(\varpi^{\alpha +e})\prod_{\pfrak}G(\chi^{-1}_{\pfrak}\psi_{\pfrak}\eps^{-1}\omega^{s})}\\
 & \times \frac{(\chi\eps\omega^{-s})_0(\dfrak\cfrak\varpi^{\alpha})\chi\eps\omega^{-s}\psi^{-1}(\dfrak\mfrak\varpi^{\alpha}2^{-1})}{(\chi^{-1}\eps^{-1}\omega^{s}\psi^2)_0(\dfrak\cfrak_0\varpi^{\alpha})\chi^{-1}\eps^{-1}\omega^{s}\psi(\dfrak\mfrak\varpi^{\alpha}2^{-1})} \\
 & \times \frac{\G_{\R,\infty}(s+1 -m - \kappa_S)\G_{\C,\infty}(s-m-1 + k)}{\G_{\R,\infty}(m +3 - \kappa_S)\G_{\C,\infty}(m-s+ k)} \\
 & \times \frac{L(s+1,\mathrm{Sym}^2(\boldf),\chi^{-1}\eps^{-1}\omega^{s})}{L(2m -s+2,\mathrm{Sym}^2(\boldf^c),\chi\eps\omega^{-s})}.
\end{align*}
The occurence of the quotient of real gamma factors comes from the expression of the period $\Omega(\eta^{-1}, m-s)$ in the evaluation formula of Proposition \ref{RibetFE} (noticing that if $\kappa=0,1$ is such that $\eta(-1)={(-1)}^{\kappa}$, we have $\kappa = 1 -\kappa_{\mathrm{S}}$). As we have
\begin{align*}
  \frac{\LL(s+1,\mathrm{Sym}^2(\boldf),\chi^{-1}\eps^{-1}\omega^{s})}{\LL(2m -s+2,\mathrm{Sym}^2(\boldf^c),\chi\eps\omega^{-s})} = & \beps(s-m,\hat{\pi}(\boldf),\chi^{-1}\eps^{-1}\omega^{s}\psi)
\end{align*}
we can  use Lemma \ref{epsfactor} and after noticing that (in the notation of Lemma \ref{epsfactor}) $\nu=\psi_{\qfrak}$ we obtain that the only part on the right hand side which depends on $\eps$ or $p$ is 
\begin{align*}
 \frac{(\chi\psi^{-1} \eps\omega^{-s}\psi)_p^4(\varpi^{\alpha+e}) {\N(\varpi^{\alpha_1+\alpha_2 + \alpha_3})}^{s-m}}{\chi_p(\varpi^{\alpha_1 + e}) \psi_p^2\chi_p^{-1}(\varpi^{\alpha_2 + e}) \psi_p\chi_p^{-1}(\varpi^{\alpha_3 + e})G(\chi_p^{-1})G(\psi_p^2\chi_p^{-1})G(\psi_p\chi_p^{-1})} 
\end{align*}
where $\alpha_1$ (resp. $\alpha_2$, $\alpha_3$) is the conductor of $\chi_p$ (resp. $\chi_p \psi_p^{-2}$, $\chi_p \psi_p^{-1}$).
If we choose $\varpi_p$ such that $\chi_{\pfrak}(\varpi_{\pfrak})=\psi_{\pfrak}(\varpi_{\pfrak})=1$ and $\N(\varpi_p^e)=p^d$, we obtain that this quantity is independent of $\eps$ and we are done.
\end{proof}
The main theorem of this section is the following;
\begin{theo}\label{1varholo}
Let $\boldf=\boldf_P$ be a nearly ordinary form of Nebentypus $\psi$ and weight $k\geq 2t$ which we decompose as $k = m +2t -2v$. Let $\chi$ be a Hecke character  such that $\chi_{\sigma}(-1)=1$ for all $\sigma|\infty$. Suppose that $\boldf$  has not CM by $\chi\psi^{-1}$ (resp. has  CM), then we have a a formal series $F(X,\boldf,\chi)$ in $\oo[[X]]\left[ \frac{1}{p}\right]$ (resp. $\oo[[X]]\left[ \frac{1}{p}, \frac{1}{1+X -u^{m+1}}\right]$) such that for all finite order character $\eps$ of $1+p\Z_p \cong u^{\Z_p}$, of conductor $p^{\alpha_0}$, and  $s \in [m-k_0+ 2, m] $ with $n\equiv s$ we have 
\begin{align*}
 F(\eps(u)u^s -1,\boldf,\chi) = & i^{(s+1)d+k} 2^{2v - (s+m)d +nd - \frac{s+n}{2}d } (st-2v)! D_F^{s-\frac{3}{2}}\times \\
            & G(\chi\eps\omega^{-s}_0)  \eta^{-1}(\mfrak \varpi^{\alpha_0} \dfrak 2^{-1})  \chi\eps\omega^{-s}_0(\dfrak\cfrak\varpi^{\alpha_0}) \times\\
            & \frac{ \psi(\dfrak\mfrak\varpi^{\beta}) {\N(\mathfrak{l}) }^{-2[s/2]}   {\N(\varpi)}^{(s+1)\beta -\alpha_0 }} {\lambda(T(\varpi^{2\beta-\alpha}))  W'(\boldf) S(P) \prod_{\pfrak} \frac{\eta\nu(\dfrak_{\pfrak})}{|\eta\nu(\dfrak_{\pfrak})|} \prod_{J}G(\nu)} \times \\
            & E_1(s+1)E_2(s+1) \frac{ L(s+1,\mathrm{Sym}^2(\boldf),\chi^{-1}\eps^{-1}\omega^{s})}{{(2 \pi )^{ds}}\Omega(\boldf)}.
\end{align*}
for $\eta=\omega^{s}\eps^{-1}\chi_1^{-1}\chi^{-1}\psi{\psi}^{-2}$ and, we recall, $\alpha$ such that $\varpi^{\alpha}$ is the conductor of $\boldf$ and $\beta$ such that $\varpi^{\beta}$ is divisible by the {\it l.c.m.} of $\varpi^{\alpha}$  and $\varpi^{\alpha_0}$.
\end{theo}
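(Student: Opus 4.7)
\textbf{Proof plan for Theorem \ref{1varholo}.} The idea is to combine the symmetric-square $p$-adic $L$-function $\Ll_p^-(X)$ of Theorem \ref{1var2}, which is a bounded measure and hence lies in $\oo[[X]][1/p]$ by construction, with the functional equation of Proposition \ref{Functpadic} in order to bypass the potential poles of $F(X)$ coming from the denominators $\calE_{\qfrak}^+(X)^{-1}$.

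First I will argue that $G(X)$ itself is regular on the open unit disk, away from the CM pole. Because $\Ll_p(Q,P)$ lives in $\mathrm{Frac}(\oo[[X]]\hat\otimes\boldI)$, its three sources of denominators need to be controlled after specialization at $P=P_{\boldf}$: the congruence element $H$ specializes to a non-zero element of $\oo$ (the congruence number of $\boldf$), so it is a harmless constant; the factor $\calE_2(X,P_\boldf)$, being an Iwasawa function at primes dividing $2$ whose zeros avoid the interpolation range, is handled exactly by the "almost all" clause in Theorem \ref{T1}~i); finally, the denominator $\Delta(X,P_\boldf)$ is non-vanishing outside the CM case by Proposition \ref{reszeta}, and contributes exactly the predicted simple pole at $1+X=u^{m+1}$ in the CM case.

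Next I will upgrade Proposition \ref{Functpadic}, which a priori is only a pointwise identity depending on $s$, to an identity of Iwasawa functions. The constant $C_s$ appearing there was computed explicitly in the proof of Proposition \ref{Functpadic} and, after the cancellations already performed between the Gau{\ss} sums coming from the functional equation for $\beps(s-m,\hat{\pi}(\boldf),\chi^{-1}\eps^{-1}\omega^{s}\psi)$ and those coming from the evaluation of $\Ll_p^{\pm}$, it depends on $s$ only through $u^{ds}$, $\N(\varpi)^{cs}$ and products of local factors independent of $\eps$. Thus $C_s$ interpolates into a unit $C(X)\in\oo[[X]]^{\times}[1/p]$, and the identity of formal series
\begin{equation*}
F(X)\prod_{\qfrak\mid\Nfrak}\calE_{\qfrak}^-(X) \;=\; C(X)\,\Ll_p^-(X)
\end{equation*}
holds in $\oo((X))[1/p]$ by density of the interpolation points inside the weight space.

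The conclusion is then a standard "double characterisation" argument. From the definition $F(X)=G(X)\prod_\qfrak\calE_{\qfrak}^+(X)^{-1}$, the only possible poles of $F$ on the open unit disk (aside from the CM pole already accounted for) are at zeros of the $\calE_{\qfrak}^+(X)$; from the identity above together with the holomorphicity of $\Ll_p^-(X)$, the only possible poles of $F$ are at zeros of the $\calE_{\qfrak}^-(X)$. A direct inspection of the explicit shapes of $\calE_{\qfrak}^{\pm}(X)$ (they involve $A_{\N(\qfrak)}(X)$ with opposite signs in the exponent, essentially reflecting $s\leftrightarrow 2m+2-s$) shows that for each $\qfrak\mid\Nfrak$ the zero sets of $\calE_{\qfrak}^+$ and $\calE_{\qfrak}^-$ are disjoint on the open unit disk; hence $F$ has no poles on the disk at all, proving that $F(X)\in\oo[[X]][1/p]$ in the non-CM case, and in $\oo[[X]][1/p, (1+X-u^{m+1})^{-1}]$ in the CM case. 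The evaluation formula is then simply the one of Theorem \ref{1var} divided by the product $\prod_\qfrak\calE_{\qfrak}^+(u^s\eps(u)-1)$, which collapses the imprimitive $L$-value $\Ll(s+1,\boldf,\chi^{-1}\eps^{-1}\omega^s)$ into the primitive one $L(s+1,\mathrm{Sym}^2(\boldf),\chi^{-1}\eps^{-1}\omega^s)$ via the explicit description of the bad Euler factors recalled in Section \ref{Extrafactors}.

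\textbf{Main obstacle.} The technical heart of the argument is Step~3 above: proving that the constant $C_s$ of Proposition \ref{Functpadic} really does interpolate to a unit of $\oo[[X]][1/p]$. This requires keeping very careful track of the $\eps$-dependence in the Gau{\ss} sums and archimedean $\G$-factors appearing on both sides of the functional equation of $\LL(s,\hat\pi(\boldf),\chi\eps)$, and invoking Lemma \ref{epsfactor} to see that the ratios $G(\chi\eps)/G(\chi^{-1}\eps^{-1}\psi^2)$ and the conductor exponents combine to give exactly the Iwasawa-function $C(X)$. The secondary obstacle is the zero-set analysis of $\calE_\qfrak^{\pm}$, which case-splits according to whether $\pi_{\qfrak}$ is a principal series, special, or supercuspidal representation and, in the last case, according to the subclasses $\Sigma_4^0,\dots,\Sigma_4^3$ of Section \ref{Extrafactors}.
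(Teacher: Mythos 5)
Your overall strategy --- deducing holomorphy of $F$ by playing the functional equation of Proposition \ref{Functpadic} against the a priori pole sets of the two sides --- is exactly the route the paper takes, and your Step 2 is easier than you fear: for a \emph{fixed} $s$ the points $\eps(u)u^s-1$ are already Zariski dense in the disc, so the pointwise identity with the single constant $C_s$ extends to an identity of meromorphic functions with no need to interpolate $C_s$ in the $s$-direction. The genuine gaps are in the pole analysis itself. First, $\Ll_p^-(X)$ is \emph{not} holomorphic ``by construction'': it is defined as ${(\Delta^-(X)\calE_2^-(X)H_{\boldf})}^{-1}$ times a bounded linear form applied to a measure, so it carries potential poles at the zeros of $\Delta^-$ and $\calE_2^-$. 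Likewise the zeros of $\calE_2(X)$ and $\Delta(X)$ are genuine potential poles of $G(X)$: the ``almost all'' clause of Theorem \ref{T1} merely excludes those points from the interpolation statement, it does not give regularity there. All of these denominators must be fed into the disjointness argument alongside the $\calE_{\qfrak}^{\pm}$.

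Second, and more seriously, the disjointness of the two pole sets is not a matter of ``direct inspection of opposite signs in the exponent''. One must rule out that a zero of $\calE_{\qfrak}^{+}$ (of the shape $(1+X)^{z_{\qfrak}}=\zeta\,\psi^{2}\chi^{-1}(\qfrak)K_{\qfrak}^{-2}\N(\qfrak)^{2m+1}$ with $\zeta$ a $p$-power root of unity, etc.) coincides with a zero of $\calE_{\qfrak'}^{-}$ for a possibly \emph{different} prime $\qfrak'$, or with a zero of $\Delta$, $\calE_2$, $\Delta^-$, $\calE_2^-$. The paper does this, following Dabrowski--Delbourgo, by transporting everything to $\C$ via the fixed isomorphism $\C\cong\C_p$ and comparing archimedean absolute values: the Weil bound $|\lambda_{\qfrak}(T(\qfrak))|^2_{\C}=\N(\qfrak)^{m+1}$ forces any putative common zero to satisfy two incompatible relations such as $|u^m|_{\C}=\N(\qfrak')^{m}$ and $|u^m|_{\C}=\N(\qfrak)^{m+1}$, contradicting $|u|_{\C}\neq 1$. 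Without this archimedean input the disjointness simply does not follow, so this is the missing idea you need to supply; the one genuine overlap, $\Delta$ against $\Delta^-$ at $1+X=u^{m+1}$, is precisely what produces the CM pole and is resolved by Proposition \ref{reszeta}.
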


\begin{proof}
Recall the functional equation of Proposition \ref{Functpadic}
\begin{align*}
F((X+1)u^s -1) & = C_s \Ll_p^-((X+1)u^s -1) \prod_{\qfrak | \Nfrak} {\calE_{\qfrak}^-((X+1)u^s -1)}^{-1}.
\end{align*}
We shall show that the possible poles of the left hand side and right hand side are disjoint (are reduce to $X= u^{m+1} -1$ if $\boldf$ has CM by $\psi^{-1}\chi$).\\
Take for example $u=p+1$. The possible poles of $F(X)$ are the zeros of $\prod_{\qfrak | \Nfrak} {\calE_{\qfrak}^+(X)}$, $\calE_2(X)$ and $\Delta(X)$. We recall that 
\begin{align*}
\calE_{2}(X)=& \prod_{\qfrak |2} (1-\psi^2\chi^{-2}(\qfrak)A_{\N(\qfrak)}^{-2}(X) {\N(\qfrak)}^{2m}),\\
\Delta(X) = & (1-\chi(c)\N(c)^{m+1}{(1+X)}^{-1}).
\end{align*}
Therefore the possible poles must be either
\begin{itemize}
\item $s=m$ (occurring in one of the factors $\calE^+_{\qfrak}$ where neither $\lambda(T(\qfrak))$ nor $\alpha_{\qfrak}$ do not appear),
\item or $s=m+1$ (occurring in the factor $\Delta(X)$),
\item or $s$ such that there exists a $p^{\infty}$-root of unit $\zeta$ such that one the following occurs:	
\begin{align*} 
 \psi^{2}\chi^{-1}(\qfrak){K_{\qfrak}}^{-2}{\N(\qfrak)}^{2m+1}A_{\N(\qfrak)}^{-1}(\zeta u^s-1)= &1, \\
\chi^{-1}(\qfrak){K_{\qfrak}}^2 {\N(\qfrak)}^{-1}A_{\N(\qfrak)}^{-1}(\zeta u^s-1)=& 1,
\end{align*}
for $K_{\qfrak}=\lambda_{\qfrak}(T(\qfrak))$ or $\alpha_{\qfrak}$.
\end{itemize}
On the other hand, the possible poles of the right hand side are instead 
\begin{itemize}
\item $s=m+1$  (occurring in one of the factors $\calE^-_{\qfrak}$ where $\lambda(T(\qfrak))$ does not appear),
\item or when there is a $p^{\infty}$-root of unit such that as 
\begin{align*}
\chi(\qfrak){K_{\qfrak}}^{-2}A_{\N(\qfrak)}(\zeta u^s -1)= & 1, \\
\psi^{-2}\chi(\qfrak){K_{\qfrak}}^2 {\N(\qfrak)}^{-2-2m}A_{\N(\qfrak)}(X)(\zeta u^s -1)= & 1,
\end{align*}
for $K_{\qfrak}=\lambda_{\qfrak}(T(\qfrak))$ or $\alpha_{\qfrak}$.
\end{itemize}
From Weierstrass preparation theorem, we know that there is only a finite number of possibilities of the above cases.\\
We proceed now exactly as in \cite[\S 3.1]{DD}. Let $\qfrak$ be a prime ideal such that 
\begin{align*}
 \chi(\qfrak){\lambda(T(\qfrak))}^{-2}A_{\N(\qfrak)}(\zeta u^m -1)=1
\end{align*}
 and $\qfrak'$ a prime ideal such that $\calE_{\qfrak'}$ has a zero at $s=m$, $\eps(u)=\zeta$. Write $\lla \N(\qfrak) \rra = u^{z_{\qfrak}}$, so 
\begin{align*}
 A_{\N(\qfrak)} (X) = {(X+1)}^{z_{\qfrak}}.
\end{align*}
Recall that ${|\lambda_{\qfrak}(T(\qfrak))|}^2_{\C}=\N(\qfrak)^{m+1}$. In particular, we obtain the following relations: $|u^m|_{\C}=\N(\qfrak')^m$ and  $|u^m|_{\C}=\N(\qfrak)^{m+1}$. But this is a contradiction as $|u|_{\C}\neq 1$. The other cases are similar.\\
The case $s=m+1$ has already been treated in Proposition \ref{reszeta}, and we can then conclude.
\end{proof}


\section{Holomorphicity in many variable}\label{AppB}
In this section we will show that the many variable $p$-adic $L$-function constructed in Theorem \ref{T1} is holomorphic (when the family has not complex multiplication). We adapt to the totally real case the strategy of proof of Hida \cite[\S 6]{H6}.\\
Fix a family of $\boldI$-adic eigenforms $\boldF$ and let $\lambda$ be the structural morphism as in Section \ref{padicL}; let us denote by $\psi$ the restriction of $\lambda$ to the torsion of $\clpinf$. Before defining the functions interpolating the missing Euler factors, we have to show that we can interpolate $p$-adically the correct Satake parameter at the primes $\qfrak$ at which $\boldF$ is not minimal.\\
Let $\boldF_{\qfrak}$ be a twist of $\boldF$ by a character $\eta_{\qfrak}$ such that $\boldF_{\qfrak}$ is minimal at $\qfrak$  and primitive outside $p$.  This family corresponds to an $\boldI_{\qfrak}$-family of eigenforms, where $\boldI_{\qfrak}$ is finite flat over the Iwasawa algebra in $d+1+\delta$ variables (here $\delta$ is the default of Leopoldt's conjecture for $p$ and $F$).\\ 
Since we have $\lambda_{\qfrak}(T(\qfrak'))=\eta_{\qfrak}(\qfrak')\lambda(T(\qfrak'))$ for almost all prime ideals $\qfrak'$, we see that, after possibly enlarging the coefficients ring $\oo$, we have $\boldI_{\qfrak}=\boldI$. For all arithmetic points $P$ of $\mathrm{Spec}(\boldI)$, we have then an element $\lambda_{\qfrak}(T(\qfrak)) \in \boldI$  such that the values $\lambda_{\qfrak}(T(\qfrak)) \bmod P $ are the Hecke eigenvalues, still denoted by $\lambda_{\qfrak}(T(\qfrak))$, of a form  $\boldf_P$ which is minimal at $\qfrak$. For the prime ideal $\qfrak$ for which $\boldF$ is not minimal, let us denote by $\alpha_{\qfrak}$ and $\beta_{\qfrak}$ the two roots of the Hecke polynomial at $\qfrak$. After possibly enlarging $\boldI$ by a quadratic extension, we may assume that $\alpha_{\qfrak}$ belongs to $\boldI$. \\
We can now define the functions interpolating the missing Euler factors; as in the proof of Theorem \ref{T1} we use $X$ (resp. $Y$) to denote a variable on the free part of $\Z_p^{\times}$ inside $\clpinf$ (resp. $\clpinf$ embedded in $\boldG=\clpinf \times \rif_p^{\times}$).\\ 
We recall that in Section  \ref{Extrafactors} we partitioned the set of primes dividing $\Nfrak$ in the four subsets {\it i)}, {\it ii)}, {\it iii)}, {\it iv)}.
If $\qfrak$ belongs to cases {\it i)} or {\it ii)} we have two possibilities, according to ${\lambda(T(\qfrak))}=0$ or not. Only for these two cases, we mean $\chi(\qfrak)=0$ if $\chi$ is ramified. In the first case, we put
\begin{align*}
\calE^+_{\qfrak}(X,Y) = & \left(1-\chi^{-1}(\qfrak)\alpha_{\qfrak}^2 {\N(\qfrak)}^{-1}A_{\N(\qfrak)}^{-1}(X)\right) \left(1-\psi\chi^{-1}(\qfrak)A_{\N(\qfrak)}(Y)A_{\N(\qfrak)}^{-1}(X)\right)\times \\
 &\left(1-\psi^{2}\chi^{-1}(\qfrak)\alpha_{\qfrak}^{-2}{\N(\qfrak)}A_{\N(\qfrak)}^{2}(Y) A_{\N(\qfrak)}^{-1}(X)\right),\\
\calE^-_{\qfrak}(X,Y) = & \left(1-\psi^{-2}\chi(\qfrak)\alpha_{\qfrak}^2{\N(\qfrak)}^{-2}A_{\N(\qfrak)}^{-2}(Y)A_{\N(\qfrak)}(X)\right)  \left(1-\chi(\qfrak)\alpha_{\qfrak}^{-2}A_{\N(\qfrak)}(X)\right)\times \\
 &\left(1-\psi^{-1}\chi(\qfrak) {\N(\qfrak)}^{-1}A_{\N(\qfrak)}^{-1}(Y)A_{\N(\qfrak)}(X)\right),
  \end{align*}
  while if ${\lambda(T(\qfrak))} \neq 0$
  \begin{align*}
\calE^+_{\qfrak}(X,Y) = & \left(1-\psi\chi^{-1}(\qfrak)A_{\N(\qfrak)}(Y)A_{\N(\qfrak)}^{-1}(X)\right) \left(1-\psi^{2}\chi^{-1}(\qfrak){\lambda_{\qfrak}(T(\qfrak))}^{-2}{\N(\qfrak)}A_{\N(\qfrak)}^{2}(Y) A_{\N(\qfrak)}^{-1}(X)\right),\\
\calE^-_{\qfrak}(X,Y) = & \left(1-\psi^{-1}\chi(\qfrak) {\N(\qfrak)}^{-1}A_{\N(\qfrak)}^{-1}(Y)A_{\N(\qfrak)}(X)\right)  \left(1-\chi(\qfrak){\lambda_{\qfrak}(T(\qfrak))}^{-2}A_{\N(\qfrak)}(X)\right).
  \end{align*}
  If $\qfrak$ is in case {\it iv)}, recall that there are several subcases; if ${(\psi\chi)}^2$ is ramified, we do not need to define any extra factors. \\
  If ${\psi\chi}$ is unramified, we define
  \begin{align*}
  \calE_{\qfrak}^+(X,Y) & =1+\psi\chi^{-1}(\qfrak) A_{\N(\qfrak)}(Y)A_{\N(\qfrak)}^{-1}(X) ,\\
   \calE_{\qfrak}^-(X,Y) & =1+\chi\psi^{-2}(\qfrak) {\N(\qfrak)}^{-1}A_{\N(\qfrak)}^{-1}(Y)A_{\N(\qfrak)}(X).
  \end{align*}
  For the remaining case, ${(\psi\chi)}^2$  unramified and ${\psi\chi}$ ramified, recall that in Section \ref{Extrafactors} we have defined two characters $\lambda_i$, $i=1,2$, such that ${\psi\chi\lambda_i}$ is unramified. The extra factor we need to define are
    \begin{align*}
  \calE_{\qfrak,i}^+(X,Y) & = 1-\psi\chi^{-1}\lambda_i(\qfrak) A_{\N(\qfrak)}(Y)A_{\N(\qfrak)}(X)^{-1},\\
   \calE_{\qfrak,i}^-(X,Y) & = 1-\chi\psi^{-1}\lambda_i(\qfrak) {\N(\qfrak)}^{-1}A_{\N(\qfrak)}^{-1}(Y)A_{\N(\qfrak)}(X) ,
  \end{align*}
  and then 
  \begin{align*}
  \calE_{\qfrak}^+(X,Y) & = \prod_{i \mbox{ s.t.} \pi_{\qfrak}\cong\pi_{\qfrak}\otimes\lambda_i}\calE_{\qfrak,i}^+(X,Y),\\
   \calE_{\qfrak}^-(X,Y) & = \prod_{i \mbox{ s.t.} \pi_{\qfrak}\cong\pi_{\qfrak}\otimes\lambda_i}\calE_{\qfrak,i}^-(X,Y).
  \end{align*}
We define 
\begin{align*}
 \calE^{\pm}(X,Y) = \prod_{\qfrak \mid \Nfrak} \calE_{\qfrak}^{\pm}(X,Y).
\end{align*}
We refer to Section \ref{padicL} for all the notation used in the following theorem.

\begin{theo}\label{T1holo}
Fix an adelic character $\chi$, such that $\chi_{\sigma}(-1)=1$ for all $\sigma|\infty$ and fix $\boldF$, a $\boldI$-adic family of Hilbert eigenforms. If $\chi\psi^{-1}$ is imaginary quadratic, suppose that $\boldF$ has not (resp. has) complex multiplication by $\chi\psi^{-1}$; then we have a $p$-adic $L$-functions $H(P)L_p(Q,P)$ in $\oo[[X ]] \hat{\otimes}  \boldI$  (resp. $\oo[[X ]] \hat{\otimes}  \boldI [{(1+X)-u(1+Y)}^{-1}]$) such that,  for all arithmetic points $(Q,P)$ of type $(s_Q,\eps_Q; m_P,\eps_P, v_P, \eps_P')$, with $ m_P - k_{P,0}  + 2  \leq s_Q \leq m_P $ (for $k_{P,0}$ equal to the minimum of $k_{P,\sigma}$'s) such that $\eps_P'$ factors through $\clpinf$ the following interpolation formula holds 
\begin{align*} 
L_p(Q,P) = & C_1 E_{1}(Q,P)E_{2}(Q,P)\frac{2^d L(s_Q+1, \mathrm{Sym}^2(\boldf_P),\eps_Q^{-1}\omega^{s_Q}\chi^{-1})}{(2\pi)^{ds}\Omega(\boldf_P)}.
\end{align*}
\end{theo}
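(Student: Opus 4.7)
The strategy is to adapt Hida's argument from \cite[\S 6]{H6} (carried out in the one-variable setting as Theorem \ref{1varholo} above) to the full $[F:\Q]+2+\delta$ variable situation. First I would construct a ``symmetric'' two-variable $p$-adic $L$-function $\Ll_p^-(Q,P)$ interpolating the values $L(2m+2-s,\mathrm{Sym}^2(\boldf_P),\eps_Q\omega^{-s}\psi^{-2}\chi)$ on the other half of the critical strip. This is done exactly as in the proof of Theorem \ref{1var2}: one replaces the Eisenstein measure $\calE_c^{\chi\chi_{-1}}$ used in Section \ref{padicL} by the measure $E_c^{\chi,-}$ built from $\zeta'_{\chi,c}$ (Proposition \ref{RibetFE}), convolves with the theta measure $\Theta'_{\chi^{-1}\psi^2}$, applies the trace operators $\mathrm{Tr}_1$ and $\mathrm{Tr}_2$, and pairs against $l_{\lambda}$. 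The output $H(P)\Ll_p^-(Q,P)$ lies in $\oo[[X]]\hat{\otimes}\boldI$ by the same argument as in Theorem \ref{T1}.

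Next, define $F(X,Y)=L_p(Q,P)\prod_{\qfrak\mid\Nfrak}\calE_{\qfrak}^+(X,Y)^{-1}$, i.e. the candidate primitive two-variable $p$-adic $L$-function. The key step is a two-variable analogue of Proposition \ref{Functpadic}: by comparing the interpolation formulae of $L_p(Q,P)$ and of $\Ll_p^-(Q,P)$ at arithmetic points $(Q,P)$ with $\eps_Q$ of large enough $p$-conductor, and by applying the complex functional equation of Section \ref{Extrafactors} together with Lemma \ref{epsfactor}, one gets an identity of the form
\begin{align*}
F(X,Y)\prod_{\qfrak\mid\Nfrak}\calE_{\qfrak}^-(X,Y) = U(X,Y)\,\Ll_p^-(X,Y)
\end{align*}
for a unit-power-of-norm factor $U(X,Y)$ lying in $\oo[[X]]\hat\otimes\boldI$. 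The $\eps_Q$-dependence and the $p$-adic gamma factors cancel in the same manner as in the proof of Proposition \ref{Functpadic}; the only delicate point is that now the ``weight'' variable $Y$ is $p$-adic, so the cancellation of the Gauss sums appearing in the functional equation has to be done in the Iwasawa algebra, not at individual points. Since an Iwasawa function is determined by its values on a Zariski-dense set of arithmetic points, this identity, established on a dense subset, extends to the full total ring of fractions.

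Having the functional equation, holomorphicity follows by comparing possible poles on each side. Poles of the left-hand side come only from the factors $\calE^+(X,Y)$, from $\calE_2(X,Y)$ and from $\Delta(X,Y)$; poles of the right-hand side come only from $\calE^-(X,Y)$ and from possible poles of $\Ll_p^-$, which in turn reduce by its construction to the Iwasawa functions appearing in the Eisenstein measure $E_c^{\chi,-}$ and to $\Delta^-(X,Y)$. The crucial point, which is the main obstacle, is to verify that the zero loci of the two collections of factors are disjoint. This is the two-variable analogue of the ``complex absolute value'' argument in Theorem \ref{1varholo}: for each pair of factors $(\calE_{\qfrak}^+,\calE_{\qfrak'}^-)$ or $(\calE_{\qfrak}^+,E_c^{\chi,-})$ whose vanishing loci might intersect, one specializes at infinitely many classical arithmetic points on the intersection and derives a contradiction with the Ramanujan--Weil bound $|\alpha_{\qfrak}|_\C^2=\N(\qfrak)^{m+1}$ for $\boldf_P$. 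The presence of two independent cyclotomic-type variables $X$ and $Y$ gives more constraints to check, but the same complex-absolute-value argument rules them all out.

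The factor $\Delta(X,Y)$ is the single exception: its zero locus meets both sides, and its behavior along $\Delta(X,Y)=0$ is controlled by Proposition \ref{reszeta}. Outside the CM case this contributes no pole; in the CM case by $\chi\psi^{-1}$ the pole is exactly that of the $p$-adic zeta function of $F$, which is inverted precisely by the factor ${(1+X)-u(1+Y)}^{-1}$ that appears in the statement. Finally, multiplying by $H(P)$ clears the denominators coming from the congruence module $C(\lambda)$ so that $H(P)L_p(Q,P)$ lies in $\oo[[X]]\hat\otimes\boldI$ (respectively in its localization in the CM case), and the interpolation formula is preserved, proving the theorem.
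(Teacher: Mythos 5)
Your overall architecture — reduce to a complementary $p$-adic $L$-function on the other half of the critical strip, transfer holomorphicity through a functional equation, and rule out common zero loci of the two collections of Euler factors by complex absolute values — is the right one, and the treatment of $\Delta(X,Y)$ via Proposition \ref{reszeta} and of the CM pole matches the paper. But your route diverges from the paper's at the crucial first step, and that step is where the gap lies: you posit a genuinely two-variable object $\Ll_p^-(Q,P)$ in (the total fraction ring of) $\oo[[X]]\hat{\otimes}\boldI$, built "exactly as in Theorem \ref{1var2}". That construction is not routine and is not carried out in the paper. The minus-side Eisenstein measure $E_c^{\chi,-}$ has Fourier coefficients given by $L$-values on the far side of the functional equation, so its interpolation formula carries the archimedean periods $\Omega(\eta^{-1},s_0-m-1)$, the local Gauss sums $\prod_{\pfrak}G(\eta_{\pfrak}^{-1})$, and the twist by $\nu_Q$ — all of which depend on the weight $m_P$ and on the characters $\eps_P,\eps_P'$ as well as on $Q$. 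Showing that this whole package assembles into a single bounded measure in the weight direction (rather than merely a one-variable measure for each fixed $\boldf$) is precisely the difficulty, and asserting it "by the same argument as in Theorem \ref{T1}" does not address it. Likewise, your two-variable functional equation $F\prod\calE_{\qfrak}^- = U\cdot\Ll_p^-$ presupposes that the fudge constant $C_s$ of Proposition \ref{Functpadic} promotes to a unit $C(X,Y)$ of $\oo[[X]]\hat{\otimes}\boldI$; this part can be done (the paper does exactly this with the factors $\beps(X,Y)$ and $C(X,Y)$), but only the elementary part of the functional equation is interpolated there, not $\Ll_p^-$ itself.

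The paper's actual proof is structured to avoid your first step entirely. It introduces $L_p'(Q,P)=i^{nd}C_sL_p(Q,P)\beps_0\beps(X,Y)C(X,Y)$, where only the $\eps$-factor and the explicit constants are interpolated in two variables, and then applies the \emph{one-variable} functional equation (Proposition \ref{Functpadic}) fiberwise: for each $P$ in the Zariski-dense set of arithmetic points with $\boldf_P$ not primitive at $p$, one gets $C_s\,\calE^-(Q,P)\,L_p'(Q,P)\equiv \Ll_p^-(Q)\bmod P$, whence the reduction mod $P$ has no denominators in $X$. Spreading this out (as in \cite[p.~137]{H6}) shows that $H(P)\calE^-(X,Y)L_p'(Q,P)$ lies in the localization $R$ of $\oo[[X]]\hat{\otimes}\boldI$ at $((1+X)-u(1+Y))$, so that $L_p(Q,P)$ lies in $R[A^{-1}H^{-1}]\cap R[B^{-1}H^{-1}]$ for $A=\calE^+\calE_2$ and $B=\calE^-$; coprimality of $A$ and $B$ (checked on prime elements of the form $(1+X)-\alpha$ via the Weil bound, as you describe) then gives $H(P)L_p(Q,P)\in R$, and Proposition \ref{reszeta} handles the remaining divisor. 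If you want to salvage your plan, you must either construct the two-variable $\Ll_p^-(Q,P)$ honestly — controlling the weight-dependence of the periods, Gauss sums and $\nu_Q$-twist in the minus-side measure — or adopt the fiberwise argument, which is what the density of arithmetic points is really buying you here.
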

It is clear that this is generalization of \cite[Theorem]{H6}.\\ 
Remarks: \begin{itemize} 
          \item[i)] If Leopoldt's conjecture for $F$ and $p$ does not hold, then $L_p(Q,P)$ is holomorphic also in the case where $\boldF$ has CM by $\chi\psi^{-1}$.
\item[ii)] If $ \calE^{+}(X,Y) \calE_2(X,Y) \equiv 0 \bmod (Q,P)$ and $s_Q = m_P$, then we are in the case of a trivial zero  and the formula above for this point is not of great interest.
\item[iii)] If we could show an holomorphic analogue of Theorem \ref{T1} {\it ii)}, we could prove Conjecture \ref{MainCo} for simple zeros without any assumption on  the conductor. 
\end{itemize}
\begin{proof}
We follow Hida's strategy \cite[\S 6]{H6}. We define 
\begin{align*}
 L_p(Q,P) = \frac{\Ll_p(Q,P)}{\calE^+(X,Y)},
\end{align*}
We shall denote ${\calE^{+}(X,Y) }{\calE_2(X,Y)}$ (resp. ${\calE^{+}(X,Y) }$) by $A(Q,P)$ (resp. $B(Q,P)$). 
Let us define 
\begin{align*}
 \beps_0(\hat{\pi},\chi^{-1}\psi)=& \prod_{\qfrak \nmid p} \beps(0,\hat{\pi}_{\qfrak},\psi_{\qfrak}\chi_{\qfrak}^{-1}),\\
\beps(X,Y)= & \frac{A_{\N(C_{\pi,\chi})}(X) A^{3}_{D_F'}(X)}{A_{\N(C_{\pi,\chi})}(Y)A_{D_F'}^{3}(Y)},\\
C(X,Y)=&   \frac{\N(\mfrak)A^2_{\N(\mfrak)}(Y)A_{\N(\lfrak)}(X)D_F' A_{D_F'}^{2}(Y)A^2_{\N(2)}(X)} {A^2_{\N(\mfrak)}(Y) 2^{\frac{9}{2}d}A_{D_F'}^{2}(X)\N(\lfrak)^{n}A^{3}_{\N(2)}(X) } ,
\end{align*}
where   $C_{\pi,\chi}$ is the conductor outside $p$ of $\hat{\pi}\otimes \psi \chi^{-1}$, and, as above, $D_F'$ is the prime-to-$p$ part of the discriminant and $\mfrak=2\lfrak\cfrak$ is an id\`ele representing {\it l.c.m.} $(\Nfrak, 4, \cfrak)$ where $\cfrak$ is the conductor of $\chi$ . \\
It is plain that  $\beps(X,Y)$ and $C(X,Y)$ are units in $\oo[[X]]\hat{\otimes}  \boldI$. We define 
\begin{align*}
 L'_p(Q,P) = i^{nd}C_s L_p(Q,P) \beps_0 \beps(X,Y) C(X,Y).
\end{align*}
Suppose that $P \in X(\boldI)$ is such that $\boldf_P$ is not primitive at $p$, then $m_P$ is in a fixed class modulo $p-1$ and $\eps_P$ is trivial. We see, using the formula in Proposition \ref{Functpadic},  that $ C_s B(Q,P) L'_p(Q,P) \equiv \Ll^{-}_p(Q) \bmod P$, where $\Ll^{-}_p(Q)$ is the $p$-adic $L$-function of Theorem \ref{1var2} for $\boldf=\boldf_P$.\\ 
Let us denote by $R$ the localization of $\oo[[X]] \hat{\otimes}  \boldI$ at $((1+X)-u(1+Y))$. As $B(Q,P) L'_p(Q,P) \bmod P$ belongs to $\oo[[X]][p^{-1}]$ (or to $\oo[[X]][p^{-1}, {(1+X -u^{m+1})}^{-1} ]$ if $\boldF$ has CM by $\chi\psi^{-1}$), arguing as in \cite[pag. 137]{H6}, we see that there exists $H'$ in $\boldI$ such that $H'(P) B(Q,P) L'_p(Q,P)$ belongs to $R$, (i.e. the possible denominators do not involve the variable $X$); from the construction in Theorem \ref{T1} we see that the choice $H'=H$, the congruence ideal of the family $\boldF$, works. \\
We know then that 
\begin{align*}
 R[B^{-1}(Q,P)H^{-1}(P)] \ni  L'_p(Q,P) \stackrel{\times}{=} L_p(Q,P)  \in R[A^{-1}(Q,P)H^{-1}(P)],
\end{align*}
where $\stackrel{\times}{=}$ means {\it equality up to a unit}.\\
It remains to show that $A(Q,P)$ and $B(Q,P)$ are coprime. As in \cite[Lemma 6.2]{H6}, we know that a prime factor of $A(Q,P)$  (resp. $B(Q,P)$)  is of the form $(1 +X) - \zeta (1+Y)$ (resp. $(1 +X) - \zeta u^{-1}(1+Y)$), for $\zeta$ a $p$-power-order root of unit, or it is of the form  $(1 +X) -\alpha$ with $\alpha$ in $\boldI$ such that $\alpha  = \zeta {K_{\qfrak}}^{2} {\N(\qfrak)}^{-1}\bmod P$ or $\alpha  = \zeta {\lambda_{\qfrak}(T(\qfrak))}^{-2} {\N(\qfrak)}^{2m+1} \bmod P $  (resp. $\alpha  = \zeta {\lambda_{\qfrak}(T(\qfrak))}^{-2} \bmod  P$ or $\alpha  = \zeta K_{\qfrak}^2 {\N(\qfrak)}^{-2m-2} \bmod P$), where $\zeta$ is a root of unit and $K_{\qfrak}=\lambda_{\qfrak}(T(\qfrak))$ or $\alpha_{\qfrak}$.\\ 
Let us denote by $\varpi_K$ a uniformizer of $\oo$, then $\calE_{\qfrak}^{\pm}(X,Y) \not\equiv 0 \bmod \varpi_K$. By contradiction, suppose that it is not true, then we would obtain $1+X \equiv \alpha \bmod \varpi_K$. But this is not possible as $X$ does not belong to $\boldI/ \varpi_K$.\\
If $(1 +X) -\alpha$ is a factor of both $A(Q,P)$ and  $B(Q,P)$, we must have $\alpha =  \zeta u^{-1}(1+Y)$  (or $\alpha =  \zeta (1+Y)$). Reasoning with complex absolute values as in the proof of Theorem \ref{1varholo}, we see that this is a contradiction.\\
The behavior at $((1+X)-u(1+Y))$ has been studied in  Proposition \ref{reszeta} and we can conclude the first part of the theorem.\\
The interpolation formula at points $(Q,P)$ for which $ \calE^{+}(X,Y) \calE_2(X,Y) \equiv 0 \bmod (Q,P)$ is a consequence of Theorem  \ref{1varholo}.
\end{proof}

\bibliographystyle{alpha}
\bibliography{Bibliografy}

\end{document}